\newcommand{\normmm}[1]{{\left\vert\kern-0.25ex\left\vert\kern-0.25ex\left\vert #1 
		\right\vert\kern-0.25ex\right\vert\kern-0.25ex\right\vert}}
\newtheorem{mainthm}{Theorem}
\newtheorem{mainprop}[mainthm]{Proposition}
\newtheorem{theorem}{Theorem}[section]
\newtheorem*{theorem*}{Theorem B}
\newtheorem{lemma}[theorem]{Lemma}
\newtheorem{proposition}[theorem]{Proposition}
\newtheorem{corollary}[theorem]{Corollary}
\newtheorem*{observation*}{Observation}
\newtheorem{assumption}{Assumption}
\newtheorem*{assumption*}{Assumption}
\newtheorem*{question*}{Question}
\theoremstyle{definition}
\newtheorem{definition}{Definition}
\newtheorem*{definition*}{Definition}
\theoremstyle{remark}
\newtheorem*{remark*}{Remark}
\newcommand{\R}{\mathbb{R}}
\newcommand{\N}{\mathbb{N}}
\newcommand{\Z}{\mathbb{Z}}
\newcommand{\C}{\mathbb{C}}
\newcommand{\E}{\mathbb{E}}
\newcommand{\PP}{\mathbb{P}}
\newcommand{\XX}{P}
\newcommand{\Cov}{\mathrm{Cov}}
\newcommand{\supp}{\mathrm{supp}}
\newcommand{\dist}{\mathrm{dist}}
\newcommand{\an}{\text{\, and \,}}
\newcommand{\anand}{\quad\text{and}\quad}
\let\oldtocsection=\tocsection
\let\oldtocsubsection=\tocsubsection
\renewcommand{\tocsection}[2]{\hspace{0em}\oldtocsection{#1}{#2}}
\renewcommand{\tocsubsection}[2]{\hspace{2em}\oldtocsubsection{#1}{#2}}
\renewcommand\section{\@startsection{section}{1}{\z@}%
	{-3.5ex \@plus -1ex \@minus -.2ex}%
	{2.3ex \@plus.2ex}%
	{\normalfont\large\bfseries}}%
\numberwithin{equation}{section}
\begin{document}
	\title[Fourier dimensions of classical multiplicative chaos measures]{Harmonic analysis of multiplicative chaos\\Part II: a unified approach to Fourier dimensions}

	\author
	{Zhaofeng Lin}
	\address
	{Zhaofeng LIN: School of Fundamental Physics and Mathematical Sciences, HIAS, University of Chinese Academy of Sciences, Hangzhou 310024, China}
	\email{linzhaofeng@ucas.ac.cn}

	\author
	{Yanqi Qiu}
	\address
	{Yanqi QIU: School of Fundamental Physics and Mathematical Sciences, HIAS, University of Chinese Academy of Sciences, Hangzhou 310024, China}
	\email{yanqi.qiu@hotmail.com, yanqiqiu@ucas.ac.cn}

	\author
	{Mingjie Tan}
	\address
	{Mingjie TAN: School of Mathematics and Statistics, Wuhan University, Wuhan 430072, China}
	\email{mingjie1tan1wuhan@gmail.com}


	\begin{abstract}
	We introduce a unified approach for studying the polynomial Fourier decay of classical multiplicative chaos measures. As consequences, we obtain the precise Fourier dimensions for multiplicative chaos measures arising from the following key models:  the sub-critical 1D and  2D GMC (which in particular resolves the Garban-Vargas conjecture); the sub-critical $d$-dimensional GMC  with $d \ge 3$ when the parameter $\gamma$ is near the critical value; the canonical Mandelbrot random coverings; the canonical Mandelbrot cascades. For various other models, we establish the non-trivial lower bounds of the Fourier dimensions and in various cases we conjecture that they are all optimal and provide the exact values of Fourier dimensions.
	\end{abstract}

	\subjclass[2020]{Primary 60G57, 42A61, 46B09; Secondary 60G46}
	\keywords{Gaussian multiplicative chaos; Mandelbrot random coverings; Poisson multiplicative chaos; Mandelbrot cascades; Fourier dimensions; Vector-valued martingale method}

	\maketitle

	\setcounter{tocdepth}{2}
	\tableofcontents

	\setcounter{tocdepth}{0}
	\setcounter{equation}{0}



\section{Introduction}\label{sec-intro}
This paper is the second part  (Part II) of a series of works on the harmonic analysis of multiplicative chaos measures.   Part II  is devoted to the multiplicative chaos on $d$-dimensional unit cube $[0,1]^d$ equipped with the Lebesgue measure as a background measure.  The treatment of multiplicative chaos on  general domains $U\subset \R^d$ equipped with abstract background measure will be included in Part III of this series.   Part II, although containing much more information than Part I, is still self-contained and thus can be read independently of Part I. However, a quick look at Part I will be helpful.

\subsection{Informal description of main results}
The main goal of the current paper  is to  provide a systematic development of the {\it vector-valued martingale method} for analyzing the polynomial Fourier decay of classical multiplicative chaos measures on $[0,1]^d$.   The vector-valued martingale method, initially introduced in \cite{CHQW24} in the setting of Mandelbrot cascades, and then adopted in the setting of one dimensional Gaussian multiplicative chaos  (GMC) in \cite{LQT24},  has been shown to be particularly powerful and straightforward in proving the polynomial Fourier decay of various models of multiplicative chaos:  as consequences, the exact Fourier dimensions for the canonical  Mandelbrot cascades and the classical sub-critical GMC on the unit interval have been established in \cite{CHQW24} and \cite{LQT24} respectively.  

The  ideas  in  \cite{CHQW24} and \cite{LQT24} can be put into a unified theorem (see Theorem~\ref{Fou-Dec-abs} below) for studying  sharp polynomial Fourier decay. This unified theorm is applicable in the classical models of multiplicative chaos  and there seems to be a general phenomenon for multiplicative chaos measures: 
\begin{center}
{\it The Fourier dimension coincides with the correlation dimension.}
\end{center}

More precisely,  as applications of Theorem~\ref{Fou-Dec-abs},  we establish the exact values or non-trivial lower bounds of the Fourier dimensions for classical  multiplicative chaos measures:
\begin{itemize}
\item 1D and 2D GMC: we obtain the exact values of Fourier dimensions of GMC measures on  $[0,1]^d$ for all sub-critical parameters $0<\gamma<\sqrt{2d}$ when $d\in \{1,2\}$ (see Theorem~\ref{thm-gen-12DGMC} below).
\item Higher dimensional GMC ($d\ge 3$): we obtain the exact values of  Fourier dimensions of GMC measures  on $[0,1]^d$   for   $1\leq\gamma<\sqrt{2d}$ when $d=3$  and  for  $\sqrt{2d}-\sqrt{2}\leq\gamma<\sqrt{2d}$ when $d \ge 4$;  for other sub-critical parameters $\gamma$, we obtain  positive Fourier dimensions (see Theorem~\ref{thm-gen-highD} below).
\item Mandelbrot random coverings (MRC):  we obtain the exact values of Fourier dimensions  of canonical MRC measures   and thus new constructions of random Salem sets; we obtain non-trivial lower bounds of the Fourier dimensions in the general setting (see  Theorems~\ref{thm-manrancov-poi} and \ref{salemcanonicalcase-poi} below). 
\item Poisson multiplicative chaos (PMC): we obtain  non-trivial lower bounds of the Fourier dimensions of PMC  measures (see Theorem~\ref{thm-poimulcha-pmc} below). 
\item Generalized Mandelbrot cascades: we introduce a model of generalized Mandelbrot cascades and obtain non-trivial lower bounds of the Fourier dimensions (see  Theorem~\ref{thm-selsim-cas} below). Our lower bounds are optimal due to the work \cite{CHQW24} on the canonical Mandelbrot cascades. 
\end{itemize}

\subsection{A unified approach for polynomial Fourier decay}
The main framework of this series of works is Kahane's $T$-martingale theory. However, in the setting of GMC measures, using a basic comparison result on kernels,   we can go beyond the $T$-martingale setting (see Theorems~\ref{thm-gen-12DGMC} and \ref{thm-gen-highD} below).

\subsubsection{General framework: Kahane's $T$-martingale theory}\label{sec-T-mart}
Let $d\geq1$ be an integer and consider  a sequence of independent stochastic processes indexed by points $\mathbf{t}$ in the $d$-dimensional unit cube $[0,1]^d$:
\begin{align}\label{def-Xm-abs}
	\{\XX_m(\mathbf{t}): \mathbf{t}\in [0,1]^d\}_{m\geq0}  \quad \text{with $\XX_m(\mathbf{t})\geq0$ and $\E[\XX_m(\mathbf{t})]\equiv 1$}.
\end{align}
For each integer $m\ge 0$, define a random measure on $[0,1]^d$ by
\begin{align}\label{def-num-abs}
	\mu_{m}(\mathrm{d}\mathbf{t}):=\Big[\prod_{j=0}^{m}\XX_{j}(\mathbf{t}) \Big] \mathrm{d}\mathbf{t}  \quad  \text{with $\mathrm{d}\mathbf{t}=\prod_{\beta=1}^{d}\mathrm{d}t_\beta$}.
\end{align}
Then $(\mu_{m})_{m\ge 0}$ is a measure-valued martingale with respect to the natural  filtration
\begin{align}\label{def-filtra-abs}
	\mathscr{G}_m:=\sigma\big(\XX_j:0\leq j\leq m\big),\quad m\geq 0.
\end{align}
By Kahane's $T$-martingale theory  \cite{Kah87a},  almost surely,  $\mu_{m}$ converges  in the sense of weak convergence of measures to a limiting random measure $\mu_{\infty}$ (called the associated multiplicative chaos measure):
\begin{align}\label{def-lim-mea-abs}
	\lim_{m\to \infty}\mu_{m}=\mu_{\infty}.
\end{align}

\subsubsection{Key assumptions and non-degeneracy of  multiplicative chaos measures}
We shall make the following key assumptions  on the stochastic processes in \eqref{def-Xm-abs}. Assumption~\ref{assum-indep-abs} is inspired by the independence in Mandelbrot cascade model; Assumption~\ref{assum-Lp0-abs} is inspired by the classical Mandelbrot-Kahane $L^p$-condition on the total mass of the Mandelbrot cascade measures;  Assumption~\ref{assum-alp0-abs} is a  natural condition allowing us to use  the $b$-adic-discrete-time approximation of the stochastic processes in question.  

Let $b\geq2$ be a fixed integer. For each integer $m\geq0$, let $\mathscr{D}_m^b$ denote the family of $b$-adic sub-cubes of $[0,1)^d$ of level/generation $m$: 
\begin{align}\label{def-b-dya-abs}
	\mathscr{D}_m^b:= \Big\{\mathbf{I}\subset [0,1)^d : \mathbf{I} = \prod_{\beta=1}^{d}\Big[\frac{h_\beta-1}{b^m}, \frac{h_\beta}{b^m}\Big) \, \text{with all $h_\beta\in \{1,2,\cdots,b^m\}$} \Big\}. 
\end{align}
For a stochastic process $\{\XX(\mathbf{t}): \mathbf{t}\in [0,1)^d\}$ and any $\mathbf{I}\in \mathscr{D}_m^b$, define  the restricted stochastic process
\[
\XX^\mathbf{I}: = \{\XX(\mathbf{t}): \mathbf{t}\in \mathbf{I}\}.
\]
Then for each $m\ge 0$, the stochastic process $\{\XX(\mathbf{t}): \mathbf{t}\in [0,1)^d\}$ can be naturally identified with a family of restricted stochastic  processes:
\[
\{\XX(\mathbf{t}): \mathbf{t}\in [0,1)^d\} \xlongequal{\text{\,\,identified with\,\,}}\{\XX^\mathbf{I}: \mathbf{I} \in \mathscr{D}_m^b\}.
\]

\begin{assumption}[Sub-exponential partition and independence inside sub-families]\label{assum-indep-abs}
	There exists an integer $k_0\geq0$ such that $\XX_k\not\equiv1$ for any $k>k_0$, and  the stochastic process 
	\[
	\{\XX_k(\mathbf{t}): \mathbf{t}\in [0,1)^d\} \xlongequal{\text{\,\,identified with\,\,}} \{\XX_k^{\mathbf{I}}: \mathbf{I}\in \mathscr{D}_{k-1}^b\}
	\]
	 can be partitioned into a sub-exponential number of sub-families,  each of which consists of independent  stochastic processes.  	That is,   there is a sequence $\{N_m\}_{m\geq0}$ of positive integers satisfying
\begin{align}\label{def-subexp-abs}
	N_m=o(b^{m\varepsilon})\quad\text{as $m\to\infty$ for any $\varepsilon>0$},
\end{align}
and for each $m\geq0$,  there is a partition of $\mathscr{D}_m^b$ into $N_m$-many sub-families, denoted by 
\begin{align}\label{dec-subexp-dec-abs}
	\mathscr{D}_m^b=\bigsqcup_{i=1}^{N_m}\mathscr{D}_{m,i}^{b},
\end{align}
such that,  for all $k>k_0$ and $1\le i\le N_{k-1}$,  the following stochastic processes are jointly independent:
\[
 \XX_{k}^{\mathbf{I}_1},  \, \XX_{k}^{\mathbf{I}_2}, \, \XX_{k}^{\mathbf{I}_3},  \cdots  (\text{with distinct $\mathbf{I}_1, \mathbf{I}_2, \mathbf{I}_3 \cdots \in \mathscr{D}_{k-1,i}^b$}).
\]
\end{assumption}

\begin{remark*}
In all applications in this paper, we only use bounded  sequences $\{N_m\}_{m\ge 0}$. However,  for further reference, we prove our main results in this generality of sub-exponential growth sequences $\{N_m\}_{m\ge 0}$. 
\end{remark*}

\begin{assumption}[Uniform  Mandelbrot-Kahane $L^{p}$-condition]\label{assum-Lp0-abs}
	There exists $p_0\in (1, 2]$  such that
	\[
	\sup_{\mathbf{t}\in[0,1)^d}\mathbb{E}[\XX_j(\mathbf{t})^{p_0}]<\infty   \,\, \text{for all $j\ge 0$} \anand 
	\limsup_{j\to\infty}\sup_{\mathbf{t}\in[0,1)^d}\mathbb{E}[\XX_j(\mathbf{t})^{p_0}]<b^{d(p_0-1)}.
	\]
\end{assumption}

\begin{assumption}[Uniform  $L^{p}$-H\"older condition]\label{assum-alp0-abs}
There exist $p_0\in (1,2]$ and  $\alpha_0\in(0,1]$ such that 
	\[
	\sup_{j\in \N}\sup_{\mathbf{I}\in\mathscr{D}_j^b}\sup_{\mathbf{t},\mathbf{s}\in\mathbf{I} \atop \mathbf{t}\neq\mathbf{s}}\mathbb{E}\Big[\Big| \frac{\XX_j(\mathbf{t})-\XX_j(\mathbf{s})}{b^{j\alpha_0}|\mathbf{t}-\mathbf{s}|^{\alpha_0}} \Big|^{p_0}\Big]<\infty,
	\]
	where $|\mathbf{t}-\mathbf{s}|=\big(\sum_{\beta=1}^{d}|t_\beta-s_\beta|^2\big)^{1/2}$ with $\mathbf{t}=(t_1,\cdots,t_d),\mathbf{s}=(s_1,\cdots,s_d)\in[0,1)^d$.
\end{assumption}

\begin{remark*}
  We shall emphasized that, Assumption~\ref{assum-alp0-abs}  should not be replaced by the following weaker one: 
\[
\limsup_{j\to\infty} \sup_{\mathbf{I}\in\mathscr{D}_j^b}\sup_{\mathbf{t},\mathbf{s}\in\mathbf{I} \atop \mathbf{t}\neq\mathbf{s}}\mathbb{E}\Big[\Big| \frac{\XX_j(\mathbf{t})-\XX_j(\mathbf{s})}{b^{j\alpha_0}|\mathbf{t}-\mathbf{s}|^{\alpha_0}} \Big|^{p_0}\Big]<\infty. 
\]
Indeed, the regularity of $P_j$  for every single $j\ge 0$ may affect the Fourier decay of the resulting multiplicative chaos measure $\mu_{\infty}$.  And,  Assumption~\ref{assum-alp0-abs}  should  not be replaced by the following stronger one: 
\[
\sup_{j\in \N}  \sup_{\mathbf{t},\mathbf{s}\in [0,1)^d \atop \mathbf{t}\neq\mathbf{s}}\mathbb{E}\Big[\Big| \frac{\XX_j(\mathbf{t})-\XX_j(\mathbf{s})}{b^{j\alpha_0}|\mathbf{t}-\mathbf{s}|^{\alpha_0}} \Big|^{p_0}\Big]<\infty, 
\]  
since, in the  setting of the canonical Mandelbrot cascades,  Assumption~\ref{assum-alp0-abs} is automatically satisfied, while the above stronger assumption may be violated. 
\end{remark*}

As usual, we say that the random measure $\mu_\infty$ in \eqref{def-lim-mea-abs} is  non-degenerate if $\PP(\mu_\infty\ne 0)>0$. Note that, uniform integrability of the martingale $\{\mu_m([0,1]^d)\}_{m\ge 0}$ implies the non-degeneracy of $\mu_\infty$. 

\begin{mainprop}[Non-degeneracy]\label{non-degene-abs}
Under Assumptions~\ref{assum-indep-abs} and \ref{assum-Lp0-abs}, for any $1<p\leq p_0$,
\begin{align}\label{Uni-Bou-01-abs}
	\sup_{m\geq0}\mathbb{E}\big[\big(\mu_{m}([0,1]^d)\big)^p \big]<\infty,
\end{align}
and consequently, the multiplicative chaos measure  $\mu_{\infty}$ is non-degenerate.
\end{mainprop}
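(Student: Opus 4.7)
The plan is to upgrade $(V_m)_{m \geq 0}$, with $V_m := \mu_m([0,1]^d)$, to an $L^{p_0}$-bounded non-negative martingale; the claim for every $p \in (1, p_0]$ then follows from Jensen's inequality, and non-degeneracy of $\mu_\infty$ is a standard consequence of the uniform integrability thus obtained. Since $\mathbb{E} V_0^{p_0} \leq \sup_{\mathbf{t}} \mathbb{E}[\XX_0(\mathbf{t})^{p_0}] < \infty$ by Assumption~\ref{assum-Lp0-abs}, it suffices to show summability of the martingale-difference moments $\mathbb{E}|V_{k+1} - V_k|^{p_0}$ and invoke the martingale form of the von Bahr--Esseen inequality, $\mathbb{E}|V_m - V_0|^{p_0} \leq C_{p_0} \sum_{k \geq 0} \mathbb{E}|V_{k+1} - V_k|^{p_0}$, valid for $p_0 \in (1, 2]$.

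To control each increment, I would write $V_{k+1} - V_k = \int_{[0,1]^d} f_k(\mathbf{t})(\XX_{k+1}(\mathbf{t}) - 1)\,\mathrm{d}\mathbf{t}$ with the $\mathscr{G}_k$-measurable weight $f_k := \prod_{j=0}^k \XX_j$, and decompose by the sub-family partition from Assumption~\ref{assum-indep-abs}:
\[
V_{k+1} - V_k = \sum_{i=1}^{N_k} T_{k,i},\qquad T_{k,i} := \sum_{\mathbf{I} \in \mathscr{D}_{k,i}^b} \int_{\mathbf{I}} f_k(\mathbf{t})(\XX_{k+1}(\mathbf{t}) - 1)\,\mathrm{d}\mathbf{t}.
\]
Conditionally on $\mathscr{G}_k$, the summands defining $T_{k,i}$ are centred (because $\mathbb{E}[\XX_{k+1}] \equiv 1$ and $\XX_{k+1}$ is independent of $\mathscr{G}_k$) and jointly independent, by the within-sub-family independence of $\{\XX_{k+1}^{\mathbf{I}}\}_{\mathbf{I} \in \mathscr{D}_{k,i}^b}$. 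Applying von Bahr--Esseen on each $T_{k,i}$, the Jensen bound $|\int_{\mathbf{I}} g|^{p_0} \leq |\mathbf{I}|^{p_0-1}\int_{\mathbf{I}} |g|^{p_0}$ inside each summand, and the crude $|\sum_{i} a_i|^{p_0} \leq N_k^{p_0-1} \sum_i |a_i|^{p_0}$ across sub-families, then taking expectation, one obtains
\[
\mathbb{E}|V_{k+1} - V_k|^{p_0} \leq C\,N_k^{p_0-1}\,b^{-dk(p_0-1)}\int_{[0,1]^d} \mathbb{E}[f_k(\mathbf{t})^{p_0}]\,\mathrm{d}\mathbf{t},
\]
with the constant $C$ absorbing the uniformly bounded factor $\sup_{k,\mathbf{t}} \mathbb{E}|\XX_{k+1}(\mathbf{t}) - 1|^{p_0}$, finite by Assumption~\ref{assum-Lp0-abs}.

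The cross-index independence of $\{\XX_j\}_{j \geq 0}$ then factorises $\mathbb{E}[f_k(\mathbf{t})^{p_0}] = \prod_{j=0}^k \mathbb{E}[\XX_j(\mathbf{t})^{p_0}]$, and Assumption~\ref{assum-Lp0-abs} furnishes an eventual uniform bound $\sup_{\mathbf{t}} \mathbb{E}[\XX_j(\mathbf{t})^{p_0}] \leq c_\ast$ with $c_\ast < b^{d(p_0-1)}$, so that $\sup_{\mathbf{t}} \mathbb{E}[f_k(\mathbf{t})^{p_0}] \leq C'\,c_\ast^k$. Substituting,
\[
\mathbb{E}|V_{k+1} - V_k|^{p_0} \leq C''\,N_k^{p_0-1}\bigl(c_\ast/b^{d(p_0-1)}\bigr)^k,
\]
which is summable because $c_\ast/b^{d(p_0-1)} < 1$ while $N_k$ grows sub-exponentially by \eqref{def-subexp-abs}. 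This yields $\sup_m \mathbb{E}V_m^{p_0} < \infty$; Jensen transports the bound to every $p \in (1, p_0]$, proving \eqref{Uni-Bou-01-abs}. Uniform integrability of the non-negative martingale $(V_m)$ then forces $\mathbb{E} V_\infty = \lim_m \mathbb{E} V_m = 1 > 0$, whence $\PP(\mu_\infty \neq 0) > 0$. The main delicate point is that the sub-family partition is formulated one level below the process it decouples, so that multiplying by the $\mathscr{G}_k$-measurable factor $f_k$ preserves the conditional independence across cubes — which is exactly the formulation of Assumption~\ref{assum-indep-abs}; the remainder is moment arithmetic.
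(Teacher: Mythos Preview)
Your proposal is correct and follows essentially the same route as the paper's proof: Burkholder/von Bahr--Esseen on the martingale $(V_m)$, then the sub-family partition from Assumption~\ref{assum-indep-abs} together with a conditional von Bahr--Esseen on each $T_{k,i}$, followed by the factorisation $\mathbb{E}[f_k^{p_0}] = \prod_j \mathbb{E}[\XX_j^{p_0}]$ and the geometric decay from Assumption~\ref{assum-Lp0-abs}. The only point you leave implicit is that the within-sub-family independence in Assumption~\ref{assum-indep-abs} is only asserted for $k+1 > k_0$, so the finitely many increments with $k < k_0$ must be handled separately (the paper does this by simply starting the Burkholder sum at $k_0$ and bounding $\mathbb{E}[V_{k_0}^{p_0}]$ directly); this is trivial since each such $\mathbb{E}|V_{k+1}-V_k|^{p_0}$ is finite by the same triangle-inequality argument you use for $\mathbb{E} V_0^{p_0}$.
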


\subsubsection{A unified theorem for polynomial Fourier decay}
Recall that for a finite positive Borel measure $\nu$ on the unit cube $[0,1]^d$, its Fourier transform is defined by
\[
\widehat{\nu}(\mathbf{x}):=\int_{[0,1]^d} e^{-2\pi i\mathbf{x}\cdot\mathbf{t}}\nu(\mathrm{d}\mathbf{t}),\quad\mathbf{x}=(x_1,\cdots,x_d)\in\mathbb{R}^d, \, \, \text{
where $\mathbf{x}\cdot\mathbf{t}=\sum_{\beta=1}^dx_\beta t_\beta$}.
\] 
A non-zero measure  $\nu$ is said to  have polynomial Fourier decay if there exists a constant $D>0$ such that  $|\widehat{\nu}(\mathbf{x})|^2 = O(|\mathbf{x}|^{-D})$ as $|\mathbf{x}|\to\infty$. The Fourier dimension of $\nu$ is defined by (see, e.g., \cite[Section~8.2]{BSS23} and \cite[Chapter~17]{Kah85b})
\[
\dim_F(\nu) := \sup \Big\{ D \in [0, d): |\widehat{\nu}(\mathbf{x})|^2 = O(|\mathbf{x}|^{-D}) \text{\,\,as\,\,} |\mathbf{x}|\to \infty\Big\}.
\]

Our main theorem is  Theorem~\ref{Fou-Dec-abs} below on the  Fourier dimension  of the multiplicative chaos measure.    The lower bound  in Theorem~\ref{Fou-Dec-abs} is sharp. Indeed, 
in the classical models of multiplicative chaos (including 1 D and 2D GMC, canonical Mandelbrot random coverings, canonical Mandelbrot cascades, etc), combined with the standard upper bound of  Fourier dimension by  correlation or Hausdorff dimensions: $\dim_F(\nu)\le \dim_2(\nu)\le \dim_H(\nu)$, Theorem~\ref{Fou-Dec-abs} provides the exact values of the Fourier dimensions.  

\begin{mainthm}[Polynomial Fourier decay]\label{Fou-Dec-abs}
	Under the Assumptions~\ref{assum-indep-abs}, \ref{assum-Lp0-abs} and \ref{assum-alp0-abs}, almost surely on $\{\mu_{\infty}\neq0\}$, the multiplicative chaos measure $\mu_{\infty}$ has polynomial Fourier decay with
	\[
	\dim_F(\mu_{\infty})\geq  \min\Big\{2 \alpha_0,\sup_{1<p\leq p_0}\frac{2  \Theta(p)}{p\log b}\Big\} >0,
	\]
	where $\Theta(p)$ is the structure function defined by 
\begin{align}\label{def-Theta-abs}
	\Theta(p):=d(p-1)\log b-\log\Big(\limsup_{j\to\infty}\sup_{\mathbf{t}\in[0,1)^d}\mathbb{E}[\XX_j(\mathbf{t})^p]\Big).
\end{align}
\end{mainthm}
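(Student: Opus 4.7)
My plan is to follow the vector-valued martingale method of \cite{CHQW24,LQT24}, estimating $L^p$-moments of $\widehat{\mu}_\infty(\xi)$ through a scale-by-scale Bahr--Esseen argument that exploits the cube-wise independence provided by Assumption~\ref{assum-indep-abs}. Concretely, I aim at the annealed bound
\[
\mathbb{E}\bigl[|\widehat{\mu}_\infty(\xi)|^p\bigr] \lesssim |\xi|^{-Dp/2} \qquad \text{as } |\xi| \to \infty,
\]
for each $1 < p \le p_0$ and each $D < \min\{2\alpha_0,\,2\Theta(p)/(p\log b)\}$. Once this is in place, a union bound over lattice points of $\mathbb{R}^d$ combined with the a priori Lipschitz control $|\widehat{\mu}_\infty(\xi)-\widehat{\mu}_\infty(\eta)| \lesssim |\xi-\eta|\,\mu_\infty([0,1]^d)$ upgrades it, via Borel--Cantelli and Proposition~\ref{non-degene-abs}, to the a.s.\ pointwise decay $|\widehat{\mu}_\infty(\xi)|^2 = O(|\xi|^{-D})$ on $\{\mu_\infty\ne 0\}$, and hence to the stated lower bound on $\dim_F(\mu_\infty)$.

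For the annealed bound, I fix $\xi$ and a cutoff generation $m = m(\xi)$ satisfying $b^m \asymp |\xi|^{\theta}$ for a parameter $\theta \in (0,1)$ to be optimised, and use the martingale decomposition $\widehat{\mu}_\infty(\xi) - \widehat{\mu}_0(\xi) = \sum_{k \ge 1} \Delta_k(\xi)$ with increments
\[
\Delta_k(\xi) = \sum_{\mathbf{I} \in \mathscr{D}_{k-1}^b} \int_{\mathbf{I}} e^{-2\pi i \xi \cdot \mathbf{t}}\,(\XX_k(\mathbf{t})-1)\prod_{j<k}\XX_j(\mathbf{t})\,d\mathbf{t}.
\]
Conditionally on $\mathscr{G}_{k-1}$, the cube contributions to $\Delta_k$ have mean zero (since $\mathbb{E}[\XX_k]=1$) and, within each of the $N_{k-1}$ sub-families of Assumption~\ref{assum-indep-abs}, are independent. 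A sub-family-wise Bahr--Esseen inequality followed by a Jensen-type aggregation across the $N_{k-1}$ sub-families bounds $\mathbb{E}[|\Delta_k(\xi)|^p \mid \mathscr{G}_{k-1}]$ by $N_{k-1}^{p-1}$ times the sum of cube-wise conditional $L^p$-moments, and the martingale subadditivity $\mathbb{E}|\sum_k \Delta_k|^p \lesssim \sum_k \mathbb{E}|\Delta_k|^p$ (valid for $p \le 2$) reduces everything to bounding each $\mathbb{E}|\Delta_k(\xi)|^p$ separately.

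The per-cube estimate is where both the Hölder regularity of Assumption~\ref{assum-alp0-abs} and the oscillation of $e^{-2\pi i \xi \cdot \mathbf{t}}$ enter. For $k \le m$, the wavelength $|\xi|^{-1}$ is finer than the cube side $b^{-(k-1)}$: splitting $\XX_k(\mathbf{t})-1 = (\XX_k(\mathbf{t}) - \XX_k(\mathbf{t}_{\mathbf{I}})) + (\XX_k(\mathbf{t}_{\mathbf{I}})-1)$ yields an oscillation piece that gains the factor $(b^{-(k-1)}|\xi|)^{-1}$ from $\bigl|\int_{\mathbf{I}}e^{-2\pi i\xi\cdot\mathbf{t}}d\mathbf{t}\bigr|$ and a Hölder piece that gains a factor $b^{-(m-k)\alpha_0}$ after a further decomposition into cubes of the finest scale $m$. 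For $k > m$, no oscillation gain is sought and the bound is supplied directly by Proposition~\ref{non-degene-abs}. In all cases, the $L^p$-moments of $\prod_{j<k}\XX_j$ on each cube are controlled by Proposition~\ref{non-degene-abs} (or its rescaled analogue), and summing across $k$ gives a bound that, after optimising $\theta$ to balance the three competing exponents $\theta\,\Theta(p)/\log b$, $(1-\theta)\,p$ and $\theta\,p\,\alpha_0$, yields $Dp/2 = p \cdot \min\{\alpha_0,\,\Theta(p)/(p\log b)\}$; taking the supremum over $1 < p \le p_0$ completes the proof.

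The principal obstacle is twofold. First, the Hölder gain requires applying Assumption~\ref{assum-alp0-abs} cube-by-cube at the finest scale $m$, which is precisely why the remark following it stresses that the condition must hold uniformly in $j$: a merely asymptotic Hölder condition would leave small generations uncontrolled. Second, the sub-family aggregation inflates the estimate by $N_{k-1}^{p-1}$; this is rendered harmless by the sub-exponential growth \eqref{def-subexp-abs}, which converts the overhead into an arbitrarily small loss $b^{m\varepsilon}$ absorbed by the strict inequality in the final bound. A minor technical point is that the range $p \le p_0 \le 2$ is exactly what allows us to invoke Bahr--Esseen and $|\cdot|^p$-subadditivity for the martingale sum; extending beyond $p=2$ would require Rosenthal-type inequalities, but is unnecessary here since $\alpha_0$ and $\sup_{1<p\le p_0} \Theta(p)/(p\log b)$ already saturate the final rate.
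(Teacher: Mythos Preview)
Your overall architecture—martingale decomposition of $\widehat{\mu}_\infty$ into scale increments $\Delta_k$, conditional Bahr--Esseen across the $N_{k-1}$ independent sub-families, and a per-cube splitting into an oscillation piece and a H\"older piece—mirrors the paper's strategy closely. However, there is a genuine gap at the passage from the annealed single-frequency bound to the almost-sure statement.

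Suppose you establish $\mathbb{E}\bigl[|\widehat{\mu}_\infty(\mathbf{n})|^p\bigr]\lesssim |\mathbf{n}|^{-Dp/2}$ for each lattice point $\mathbf{n}$. A union bound over the $\asymp R^{d}$ lattice points in the shell $|\mathbf{n}|\sim R$, followed by Borel--Cantelli, yields only
\[
|\widehat{\mu}_\infty(\mathbf{n})|\lesssim |\mathbf{n}|^{-D/2+d/p+\varepsilon}\quad\text{a.s.},
\]
i.e.\ $\dim_F(\mu_\infty)\ge D-2d/p$. Since you are confined to $p\le p_0\le 2$, the loss $2d/p\ge d$ is at least the ambient dimension. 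In concrete cases this destroys the conclusion: for $1$D GMC with small $\gamma$ one has $D=D_{\gamma,1}=1-\gamma^2<1$, so $D-2d/p<0$ and nothing is obtained. The Lipschitz control you invoke lets you pass from lattice points to all $\xi$, but it does not reduce the number of lattice points you must test.

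The paper avoids this loss precisely by \emph{not} working one frequency at a time. It forms the $\ell^q$-valued martingale $\mathcal{M}_m=(|\mathbf{n}|^{\tau/2}\widehat{\mu_m}(\mathbf{n}))_{\mathbf{n}}$ and applies Pisier's martingale type $p$ inequality for $\ell^q$ (with $q$ large) in place of the scalar Bahr--Esseen bound. The resulting estimate $\sup_m\mathbb{E}[\|\mathcal{M}_m\|_{\ell^q}^p]<\infty$ controls all frequencies simultaneously; the dimensional cost appears only as an additive $dp/q$ in the exponent (see \eqref{Z-exponent}--\eqref{Y-exponent} and \eqref{existen-p-q-abs}), and this vanishes as $q\to\infty$. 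In short, the vector-valued structure is not a stylistic choice but the mechanism that eliminates the $2d/p$ deficit your union bound incurs.

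A secondary point: in your per-cube estimate the ``oscillation piece'' is not simply $(\XX_k(\mathbf{t}_{\mathbf{I}})-1)\int_{\mathbf{I}}e^{-2\pi i\xi\cdot\mathbf{t}}\,\mathrm{d}\mathbf{t}$, because the weight $\prod_{j<k}\XX_j(\mathbf{t})$ is not constant on $\mathbf{I}$. The paper handles this by a further $b$-adic refinement of $\mathbf{I}$ to the frequency scale and an Abel summation in one coordinate (Steps~2--4 in the proof of Lemma~\ref{UB-YZW-abs}); your sketch would need a comparable device, but this is repairable once the main $\ell^q$ issue above is addressed.
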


\begin{remark*}
The definition \eqref{def-Theta-abs} of $\Theta(p)$  is closely related to the structure function in the theory of Mandelbrot cascades.  
\end{remark*}

\subsection{Gaussian multiplicative chaos}
We consider the Fourier dimensions of the GMC measures on $d$-dimensional unit cubes. The reader is referred to \cite{Kah85a, RV14, LRV15, BKNSW15, CN19, FJ19, GV23, BGKRV24} for more background and related results in Fourier decay of Gaussian multiplicative chaos. 

Informally, GMC measures arise as the exponential of log-correlated Gaussian fields.   
The original construction of GMC in   Kahane's seminal work \cite{Kah85a} will be briefly recalled in \S\ref{sec-sigma-reg}. Kahane's construction relies on a  well-known notion of the $\sigma$-positive type kernel, corresponding to  a special decomposition of the  kernel and thus a special decomposition of the related Gaussian field. Therefore,  Kahane's original GMC is a particular case of his $T$-martingale theory \cite{Kah87a}.    

However, the modern theory of GMC goes beyond the $\sigma$-positive type kernels and relies instead on various mollifications of the log-correlated Gaussian fields, see, e.g., \cite{RV14}. 

In Theorem~\ref{thm-gen-12DGMC} and  Theorem~\ref{thm-gen-highD} below, we shall deal with the Gaussian fields on $[0,1]^d$ with general log-correlated kernels  (not necessarily of $\sigma$-positive type): 
\[
K(\mathbf{t}, \mathbf{s}) = \log_{+}\Big( \frac{1}{|\mathbf{t}-\mathbf{s}|}\Big) + G(\mathbf{t}, \mathbf{s}), \quad \mathbf{t}, \mathbf{s}\in [0,1]^d,
\]
with $G$  being bounded and continuous and  $\log_{+}(x)= \max\{\log x, 0\}$.    We  emphasize that,  a priori,  the GMC measures associated to non-necessarily $\sigma$-positive type kernels do not fall into  the framework of $T$-martingale theory and thus one cannot  directly apply Theorem~\ref{Fou-Dec-abs}.  This difficulty is overcome as follows.   
 First, in the framework of Theorem~\ref{Fou-Dec-abs}, we shall  prove  Proposition~\ref{prop-kernel-ass} below on GMC measures associated to $\sigma$-positive type kernels.   Then,  by a simple comparison of GMC measures on the level of kernels (not the measure-theoretical level), we  derive Theorems~\ref{thm-gen-12DGMC} and  \ref{thm-gen-highD} from Proposition~\ref{prop-kernel-ass} and Theorem~\ref{Fou-Dec-abs}. 

In what follows,  for any integer $d\ge 1$ and any $\gamma\in (0, \sqrt{2d})$,  we define $D_{\gamma,d}\in(0,d)$ by
\begin{align}\label{def-D-gamma}
D_{\gamma,d}: =\left\{
	\begin{array}{cl}
		d-\gamma^2 & \text{if  $0<\gamma<\sqrt{2d}/2$}
		\vspace{2mm}
		\\
		(\sqrt{2d}-\gamma)^2 & \text{if $\sqrt{2d}/2\leq\gamma<\sqrt{2d}$}
	\end{array}\right..
\end{align}

We shall need the following auxiliary result: 
for any integer $d \ge 1$, there exists a continuous  function $f: [0, \infty) \rightarrow [0, \infty)$ with
\[
\supp(f) \subset [0,1], \quad  f(0) = 1 \anand  \sup_{v\in (0,1]} \frac{|f(v) - f(0)|}{v^{2}}<\infty, 
\]
such that the function $\R^d \ni \mathbf{t} \mapsto f(|\mathbf{t}|)$ is positive definite on $\R^d$. In what follows, we denote the class of such functions by $\mathscr{P}_d$: 
\begin{align}\label{def-Pd}
\mathscr{P}_d = \Big\{f: [0,\infty)\rightarrow [0, \infty)\, \big|\,  \text{$f$ satisfies all conditions above}\Big\}. 
\end{align}
 The fact that $\mathscr{P}_d \ne \varnothing$  is an immediate consequence of Lemma~\ref{lem-base-fn} below. Moreover,  Lemma~\ref{lem-base-fn} is proved by  explicit construction:  for any $h\in C^\infty(\R)$ with $h\not\equiv 0$ and $\supp(h)\subset [0,1/4]$,  taking the  self-convolution of $h(|\mathbf{x}|^2)$ and renormalization, we obtain a radial function $\mathbf{x}\mapsto f(|\mathbf{x}|)$ with $f\in \mathscr{P}_d$.

\subsubsection{1D and 2D GMC}
\begin{theorem}[Proof of Garban-Vargas conjecture]\label{thm-gen-12DGMC}
Let $d \in \{1, 2\}$ and consider a positive definite kernel 
\begin{align}\label{G-log}
K(\mathbf{t}, \mathbf{s}) = \log_{+} \Big( \frac{1}{|\mathbf{t}-\mathbf{s}|} \Big) + G(\mathbf{t}, \mathbf{s}), \quad \mathbf{t}, \mathbf{s}\in [0,1]^d,
\end{align}
with $G$  being bounded and continuous. Assume  that there exists  a function $f\in \mathscr{P}_d$ such that 
\begin{itemize}
\item either $G(\mathbf{t}, \mathbf{s})$ has the form
\[
G(\mathbf{t}, \mathbf{s}) = \int_{|\mathbf{t}-\mathbf{s}|}^1 \frac{f(v) -1}{v} \mathrm{d} v; 
\]
\item or more generally,  the function $G$ satisfies 
\begin{align}\label{error-cond}
 \sup_{\mathbf{t}, \mathbf{s}\in [0,1]^d
\atop \mathbf{t}\ne \mathbf{s}} \frac{|G(\mathbf{t},\mathbf{t}) -  G(\mathbf{s},\mathbf{s})|}{|\mathbf{t}-\mathbf{s}|}<\infty, \quad  \sup_{\mathbf{t}, \mathbf{s}\in [0,1]^d
\atop \mathbf{t}\ne \mathbf{s}} \frac{|G(\mathbf{t},\mathbf{t}) + G(\mathbf{s}, \mathbf{s})- 2 G(\mathbf{t},\mathbf{s})|}{|\mathbf{t}-\mathbf{s}|^2}<\infty
\end{align}
and  there is a constant $\lambda>0$ such that the following  kernel  is positive definite on $[0,1]^d \times [0,1]^d$:
\begin{align}\label{error-cond-bis}
R_\lambda(\mathbf{t}, \mathbf{s}) : = \lambda + G(\mathbf{t}, \mathbf{s}) - \int_{|\mathbf{t}-\mathbf{s}|}^1 \frac{f(v)-1}{v} \mathrm{d}v. 
\end{align}
\end{itemize}
Then for any $\gamma\in (0, \sqrt{2d})$, almost surely, the  sub-critical GMC measure $\mathrm{GMC}_{K}^\gamma$ has Fourier dimension 
\begin{align}\label{low-GMC-dim}
\dim_F(\mathrm{GMC}_{K}^\gamma) = D_{\gamma,d}. 
\end{align}
\end{theorem}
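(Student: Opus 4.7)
The upper bound $\dim_F(\mathrm{GMC}_K^\gamma)\le D_{\gamma,d}$ follows immediately from the standard chain $\dim_F(\nu)\le \dim_2(\nu)\le \dim_H(\nu)$ and the classical evaluation $\dim_H(\mathrm{GMC}_K^\gamma) = D_{\gamma,d}$ in the sub-critical regime, so the plan focuses on the matching lower bound. The strategy is to package the entire construction into the abstract framework of Theorem~\ref{Fou-Dec-abs}: read the cascade levels off a canonical $\sigma$-positive decomposition of the principal part $K_f$, and, under hypothesis \eqref{error-cond-bis}, absorb the continuous error term into a H\"older-regular zeroth level of the martingale.

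The first bullet of the hypothesis is already in $\sigma$-positive form, $K = K_f := \int_{|\mathbf{t}-\mathbf{s}|}^1 f(v)/v\,\mathrm{d}v$. The change of variable $v = u|\mathbf{t}-\mathbf{s}|$ combined with a $b$-adic split of $u$ yields $K_f = \sum_{j\ge 0} Q_j$ with $Q_j(\mathbf{t},\mathbf{s}) := \int_{b^j}^{b^{j+1}} \frac{f(u|\mathbf{t}-\mathbf{s}|)}{u}\,\mathrm{d}u$; each $Q_j$ is positive definite, satisfies $Q_j(\mathbf{t},\mathbf{t}) = \log b$, and vanishes for $|\mathbf{t}-\mathbf{s}|\ge b^{-j}$. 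Let $\{Y_j\}$ be independent centered Gaussian fields with covariances $\{Q_j\}$ and set $\XX_j(\mathbf{t}) := \exp(\gamma Y_j(\mathbf{t}) - \gamma^2 \log b/2)$. The finite range of $Q_j$ gives Assumption~\ref{assum-indep-abs} with bounded $N_m$ (a constant-size colouring of $\mathscr{D}_{j-1}^b$ decouples non-adjacent cubes); the Gaussian identity $\mathbb{E}[\XX_j(\mathbf{t})^p] = b^{\gamma^2 p(p-1)/2}$ gives Assumption~\ref{assum-Lp0-abs} for every $p\in(1,\min\{2, 2d/\gamma^2\})$; and the quadratic bound $|f(v)-f(0)|=O(v^2)$ built into $\mathscr{P}_d$ forces $\mathbb{E}[|Y_j(\mathbf{t})-Y_j(\mathbf{s})|^2] = O(b^{2j}|\mathbf{t}-\mathbf{s}|^2)$ uniformly in $j$, giving Assumption~\ref{assum-alp0-abs} with $\alpha_0 = 1$. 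A short computation yields $\Theta(p) = (p-1)(d - p\gamma^2/2)\log b$, whence
\[
\frac{2\Theta(p)}{p\log b} = 2d + \gamma^2 - p\gamma^2 - \frac{2d}{p}.
\]
Optimizing over the admissible range returns $d - \gamma^2$ at $p = 2$ when $\gamma \le \sqrt{d/2}$, and $(\sqrt{2d}-\gamma)^2$ at the interior critical point $p^* = \sqrt{2d}/\gamma$ otherwise, which in both cases equals $D_{\gamma,d}$. Since $D_{\gamma,d}\le d\le 2\alpha_0$, Theorem~\ref{Fou-Dec-abs} yields $\dim_F(\mathrm{GMC}_{K_f}^\gamma)\ge D_{\gamma,d}$.

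For the general kernel in the second bullet, hypothesis \eqref{error-cond-bis} rewrites $K+\lambda = K_f + R_\lambda$ with both summands positive definite. Realizing the Gaussian field of covariance $K+\lambda$ as an independent sum $X_f+Z$ with $X_f, Z$ of covariances $K_f, R_\lambda$ respectively, and noting that the additive constant $\lambda$ corresponds to a harmless random multiplicative constant that does not affect the Fourier dimension, one augments the martingale by inserting a zeroth level $\XX_0(\mathbf{t}) := \exp(\gamma Z(\mathbf{t}) - \gamma^2 R_\lambda(\mathbf{t},\mathbf{t})/2)$. Since Assumption~\ref{assum-indep-abs} imposes no constraint on early levels, the only new verification is the $L^{p_0}$-H\"older regularity of $\XX_0$; this follows from \eqref{error-cond}, which forces $\mathbb{E}[(Z(\mathbf{t})-Z(\mathbf{s}))^2] = O(|\mathbf{t}-\mathbf{s}|^2)$ together with the Lipschitz regularity of $R_\lambda(\mathbf{t},\mathbf{t})$. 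The structure function depends only on the $\limsup$ as $j\to\infty$ and is therefore unchanged, so Theorem~\ref{Fou-Dec-abs} again delivers $\dim_F(\mathrm{GMC}_K^\gamma)\ge D_{\gamma,d}$.

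The main technical hurdle is the uniform verification of Assumption~\ref{assum-alp0-abs} across all generations: for $j \ge 1$ it hinges on the \emph{global} quadratic vanishing of $f - f(0)$ built into $\mathscr{P}_d$, while for $j = 0$ it exploits the full strength of both estimates in \eqref{error-cond}. The sharpness of the lower bound also relies on the dimension restriction $d\in\{1,2\}$ through $D_{\gamma,d}\le d\le 2 = 2\alpha_0$, which is precisely why higher dimensions must be handled separately in Theorem~\ref{thm-gen-highD}.
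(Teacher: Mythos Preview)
Your lower bound argument is essentially the paper's own proof: the $b$-adic decomposition $K_f=\sum_j Q_j$ is exactly the construction in Proposition~\ref{prop-star-scale} exhibiting $K_f$ as sharply-$\sigma^b(1)$-regular, the verification of the three Assumptions is Proposition~\ref{prop-kernel-ass}, the modification of the zeroth level by the exponential of $Z$ is Lemma~\ref{lem-3-ass-new}, and the optimization of $2\Theta(p)/(p\log b)$ matches \eqref{def-zeta-p-1}--\eqref{def-zeta-p-2}.

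There is, however, a genuine slip in your upper bound. You invoke $\dim_F\le\dim_2\le\dim_H$ together with ``the classical evaluation $\dim_H(\mathrm{GMC}_K^\gamma)=D_{\gamma,d}$'', but this equality is false: the (upper) Hausdorff dimension of sub-critical GMC is $d-\gamma^2/2$, not $D_{\gamma,d}$ (for instance, $d=1$, $\gamma=1$ gives $\dim_H=1/2$ while $D_{1,1}=(\sqrt{2}-1)^2\approx 0.17$). The bound $\dim_H\ge D_{\gamma,d}$ goes the wrong way to conclude anything. The correct route, which you already wrote down in the chain, is to stop at $\dim_2$: one has $\dim_2(\mathrm{GMC}_K^\gamma)=D_{\gamma,d}$ almost surely, by the multifractal analysis of Bertacco (this is the content of Lemma~\ref{Uppbou-FourierDim-dDGMC} in the paper). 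With that correction your upper bound is fine.

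One further small point on the lower bound: when you ``insert a zeroth level $\XX_0$'', be explicit that this multiplies (rather than replaces) the original level-zero factor coming from $Q_0$; otherwise the $Q_0$ piece of the kernel is lost. The paper handles this by setting $\widetilde{\XX}_{\gamma,0}^{(\lambda)}=P_\gamma^{(\lambda)}\cdot \XX_{\gamma,0}$ and checking the product still satisfies Assumption~\ref{assum-alp0-abs} (Lemma~\ref{lem-3-ass-new}); your sketch of why \eqref{error-cond} suffices is correct once this is made precise.
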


\begin{remark*}
One can easily check that the condition \eqref{error-cond} holds for any  $C^2$-smooth symmetric function 
\[
G\in C^2([0,1]^d \times [0,1]^d).
\]
 Moreover, for a fixed $\gamma \in (0, \sqrt{2d})$, for obtaining the almost sure equality \eqref{low-GMC-dim},  one may replace the  condition  \eqref{error-cond} by the following weaker one (recall that $D_{\gamma,d}\in (0,2)$ for $d\in \{1,2\}$ and $\gamma \in (0, \sqrt{2d})$): 
\[
 \sup_{\mathbf{t}, \mathbf{s}\in [0,1]^d
\atop \mathbf{t}\ne \mathbf{s}} \frac{|G(\mathbf{t},\mathbf{t}) -  G(\mathbf{s},\mathbf{s})|}{\sqrt{|\mathbf{t}-\mathbf{s}|^{D_{\gamma,d}}}}<\infty \anand \sup_{\mathbf{t}, \mathbf{s}\in [0,1]^d
\atop \mathbf{t}\ne \mathbf{s}} \frac{|G(\mathbf{t},\mathbf{t}) + G(\mathbf{s}, \mathbf{s})- 2 G(\mathbf{t},\mathbf{s})|}{|\mathbf{t}-\mathbf{s}|^{D_{\gamma,d}}}<\infty.
\]
Note also that the mere boundedness and continuity  of $G$ is inadequate for analyzing the exact polynomial Fourier decay of the GMC measure. Indeed, multiplication by a continuous density on a measure  can profoundly change its  Fourier transform's asymptotic behavior.   Therefore,  to ensure the Fourier dimension of the GMC measure remains invariant under the perturbation, further smoothness constraints on $G$ are indispensable.  
\end{remark*}

\begin{remark*}
When $d=1$,  for the exact log-kernel 
\[
K_{\mathrm{exact}}(t, s)= \log \frac{1}{|t- s|},  \quad  t,  s \in [0,1], 
\]
the almost sure equality \eqref{low-GMC-dim}  is proved in  \cite{LQT24} --the Part I of this series: there we used a slightly different method  via the white noise decomposition of $K_{\mathrm{exact}}$ in Bacry-Muzy \cite{BM03}.
\end{remark*}

\subsubsection{Higher dimensional GMC ($d\ge 3$)}
\begin{theorem}[Proof of Garban-Vargas conjecture for near-critical parameters $\gamma$]\label{thm-gen-highD}
For any integer  $d\ge 3$,  consider a positive definite kernel of the form \eqref{G-log} such that the assumptions in Theorem~\ref{thm-gen-12DGMC} are satisfied. Then for any $\gamma \in (0, \sqrt{2d})$,  almost surely,  the sub-critical GMC measure $\mathrm{GMC}_{K}^\gamma$ satisfies 
\[
0<\min\{2,D_{\gamma,d}\}\leq\mathrm{dim}_{F}(\mathrm{GMC}_{K}^\gamma)\leq D_{\gamma,d}<d.
\]
\end{theorem}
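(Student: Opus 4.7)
The plan is to imitate the proof of Theorem~\ref{thm-gen-12DGMC} essentially verbatim, noting that in dimension $d \ge 3$ the cap $2\alpha_0 \le 2$ built into Assumption~\ref{assum-alp0-abs} can become active and produces the $\min\{2, D_{\gamma,d}\}$ in the statement. The three ingredients (upper bound via correlation dimension, kernel comparison to $\sigma$-positive type, application of Theorem~\ref{Fou-Dec-abs}) are each dimension-independent modulo this saturation.

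\textbf{Upper bound.} I would invoke the standard inequality $\dim_F(\nu) \le \dim_2(\nu)$ for finite positive Borel measures, combined with Kahane's classical computation of the correlation dimension of the sub-critical GMC: $\dim_2(\mathrm{GMC}_K^\gamma) = D_{\gamma,d}$ in every dimension, where the dichotomy at $\gamma = \sqrt{2d}/2$ in \eqref{def-D-gamma} reflects whether the second moment or a lower-order moment controls the diagonal singularity. The strict inequality $D_{\gamma,d} < d$ is immediate from \eqref{def-D-gamma} for any $\gamma \in (0, \sqrt{2d})$.

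\textbf{Lower bound.} I would reduce to kernels of $\sigma$-positive type by exactly the comparison argument used in Theorem~\ref{thm-gen-12DGMC}: pick $f \in \mathscr{P}_d$, build the $\sigma$-positive type kernel $K_{\sigma}$ from $f$ (whose $\sigma$-positivity is guaranteed by $f \in \mathscr{P}_d$), and observe that conditions \eqref{error-cond}--\eqref{error-cond-bis} force $K - K_{\sigma}$ to contribute only an almost surely $C^1$-smooth positive density against the $\sigma$-positive GMC, leaving $\dim_F$ invariant. For $K_{\sigma}$ I would then appeal to Proposition~\ref{prop-kernel-ass} to supply the $T$-martingale decomposition, with layers $P_j$ satisfying Assumptions~\ref{assum-indep-abs}--\ref{assum-alp0-abs} and Hölder parameter $\alpha_0 = 1$, the value $\alpha_0 = 1$ being forced by the defining bound $|f(v) - f(0)| = O(v^2)$ of the class $\mathscr{P}_d$. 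The computation of the structure function $\Theta(p)$ in \eqref{def-Theta-abs} for the Gaussian layers is dimension-independent and yields, after optimization over $p \in (1, p_0]$ with $p_0 = p_0(\gamma, d) \in (1,2]$ compatible with Assumption~\ref{assum-Lp0-abs}, the identity $\sup_{1 < p \le p_0} 2\Theta(p)/(p \log b) = D_{\gamma,d}$. Substitution into Theorem~\ref{Fou-Dec-abs} delivers
\[
\dim_F(\mathrm{GMC}_K^\gamma) \;\ge\; \min\{2\alpha_0, \, D_{\gamma,d}\} \;=\; \min\{2, \, D_{\gamma,d}\}.
\]

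\textbf{Main obstacle.} The reason the lower bound is not simply $D_{\gamma,d}$, as in $d \in \{1,2\}$, is \emph{structural} rather than a new technical difficulty. Assumption~\ref{assum-alp0-abs} restricts $\alpha_0$ to $(0,1]$, so the regularity term $2\alpha_0$ in Theorem~\ref{Fou-Dec-abs} cannot exceed $2$; yet $D_{\gamma,d}$ can be as large as $d$ when $\gamma$ is small. Consequently, the cap bites precisely when $D_{\gamma,d} > 2$, producing a gap between the lower and upper bounds for $\gamma$ far from critical. For $d = 3$ with $\gamma \ge 1$, and for $d \ge 4$ with $\gamma \ge \sqrt{2d} - \sqrt{2}$, one checks from \eqref{def-D-gamma} that $D_{\gamma,d} \le 2$ and the two bounds coincide, recovering the exact Fourier dimension announced in the introduction. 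Breaking past the Lipschitz scale $\alpha_0 = 1$ within the present martingale framework, and thus closing the gap for all sub-critical $\gamma$ in high dimensions, is the remaining open problem hinted at by Theorem~\ref{thm-gen-highD}.
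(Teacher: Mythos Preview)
Your overall architecture is right and matches the paper: upper bound via $\dim_F \le \dim_2$ and the known correlation dimension $D_{\gamma,d}$, lower bound via Theorem~\ref{Fou-Dec-abs} applied with $\alpha_0=1$, and the observation that in $d\ge 3$ the cap $2\alpha_0 = 2$ can become active. The structure-function computation and the identification of the near-critical range where the bounds coincide are also correct.

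The gap is in the comparison step. You assert that conditions \eqref{error-cond}--\eqref{error-cond-bis} force the perturbation to act as an ``almost surely $C^1$-smooth positive density'' against the $\sigma$-positive GMC. This is not justified by the hypotheses. The second inequality in \eqref{error-cond} only yields $\mathbb{E}[(Z_\lambda(\mathbf{t})-Z_\lambda(\mathbf{s}))^2] \le C|\mathbf{t}-\mathbf{s}|^2$ for the residual Gaussian process $Z_\lambda$, which by Kolmogorov gives sample paths that are H\"older of every order $\alpha<1$ but \emph{not} $C^1$ in general. And as the paper remarks explicitly (the remark following \eqref{2-GMC-comp}), H\"older regularity of the density is not enough to conclude invariance of polynomial Fourier decay under multiplication, so the measure-theoretic identity \eqref{2-GMC-comp} by itself does not transfer the lower bound.

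The paper circumvents this by never invoking a density argument. Instead it works at the \emph{kernel} level: it writes $L_\lambda = K_{\mathrm{good}} + R_\lambda$ with $R_\lambda$ positive definite, absorbs the entire residual process $Z_\lambda$ into the zeroth layer $\widetilde{\psi}_0 = Z_\lambda + \psi_0$ of the $T$-martingale decomposition (see \eqref{X-L-lambda-dec}--\eqref{zero-gp}), and then re-verifies Assumptions~\ref{assum-indep-abs}--\ref{assum-alp0-abs} for this modified sequence in Lemma~\ref{lem-3-ass-new}. The point is that Assumption~\ref{assum-alp0-abs} only demands an $L^{p_0}$-H\"older estimate on each single layer, and Lemma~\ref{lem-gp} provides exactly this for the modified zeroth layer from the second-order diagonal control in \eqref{R-C2}. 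Replace your density sentence with this kernel-level incorporation and the argument goes through.
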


\begin{remark*}
Under the assumptions of Theorem~\ref{thm-gen-highD},  we obtain in particular  the following exact equalities: for  $d = 3$ and  $\gamma \in [1, \sqrt{2d}) = [1, \sqrt{6})$, almost surely, 
	\[
	\dim_F(\mathrm{GMC}_{K}^\gamma)  =\left\{
	\begin{array}{cl}
		d-\gamma^2 & \text{if  $1\le \gamma<\sqrt{2d}/2$}
		\vspace{2mm}
		\\
		(\sqrt{2d}-\gamma)^2 & \text{if $\sqrt{2d}/2\leq\gamma<\sqrt{2d}$}
	\end{array}\right.,
	\]
for $d\geq4$ and $\gamma \in [\sqrt{2d}-\sqrt{2}, \sqrt{2d})$, almost surely, 
	$
	\mathrm{dim}_{F}(\mathrm{GMC}_{K}^\gamma)= (\sqrt{2d}-\gamma)^2.
	$
\end{remark*}

\subsection{Mandelbrot random coverings}
The Mandelbrot random coverings (MRC), also known as the Poisson random coverings,  were first introduced by Mandelbrot  \cite{Man72} and further studied by many authors, e.g., \cite{She72, FFS85, Kah87b, Kah98, Fan02, SS18}. 

We briefly recall the main idea of MRC.  For any countable  subset   $\mathcal{Z}\subset \R\times \R_{+}$, define an open set by taking union of open intervals
\[
\mathcal{U}[\mathcal{Z}]: =  \bigcup_{z\in \mathcal{Z}} I_z, \quad \text{where the open interval $I_z : = (x, x+y)$ for any point $z = (x,y)\in \mathcal{Z}$}.
\]  
The theory of MRC aims to study the open set $\mathcal{U}(\mathcal{Z})$ or its complement  when  $\mathcal{Z}$ is sampled from a translation-invariant Poisson point process on the strip  
\[
 S := \mathbb{R}\times(0,1). 
 \]
 More precisely,  let $\mathrm{PPP} (\omega_\Lambda)$ be the translation-invariant Poisson point process on $\R\times (0,1)$ with intensity 
 \begin{align}\label{def-kappa}
 \omega_\Lambda(\mathrm{d}x\mathrm{d}y) := \mathrm{d}x  \otimes \Lambda(\mathrm{d}y),
 \end{align}
 where $\Lambda$ is a  Radon measure  on $(0,1)$ with infinite total mass $\Lambda((0,1)) = \infty$.   By translation-invariance,  the study  of the random set $\mathcal{U}[\mathrm{PPP} (\omega_\Lambda)]\subset \R$ can usually be reduced to a local position. Therefore,  define a closed random subset of the unit interval $[0,1]$ by  
\[
E_\Lambda: = [0,1]\setminus \mathcal{U}[\mathrm{PPP} (\omega_\Lambda)]=  [0,1]\setminus \bigcup_{z\in \mathrm{PPP} (\omega_\Lambda)} I_z.
\]
The random subset $E_\Lambda\subset [0,1]$ is called the   uncovered set of the Mandelbrot random covering.  We refer the reader to \cite{She72, FFS85} for more background and details of the uncovered set $E_\Lambda$.

The study of the above random set $E_\Lambda$  is closely related to multiplicative chaos. Indeed,  one may define a  multiplicative chaos measure with  support $E_\Lambda$ as follows. Given any $\epsilon\in (0,1)$, set
\[
S_\epsilon := \R\times (\epsilon, 1)
\]
and consider the closed random subset
\begin{align}\label{def-E-set}
E_\Lambda(\epsilon): = [0,1] \setminus \mathcal{U}[\mathrm{PPP} (\omega_\Lambda) \cap S_\epsilon]=[0,1]\setminus \bigcup_{z\in \mathrm{PPP} (\omega_\Lambda) \cap S_\epsilon} I_z.
\end{align}
Define a random measure on the unit interval $[0,1]$ by 
\begin{align}\label{def-muPCepsilon-poi}
	\mathrm{MRC}_\Lambda^\epsilon(\mathrm{d} t): =  \frac{\mathds{1}_{E_\Lambda(\epsilon)} (t) }{\E[\mathds{1}_{E_\Lambda(\epsilon)} (t) ]} \mathrm{d}t.
\end{align}
It turns out that this construction fits perfectly  with Kahane's $T$-martingale theory  \cite{Kah87a} and gives rise to  the Mandelbrot random convering measure  $\mathrm{MRC}_{\Lambda}$ on $[0,1]$ with 
\begin{align}\label{weacon-muPC-poi}
	\lim_{\epsilon\to0^+} \mathrm{MRC}_\Lambda^\epsilon =\mathrm{MRC}_\Lambda \anand  \supp(\mathrm{MRC}_\Lambda)= E_\Lambda. 
\end{align} 

To state our main results on $\mathrm{MRC}_\Lambda$ and $E_\Lambda$, we define two quantities: for any integer $b \ge 2$, set 
\begin{align}\label{def-chi-b}
\chi(b,\Lambda): = \limsup_{j\to \infty}\frac{1}{\log b}\int_{[b^{-j}, b^{-(j-1)})} y \Lambda(\mathrm{d}y)
\anand 
\chi(\Lambda):= \inf_{b\in \N_{\ge 2}}   \chi(b, \Lambda). 
\end{align}

Recall that for a compact subset $E\subset \R$, its Fourier dimension $\dim_{F}(E)$  is defined as 
\begin{align}\label{def-dimF-set}
\dim_F(E): = \sup\Big\{\dim_F(\nu): \text{$\nu$ is a Radon measure supported on $E$} \Big\}. 
\end{align}

\begin{theorem}\label{thm-manrancov-poi}
	Suppose that $
	\chi(\Lambda)<1,
	$
	then almost surely,  the random measure  $\mathrm{MRC}_{\Lambda}$ and the closed set $E_\Lambda$ have positive Fourier dimensions with
	\[
	\dim_F(E_\Lambda) \ge \dim_{F}(\mathrm{MRC}_{\Lambda})\geq1-\chi(\Lambda)>0.
	\]
\end{theorem}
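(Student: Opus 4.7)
The plan is to realize $\mathrm{MRC}_\Lambda$ as the $T$-martingale limit of a $b$-adic decomposition of the Poisson intensity and then invoke Theorem~\ref{Fou-Dec-abs}. Fix an integer $b \ge 2$ with $\chi(b, \Lambda) < 1$, which exists by the definition \eqref{def-chi-b} of $\chi(\Lambda)$. Partition the strip $S$ into annular sub-strips $S_j := \mathbb{R} \times [b^{-j}, b^{-(j-1)})$, set $\mathcal{Z}_j := \mathrm{PPP}(\omega_\Lambda) \cap S_j$ (independent in $j$ by the restriction property of Poisson processes), and define $P_0 \equiv 1$ and, for $j \ge 1$,
\[
P_j(t) := e^{m_j}\,\mathds{1}_{\{t \notin \mathcal{U}[\mathcal{Z}_j]\}}, \qquad m_j := \int_{[b^{-j}, b^{-(j-1)})} y\,\Lambda(\mathrm{d}y),
\]
where the prefactor $e^{m_j}$ is dictated by the Poisson computation $\PP(t \notin \mathcal{U}[\mathcal{Z}_j]) = e^{-m_j}$ (giving $\mathbb{E}[P_j(t)] = 1$). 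Telescoping yields $\prod_{j=1}^{m} P_j(t)\,\mathrm{d}t = \mathrm{MRC}_\Lambda^{b^{-m}}$, so the $T$-martingale limit of \eqref{def-lim-mea-abs} coincides with $\mathrm{MRC}_\Lambda$ by \eqref{weacon-muPC-poi}.

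Next I verify the three hypotheses of Theorem~\ref{Fou-Dec-abs} with $d = 1$, $p_0 = 2$ and $\alpha_0 = 1/2$. Since every interval $I_z$ attached to $z \in \mathcal{Z}_j$ has length $<b^{-(j-1)}$, it can meet at most two consecutive cubes of $\mathscr{D}_{j-1}^b$; therefore, partitioning $\mathscr{D}_{j-1}^b$ into the two sub-families of cubes with odd and even index makes the restricted processes $P_j^{\mathbf{I}}$ inside one sub-family depend on Poisson points in pairwise disjoint subregions of $S_j$, and hence jointly independent. This gives the bounded (hence sub-exponential) partition size $N_{j-1} = 2$ of Assumption~\ref{assum-indep-abs}. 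For Assumption~\ref{assum-Lp0-abs}, $\mathbb{E}[P_j(t)^2] = e^{m_j}$, and by definition \eqref{def-chi-b},
\[
\limsup_{j \to \infty}\sup_{t \in [0,1)} \mathbb{E}[P_j(t)^2] = b^{\chi(b, \Lambda)} < b = b^{d(p_0 - 1)}.
\]

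The main technical obstacle is Assumption~\ref{assum-alp0-abs}, since $P_j$ is a rescaled indicator with no pointwise regularity in $t$; the required bound must be established in $L^2$-mean. For $t, s \in \mathbf{I} \in \mathscr{D}_j^b$ (so $|t-s| \le b^{-j}$), Poisson independence across the pairwise disjoint ``covers only $t$'', ``covers only $s$'', and ``covers both'' subregions of $S_j$ yields
\[
\PP(\text{exactly one of } t, s \text{ is covered by } \mathcal{Z}_j) \le 2 |t-s|\,\Lambda([b^{-j}, b^{-(j-1)})) \le 2 (b^j |t-s|)\, m_j,
\]
where the second inequality uses $\Lambda([b^{-j}, b^{-(j-1)})) \le b^j m_j$. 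Since $m_j$ is uniformly bounded (by $\chi(b, \Lambda)\log b$ in the limit) and $b^j |t-s| \le 1$, one concludes
\[
\mathbb{E}[|P_j(t) - P_j(s)|^2] = e^{2 m_j}\,\PP(\text{exactly one covered}) \le C\, b^j |t-s| \le C\, (b^{j/2} |t-s|^{1/2})^{2},
\]
which is exactly the required Hölder-in-$L^2$ bound with $p_0 = 2$, $\alpha_0 = 1/2$.

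With $\Theta(2) = (1 - \chi(b, \Lambda))\log b$, Theorem~\ref{Fou-Dec-abs} and Proposition~\ref{non-degene-abs} then give, almost surely,
\[
\dim_F(\mathrm{MRC}_\Lambda) \ge \min\Big\{ 2\alpha_0,\ \tfrac{2 \Theta(2)}{2 \log b}\Big\} = \min\{1,\, 1 - \chi(b, \Lambda)\} = 1 - \chi(b, \Lambda).
\]
Taking the infimum over all admissible $b$ (equivalently, using a sequence $b_n$ with $\chi(b_n, \Lambda) \to \chi(\Lambda)$) yields $\dim_F(\mathrm{MRC}_\Lambda) \ge 1 - \chi(\Lambda)$. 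The bound $\dim_F(E_\Lambda) \ge \dim_F(\mathrm{MRC}_\Lambda)$ is immediate from $\supp(\mathrm{MRC}_\Lambda) \subset E_\Lambda$ and the definition \eqref{def-dimF-set}.
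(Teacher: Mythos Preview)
Your proposal is correct and follows essentially the same route as the paper: the processes $P_j$ you define coincide with the paper's $\mathcal{X}_{b,j}$, your odd/even partition is the $d=1$ case of the paper's parity decomposition, and your verification of Assumptions~\ref{assum-indep-abs}--\ref{assum-alp0-abs} with $p_0=2$, $\alpha_0=1/2$ matches Proposition~\ref{prop-manrancov-ass} (your direct bound $\PP(\text{exactly one covered})\le 2|t-s|\Lambda([b^{-j},b^{-(j-1)}))$ is slightly looser than the paper's exact computation in Lemma~\ref{Holder-p-moment-X_PCm-poi}, but equally sufficient). The concluding passage to $\chi(\Lambda)$ via a sequence $b_n$ is also identical.
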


\begin{remark*} 
	We note that Shmerkin and Suomala employed a different approach to derive lower bounds of the Fourier dimensions of Mandelbrot random coverings, as detailed in \cite[Corollary~7.5]{SS18}. 
\end{remark*}

The above lower bounds are sharp and provide the exact values of Fourier dimensions in the canonical MRC setting (see \cite{Fan02} for more background on the canonical case) with $\Lambda$ is given by
\begin{align}\label{canonicalcase-poi}
	\Lambda_\alpha:=\sum_{n=1}^{\infty}\delta_{\alpha/n}, \quad \text{where $\delta_x$ denoting the Dirac mass at $x$  and $\alpha \in (0,1)$}.
\end{align}

Recall  that the Hausdorff dimension (more precisely, the upper Hausdorff dimension) of a Radon measure $\nu$  on $\R$ is defined as
\begin{align}\label{def-Hdim-m}
\dim_{H}(\nu):=\inf\Big\{\dim_{H}(A):A\subset\mathbb{R} \an\nu(\mathbb{R}\setminus A)=0\Big\}.
\end{align}

\begin{theorem}\label{salemcanonicalcase-poi}
	Let $\alpha \in (0,1)$. Then  almost surely, $\mathrm{MRC}_{\Lambda_\alpha}$ is a Salem measure and $E_{\Lambda_\alpha}$ is a Salem set. More precisely, almost surely, we have
	\[
	\dim_{F}(\mathrm{MRC}_{\Lambda_\alpha})=\dim_{H}(\mathrm{MRC}_{\Lambda_\alpha})=\dim_{F}(E_{\Lambda_\alpha})=\dim_{H}(E_{\Lambda_\alpha})=1-\alpha.
	\]
\end{theorem}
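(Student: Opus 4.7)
The plan is to reduce Theorem~\ref{salemcanonicalcase-poi} to Theorem~\ref{thm-manrancov-poi} by computing the quantity $\chi(\Lambda_\alpha)$ explicitly, and then closing the chain of equalities with the classical upper bound $\dim_F(\cdot)\le \dim_H(\cdot)$ together with the known almost sure value of $\dim_H(E_{\Lambda_\alpha})$ for canonical Mandelbrot random coverings.

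The first step, which is the computational heart of the argument, is to show that $\chi(\Lambda_\alpha)=\alpha$. For any integer $b\ge 2$, the condition $\alpha/n\in[b^{-j},b^{-(j-1)})$ is equivalent to $n\in(\alpha b^{j-1},\alpha b^{j}]$, so
\[
\int_{[b^{-j},b^{-(j-1)})} y\,\Lambda_\alpha(\mathrm{d}y)
= \sum_{\alpha b^{j-1} < n \le \alpha b^{j}} \frac{\alpha}{n}.
\]
Using the harmonic-sum asymptotics $\sum_{n=M+1}^{N}\tfrac{1}{n}=\log(N/M)+O(1/M)$, the right-hand side converges to $\alpha\log b$ as $j\to\infty$. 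Dividing by $\log b$ yields $\chi(b,\Lambda_\alpha)=\alpha$ for every $b\ge 2$, and hence $\chi(\Lambda_\alpha)=\alpha$. In particular $\chi(\Lambda_\alpha)<1$, so Theorem~\ref{thm-manrancov-poi} is applicable and gives the almost sure lower bounds
\[
\dim_F(\mathrm{MRC}_{\Lambda_\alpha}) \ge 1-\alpha, \qquad \dim_F(E_{\Lambda_\alpha}) \ge 1-\alpha.
\]

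The second step is to match these with upper bounds. The classical inequality $\dim_F(\nu)\le \dim_H(\nu)$, valid for any finite positive Borel measure via Frostman's energy criterion, gives $\dim_F(\mathrm{MRC}_{\Lambda_\alpha})\le \dim_H(\mathrm{MRC}_{\Lambda_\alpha})$; moreover, since $\supp(\mathrm{MRC}_{\Lambda_\alpha})=E_{\Lambda_\alpha}$ by \eqref{weacon-muPC-poi}, the definition \eqref{def-Hdim-m} yields $\dim_H(\mathrm{MRC}_{\Lambda_\alpha})\le \dim_H(E_{\Lambda_\alpha})$. The analogous inequality $\dim_F(E_{\Lambda_\alpha})\le \dim_H(E_{\Lambda_\alpha})$ holds for sets. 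The classical result of Shepp \cite{She72} (together with \cite{FFS85,Fan02}) asserts that almost surely
\[
\dim_H(E_{\Lambda_\alpha}) = 1-\alpha,
\]
and this is precisely the value we need on the upper side. Concatenating the two chains of inequalities, all four dimensions are squeezed to $1-\alpha$ almost surely.

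The main obstacle is really the Step~1 computation, which however is essentially a discrete Riemann-sum approximation and presents no genuine difficulty. Subsidiary care is needed to ensure the intersection over a countable collection of almost sure events (Theorem~\ref{thm-manrancov-poi}, the Fourier–Hausdorff comparison, and the classical Hausdorff dimension identity) remains an almost sure event; this is automatic since each holds almost surely. No additional work on the Salem property beyond matching Fourier and Hausdorff dimensions is required, since $\dim_F(\mathrm{MRC}_{\Lambda_\alpha})=\dim_H(\mathrm{MRC}_{\Lambda_\alpha})$ is exactly the definition of a Salem measure, and similarly for $E_{\Lambda_\alpha}$ via \eqref{def-dimF-set}.
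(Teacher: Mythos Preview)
Your proposal is correct and follows essentially the same route as the paper: compute $\chi(b,\Lambda_\alpha)=\alpha$ via the harmonic-sum asymptotics, invoke Theorem~\ref{thm-manrancov-poi} for the lower bound, and close with the standard inequalities $\dim_F\le\dim_H$ together with the Fitzsimmons--Fristedt--Shepp upper bound $\dim_H(E_{\Lambda_\alpha})\le 1-\alpha$. The only minor difference is that the paper cites \cite{FFS85} specifically for the \emph{upper} bound $\dim_H(E_{\Lambda_\alpha})\le 1-\alpha$ (noting that the uncovered set in $[0,1]$ sits inside the one in $[0,\infty)$), whereas you quote the full equality; either works since the lower bound is already supplied by the Fourier-dimension chain.
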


\subsection{Poisson multiplicative chaos}
 The Poisson multiplicative chaos (PMC) were investigated in the study of random coverings,  see \cite{FK01, Fan02, BF05} for more details.
 
The construction of PMC is as follows.  For any $\epsilon\in (0,1)$ and $t\in[0,1]$, set
\begin{align}\label{def-Depsil-pmc}
	\mathscr{D}_{\epsilon}(t):=\Big\{(x,y)\in\mathbb{R}\times(0,1)\,\Big|\,\epsilon\leq y<1 \an t-y<x<t\Big\}.
\end{align}
Let $\mathrm{PPP} (\omega_\Lambda)$ be the Poisson point process on the strip $S=\R\times (0,1)$ with intensity $\omega_\Lambda$ given  in  \eqref{def-kappa}.  Fix any $a\in(0,1)$ and define a random measure on the unit interval $[0,1]$ by
\begin{align}\label{def-mu-a-pmc}
	\mathrm{PMC}_{\Lambda}^{a,\epsilon}(\mathrm{d}t):=\frac{a^{\#(\mathrm{PPP} (\omega_\Lambda)\cap\mathscr{D}_{\epsilon}(t))}}{\E[a^{\#(\mathrm{PPP} (\omega_\Lambda)\cap\mathscr{D}_{\epsilon}(t))}]}\mathrm{d}t,
\end{align}
where $\#(\mathrm{PPP} (\omega_\Lambda)\cap\mathscr{D}_{\epsilon}(t))$ denotes the cardinality of $\mathrm{PPP} (\omega_\Lambda)\cap\mathscr{D}_{\epsilon}(t)$.  Kahane's $T$-martingale theory \cite{Kah87a} then gives rise to  a limiting random measure $\mathrm{PMC}_{\Lambda}^{a}$ on $[0,1]$ with
\begin{align}\label{weacon-mu-a-pmc}
	\lim_{\epsilon\to0^+} \mathrm{PMC}_{\Lambda}^{a,\epsilon}=\mathrm{PMC}_{\Lambda}^{a}.
\end{align}
The random measure $\mathrm{PMC}_{\Lambda}^{a}$ is called the PMC measure associated with $\Lambda$ and $a$.

Recall the definitions \eqref{def-chi-b} for $\chi(b,\Lambda)$ and $\chi(\Lambda)$. 

\begin{theorem}\label{thm-poimulcha-pmc}
	Suppose that $\chi(\Lambda)<1$ and $a\in (0,1)$, 
	then almost surely,  the random measure  $\mathrm{PMC}_{\Lambda}^{a}$ has polynomial Fourier decay with
	\[
	\dim_{F}(\mathrm{PMC}_{\Lambda}^{a})\geq1- (1-a)^2   \chi(\Lambda) >0.
	\]
\end{theorem}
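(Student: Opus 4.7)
The plan is to realize $\mathrm{PMC}_\Lambda^a$ as a $T$-martingale limit of the form \eqref{def-num-abs} and invoke Theorem~\ref{Fou-Dec-abs} at the natural exponent $p_0=2$. Fix an integer $b\ge 2$ and stratify $\mathrm{PPP}(\omega_\Lambda)$ by the vertical coordinate into the mutually independent slices supported on $S_m^b:=\mathbb{R}\times[b^{-m},b^{-(m-1)})$, $m\ge 1$. Set $\XX_0\equiv 1$ and
\[
\XX_m(t):=\frac{a^{N_m(t)}}{\mathbb{E}[a^{N_m(t)}]},\qquad N_m(t):=\#\bigl(\mathrm{PPP}(\omega_\Lambda)\cap R_m(t)\bigr),
\]
with $R_m(t):=\mathscr{D}_{b^{-m}}(t)\setminus\mathscr{D}_{b^{-(m-1)}}(t)$. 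A telescoping based on the Poisson Laplace functional identifies $\prod_{j=0}^m \XX_j(t)\,\mathrm{d}t=\mathrm{PMC}_\Lambda^{a,b^{-m}}(\mathrm{d}t)$, so the associated measure-valued martingale converges weakly to $\mathrm{PMC}_\Lambda^a$ by \eqref{weacon-mu-a-pmc} along this subsequence.

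Assumption~\ref{assum-indep-abs} is immediate: $\XX_m(t)$ depends only on points whose horizontal coordinates lie in $(t-b^{-(m-1)},t)$, so for $\mathbf{I}_1,\mathbf{I}_2\in\mathscr{D}_{m-1}^b$ at distance at least $2b^{-(m-1)}$ the restrictions $\XX_m^{\mathbf{I}_1},\XX_m^{\mathbf{I}_2}$ are independent, and a simple $3$-coloring of $\mathscr{D}_{m-1}^b$ supplies a partition into $N_m=3$ independent sub-families. For Assumption~\ref{assum-Lp0-abs} with $p_0=2$, the Laplace functional yields $\mathbb{E}[\XX_m(t)^2]=\exp((1-a)^2 I_m)$, where
\[
I_m:=\omega_\Lambda(R_m(t))=\int_{[b^{-m},b^{-(m-1)})}y\,\Lambda(\mathrm{d}y)
\]
is $t$-independent by translation invariance. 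The hypothesis $\chi(\Lambda)<1$ together with $(1-a)^2<1$ allows us to fix $b$ so that $(1-a)^2\chi(b,\Lambda)<1$, whence $\limsup_m\mathbb{E}[\XX_m(t)^2]\le b^{(1-a)^2\chi(b,\Lambda)}<b=b^{d(p_0-1)}$.

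The most delicate point is Assumption~\ref{assum-alp0-abs}. Another Laplace identity gives $\mathbb{E}[\XX_m(t)\XX_m(s)]=\exp((1-a)^2\omega_\Lambda(R_m(t)\cap R_m(s)))$, so
\[
\mathbb{E}[(\XX_m(t)-\XX_m(s))^2]=2\bigl\{e^{(1-a)^2 I_m}-e^{(1-a)^2\omega_\Lambda(R_m(t)\cap R_m(s))}\bigr\}.
\]
For $t,s\in\mathbf{I}\in\mathscr{D}_m^b$ one has $|t-s|\le b^{-m}\le y$ for every $y\in[b^{-m},b^{-(m-1)})$, and at each such height the horizontal symmetric difference has length $2|t-s|$; hence $\omega_\Lambda(R_m(t)\triangle R_m(s))\le 2|t-s|\Lambda([b^{-m},b^{-(m-1)}))\le 2b^m I_m|t-s|$. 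The elementary inequality $1-e^{-x}\le x$ then yields $\mathbb{E}[(\XX_m(t)-\XX_m(s))^2]\le 2(1-a)^2 I_m e^{(1-a)^2 I_m}\,b^m|t-s|$, which is bounded uniformly in $m$ since each $I_m$ is finite and $\limsup_m I_m<\infty$. This verifies Assumption~\ref{assum-alp0-abs} with $p_0=2$ and $\alpha_0=1/2$. Theorem~\ref{Fou-Dec-abs} then delivers
\[
\dim_F(\mathrm{PMC}_\Lambda^a)\ge \min\!\Bigl\{1,\ \frac{2\Theta(2)}{2\log b}\Bigr\}=1-(1-a)^2\chi(b,\Lambda),
\]
and optimizing over admissible $b\ge 2$ via $\chi(\Lambda)=\inf_b\chi(b,\Lambda)$ produces the claimed bound $1-(1-a)^2\chi(\Lambda)>0$. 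The principal subtlety is the coupled balancing of Assumptions~\ref{assum-Lp0-abs} and \ref{assum-alp0-abs}: the $b$-adic height stratification is precisely what causes the $L^2$-moment to depend on $I_m$ while the modulus of continuity depends on the strictly larger quantity $b^m I_m$, so that the exponent $\alpha_0=1/2$ is achievable simultaneously with the $L^2$-bound. The choice $p_0=2$ is simultaneously the simplest and the sharpest for the stated conclusion: $a^p-1-p(a-1)$ equals $(1-a)^2$ exactly at $p=2$, which is the coefficient appearing in the final Fourier dimension.
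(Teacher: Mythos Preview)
Your proposal is correct and follows essentially the same route as the paper's proof in \S\ref{S-pmc-abs}: the same $b$-adic vertical stratification of the Poisson point process, the same verification of Assumptions~\ref{assum-indep-abs}--\ref{assum-alp0-abs} with $p_0=2$ and $\alpha_0=1/2$, and the same optimization over $b$ at the end. The only cosmetic differences are that you use a $3$-colouring where the paper's Lemma~\ref{elem-prop-pmc} shows distance $b^{-(m-1)}$ already suffices (so a $2$-colouring works), and that you plug in $p=2$ directly in Theorem~\ref{Fou-Dec-abs} rather than arguing, as the paper does, that the supremum over $1<p\le 2$ is attained there---your shortcut is legitimate since only the lower bound is needed.
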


\subsection{Generalized Mandelbrot cascades}\label{S-cas-abs}
We now introduce a generalized model of Mandelbrot cascades, which   include  the canonical Mandelbrot cascades as the simplest particular examples. 

Let $\{\mathscr{W}(\mathbf{t}): \mathbf{t}\in [0,1]^d\}$ be a stochastic process indexed by points in $[0,1]^d$ satisfying 
\begin{align}\label{def-Wt-cas}
	\mathscr{W}(\mathbf{t})\geq0\anand\E[\mathscr{W}(\mathbf{t})]\equiv 1\text{ for any $\mathbf{t}\in[0,1]^d$}.
\end{align}
For any fixed integer $b\ge 2$,  recall the $b$-adic structure defined in \eqref{def-b-dya-abs}.  Based on the stochastic process \eqref{def-Wt-cas}, we construct a family  of jointly independent stochastic processes 
\begin{align}\label{def-WIt-cas}
	\Big\{\mathscr{W}_\mathbf{I}: \mathbf{I} \in \bigsqcup_{m=1}^{\infty} \mathscr{D}_m^b\Big\}
\end{align}
such that  for each $b$-adic sub-cube $\mathbf{I}\in\mathscr{D}_m^b$, the stochastic process $\mathscr{W}_{\mathbf{I}}(\mathbf{t})$ is indexed by points $\mathbf{t}\in\mathbf{I}$ and is an independent copy of $\{\mathscr{W}(\mathbf{t}): \mathbf{t}\in [0,1]^d\}$ up to a natural affine transform of $\mathbf{I} \rightarrow [0,1]^d$: 
\begin{align}\label{selsimproper-cas}
	\big\{\mathscr{W}_{\mathbf{I}}(\mathbf{t}): \mathbf{t} \in \mathbf{I}\big\}   \xlongequal{\text{in distribution}} \big\{\mathscr{W} \big(b^m(\mathbf{t}-\ell_{\mathbf{I}})\big):  \mathbf{t}\in\mathbf{I}  \big\},
\end{align}
where $\ell_{\mathbf{I}}$ is the minimum vertex of $\mathbf{I}$ defined by 
\begin{align}\label{ell-I-cas}
	\ell_{\mathbf{I}}:=\Big(\frac{h_1-1}{b^{m}},\frac{h_2-1}{b^{m}},\cdots,\frac{h_d-1}{b^{m}}\Big)\quad\text{if}\quad\mathbf{I} = \prod_{\beta=1}^{d}\Big[\frac{h_\beta-1}{b^{m}}, \frac{h_\beta}{b^{m}}\Big)\in\mathscr{D}_{m}^b.
\end{align}

If we define 
\begin{align}\label{def-XWm-cas}
	\XX_{\mathscr{W},0}(\mathbf{t}):\equiv1\anand\XX_{\mathscr{W},m}(\mathbf{t}):=\sum_{\mathbf{I}\in\mathscr{D}_{m}^b}\mathscr{W}_{\mathbf{I}}(\mathbf{t})\mathds{1}_{\mathbf{I}}(\mathbf{t})\text{ for each $m\geq1$},
\end{align}
then we obtain   a sequence of independent stochastic processes indexed by $\mathbf{t}\in[0,1]^d$ such that
\begin{align}\label{def-Xm-cas}
	\{\XX_{\mathscr{W},m}(\mathbf{t}): \mathbf{t}\in [0,1]^d\}_{m\ge 0}  \quad \text{with $\XX_{\mathscr{W},m}(\mathbf{t})\geq0$ and $\E[\XX_{\mathscr{W},m}(\mathbf{t})]\equiv 1$}.
\end{align}
Then by applying Kahane's construction \eqref{def-num-abs} and \eqref{def-lim-mea-abs} as in \S\ref{sec-T-mart},  we  obtain a generalized Mandelbrot cascade measure $\mathrm{MC}_\mathscr{W}^b$ on the unit cube $[0,1]^d$ associated to  \eqref{def-Xm-cas} and the background measure $\mathrm{d} \mathbf{t}$: 
\begin{align}\label{def-geneMC-cas}
	\mathrm{MC}_\mathscr{W}^b(\mathrm{d}\mathbf{t}): = \lim_{m\to\infty}  \Big[\prod_{j=0}^{m}\XX_{\mathscr{W}, j}(\mathbf{t}) \Big] \mathrm{d}\mathbf{t}.
\end{align}

\begin{theorem}\label{thm-selsim-cas}
Let $\{\mathscr{W}(\mathbf{t}): \mathbf{t}\in [0,1]^d\}$ be given as \eqref{def-Wt-cas} such that  for some $p_0\in (1,2]$ and $\alpha_0\in(0,1]$, 
\begin{align}\label{condi-thm-selsim-cas}
	\sup_{\mathbf{t}\in[0,1)^d}\E[\mathscr{W}^{p_0}(\mathbf{t})]<b^{d(p_0-1)}\anand\sup_{\mathbf{t}, \mathbf{s}\in[0,1)^d, \mathbf{t}\neq\mathbf{s}}\mathbb{E}\Big[\Big|\frac{\mathscr{W}(\mathbf{t})-\mathscr{W}(\mathbf{s})}{|\mathbf{t}-\mathbf{s}|^{\alpha_0}} \Big|^{p_0}\Big]<\infty.
\end{align}
Then the generalized Mandelbrot cascade measure $\mathrm{MC}_\mathscr{W}^b$ is non-degenerate. Moreover,   almost surely on $\{ \mathrm{MC}_\mathscr{W}^b \neq0\}$, the random measure  $\mathrm{MC}_\mathscr{W}^b$ has polynomial Fourier decay with
	\[
	\dim_{F}(\mathrm{MC}_\mathscr{W}^b)\geq \min\Big\{2 \alpha_0,\sup_{1<p\leq p_0 }  \Big[  2d \big(1- \frac{1}{p}) - 2\log_b \Big( \sup_{\mathbf{t}\in[0,1)^d}  \big(\mathbb{E}[\mathscr{W}^{p}(\mathbf{t})]\big)^{\frac{1}{p}}\Big)   \Big]\Big\}>0.
	\]
\end{theorem}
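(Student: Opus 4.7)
The plan is to recognize $\mathrm{MC}_\mathscr{W}^b$ as precisely the multiplicative chaos measure $\mu_\infty$ produced by the unified framework of \S\ref{sec-T-mart} applied to the sequence of processes $\XX_j := \XX_{\mathscr{W},j}$ defined in \eqref{def-XWm-cas}, and then to invoke Proposition~\ref{non-degene-abs} and Theorem~\ref{Fou-Dec-abs} as black boxes. Thus the whole argument reduces to verifying Assumptions~\ref{assum-indep-abs}, \ref{assum-Lp0-abs} and \ref{assum-alp0-abs} for this particular family, followed by a short algebraic rearrangement of the structure function $\Theta(p)$ to match the stated lower bound.

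For Assumption~\ref{assum-indep-abs}, take $k_0=0$ (so that $\XX_{\mathscr{W},k}\not\equiv1$ for every $k\ge 1$, provided $\mathscr{W}\not\equiv 1$) together with the trivial partition $N_m\equiv 1$: for any $k\ge 1$ and distinct $\mathbf{I}, \mathbf{I}'\in\mathscr{D}_{k-1}^b$, the restricted processes $\XX_{\mathscr{W},k}^{\mathbf{I}}$ and $\XX_{\mathscr{W},k}^{\mathbf{I}'}$ are measurable functions of disjoint sub-collections of the jointly independent building blocks $\{\mathscr{W}_{\mathbf{J}}:\mathbf{J}\in\mathscr{D}_k^b\}$, and are therefore independent. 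For Assumption~\ref{assum-Lp0-abs}, when $\mathbf{t}$ lies in the (a.e.\ unique) $\mathbf{J}\in\mathscr{D}_j^b$, the self-similarity relation \eqref{selsimproper-cas} gives that $\XX_{\mathscr{W},j}(\mathbf{t})$ is distributed as $\mathscr{W}(b^j(\mathbf{t}-\ell_{\mathbf{J}}))$, so
\[
\sup_{\mathbf{t}\in[0,1)^d}\E[\XX_{\mathscr{W},j}(\mathbf{t})^{p_0}]\;=\;\sup_{\mathbf{t}\in[0,1)^d}\E[\mathscr{W}(\mathbf{t})^{p_0}]\;<\;b^{d(p_0-1)}
\]
by the first part of \eqref{condi-thm-selsim-cas}. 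The same rescaling takes care of Assumption~\ref{assum-alp0-abs}: for $\mathbf{t}\ne\mathbf{s}$ in a common $\mathbf{I}\in\mathscr{D}_j^b$, the increment $\XX_{\mathscr{W},j}(\mathbf{t})-\XX_{\mathscr{W},j}(\mathbf{s})=\mathscr{W}_{\mathbf{I}}(\mathbf{t})-\mathscr{W}_{\mathbf{I}}(\mathbf{s})$ has the same law as $\mathscr{W}(\mathbf{t}')-\mathscr{W}(\mathbf{s}')$ with $\mathbf{t}',\mathbf{s}':=b^j(\mathbf{t}-\ell_{\mathbf{I}}),\,b^j(\mathbf{s}-\ell_{\mathbf{I}})\in[0,1)^d$, so $|\mathbf{t}'-\mathbf{s}'|=b^j|\mathbf{t}-\mathbf{s}|$ and the second part of \eqref{condi-thm-selsim-cas} delivers a uniform bound on $\E[|\XX_{\mathscr{W},j}(\mathbf{t})-\XX_{\mathscr{W},j}(\mathbf{s})|^{p_0}]/(b^{j\alpha_0}|\mathbf{t}-\mathbf{s}|^{\alpha_0})^{p_0}$, exactly as required.

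Non-degeneracy of $\mathrm{MC}_\mathscr{W}^b$ is then immediate from Proposition~\ref{non-degene-abs}, and Theorem~\ref{Fou-Dec-abs} yields the a.s.\ lower bound $\dim_F(\mathrm{MC}_\mathscr{W}^b)\ge \min\{2\alpha_0,\,\sup_{1<p\le p_0} 2\Theta(p)/(p\log b)\}$ with, by the very same reduction as above,
\[
\Theta(p)\;=\; d(p-1)\log b \;-\; \log\Big(\sup_{\mathbf{t}\in[0,1)^d}\E[\mathscr{W}(\mathbf{t})^p]\Big).
\]
Dividing by $p\log b$ and using that the supremum commutes with $(\,\cdot\,)^{1/p}$ rewrites this as $2d(1-1/p)-2\log_b\!\bigl(\sup_{\mathbf{t}}(\E[\mathscr{W}(\mathbf{t})^p])^{1/p}\bigr)$, matching the expression in the theorem; positivity at $p=p_0$ is just the first hypothesis of \eqref{condi-thm-selsim-cas} restated as $\Theta(p_0)>0$. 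Honestly, the present theorem is designed as the cleanest showcase application of Theorem~\ref{Fou-Dec-abs}, and no real obstacle arises: the only point worth noting is that the full sub-exponential flexibility of $\{N_m\}_{m\ge 0}$ built into Assumption~\ref{assum-indep-abs} is not needed here—the trivial partition suffices—in contrast with the GMC, PMC and MRC applications, where a non-trivial choice of partitions will be essential.
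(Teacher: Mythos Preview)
Your proposal is correct and follows essentially the same approach as the paper: verify Assumptions~\ref{assum-indep-abs}, \ref{assum-Lp0-abs} and \ref{assum-alp0-abs} via the self-similarity relation \eqref{selsimproper-cas}, then apply Proposition~\ref{non-degene-abs} and Theorem~\ref{Fou-Dec-abs} as black boxes, and finally rewrite $\Theta(p)$ in the form stated. Your explicit remark that the trivial partition $N_m\equiv 1$ suffices here (because the restrictions $\XX_{\mathscr{W},k}^{\mathbf{I}}$ for $\mathbf{I}\in\mathscr{D}_{k-1}^b$ depend on disjoint subfamilies of the independent blocks $\{\mathscr{W}_{\mathbf{J}}:\mathbf{J}\in\mathscr{D}_k^b\}$) is a correct sharpening of the paper's ``Assumption~\ref{assum-indep-abs} is satisfied automatically.''
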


When $\mathscr{W}(\mathbf{t})$ is a random constant function, that is, $\mathscr{W}(\mathbf{t})=W\not\equiv1$ is a random variable satisfying $W\geq0$ and $\E[W]=1$, then the  random measure $\mathrm{MC}_\mathscr{W}^b$ becomes the canonical Mandelbrot cascade  measure on  $[0,1]^d$ induced by the random variable $W$.  In this case, we denote $\mathrm{MC}_\mathscr{W}^b$ as 
\[
\mathrm{MC}_W^b. 
\]
We refer the reader to \cite{Man74a, Man74b, KP76, Kah93, CLS24, CHQW24} for more background and some related results in Fourier decay of the canonical Mandelbrot cascade measures.

\begin{corollary}\label{MC-FouDec-cas}
	Suppose that $\mathbb{E}[W\log W]<d\log b$ and $\E[W^{1+\varepsilon}]<\infty$ for some $\varepsilon>0$, then almost surely on $\{\mathrm{MC}_W^b\neq0\}$,  the canonical Mandelbrot cascade $\mathrm{MC}_W^b$ has polynomial Fourier decay with 
	\[
	\dim_{F}(\mathrm{MC}_W^b)\geq\min\Big\{2,\sup_{1<p\leq p_0} \Big[ 2d \big(1-\frac{1}{p}\big) -   2 \log_b \Big(\big(\E[W^p]\big)^{\frac{1}{p}}\Big)   \Big]   \Big\}> 0,
	\]
	here $p_0\in (1,2]$ is any exponent satisfying $\E[W^{p_0}]<b^{d(p_0-1)}$.
\end{corollary}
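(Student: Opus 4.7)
The plan is to derive this corollary as a direct specialization of Theorem~\ref{thm-selsim-cas} to the constant-in-$\mathbf{t}$ process $\mathscr{W}(\mathbf{t})\equiv W$. In this setting the generalized Mandelbrot cascade $\mathrm{MC}_\mathscr{W}^b$ reduces by construction to the canonical Mandelbrot cascade $\mathrm{MC}_W^b$, so it suffices to verify the hypotheses \eqref{condi-thm-selsim-cas} and substitute into the conclusion.

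First I would check the H\"older hypothesis. Since $\mathscr{W}(\mathbf{t})-\mathscr{W}(\mathbf{s})=W-W=0$ identically, the second inequality in \eqref{condi-thm-selsim-cas} is trivially satisfied for \emph{any} $\alpha_0\in(0,1]$. I would therefore take $\alpha_0=1$, which produces the value $2\alpha_0=2$ inside the minimum appearing in the conclusion of Theorem~\ref{thm-selsim-cas}.

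Next I would establish the existence of $p_0\in(1,2]$ with $\E[W^{p_0}]<b^{d(p_0-1)}$. Define $g(p):=\E[W^p]-b^{d(p-1)}$. Then $g(1)=0$, and by the integrability assumption $\E[W^{1+\varepsilon}]<\infty$ one can differentiate under the expectation to obtain
\[
g'(1^+)=\E[W\log W]-d\log b<0.
\]
Hence $g(p)<0$ on a right-neighborhood of $1$, yielding a valid $p_0\in(1,\min\{2,1+\varepsilon\}]$. In the constant case $\sup_{\mathbf{t}\in[0,1)^d}\E[\mathscr{W}^{p_0}(\mathbf{t})]=\E[W^{p_0}]$, so the first half of \eqref{condi-thm-selsim-cas} is verified. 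Theorem~\ref{thm-selsim-cas} then supplies both non-degeneracy of $\mathrm{MC}_W^b$ and, almost surely on $\{\mathrm{MC}_W^b\ne 0\}$, the claimed lower bound on $\dim_F(\mathrm{MC}_W^b)$.

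Finally I would verify the strict positivity $>0$ of the right-hand side: evaluating the quantity inside the supremum at $p=p_0$ yields
\[
2d\Big(1-\tfrac{1}{p_0}\Big)-2\log_b\big((\E[W^{p_0}])^{1/p_0}\big)>2d\Big(1-\tfrac{1}{p_0}\Big)-\tfrac{2d(p_0-1)}{p_0}=0,
\]
the strict inequality being exactly the condition $\E[W^{p_0}]<b^{d(p_0-1)}$. The only genuine technical point is the differentiation-under-the-integral step computing $g'(1^+)$; the hypothesis $\E[W^{1+\varepsilon}]<\infty$ is used precisely to dominate the difference quotient $(W^p-W)/(p-1)$ uniformly for $p$ in a right-neighborhood of $1$. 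This step is elementary rather than the main obstacle, and everything else is direct bookkeeping inside the statement of Theorem~\ref{thm-selsim-cas}.
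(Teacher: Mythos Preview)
Your proposal is correct and follows essentially the same approach as the paper: specialize Theorem~\ref{thm-selsim-cas} to the constant process $\mathscr{W}(\mathbf{t})\equiv W$, take $\alpha_0=1$ from the trivial H\"older condition, and invoke the existence of a suitable $p_0$. The paper simply cites \cite{KP76,CHQW24} for the existence of $p_0\in(1,2]$ with $\E[W^{p_0}]<b^{d(p_0-1)}$, whereas you supply the standard differentiation argument explicitly and also spell out the strict positivity at $p=p_0$; both are welcome details but do not constitute a different route.
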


\begin{remark*}
From the works \cite{CLS24, CHQW24},  the lower bound of the Fourier dimension  obtained in Corollary~\ref{MC-FouDec-cas} is sharp.  Moreover, Corollary~\ref{MC-FouDec-cas}  implies that  the assumption $\E[W^p]<\infty$ for all $p>0$ used in \cite{CHQW24} can be relaxed to the weaker assumption $\E[W^2]<\infty$. 
\end{remark*}

The following special case was suggested to us by Prof. Xinxin Chen.  Let $d=1$. For any $\sigma>0$, let $\mathscr{W}_\sigma(t)$ be the geometric Brownian motion:
\begin{align}\label{def-GBM}
\mathscr{W}_\sigma(t)=\exp\big(\sigma \mathrm{B}(t)-\frac{\sigma^2}{2}t\big), \quad t\in [0,1],
\end{align}
where $\mathrm{B}(t)$ is the standard Brownian motion starting from the origin $0$.
  
\begin{corollary}\label{GBM-FouDec-cas}
	For any $\sigma \in(0,\sqrt{2\log b})$, the generalized Mandelbrot cascade measure $\mathrm{MC}_{\mathscr{W}_\sigma}^b$ is non-degenerate and has polynomial Fourier decay almost surely. Moreover, almost surely on $\{\mathrm{MC}_{\mathscr{W}_\sigma}^b\neq0\}$,
    \begin{align}\label{def-D-sigma}
    \dim_{F}(\mathrm{MC}_{\mathscr{W}_\sigma}^b)\geq  D_\sigma : =  \left\{
    \begin{array}{cl}
    	1-\frac{\sigma^2}{\log b} & \text{if  $0<\sigma<\frac{\sqrt{2\log b}}{2}$}
    	\vspace{2mm}
    	\\
    	(\sqrt{2}-\frac{\sigma}{\sqrt{\log b}})^2 & \text{if $\frac{\sqrt{2\log b}}{2}\leq \sigma<\sqrt{2\log b}$}
    \end{array}\right..
    \end{align}
\end{corollary}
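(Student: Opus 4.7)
The plan is to apply Theorem~\ref{thm-selsim-cas} to $\mathscr{W} = \mathscr{W}_\sigma$ and then optimize the resulting lower bound. First I would verify the two hypotheses in \eqref{condi-thm-selsim-cas}. The moment generating function of Brownian motion gives
\[
\E[\mathscr{W}_\sigma(t)^p] = \exp\!\bigl(p(p-1)\sigma^2 t/2\bigr), \quad t \in [0,1],
\]
so $\sup_{t \in [0,1)} \E[\mathscr{W}_\sigma(t)^p] = e^{p(p-1)\sigma^2/2}$. The Mandelbrot--Kahane condition $\sup_t \E[\mathscr{W}_\sigma(t)^{p_0}] < b^{p_0-1}$ then reduces to $p_0 \sigma^2 < 2\log b$, which is satisfiable by some $p_0 \in (1, 2]$ precisely when $\sigma < \sqrt{2\log b}$. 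For the H\"older bound I would exploit the independence of Brownian increments: for $0 \le s \le t \le 1$, write
\[
\mathscr{W}_\sigma(t) - \mathscr{W}_\sigma(s) = \mathscr{W}_\sigma(s)\bigl(e^{\xi_{s,t}} - 1\bigr), \qquad \xi_{s,t} := \sigma\bigl(\mathrm{B}(t)-\mathrm{B}(s)\bigr) - \sigma^2(t-s)/2,
\]
with $\xi_{s,t}$ Gaussian of variance $\sigma^2(t-s)$ and independent of $\mathscr{W}_\sigma(s)$. Combining the pointwise bound $|e^x - 1|^{p_0} \le 2^{p_0-1}|x|^{p_0}(1 + e^{p_0 x})$ with standard Gaussian moment estimates (a Cameron--Martin shift handles the exponential term) yields $\E[|e^{\xi_{s,t}} - 1|^{p_0}] \lesssim |t-s|^{p_0/2}$ uniformly in $s, t \in [0,1]$; since $\E[\mathscr{W}_\sigma(s)^{p_0}]$ is bounded on $[0,1]$, this gives the H\"older condition in \eqref{condi-thm-selsim-cas} with exponent $\alpha_0 = 1/2$.

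Next I would compute the lower bound in Theorem~\ref{thm-selsim-cas} explicitly. Setting $\tau := \sigma^2/\log b \in (0, 2)$, the inner supremum reduces to $\sup_{1 < p \le p_0} f(p)$ with
\[
f(p) := 2\Bigl(1 - \frac{1}{p}\Bigr) - (p-1)\tau.
\]
Since $f'(p) = 2/p^2 - \tau$ and $f'' < 0$, the function $f$ is concave and attains its global maximum on $(1, \infty)$ at $p^* := \sqrt{2/\tau}$, with value $f(p^*) = (\sqrt{2} - \sqrt{\tau})^2 = (\sqrt{2} - \sigma/\sqrt{\log b})^2$. When $\tau \le 1/2$ (equivalently $\sigma \le \sqrt{2\log b}/2$), $p^* \ge 2$, so the supremum over the admissible range $p \in (1, 2]$ is attained at $p = 2$, yielding $f(2) = 1 - \tau = 1 - \sigma^2/\log b$. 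When $\tau \in (1/2, 2)$, $p^* \in (1, 2)$ satisfies $p^* < 2/\tau$, so the choice $p_0 := p^*$ is admissible and the supremum equals $f(p^*)$. A short check confirms $f(p^*) \le 1 = 2\alpha_0$ in both regimes, so the $\min$ in Theorem~\ref{thm-selsim-cas} equals exactly $D_\sigma$, as claimed. Non-degeneracy of $\mathrm{MC}_{\mathscr{W}_\sigma}^b$ follows from Proposition~\ref{non-degene-abs} applied with the same $p_0$.

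The only delicate step I foresee is the Gaussian moment estimate $\E[|e^{\xi_{s,t}} - 1|^{p_0}] \lesssim |t-s|^{p_0/2}$ for general $p_0 \in (1, 2]$: while routine, capturing the correct exponent $p_0/2$ (rather than a strictly smaller H\"older exponent) requires careful handling of the exponential factor $e^{p_0 x}$ against the Gaussian density of $\xi_{s,t}$, which is where the multiplicative structure of the geometric Brownian motion enters non-trivially.
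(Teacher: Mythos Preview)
Your proposal is correct and follows the same overall strategy as the paper: verify the two hypotheses of Theorem~\ref{thm-selsim-cas} and then optimize the resulting bound in $p$. The moment computation and the optimization in $p$ are identical to the paper's.

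The one place you diverge is in verifying the H\"older condition, and here the paper takes a shorter route than the one you outline. Rather than decomposing $\mathscr{W}_\sigma(t)-\mathscr{W}_\sigma(s)=\mathscr{W}_\sigma(s)(e^{\xi_{s,t}}-1)$ and estimating $\E[|e^{\xi_{s,t}}-1|^{p_0}]$ via a Cameron--Martin shift, the paper simply computes the \emph{second} moment in closed form,
\[
\E\bigl[|\mathscr{W}_\sigma(t)-\mathscr{W}_\sigma(s)|^2\bigr]=\bigl|e^{\sigma^2 t}-e^{\sigma^2 s}\bigr|\le C_\sigma\,|t-s|,
\]
which gives $\alpha_0=1/2$ at $p=2$, and then observes that since $p_0\le 2$ the $L^{p_0}$ H\"older bound follows immediately from the $L^2$ one by Jensen. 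This sidesteps entirely the ``delicate step'' you flag; no exponential-against-Gaussian estimate is needed. Your approach does work (the Cameron--Martin argument you sketch is sound and yields the claimed $|t-s|^{p_0/2}$), but it is more laborious than necessary here. The advantage of your route would appear only if one needed the H\"older estimate for some $p_0>2$, which is not the case in this corollary.
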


\subsection{Organization of the paper}
In the introduction  \S\ref{sec-intro}, we state the main results of this paper: the unified results Proposition~\ref{non-degene-abs} on non-degeneracy and  Theorem~\ref{Fou-Dec-abs} on polynomial Fourier decay of the multiplicative chaos; Theorem~\ref{thm-gen-12DGMC} and Theorem~\ref{thm-gen-highD} on the Garban-Vargas conjecture of Gaussian multiplicative chaos; Theorem~\ref{thm-manrancov-poi} and Theorem~\ref{salemcanonicalcase-poi} on the Fourier dimensions  of  Mandelbrot random coverings;  Theorem~\ref{thm-poimulcha-pmc}  and    Theorem~\ref{thm-selsim-cas}   on the Fourier decay  of Poisson multiplicative chaos  and generalized Mandelbrot cascades respectively.   

In \S\ref{S-metvecvalmar-abs} below, we outline the proof of Theorem~\ref{Fou-Dec-abs}. In particular, we recall the Pisier's martingale type inequalities and outline the application of the vector-valued martingale method in the study of Fourier decay of multiplicative chaos.  

In \S\ref{S-dDGMC-abs}, we give the proofs of Theorem~\ref{thm-gen-12DGMC} and Theorem~\ref{thm-gen-highD} for GMC measures.  The main strategy is as follows. We first introduce a notion of $\sigma$-regular kernels  and construct  $*$-scale invariant  $\sigma$-regular kernels.  Then in Proposition~\ref{prop-kernel-ass},  we prove  Assumptions~\ref{assum-indep-abs}, \ref{assum-Lp0-abs} and \ref{assum-alp0-abs} for the exponential Gaussian processes associated with the Gaussian fields with $\sigma$-regular kernels.   Finally, Theorem~\ref{thm-gen-12DGMC} and Theorem~\ref{thm-gen-highD} will be proved as consequences of a special comparison result for GMC measures  (see \S\ref{sec-compare}) and  of  Proposition~\ref{prop-kernel-ass} and Theorem~\ref{Fou-Dec-abs}. 
 
In \S\ref{S-poi-abs}, we derive Theorem~\ref{thm-manrancov-poi} and Theorem~\ref{salemcanonicalcase-poi} for MRC measures from Theorem~\ref{Fou-Dec-abs} . 
  
In \S\ref{S-pmc-abs}, we derive Theorem~\ref{thm-poimulcha-pmc} for PMC measures from Theorem~\ref{Fou-Dec-abs}. 
  
In \S\ref{S-genercas-abs}, we derive Theorem~\ref{thm-selsim-cas} for generalized MC measures from Proposition~\ref{non-degene-abs} and  Theorem~\ref{Fou-Dec-abs}.   And then we prove Corollary~\ref{MC-FouDec-cas} and Corollary~\ref{GBM-FouDec-cas} from Theorem~\ref{Fou-Dec-abs}. 

The detailed proof of Proposition~\ref{non-degene-abs}  is  given in \S\ref{S-Pf-Thm-Nondegen-abs}.

The detailed proof of  our unified Theorem~\ref{Fou-Dec-abs} is given in  \S\ref{S-Pf-Thm-PolyFoudec-abs}. The schematic graph of the proof of Theorem~\ref{Fou-Dec-abs}  is given in Figure~\ref{figure-procedure-proving}.

\subsection*{Acknowledgements} We would like to thank Prof. Xinxin Chen for useful discussion on the model in Corollary~\ref{GBM-FouDec-cas} of the generalized Mandelbrot cascades relating to  the geometric Brownian motion. 
YQ is supported by National Natural Science Foundation of China (NSFC No. 12471145).

\section{Outline of the proof of Theorem~\ref{Fou-Dec-abs}}\label{S-metvecvalmar-abs}
In this section,  we briefly outline the proof of our unified Theorem~\ref{Fou-Dec-abs} via vector-valued martingale method, combined with Littlewood-Paley theory. We will prove Theorem~\ref{Fou-Dec-abs} in \S\ref{S-Pf-Thm-PolyFoudec-abs} following the procedure in Figure \ref{figure-procedure-proving}:
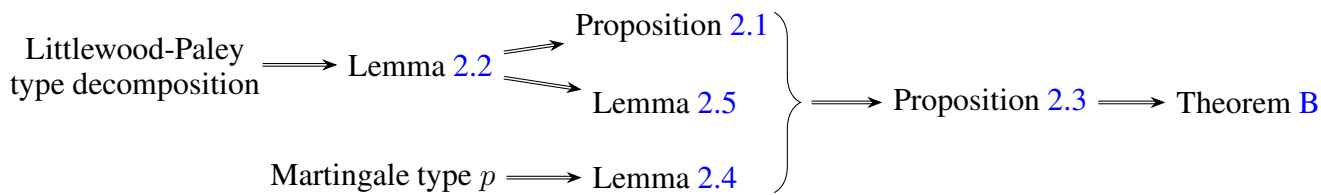
\begin{figure}[h]
	\begin{center}
		\begin{tikzpicture}[node distance=1.5cm and 2cm, >=Stealth]
\node (B) at (-2.57,2.4375) {Martingale type $p$ inequalities};
\draw[->, double] (B) -- (0.95,2.4375);
\node (E1) at (2.85,2.675) {Lemma~\ref{dec-vm-abs}:};
\node (E2) at (2.85,2.2) {Localization estimate};
\node (C1) at (2.85,1) {Proposition~\ref{lem-lq-abs}:};
\node (C2) at (2.85,0.525) {Single weighted};
\node (C3) at (2.85,0.05) {$L^p(\ell^q)$-boundedness};
\node (D1) at (2.85,-1.15) {Lemma~\ref{UB-ZW-abs}:};
\node (D2) at (2.85,-1.625) {Asymptotic behavior};
\node (D3) at (2.85,-2.1) {of $\mathbb{E}[\|\mathscr{Y}_\mathbf{I} \|_{\ell^q}^p]$};
\draw[decorate, decoration={brace, amplitude=10pt}] (4.63,2.8125) -- (4.63,-2.2375);
\node (F1) at (7.58,0.7625) {Proposition~\ref{Uni-Bou-Lplq-abs}:};
\node (F2) at (7.58,0.2875) {Uniform weighted};
\node (F3) at (7.58,-0.1875) {$L^p(\ell^q)$-boundedness};
\draw[->, double] (5.08,0.2875) -- (F2);
\node (G1) at (7.58,-1.625) {Theorem~\ref{Fou-Dec-abs}:};
\node (G2) at (7.58,-2.1) {Polynomial Fourier decay};
\draw[->, double] (F3) -- (G1);
\node (A1) at (-1.2,-0.075) {Lemma~\ref{UB-YZW-abs}:};
\node (A2) at (-1.2,-0.55) {Asymptotic behavior};
\node (A3) at (-1.2,-1.025) {of $\mathbb{E}[\|\mathscr{Z}_\mathbf{I} \|_{\ell^q}^p]$};
\draw[->, double] (0.325,-0.2125) -- (1.2,0.5);
\draw[->, double] (0.325,-0.8875) -- (0.95,-1.3875);
\node (H1) at (-6.03,0.525) {Littlewood-Paley};
\node (H2) at (-6.03,0.05) {type decomposition};			
\node (J1) at (-5.7,-1.15) {Discrete-time};
\node (J2) at (-5.7,-1.625) {approximation};
\draw[decorate, decoration={brace, amplitude=10pt}] (-4.4,0.6625) -- (-4.4,-1.7625);
\draw[->, double] (-3.95,-0.555) -- (A2);
		\end{tikzpicture}
	\end{center}
	\caption{The procedure for proving Theorem~\ref{Fou-Dec-abs}.}
	\label{figure-procedure-proving}
\end{figure}

\subsection{Pisier's martingale type inequalities}	
	We shall use the following well-known fact in the theory of Banach space geometry (see \cite[Proposition~10.36 and Definition~10.41]{Pis16}): 
	\begin{center}
		{\it  For any  $2\le q<\infty$, the Banach space $\ell^q$ has martingale type $p$ for all $1<p\le 2$. }
	\end{center}
	
	More precisely,  for any  $1<p\leq2\leq q<\infty$, there exists a constant $C(p, q)>0$ such that any $\ell^q$-valued martingale  
	$(F_m)_{m\ge 0}$ in $L^p(\PP; \ell^q)$ satisfies
	\begin{align}\label{def-Mtype}
		\E [ \| F_m\|_{\ell^q}^p] \le  C(p, q) \sum_{k =0}^m   \E[ \|F_k- F_{k-1}\|_{\ell^q}^p]
	\end{align}
	with the convention $F_{-1}\equiv 0$. 	The inequality \eqref{def-Mtype} implies in particular that for any family of independent and  centered  $\ell^q$-valued random variables $(G_k)_{k=0}^m$  in $L^p(\PP; \ell^q)$, 
	\begin{align}\label{def-ind-Mtype}
		\E\Big[ \Big\|\sum_{k=0}^m G_k \Big\|_{\ell^q}^p\Big] \le C(p, q)   \sum_{k =0}^m \E[\|G_k \|_{\ell^q}^p].  
	\end{align} 
	
	For further reference, note that when $1<p\leq2$, if $(T_m)_{m\ge 0}$ is a scalar martingale in $L^p(\PP)$, the inequality \eqref{def-Mtype} follows from the classical Burkholder's martingale inequality and reads as 
	\begin{align}\label{def-Mtype-scalar}
		\E [ | T_m|^p] \le  C(p) \sum_{k =0}^m   \E[ |T_k- T_{k-1}|^p]
	\end{align}
	with the convention $T_{-1}\equiv 0$.  Moreover,  the inequality \eqref{def-Mtype-scalar} implies that for any family of independent and  centered random variables $(X_k)_{k=0}^m$  in $L^p(\PP)$, 
	\begin{align}\label{def-ind-Mtype-scalar}
		\E\Big[ \Big|\sum_{k=0}^m X_k \Big|^p\Big] \le C(p)   \sum_{k =0}^m \E[|X_k |^p].  
	\end{align}

\subsection{Vector-valued martingale method}\label{subS-metvecvalmar-abs}
Given a Radon measure $\nu$ on $[0,1]^d$, it is well-known from  Kahane \cite[Chapter~17, Lemma~1]{Kah85b} (see also a convenient form in \cite[Section~1.3]{CHQW24}) that one may reduce the study of the polynomial decay of the Fourier transform of $\widehat{\nu}(\mathbf{x})$ as $|\mathbf{x}| \to\infty$ to that of its Fourier coefficients  $\widehat{\nu}(\mathbf{n})$  for $\mathbf{n}  = (n_1, \cdots, n_d) \in \Z^d$ as $|\mathbf{n}| = (\sum_{\beta=1}^d n_\beta^2)^{1/2}\to\infty$.  

To study the Fourier decay of the  multiplicative chaos measure $\mu_\infty$ defined by \eqref{def-lim-mea-abs} as in \S \ref{sec-T-mart},  we shall study the Fourier transform of the associated  approximating measure-valued martingale sequence  $(\mu_m)_{m\ge 0}$ defined in \eqref{def-num-abs}.   Thanks to Kahane's reduction to Fourier coefficients, we may consider only the restriction $\widehat{\mu_m}|_{\Z^d}$.  The key step in proving the unified Theorem~\ref{Fou-Dec-abs} is the following Proposition~\ref{Uni-Bou-Lplq-abs} on the uniform weighted $L^p(\ell^q)$-boundedness of the weighted $\ell^q$-valued martingale 
\begin{align}\label{key-mart}
 \big\{\widehat{\mu_{m}}|_{\Z^d}\big\}_{m\ge 0} = \big\{(\widehat{\mu_{m}} (\mathbf{n}))_{\mathbf{n}\in \Z^d}\big\}_{m\ge 0} \quad \text{with $\widehat{\mu_m}(\mathbf{n}) = \int_{[0,1]^d}e^{-2\pi i\mathbf{n}\cdot\mathbf{t}}\mu_{m}(\mathrm{d}\mathbf{t})$ for $\mathbf{n}\in\Z^d$}. 
\end{align}

\subsubsection{Uniform weighted $L^p(\ell^q)$-boundedness}
For simplifying notation, in what follows, set 
\begin{align}\label{def-LF-abs}
	\mathcal{L}_F:=\min\Big\{2 \alpha_0,\sup_{1<p\leq p_0}\frac{2  \Theta(p)}{p\log b}\Big\},
\end{align}
where $\Theta(p)$ is given in \eqref{def-Theta-abs}.  Under Assumption~\ref{assum-Lp0-abs}, we have  $\mathcal{L}_F>0$. See Lemma~\ref{lem-LF} below for details. 

\begin{proposition}[Single weighted $L^p(\ell^q)$-boundedness]\label{lem-lq-abs}
	Under the Assumptions~\ref{assum-Lp0-abs} and \ref{assum-alp0-abs}, for any fixed $\tau\in(0,\mathcal{L}_F)$ and any fixed integer $m\geq0$, if $1<p\leq p_0$ and $q>\frac{2d}{2\alpha_0-\tau}$, then
	\[
	\mathbb{E}\Big[\Big\{\sum_{\mathbf{n}\in\mathbb{Z}^{d}}\Big||\mathbf{n}|^{\tau/2}\widehat{\mu_{m}}(\mathbf{n})\Big|^q\Big\}^{p/q}\Big]<\infty.
	\]
\end{proposition}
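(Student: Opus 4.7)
\textbf{Proof proposal for Proposition~\ref{lem-lq-abs}.}
The strategy is to derive a pointwise-in-$\mathbf{n}$ bound $\|\widehat{\mu_m}(\mathbf{n})\|_{L^p(\mathbb{P})} \lesssim C_m (1+|\mathbf{n}|)^{-\alpha_0}$ using the H\"older regularity encoded in Assumption~\ref{assum-alp0-abs}, and then assemble it into the weighted $\ell^q$-norm bound via a Littlewood-Paley decomposition of $\mathbb{Z}^d$. Note that Assumption~\ref{assum-indep-abs} plays no role in this single-$m$ estimate; only the uniform $L^{p_0}$-moment control (Assumption~\ref{assum-Lp0-abs}) and the H\"older regularity (Assumption~\ref{assum-alp0-abs}) are needed. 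The constant $C_m$ is allowed to depend on the fixed level $m$, which is why the martingale type $p$ machinery and the sub-exponential partition are not yet required at this stage.

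\emph{Pointwise decay.} For $\mathbf{n} \in \mathbb{Z}^d \setminus \{\mathbf{0}\}$, pick a coordinate $\beta$ with $|n_\beta| \gtrsim |\mathbf{n}|$ and set $h := 1/(2 n_\beta)$, so that $e^{-2\pi i n_\beta h} = -1$. The classical translation identity yields
\[
2|\widehat{\mu_m}(\mathbf{n})| \leq \int_{[0,1]^d} |\rho_m(\mathbf{t}) - \rho_m(\mathbf{t} - h\mathbf{e}_\beta)|\,\mathrm{d}\mathbf{t} + O(|h|)\,\mu_m([0,1]^d),
\]
where $\rho_m = \prod_{j=0}^m P_j$ is the density of $\mu_m$. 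Taking $L^p(\mathbb{P})$-norms, commuting with the $\mathbf{t}$-integral via Minkowski, and telescoping the product expresses $\|\rho_m(\mathbf{t}) - \rho_m(\mathbf{t} - h\mathbf{e}_\beta)\|_{L^p}$ as a sum over $j$ of terms controlled by $\|P_j(\mathbf{t}) - P_j(\mathbf{t} - h\mathbf{e}_\beta)\|_{L^{p_0}}$ times $L^{p_0}$-moments of the remaining factors, the latter being bounded by Assumption~\ref{assum-Lp0-abs} and Proposition~\ref{non-degene-abs}. For pairs $(\mathbf{t}, \mathbf{t} - h\mathbf{e}_\beta)$ lying in a common cube of $\mathscr{D}_j^b$, Assumption~\ref{assum-alp0-abs} gives $\|P_j(\mathbf{t}) - P_j(\mathbf{t}-h\mathbf{e}_\beta)\|_{L^{p_0}} \lesssim b^{j\alpha_0}|h|^{\alpha_0}$, and summing over $j = 0, \ldots, m$ produces the claimed $C_m|\mathbf{n}|^{-\alpha_0}$ decay.

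\emph{Assembly.} Partition $\mathbb{Z}^d \setminus \{\mathbf{0}\} = \bigsqcup_{k \geq 1} A_k$ with $A_k := \{\mathbf{n} : b^{k-1} \leq |\mathbf{n}| < b^k\}$, so that $|\mathbf{n}|^{\tau/2} \asymp b^{k\tau/2}$ and $|A_k| \asymp b^{kd}$ on $A_k$. When $p \leq q$, Minkowski's integral inequality gives
\[
\mathbb{E}\bigl[\|\{|\mathbf{n}|^{\tau/2}\widehat{\mu_m}(\mathbf{n})\}_{\mathbf{n}\in A_k}\|_{\ell^q}^p\bigr]^{1/p} \lesssim \Bigl(\sum_{\mathbf{n}\in A_k}|\mathbf{n}|^{\tau q/2}\|\widehat{\mu_m}(\mathbf{n})\|_{L^p}^q\Bigr)^{1/q} \lesssim C_m\, b^{k(d/q + \tau/2 - \alpha_0)},
\]
while when $p > q$, the power-mean inequality $\|a\|_{\ell^q(A_k)} \leq |A_k|^{1/q - 1/p}\|a\|_{\ell^p(A_k)}$ reduces the problem to $|A_k|^{p/q-1} \sum_{\mathbf{n}\in A_k} \mathbb{E}[|a_\mathbf{n}|^p]$ and delivers the same net bound. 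Summing over $k \geq 1$ converges precisely when $d/q + \tau/2 - \alpha_0 < 0$, i.e.\ when $q > 2d/(2\alpha_0 - \tau)$, yielding the desired finiteness.

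The main obstacle is the treatment of pairs $(\mathbf{t}, \mathbf{t} - h\mathbf{e}_\beta)$ that straddle $b$-adic cube boundaries at some level $j \leq m$ in the pointwise step, where Assumption~\ref{assum-alp0-abs} is silent; here one must fall back on the crude bound $\|P_j(\mathbf{t})\|_{L^{p_0}} + \|P_j(\mathbf{t} - h\mathbf{e}_\beta)\|_{L^{p_0}}$. Since the Lebesgue measure of this bad set is $\lesssim m|h|$, its total contribution is $\lesssim |h|$, which is absorbed by the dominant $|h|^{\alpha_0}$ term using $\alpha_0 \leq 1$. Analogous care is needed for the boundary contribution $O(|h|)\mu_m([0,1]^d)$ arising from the translation trick when $\mathbf{t} - h\mathbf{e}_\beta$ exits $[0,1]^d$; this is handled by the $L^{p_0}$-moment bound on the total mass from Proposition~\ref{non-degene-abs}.
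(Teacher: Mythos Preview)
Your approach is correct and gives a genuinely different, more elementary proof than the paper's. The paper proves Proposition~\ref{lem-lq-abs} by first localizing $\mathcal{M}_m=\sum_{\mathbf{I}\in\mathscr{D}_m^b}\mathscr{Z}_\mathbf{I}$ and then invoking Lemma~\ref{UB-YZW-abs}, whose proof is the heavy 12-step machinery (discrete-time approximation, Abel summation, Littlewood--Paley splitting of $\N^d$). You bypass all of that by proving the global pointwise decay $\|\widehat{\mu_m}(\mathbf{n})\|_{L^p}\lesssim C_m|\mathbf{n}|^{-\alpha_0}$ via the classical half-period translation trick, then summing blocks. This is simpler and entirely adequate for the single fixed $m$. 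The price is that your constant $C_m$ blows up like $C^m b^{m\alpha_0}$, so your argument contributes nothing to the uniform-in-$m$ bound of Proposition~\ref{Uni-Bou-Lplq-abs}; the paper's route through Lemma~\ref{UB-YZW-abs} is designed precisely so that the same localized estimate $\E[\|\mathscr{Z}_\mathbf{I}\|_{\ell^q}^p]\lesssim |\mathbf{I}|^{1+\delta}$ feeds both Proposition~\ref{lem-lq-abs} and Lemma~\ref{UB-ZW-abs}.

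Two small points to clean up. First, your estimate of the boundary-straddling bad set as having measure $\lesssim m|h|$ is off: at level $j$ there are $\sim b^j$ hyperplanes in the $\beta$-direction, so the bad set at level $j$ has measure $\sim b^j|h|$, and summing over $j\le m$ gives $\sim b^m|h|$, not $m|h|$. This does not harm the conclusion since the constant may depend on $m$ and $|h|\le|h|^{\alpha_0}$ for $|h|\le 1$. Second, your appeals to Proposition~\ref{non-degene-abs} are inconsistent with your (correct) claim that Assumption~\ref{assum-indep-abs} is unused: Proposition~\ref{non-degene-abs} requires Assumption~\ref{assum-indep-abs}. What you actually need is only the fixed-$m$ bound $\E[\mu_m([0,1]^d)^p]<\infty$, which follows from Assumption~\ref{assum-Lp0-abs} alone via independence and Minkowski (this is Lemma~\ref{lem-finite-m} in the paper).
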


In other words, for any $\tau\in (0, \mathcal{L}_F)$ and $p, q$ as in Proposition~\ref{lem-lq-abs},  we obtain the following $\ell^q$-valued martingale with finite $p$-moments (for simplifying notation, the subscript $\tau$ will be  omitted): 
\begin{align}\label{vec-val-mar-abs}
\{\mathcal{M}_{m}\}_{m\ge 0}=\{\mathcal{M}_{\tau,m} \}_{m\ge 0}:= \big\{ (|\mathbf{n}|^{\tau/2}\widehat{\mu_{m}}(\mathbf{n}))_{\mathbf{n}\in\mathbb{Z}^{d}}\big\}_{m\ge 0}.
\end{align}
For obtaining Theorem~\ref{Fou-Dec-abs}, we shall establish the uniform $L^p(\ell^q)$-boundedness of the $\ell^q$-valued martingale $\{\mathcal{M}_{m}\}_{m\ge 0}$ in the following Proposition~\ref{Uni-Bou-Lplq-abs}.

\begin{proposition}[Uniform weighted $L^p(\ell^q)$-boundedness]\label{Uni-Bou-Lplq-abs}
	Under the Assumptions~\ref{assum-indep-abs}, \ref{assum-Lp0-abs} and \ref{assum-alp0-abs}, for any fixed $\tau\in(0,\mathcal{L}_F)$, there exist $p$ and $q$ satisfying $1<p\leq p_0 \leq\max\{2,\frac{2d}{2\alpha_0-\tau}\}<q<\infty$ such that 
	\begin{align}\label{sup-Lplq-abs}
		\sup_{m\geq0}\mathbb{E}[\|\mathcal{M}_{m}\|_{\ell^{q}}^{p}]  = \sup_{m\ge 0} 	\mathbb{E}\Big[\Big\{\sum_{\mathbf{n}\in\mathbb{Z}^{d}}\Big||\mathbf{n}|^{\tau/2}\widehat{\mu_{m}}(\mathbf{n})\Big|^q\Big\}^{p/q}\Big]<\infty.
	\end{align}
\end{proposition}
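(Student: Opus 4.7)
The strategy is to upgrade the single-$m$ control of Proposition~\ref{lem-lq-abs} to a uniform estimate by decomposing $\mathcal{M}_m$ into martingale increments and summing them in $L^p(\PP;\ell^q)$. Writing $D_k:=\mathcal{M}_k-\mathcal{M}_{k-1}$ with the convention $\mathcal{M}_{-1}:=0$, the sequence $(D_k)_{k\ge 0}$ is a martingale difference for $(\mathscr{G}_m)_{m\ge 0}$. Since $\ell^q$ has martingale type $p$ for every $1<p\le 2\le q<\infty$, Pisier's inequality \eqref{def-Mtype} yields
\[
\mathbb{E}\big[\|\mathcal{M}_m\|_{\ell^q}^p\big] \le C(p,q)\sum_{k=0}^{m}\mathbb{E}\big[\|D_k\|_{\ell^q}^p\big],
\]
so the task reduces to exhibiting a pair $(p,q)$ within the admissible range for which $\sum_{k\ge 0}\mathbb{E}[\|D_k\|_{\ell^q}^p]<\infty$.

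To control an individual increment, I would localize over $b$-adic cubes at level $k-1$. Since $\mu_k-\mu_{k-1} = \prod_{j<k}P_j(\mathbf{t})\,(P_k(\mathbf{t})-1)\,\mathrm{d}\mathbf{t}$ and the new factor depends only on the cube $\mathbf{I}\in\mathscr{D}_{k-1}^b$ containing $\mathbf{t}$, we may write $D_k=\sum_{\mathbf{I}\in\mathscr{D}_{k-1}^b}\mathscr{Z}_\mathbf{I}$ with each $\mathscr{Z}_\mathbf{I}$ supported in $\mathbf{I}$. By Assumption~\ref{assum-indep-abs}, $\mathscr{D}_{k-1}^b$ splits into $N_{k-1}$ sub-families $\mathscr{D}_{k-1,i}^b$ on which the $\mathscr{Z}_\mathbf{I}$ are, conditionally on $\mathscr{G}_{k-1}$, jointly independent and centered. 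Applying the convexity inequality $\|\sum_{i=1}^{N}a_i\|^p\le N^{p-1}\sum_i\|a_i\|^p$ across sub-families and then Pisier's inequality \eqref{def-ind-Mtype} within each sub-family (first conditionally on $\mathscr{G}_{k-1}$, then taking expectation), one obtains the localization bound
\[
\mathbb{E}\big[\|D_k\|_{\ell^q}^p\big] \le C(p,q)\,N_{k-1}^{p-1}\sum_{\mathbf{I}\in\mathscr{D}_{k-1}^b}\mathbb{E}\big[\|\mathscr{Z}_\mathbf{I}\|_{\ell^q}^p\big],
\]
which is the role of Lemma~\ref{dec-vm-abs} in the procedure diagram.

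The remaining task is a single-cube bound $\mathbb{E}[\|\mathscr{Z}_\mathbf{I}\|_{\ell^q}^p]\le C\,b^{-(k-1)\Psi(p,q,\tau,\alpha_0)}$ with an explicit exponent $\Psi>0$ whenever $\tau<\mathcal{L}_F$. I would derive it by combining a Littlewood--Paley decomposition in the frequency variable with a $b$-adic discrete-time approximation of $P_k$ on $\mathbf{I}$. Assumption~\ref{assum-alp0-abs} delivers the expected decay of each Fourier coefficient of $\mathscr{Z}_\mathbf{I}$, so that the contribution to $\|\mathscr{Z}_\mathbf{I}\|_{\ell^q}^q$ from a dyadic shell of radius $\sim 2^N$ scales like $2^{N(d+q(\tau/2-\alpha_0))}$, which is summable in $N$ exactly when $q>2d/(2\alpha_0-\tau)$. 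Assumption~\ref{assum-Lp0-abs} together with a scaling argument controls the $L^p$-size of the accumulated weight $\prod_{j<k}P_j$ restricted to a cube of side $b^{-(k-1)}$ by a power scaling dictated by $\Theta(p)$. Summing over the $b^{(k-1)d}$ cubes at level $k-1$, absorbing the sub-exponential factor $N_{k-1}^{p-1}=o(b^{\varepsilon k(p-1)})$ afforded by \eqref{def-subexp-abs}, and choosing $p$ close to an optimizer of $p\mapsto 2\Theta(p)/(p\log b)$, $q$ just above $2d/(2\alpha_0-\tau)$, and $\varepsilon$ small enough, the total exponent in $k$ becomes strictly negative and the resulting geometric series converges.

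The principal obstacle is the single-cube estimate, where three mechanisms must be balanced simultaneously: the $\alpha_0$-Hölder regularity of the new factor $P_k$, the uniform $L^{p_0}$-control of the accumulated weight $\prod_{j<k}P_j$, and the $\ell^q$-geometry of the frequency lattice. The Littlewood--Paley splitting in frequency must be synchronized with the $b$-adic discrete-time approximation of $P_k$ so that the contribution of each dyadic shell is estimated at the scale where the Hölder regularity of $P_k$ is effective; this synchronization is what forces the threshold $q>2d/(2\alpha_0-\tau)$ and yields the correct exponent $\Psi$. Once this is arranged, the rest of the argument is bookkeeping of the constraints $\tau<\mathcal{L}_F$, $1<p\le p_0$, and the sub-exponential growth of $N_{k-1}$, which together guarantee a choice of $(p,q)$ making $\sum_{k\ge 0}\mathbb{E}[\|D_k\|_{\ell^q}^p]$ finite and hence the uniform bound \eqref{sup-Lplq-abs}.
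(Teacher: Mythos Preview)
Your proposal is correct and follows essentially the same route as the paper: Pisier's martingale type $p$ inequality for $\ell^q$, localization of each increment over $b$-adic cubes at level $k-1$ using the sub-exponential partition of Assumption~\ref{assum-indep-abs} (applied conditionally on $\mathscr{G}_{k-1}$), a single-cube bound obtained by synchronizing a Littlewood--Paley frequency decomposition with a $b$-adic discrete-time approximation of the density, and finally summation of the resulting geometric series once $(p,q)$ are chosen so that the exponent is strictly positive. Two small remarks: what you call $\mathscr{Z}_\mathbf{I}$ is the paper's $\mathscr{Y}_\mathbf{I}$ (the paper reserves $\mathscr{Z}_\mathbf{I}$ for the uncentered local Fourier transform, and derives the $\mathscr{Y}_\mathbf{I}$-estimate from the $\mathscr{Z}_\mathbf{I}$-estimate); and the conditional independence in Assumption~\ref{assum-indep-abs} is only asserted for $k>k_0$, so the finitely many initial increments $D_0,\dots,D_{k_0}$ must be handled separately via Proposition~\ref{lem-lq-abs}, which you invoke at the outset.
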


\begin{remark*}
	For the exponents $\tau,p, q$ as in Proposition \ref{Uni-Bou-Lplq-abs}, consider the  weighted $\ell^q$-sequence space 
	\[
		\ell^q_\tau(\Z^d): = \Big\{ (c_\mathbf{n})_{\mathbf{n}\in \Z^d}\,\Big|\, c_\mathbf{n} \in \C \an \| (c_\mathbf{n})\|_{\ell_\tau^q}: = \Big\{\sum_{\mathbf{n}\in \Z^d}  \Big| |\mathbf{n}|^{\tau/2} c_\mathbf{n}\Big|^q  \Big\}^{1/q} <\infty \Big\}.
	\]
	Then the inequality \eqref{sup-Lplq-abs} means that the  martingale \eqref{key-mart} is uniformly bounded in $L^p(\ell^q_\tau(\Z^d))$: 
	\[
	\sup_{m\geq0}\mathbb{E}\big[\big\|\widehat{\mu_m}|_{\Z^d}\big\|_{\ell_\tau^{q}}^{p}\big]<\infty.
	\]
\end{remark*}

\subsubsection{Localization estimate via a random Fourier decoupling inequality}   The main idea in the first step towards Proposition~\ref{Uni-Bou-Lplq-abs} is essentially a global-to-local  estimate (or a localization estimate) of the Fourier transform of $\widehat{\mu_\infty}$ or of $\widehat{\mu_m}$, and Pisier's martingale type $p$ inequality for the Banach space $\ell^q$ plays an important role  in this step.  That is, to estimate the global Fourier transform $\widehat{\mu_m}(\mathbf{n})$, we shall consider the following random quantities  (see \eqref{def-Y-abs} and \eqref{def-Z-abs} below for the definitions of the precise quantities that will be used later) obtained as local Fourier transforms: 
\[
\int_{\mathbf{I}}\Big[\prod_{j=0}^{m}\XX_j(\mathbf{t})\Big]  e^{-2\pi i\mathbf{n}\cdot\mathbf{t}}\mathrm{d}\mathbf{t} \anand  \int_{\mathbf{I}}\Big[\prod_{j=0}^{k-1}\XX_j(\mathbf{t})\Big]  \mathring{\XX}_{k} (\mathbf{t}) e^{-2\pi i\mathbf{n}\cdot\mathbf{t}}\mathrm{d}\mathbf{t} 
\]
with $\mathring{\XX}_{k} = \XX_k -1$ and $\mathbf{I}$ is some $b$-adic sub-cube of $[0,1)^d$ with a size related to $m$ or $k$ in a certain way. 

\begin{remark*}
We find some similarity (probably in a dual form) of our estimate to the decoupling inequalities in harmonic analysis.  
\end{remark*}

We now turn to the precise statement of our localization estimate. 

Recall the definition of the natural filtration $(\mathscr{G}_k)_{k\ge 0}$  defined in \eqref{def-filtra-abs}.  For any integer $k\ge 1$ and  any $b$-adic sub-cube $\mathbf{I}\in \mathscr{D}_{k-1}^b$ defined in \eqref{def-b-dya-abs},  we define a $\mathscr{G}_k$-measurable (but not $\mathscr{G}_{k-1}$-measurable) random vector $\mathscr{Y}_{\mathbf{I}} =\mathscr{Y}_{\tau,\mathbf{I}}:= \big(\mathscr{Y}_\mathbf{I}(\mathbf{n})\big)_{\mathbf{n}\in\mathbb{Z}^{d}}$ by
\begin{align}\label{def-Y-abs}
	\mathscr{Y}_\mathbf{I}(\mathbf{n}):= |\mathbf{n}|^{\tau/2}\int_{\mathbf{I}}\Big[\prod_{j=0}^{k-1}\XX_j(\mathbf{t})\Big]  \mathring{\XX}_{k} (\mathbf{t}) e^{-2\pi i\mathbf{n}\cdot\mathbf{t}}\mathrm{d}\mathbf{t},
\end{align}
where 
\begin{align}\label{def-rin-X-abs}
	\mathring{\XX}_{k}(\mathbf{t}):=\XX_{k}(\mathbf{t})-\E[\XX_{k}(\mathbf{t})]=\XX_{k}(\mathbf{t})-1. 
\end{align}

\medskip
{\flushleft\bf Alarming:}  One should note that, for each $\mathbf{I}\in \mathscr{D}_{k-1}^b$, the random vector $\mathscr{Y}_\mathbf{I}$  defined in \eqref{def-Y-abs} is $\mathscr{G}_k$-measurable, but is not $\mathscr{G}_{k-1}$-measurable.  It is worthwhile to note that, by condition \eqref{def-Xm-abs} of the independent stochastic processes $\{\XX_j\}_{j\ge 0}$ and the definition \eqref{def-rin-X-abs}, we have
\begin{align}\label{Y-I-cond-abs}
	\E[\mathscr{Y}_\mathbf{I}|\mathscr{G}_{k-1}] =0. 
\end{align}
The observation  \eqref{Y-I-cond-abs} is crucial in the proof of the following localization estimate in Lemma~\ref{dec-vm-abs}.

\begin{lemma}[Localization estimate]\label{dec-vm-abs}
	Under the Assumption~\ref{assum-indep-abs}, for any fixed $\tau\in(0,\mathcal{L}_F)$ and $p$, $q$ satisfying $1<p\leq p_0 \leq\max\{2,\frac{2d}{2\alpha_0-\tau}\}<q<\infty$, there exists a constant $C=C(d,b,\tau,p,q)>0$ such that for any $m>k_0$ (here $k_0$ is the integer given as in  Assumption~\ref{assum-indep-abs}),
	\begin{align}\label{local-ineq}
	\mathbb{E}[\|\mathcal{M}_{m}\|_{\ell^q}^p]\leq C \E[\|\mathcal{M}_{k_0}\|_{\ell^q}^p]  +  C \sum_{k=k_0+1}^{m}N_{k-1}^{p-1}\sum_{\mathbf{I}\in\mathscr{D}_{k-1}^b}\mathbb{E}[\|\mathscr{Y}_\mathbf{I} \|_{\ell^q}^p].
	\end{align}
\end{lemma}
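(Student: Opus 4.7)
\textbf{Proof proposal for Lemma~\ref{dec-vm-abs}.}
The plan is to apply Pisier's martingale type $p$ inequality for the Banach space $\ell^q$ in two nested layers: once at the outer level to the $\ell^q$-valued martingale $(\mathcal{M}_m)_{m\ge 0}$ itself, and once conditionally at the inner level to sums of conditionally independent centered $\ell^q$-valued random variables, after invoking the sub-family decomposition of Assumption~\ref{assum-indep-abs}. Since $1<p\le 2\le q<\infty$ the space $\ell^q$ has martingale type $p$, so applying \eqref{def-Mtype} to the $\ell^q$-valued martingale $(\mathcal{M}_m)_{m\ge k_0}$ and combining with the elementary bound $\E[\|\mathcal{M}_m\|_{\ell^q}^p]\le 2^{p-1}(\E[\|\mathcal{M}_{k_0}\|_{\ell^q}^p]+\E[\|\mathcal{M}_m-\mathcal{M}_{k_0}\|_{\ell^q}^p])$ reduces the problem to estimating $\E[\|\mathcal{M}_k-\mathcal{M}_{k-1}\|_{\ell^q}^p]$ for each $k>k_0$.

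The next step is to identify the martingale differences. Using $\mu_k-\mu_{k-1}=[\prod_{j=0}^{k-1}\XX_j(\mathbf{t})]\mathring{\XX}_k(\mathbf{t})\,\mathrm{d}\mathbf{t}$ and splitting the integration domain $[0,1)^d$ according to $\bigsqcup_{\mathbf{I}\in\mathscr{D}_{k-1}^b}\mathbf{I}$, the $\mathbf{n}$-th coordinate of $\mathcal{M}_k-\mathcal{M}_{k-1}$ equals $\sum_{\mathbf{I}\in\mathscr{D}_{k-1}^b}\mathscr{Y}_\mathbf{I}(\mathbf{n})$. Regrouping according to the sub-family partition \eqref{dec-subexp-dec-abs}, I write
\[
\mathcal{M}_k-\mathcal{M}_{k-1}=\sum_{i=1}^{N_{k-1}}\mathscr{U}_{k,i},\qquad \mathscr{U}_{k,i}:=\sum_{\mathbf{I}\in\mathscr{D}_{k-1,i}^b}\mathscr{Y}_\mathbf{I}.
\]
A standard power-mean (Jensen) inequality then gives $\|\mathcal{M}_k-\mathcal{M}_{k-1}\|_{\ell^q}^p\le N_{k-1}^{p-1}\sum_{i=1}^{N_{k-1}}\|\mathscr{U}_{k,i}\|_{\ell^q}^p$, which is precisely where the factor $N_{k-1}^{p-1}$ in \eqref{local-ineq} enters.

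The heart of the argument is the conditional analysis of each $\mathscr{U}_{k,i}$ given $\mathscr{G}_{k-1}$. Conditional on $\mathscr{G}_{k-1}$ the product $\prod_{j=0}^{k-1}\XX_j$ is frozen, so each $\mathscr{Y}_\mathbf{I}$ (for $\mathbf{I}\in\mathscr{D}_{k-1,i}^b$) is a measurable function of the restricted process $\XX_k^\mathbf{I}$ alone. By Assumption~\ref{assum-indep-abs} the family $\{\XX_k^\mathbf{I}:\mathbf{I}\in\mathscr{D}_{k-1,i}^b\}$ is jointly independent, so the family $\{\mathscr{Y}_\mathbf{I}:\mathbf{I}\in\mathscr{D}_{k-1,i}^b\}$ is conditionally jointly independent; and by \eqref{Y-I-cond-abs} each $\mathscr{Y}_\mathbf{I}$ is conditionally centered. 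The conditional version of the vector-valued inequality \eqref{def-ind-Mtype} (with the same universal constant $C(p,q)$) then yields
\[
\E\bigl[\|\mathscr{U}_{k,i}\|_{\ell^q}^p\bigm|\mathscr{G}_{k-1}\bigr]\le C(p,q)\sum_{\mathbf{I}\in\mathscr{D}_{k-1,i}^b}\E\bigl[\|\mathscr{Y}_\mathbf{I}\|_{\ell^q}^p\bigm|\mathscr{G}_{k-1}\bigr].
\]
Taking expectations, summing over $i$ and $k$, and combining with the outer Pisier estimate produces \eqref{local-ineq}.

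The main obstacle, in my view, is the careful justification of the conditional Pisier inequality in the step above: one needs to argue that the Banach-space-valued martingale type $p$ inequality \eqref{def-ind-Mtype} transfers to the conditional setting with the same constant, given that the $\mathscr{Y}_\mathbf{I}$ are $\mathscr{G}_k$-measurable $\ell^q$-valued random variables whose randomness, once $\mathscr{G}_{k-1}$ is fixed, is driven by independent building blocks $\XX_k^\mathbf{I}$. This can be handled by a regular conditional probability argument, or equivalently by viewing $(\mathscr{Y}_\mathbf{I})_{\mathbf{I}\in\mathscr{D}_{k-1,i}^b}$ enumerated in any order as an $\ell^q$-valued martingale difference sequence with respect to an enlarged filtration that adds one new process $\XX_k^\mathbf{I}$ at each step. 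One also needs Proposition~\ref{lem-lq-abs} to know \emph{a priori} that each $\mathcal{M}_m$ lies in $L^p(\Omega;\ell^q)$, so that the telescoping decomposition is legitimate in the Bochner sense.
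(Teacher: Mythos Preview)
Your proposal is correct and follows essentially the same route as the paper's proof: an outer application of Pisier's martingale type $p$ inequality to $(\mathcal{M}_m)_{m\ge k_0}$, identification of $\mathcal{M}_k-\mathcal{M}_{k-1}=\sum_{\mathbf{I}\in\mathscr{D}_{k-1}^b}\mathscr{Y}_\mathbf{I}$, the Jensen step over the $N_{k-1}$ sub-families yielding the factor $N_{k-1}^{p-1}$, and then a conditional application of \eqref{def-ind-Mtype} to each $\mathscr{U}_{k,i}$ using the conditional independence and centering from Assumption~\ref{assum-indep-abs} and \eqref{Y-I-cond-abs}. Your extra care about justifying the conditional Pisier inequality and about the a priori $L^p(\ell^q)$-membership of $\mathcal{M}_m$ is well placed; the paper simply asserts the conditional inequality holds and does not dwell on these points (the inequality \eqref{local-ineq} being trivially true if either side is infinite).
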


\subsubsection{Key lemmas to the localization estimates}
By the localization estimate in Lemma~\ref{dec-vm-abs}, 
to complete the proof of Proposition~\ref{Uni-Bou-Lplq-abs},   the following two tasks should be accomplished:
\begin{itemize}
\item   prove Proposition~\ref{lem-lq-abs} to ensure that  $\E[\|\mathcal{M}_{k_0}\|_{\ell^q}^p]<\infty$;
\item   obtain an appropriate asymptotic behavior of $\mathbb{E}[\|\mathscr{Y}_\mathbf{I} \|_{\ell^q}^p]$ (as the Lebesgue measure $|\mathbf{I}|\to 0$) to ensure the summability of the series on the right hand side of \eqref{local-ineq}.
\end{itemize}
It turns out that the above two tasks can be unified into essentially one single task. Indeed,  the proof of $\E[\|\mathcal{M}_{k_0}\|_{\ell^q}^p]<\infty$ relies on   the asymptotic behavior of $\mathbb{E}[\|\mathscr{Z}_\mathbf{I} \|_{\ell^q}^p]$  for the random vectors $\mathscr{Z}_\mathbf{I}$ defined in a similar way to  the random vector $\mathscr{Y}_{\mathbf{I}}$ in \eqref{def-Y-abs}:  for any integer $m\geq0$ and any $b$-adic sub-cube $\mathbf{I}\in \mathscr{D}_{m}^b$,  the $\mathscr{G}_m$-measurable random vector $\mathscr{Z}_\mathbf{I} =\mathscr{Z}_{\tau,\mathbf{I}}:= \big(\mathscr{Z}_\mathbf{I}(\mathbf{n})\big)_{\mathbf{n}\in\mathbb{Z}^{d}}$ is defined by
\begin{align}\label{def-Z-abs}
	\mathscr{Z}_\mathbf{I}(\mathbf{n}):=|\mathbf{n}|^{\tau/2}\int_{\mathbf{I}}\Big[\prod_{j=0}^{m}\XX_j(\mathbf{t})\Big]e^{-2\pi i\mathbf{n}\cdot\mathbf{t}}\mathrm{d}\mathbf{t}.
\end{align}

Therefore, the following key technical lemmas  (Lemmas \ref{UB-YZW-abs} and \ref{UB-ZW-abs}) on  the asymptotic behaviors of $\mathbb{E}[\|\mathscr{Y}_\mathbf{I} \|_{\ell^q}^p]$ and $\mathbb{E}[\|\mathscr{Z}_\mathbf{I} \|_{\ell^q}^p]$ as  $|\mathbf{I}|\to 0$  lie at the heart of our work.  

By the elementary Lemma~\ref{lem-small-p} below, Assumption~\ref{assum-Lp0-abs} implies that  for sufficiently large integer $j_0>0$,  if $1<p\le p_0\le 2$, then
\[
\sup_{0\leq j\leq j_0}\sup_{\mathbf{t}\in[0,1)^d}\mathbb{E}[\XX_j(\mathbf{t})^{p}]<\infty\anand
\sup_{j>j_0}\sup_{\mathbf{t}\in[0,1)^d}\mathbb{E}[\XX_j(\mathbf{t})^{p}]<b^{d(p-1)}.
\]
In what follows, we fix a large enough integer $j_0>0$ and denote
\begin{align}\label{def-phi-abs}
	\zeta(p)=\zeta_{j_0}(p):=d(p-1)\log b-\log\Big(\sup_{j>j_0}\sup_{\mathbf{t}\in[0,1)^d}\mathbb{E}[\XX_j(\mathbf{t})^p]\Big)>0.
\end{align}

\begin{lemma}\label{UB-YZW-abs}
	Under the Assumptions~\ref{assum-Lp0-abs} and \ref{assum-alp0-abs}, for any fixed $\tau\in(0,\mathcal{L}_F)$, if $1<p\leq p_0$ and $q>\frac{2d}{2\alpha_0-\tau}$, then there exists a constant $C = C(d,b,\tau,p,q,j_0)>0$ such that for any $b$-adic sub-cube $\mathbf{I}\subset [0,1)^d$,  
	\[
	\E[\|\mathscr{Z}_\mathbf{I}\|_{\ell^q}^p] \le C |\mathbf{I}|^{ 1 + \frac{\zeta(p)}{d\log b}-\frac{\tau p}{2d}-\frac{p}{q}}.
\]
In other words, for each $m\geq0$ and any $\mathbf{I}\in \mathscr{D}_{m}^b$,
	\begin{align}\label{Z-exponent}
	\E[\|\mathscr{Z}_\mathbf{I}\|_{\ell^q}^p]\leq C\cdot b^{-m [d + \frac{\zeta(p)}{\log b}-\frac{\tau p}{2}-\frac{dp}{q}] }.
	\end{align}
\end{lemma}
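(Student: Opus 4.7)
Writing $g_{\mathbf I}(\mathbf n):=\int_{\mathbf I}\prod_{j=0}^m P_j(\mathbf t)e^{-2\pi i\mathbf n\cdot\mathbf t}\,\mathrm{d}\mathbf t$, so that $\mathscr Z_{\mathbf I}(\mathbf n)=|\mathbf n|^{\tau/2}g_{\mathbf I}(\mathbf n)$, the plan is to split $\|\mathscr Z_{\mathbf I}\|_{\ell^q}^q$ at the natural dyadic scale $N:=b^m=|\mathbf I|^{-1/d}$ attached to $\mathbf I\in\mathscr D_m^b$: below $N$, a trivial size bound together with lattice counting will saturate the target exponent, while above $N$, the Fourier decay forced by the $L^{p_0}$-H\"older Assumption~\ref{assum-alp0-abs} takes over. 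The threshold $q>2d/(2\alpha_0-\tau)$ will appear precisely as the convergence condition for the high-frequency tail.

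First I would establish the $L^p$-moment bound
\[
\E[\mu_m(\mathbf I)^p]\leq C|\mathbf I|^{1+\zeta(p)/(d\log b)}
\]
by applying Jensen's inequality $(\int_\mathbf I f)^p\leq|\mathbf I|^{p-1}\int_\mathbf I f^p$ to $f(\mathbf t)=\prod_{j=0}^m P_j(\mathbf t)$, then taking expectation, using independence of the $P_j$'s together with the definition \eqref{def-phi-abs} of $\zeta(p)$ (which yields $\prod_{j=0}^m\E[P_j(\mathbf t)^p]\leq Cb^{m[d(p-1)-\zeta(p)/\log b]}$). Next, decompose $\|\mathscr Z_\mathbf I\|_{\ell^q}^q=S_{\mathrm{low}}+S_{\mathrm{high}}$ at $|\mathbf n|=N$, and exploit the subadditivity $(a+b)^{p/q}\leq a^{p/q}+b^{p/q}$ (valid since $p/q\leq 1$). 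For the low part, the trivial bound $|g_{\mathbf I}(\mathbf n)|\leq\mu_m(\mathbf I)$ together with $\sum_{|\mathbf n|\leq N}|\mathbf n|^{\tau q/2}\leq CN^{d+\tau q/2}$ gives $S_{\mathrm{low}}^{p/q}\leq C\mu_m(\mathbf I)^p N^{dp/q+\tau p/2}$, so the moment bound delivers
\[
\E[S_{\mathrm{low}}^{p/q}]\leq C|\mathbf I|^{1+\frac{\zeta(p)}{d\log b}-\frac{\tau p}{2d}-\frac{p}{q}},
\]
which is exactly the target exponent.

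For the high-frequency piece I would use the embedding $\ell^p\hookrightarrow\ell^q$ (valid for $p\leq q$) in the form $(\sum|a_\mathbf n|^q)^{p/q}\leq\sum|a_\mathbf n|^p$ to reduce $\E[S_{\mathrm{high}}^{p/q}]$ to $\sum_{|\mathbf n|>N}|\mathbf n|^{\tau p/2}\,\E[|g_{\mathbf I}(\mathbf n)|^p]$. The heart of the argument is then the $L^p$-Fourier-decay estimate
\[
\E[|g_{\mathbf I}(\mathbf n)|^p]\leq C\,\E[\mu_m(\mathbf I)^p]\cdot(|\mathbf n|/N)^{-\alpha_0 p},\qquad |\mathbf n|>N,
\]
which would be derived from the oscillation identity $g_{\mathbf I}(\mathbf n)=\tfrac{1}{2}\int[f(\mathbf t)-f(\mathbf t+\mathbf h)]e^{-2\pi i\mathbf n\cdot\mathbf t}\,\mathrm{d}\mathbf t$, with $f=(\prod_j P_j)\mathds 1_{\mathbf I}$ and $|\mathbf h|\asymp 1/|\mathbf n|$, combined with the telescoping identity
\[
\prod_{j=0}^m P_j(\mathbf t+\mathbf h)-\prod_{j=0}^m P_j(\mathbf t)=\sum_{k=0}^m\prod_{j<k}P_j(\mathbf t+\mathbf h)\cdot[P_k(\mathbf t+\mathbf h)-P_k(\mathbf t)]\cdot\prod_{j>k}P_j(\mathbf t),
\]
H\"older's inequality in $\omega$, and Assumption~\ref{assum-alp0-abs} applied to each level-$k$ increment (contributing $(b^k|\mathbf h|)^{\alpha_0}$ when $b^k|\mathbf h|\lesssim 1$, with the trivial $L^{p_0}$-moment bound used otherwise). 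Summing $|\mathbf n|^{\tau p/2-\alpha_0 p}$ over $|\mathbf n|>N$ converges precisely when $\alpha_0>\tau/2+d/q$, i.e.\ under the hypothesis $q>2d/(2\alpha_0-\tau)$, and yields the same exponent as the low-frequency bound.

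The principal obstacle is this last step: the H\"older constants $b^{j\alpha_0}$ in Assumption~\ref{assum-alp0-abs} grow with $j$, so the telescoping sum over $k$ must be truncated carefully at the critical level $k_\ast$ with $b^{k_\ast}\asymp|\mathbf n|$, beyond which the H\"older estimate is no longer better than the uniform $L^{p_0}$-moment control from Assumption~\ref{assum-Lp0-abs}. A related minor technicality is the boundary discrepancy $\mathbf I\triangle(\mathbf I-\mathbf h)$ (arising from the indicator $\mathds 1_\mathbf I$ in $f$), whose measure $|\mathbf h|\,b^{-m(d-1)}$ is strictly smaller than $|\mathbf I|$ for $|\mathbf n|>N$, so it produces a lower-order contribution that is absorbed into the main estimate once $\alpha_0\leq 1$.
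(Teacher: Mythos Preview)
Your low-frequency argument is fine and matches the paper exactly. The high-frequency argument, however, has a genuine gap at the $\ell^p\hookrightarrow\ell^q$ step: this embedding is too lossy and does \emph{not} produce the claimed exponent.

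Concretely, after you pass to $\sum_{|\mathbf n|>N}|\mathbf n|^{\tau p/2}\,\E[|g_{\mathbf I}(\mathbf n)|^p]$ and insert the (correct) decay $\E[|g_{\mathbf I}(\mathbf n)|^p]\lesssim\E[\mu_m(\mathbf I)^p](N/|\mathbf n|)^{\alpha_0 p}$, the lattice sum $\sum_{|\mathbf n|>N}|\mathbf n|^{\tau p/2-\alpha_0 p}$ over $\Z^d$ converges iff $\alpha_0>\tau/2+d/p$, not $d/q$ as you wrote. Since $p\le p_0\le 2\le q$, this is a strictly stronger requirement, and the lemma's hypothesis $q>2d/(2\alpha_0-\tau)$ does not guarantee it. Worse, even when the sum does converge it evaluates to $\asymp N^{d+\tau p/2-\alpha_0 p}$, so your bound becomes
\[
\E[S_{\mathrm{high}}^{p/q}]\lesssim \E[\mu_m(\mathbf I)^p]\,N^{d+\tau p/2}
=|\mathbf I|^{\frac{\zeta(p)}{d\log b}-\frac{\tau p}{2d}},
\]
which is larger than the target $|\mathbf I|^{1+\frac{\zeta(p)}{d\log b}-\frac{\tau p}{2d}-\frac{p}{q}}$ by the divergent factor $|\mathbf I|^{-(1-p/q)}$. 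The error is structural: the inequality $(\sum|a_{\mathbf n}|^q)^{p/q}\le\sum|a_{\mathbf n}|^p$ loses a factor of roughly $(\#\{\mathbf n\})^{1-p/q}$, which is exactly the discrepancy you see.

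The paper avoids this by never interchanging $\E$ with the $\ell^q$-sum. Instead it establishes a \emph{pointwise separation-of-variables} bound: within each dyadic shell $b^{m+L-1}<|\mathbf n|\le b^{m+L}$ one writes $|\mathscr Z_{\mathbf I}(\mathbf n)|\le v_L(\mathbf n)R_L+\sum_\beta w_{L,\beta}(\mathbf n)Q_{L,\beta}$, where $v_L,w_{L,\beta}$ are deterministic and $R_L,Q_{L,\beta}$ are random but \emph{independent of $\mathbf n$}. This is achieved by a discrete-time approximation of $\prod_j P_j$ on a grid of mesh $b^{-(m+L)}$ (your oscillation shift $\mathbf h$ depends on $\mathbf n$, which is precisely what prevents the separation), together with an Abel summation to handle the piecewise-constant remainder. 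One can then take the $\ell^q$-norm of the deterministic factor (producing lattice counts to the power $1/q$, hence the correct $d/q$) and the $L^p(\PP)$-norm of the random factor separately.
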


\begin{lemma}\label{UB-ZW-abs}
	Under the Assumptions~\ref{assum-Lp0-abs} and \ref{assum-alp0-abs}, for any fixed $\tau\in(0,\mathcal{L}_F)$, if $1<p\leq p_0$ and $q>\frac{2d}{2\alpha_0-\tau}$, then there exists a constant $C = C(d,b,\tau,p,q,j_0)>0$ such that for any $b$-adic sub-cube $\mathbf{I}\subset [0,1)^d$,  
\[
	\E[\|\mathscr{Y}_\mathbf{I}\|_{\ell^q}^p] \le C |\mathbf{I}|^{ 1 + \frac{\zeta(p)}{d\log b}-\frac{\tau p}{2d}-\frac{p}{q}}. 
\]
	In other words, for each $k\ge 1$ and any $\mathbf{I}\in \mathscr{D}_{k-1}^b$,
	\begin{align}\label{Y-exponent}
	\E[\|\mathscr{Y}_\mathbf{I}\|_{\ell^q}^p] \leq C \cdot b^{-k [d+\frac{\zeta(p)}{\log b}-\frac{\tau p}{2}-\frac{dp}{q}]}.
	\end{align}
\end{lemma}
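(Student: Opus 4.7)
The argument parallels Lemma~\ref{UB-YZW-abs} but exploits the conditional centering
\[
\mathbb{E}\bigl[\mathring{\XX}_k(\mathbf{t}) \,\big|\, \mathscr{G}_{k-1}\bigr] = 0,
\]
noted in \eqref{Y-I-cond-abs} and following from the independence of $\XX_k$ from $\mathscr{G}_{k-1}$ in \eqref{def-Xm-abs}. This centering sharpens the naive bound $|\mathbf{I}|^{1+[\cdots]/d}=b^{-(k-1)[\cdots]}$, which would correspond to $|\mathbf{I}|=b^{-(k-1)d}$ for $\mathbf{I}\in\mathscr{D}_{k-1}^b$, to the advertised $b^{-k[d + \zeta(p)/\log b - \tau p/2 - dp/q]}$ in \eqref{Y-exponent}. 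Intuitively, the integrand $g_\mathbf{I}(\mathbf{t}) := \bigl[\prod_{j=0}^{k-1}\XX_j(\mathbf{t})\bigr]\mathring{\XX}_k(\mathbf{t})\mathbbm{1}_\mathbf{I}(\mathbf{t})$ lives on the scale $b^{-k}$ intrinsic to $\mathring{\XX}_k$, one generation finer than the outer cube $\mathbf{I}$.

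Following the scheme in Figure~\ref{figure-procedure-proving}, I plan to combine a Littlewood-Paley decomposition in Fourier space with a discrete-time approximation of $g_\mathbf{I}$. First, split $\mathbb{Z}^d\setminus\{0\} = \bigsqcup_{\ell\geq 0} A_\ell$ with $A_\ell = \{\mathbf{n} : b^\ell \le |\mathbf{n}| < b^{\ell+1}\}$, on which $|\mathbf{n}|^{\tau/2}\asymp b^{\ell\tau/2}$. Second, for each $\ell\geq 0$ partition $\mathbf{I}$ into $b$-adic sub-cubes $\mathbf{J}\in \mathscr{D}_{k+\ell}^b$ of diameter $\asymp b^{-(k+\ell)}$ matched to the Fourier scale, and decompose $g_\mathbf{I}(\mathbf{t}) = g_\mathbf{I}(\mathbf{t}_\mathbf{J}) + [g_\mathbf{I}(\mathbf{t})-g_\mathbf{I}(\mathbf{t}_\mathbf{J})]$ on each $\mathbf{J}$ at a reference point $\mathbf{t}_\mathbf{J}$. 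The residual is controlled by Assumption~\ref{assum-alp0-abs} applied to both $\XX_j$ (for $j\leq k-1$) and to $\mathring{\XX}_k$, yielding $L^{p_0}$-decay of order $b^{-\ell\alpha_0}$ per refinement. The piecewise-constant principal term is estimated by conditioning on $\mathscr{G}_{k-1}$ to extract $F_{k-1}:=\prod_{j=0}^{k-1}\XX_j$, then bounding its conditional $L^p$-moment on each sub-cube via Assumption~\ref{assum-Lp0-abs}, which supplies the $\zeta(p)$-factor. The Hausdorff-Young inequality converts the $\ell^q$-sum over $A_\ell$ into an $L^{q'}$-norm on each sub-cube, and summing over $\ell\geq 0$ produces a geometric series which converges precisely because of the hypothesis $q > 2d/(2\alpha_0-\tau)$.

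The main obstacle is harvesting the centering cleanly. Since Assumption~\ref{assum-indep-abs} is \emph{not} among the hypotheses, one cannot rely on independence of $\mathring{\XX}_k$ across children of $\mathbf{I}$; the extra generation of decay must come solely from $\mathbb{E}[\mathring{\XX}_k\mid \mathscr{G}_{k-1}]=0$ combined with the Hölder regularity of $\mathring{\XX}_k$ at its natural scale $b^{-k}$. Concretely, I plan to view $\mathscr{Y}_\mathbf{I}$ as the $k$-th increment of the $\mathscr{G}_\bullet$-martingale $(\mathscr{Z}_\mathbf{I}^{(j)})_{j\ge 0}$ defined coordinate-wise by $\mathscr{Z}_\mathbf{I}^{(j)}(\mathbf{n}) := |\mathbf{n}|^{\tau/2}\int_\mathbf{I} \bigl[\prod_{i=0}^j \XX_i(\mathbf{t})\bigr] e^{-2\pi i \mathbf{n}\cdot\mathbf{t}}\,\mathrm{d}\mathbf{t}$, and applying the scalar martingale-type-$p$ inequality \eqref{def-Mtype-scalar} together with careful bookkeeping across three nested scales (Fourier annulus $b^\ell$, partition scale $b^{-(k+\ell)}$, outer cube $b^{-(k-1)}$) to reproduce the precise exponent in \eqref{Y-exponent}.
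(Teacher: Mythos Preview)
Your plan rests on a misdiagnosis. You write that the conditional centering $\mathbb{E}[\mathring{\XX}_k\mid\mathscr{G}_{k-1}]=0$ ``sharpens the naive bound $b^{-(k-1)[\cdots]}$ to the advertised $b^{-k[\cdots]}$''. But these two quantities differ by the fixed factor $b^{d+\zeta(p)/\log b-\tau p/2-dp/q}$, which depends only on $d,b,\tau,p,q,j_0$ and is absorbed into the constant $C$ of the statement. No sharpening is needed, and the centering of $\mathring{\XX}_k$ plays \emph{no role whatsoever} in this lemma (it is used only later, in Lemma~\ref{dec-vm-abs}, to apply the martingale type~$p$ inequality). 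Consequently the entire machinery you propose---Hausdorff--Young, conditioning on $\mathscr{G}_{k-1}$, scalar martingale type~$p$ for a single cube, tracking three nested scales---is aimed at a nonexistent obstacle.

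The paper's proof is a three-line reduction to Lemma~\ref{UB-YZW-abs}, already flagged in the remark following the statement. For $\mathbf{I}\in\mathscr{D}_{k-1}^b$ one writes
\[
\mathscr{Y}_\mathbf{I} \;=\; \Bigl(\sum_{\mathbf{J}\in\mathscr{D}_k^b(\mathbf{I})}\mathscr{Z}_\mathbf{J}\Bigr) - \mathscr{Z}_\mathbf{I},
\]
which is immediate from $\mathring{\XX}_k=\XX_k-1$ and the partition of $\mathbf{I}$ into its $b^d$ children. The triangle inequality in $L^p(\ell^q)$ together with Lemma~\ref{UB-YZW-abs} applied to $\mathscr{Z}_\mathbf{I}$ (level $k-1$) and to each $\mathscr{Z}_\mathbf{J}$ (level $k$) gives the bound; the level-$(k-1)$ term contributes $b^{-(k-1)[\cdots]}\lesssim b^{-k[\cdots]}$ with the mismatch absorbed into $C$, and the $b^d$ children contribute $b^d\cdot b^{-k[\cdots]}$ likewise. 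Your elaborate programme would at best rederive Lemma~\ref{UB-YZW-abs} with $\prod_{j=0}^{k-1}\XX_j\cdot\mathring{\XX}_k$ in place of $\prod_{j=0}^{k}\XX_j$, which is unnecessary once the identity above is observed.
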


\begin{remark*}
 Lemma~\ref{UB-ZW-abs} will follow from Lemma~\ref{UB-YZW-abs} by a  simple observation: for any $k\ge 1$ and $\mathbf{I}\in \mathscr{D}_{k-1}^b$,
\[
\mathscr{Y}_\mathbf{I} =\Big( \sum_{\mathbf{J}\in \mathscr{D}_k^b(\mathbf{I})}  \mathscr{Z}_\mathbf{J}\Big)  -  \mathscr{Z}_\mathbf{I}\quad \text{with $\mathscr{D}_{k}^b(\mathbf{I})= \Big\{\mathbf{J}\subset\mathbf{I}:\mathbf{J}\in\mathscr{D}_{k}^b\Big\}$}. 
\]
The importance of the upper estimates \eqref{Z-exponent} and \eqref{Y-exponent} is that,   for any $\tau\in(0,\mathcal{L}_F)$, there exists large enough  integer $j_0$ and   $p,q$ with  $1<p\leq p_0\leq\max\{2,\frac{2d}{2\alpha_0-\tau}\}<q<\infty$ such that   (see  \S \ref{sec-mart-type} for details)
\[
	\frac{\zeta(p)}{\log b}-\frac{\tau p}{2}-\frac{dp}{q} = \frac{\zeta_{j_0}(p)}{\log b}-\frac{\tau p}{2}-\frac{dp}{q}>0.
\]
Hence the quantities $\E[\|\mathscr{Z}_\mathbf{I}\|_{\ell^q}^p]/|\mathbf{I}|$ and $\E[\|\mathscr{Y}_\mathbf{I}\|_{\ell^q}^p]/|\mathbf{I}|$  decay exponentially with respect to the level of the $b$-adic sub-cube $\mathbf{I}\subset [0,1)^d$. 
\end{remark*}

\section{Fourier dimensions of Gaussian multiplicative chaos}\label{S-dDGMC-abs}
This section is devoted to the proofs of Theorems~\ref{thm-gen-12DGMC} and \ref{thm-gen-highD}.  We emphasize that, here in the GMC setting with log-correlated kernels, we go beyond the $T$-martingale setting. The main idea is  that, by using a basic comparison result on log-correlated  kernels in any bounded sets of $\R^d$,  we can reduce the study of  non-necessarily $\sigma$-positive type kernels to very special $\sigma$-positive type kernels. 

The main strategy of the proofs of Theorems~\ref{thm-gen-12DGMC} and \ref{thm-gen-highD} is as follows. We first introduce a notion of $\sigma$-regular kernels  and construct  $*$-scale invariant  $\sigma$-regular kernels in \S\ref{sec-con-reg}. Then in Proposition~\ref{prop-kernel-ass},  we prove  Assumptions~\ref{assum-indep-abs}, \ref{assum-Lp0-abs} and \ref{assum-alp0-abs} for the exponential Gaussian processes associated with the Gaussian fields with $\sigma$-regular kernels.  Finally,  Theorems~\ref{thm-gen-12DGMC} and \ref{thm-gen-highD} will be proved by a simple comparison result for GMC measures  (see \S\ref{sec-compare}) on the level of kernels and  by  Proposition~\ref{prop-kernel-ass} and our unified Theorem~\ref{Fou-Dec-abs}.

\subsection{The notion of $\sigma$-regular kernels}\label{sec-sigma-reg}
Let $K(\mathbf{t}, \mathbf{s})$ be the covariance kernel of a Gaussian field on $[0,1)^d$.     For simplicity,  we always assume that $K$ is stationary, that is (slightly abusing the notation),
\[
K(\mathbf{t}, \mathbf{s})= K(\mathbf{t}-\mathbf{s}), \quad \mathbf{t}, \mathbf{s} \in [0,1)^d. 
\]
 Following Kahane's $\sigma$-positive type condition on the kernels, we assume that 
\begin{align}\label{K-sigma-pos}
K(\mathbf{t}) =  \sum_{j=0}^\infty K_j(\mathbf{t}) \quad \text{for all $\mathbf{t} \in (-1,1)^d \setminus \{\mathbf{0}\}$},
\end{align}
where $K_j$ are positive definite  and non-negative \footnote{In Kahane's seminal works \cite{Kah85a, Kah87a},   the non-negative  assumption on all $K_j$  in  \eqref{K-sigma-pos} implies that the GMC measure constructed as in  \S\ref{sec-T-mart} does not depend on the decomposition of the kernel.   However, we mention that such uniqueness of GMC is not relevant in this work and hence one  may also drop this non-negative assumption.  } bounded continuous functions on  $[-1,1]^d$. 

Let $\psi(\mathbf{t})$ be the centered Gaussian field  on $[0,1)^d$ with covariance kernel $K(\mathbf{t}, \mathbf{s})= K(\mathbf{t} - \mathbf{s})$. The corresponding sub-critical GMC measure $\mathrm{GMC}_K^\gamma$ is informally written as 
\[
\mathrm{GMC}_K^\gamma(\mathrm{d}\mathbf{t})  \xlongequal{\text{informally}}e^{\gamma \psi (\mathbf{t}) - \frac{\gamma^2}{2} \E[\psi(\mathbf{t})^2]} \mathrm{d}\mathbf{t}. 
\]
Here we briefly recall Kahane's original construction of GMC \cite{Kah85a, Kah87a}.   The decomposition \eqref{K-sigma-pos}   gives a decomposition of $\psi (\mathbf{t})$ into the sum of independent centered  Gaussian  processes $\psi_j(\mathbf{t})$ with  covariance kernels $K_j(\mathbf{t}, \mathbf{s})= K_j(\mathbf{t}-\mathbf{s})$.   Following Kahane,  
given a parameter $\gamma>0$, define a sequence of independent stochastic processes: 
\begin{align}\label{Field-dec}
\Big\{\XX_{\gamma, j}(\mathbf{t})=   \exp\big( \gamma \psi_j(\mathbf{t}) - \frac{\gamma^2}{2} \E[\psi_j(\mathbf{t})^2] \big): \mathbf{t} \in [0,1)^d \Big\}_{j\ge 0}.
\end{align}
Then,  as  in \S\ref{sec-T-mart} for general $T$-martingales, the GMC measure $\mathrm{GMC}_K^\gamma$ is defined as the limiting random measure in the sense of weak convergence:
\[
\mathrm{GMC}_K^\gamma(\mathrm{d}\mathbf{t}) =    \lim_{m\to\infty}  \Big[\prod_{j=0}^m  \XX_{\gamma, j}(\mathbf{t}) \Big] \mathrm{d}\mathbf{t}    =  \lim_{m\to\infty}  \Big[\prod_{j=0}^m e^{\gamma \psi_j(\mathbf{t}) - \frac{\gamma^2}{2} \E[\psi_j(\mathbf{t})^2]} \Big] \mathrm{d}\mathbf{t}. 
\]

\begin{definition}[$\sigma$-regularity]\label{def-sigma-reg}
Let $\alpha_0 \in (0, 1]$ and let $b\ge 2$ be an integer.  A stationary kernel $K$  is called {\it $\sigma^b(\alpha_0)$-regular}  if it admits a  decomposition  \eqref{K-sigma-pos} such that  the following conditions (H1), (H2) and (H3) are satisfied (in this case, we say that the  decomposition  \eqref{K-sigma-pos} is $\sigma^b(\alpha_0)$-admissible): 
\begin{itemize}
\item[(H1)] Shrinking support:  There exists $j_0\ge 0$ such that for each $j\ge j_0$, the support of $K_j$ satisfies
\begin{align}\label{supp-Kj}
\supp(K_j) \subset \bar{B}(\mathbf{0}, b^{-j}),
\end{align}
  where $\bar{B}(\mathbf{0}, \delta) \subset \R^d$ denotes the centered  closed Euclidean ball of radius $\delta>0$; 
\item[(H2)] Eventual upper bound of $K_j(\mathbf{0})$: 
\begin{align}\label{Kj-bdd}
\limsup_{j\to\infty} K_j(\mathbf{0})  =   \log b;
\end{align}
\item[(H3)] Rescaled uniform regularity at  the origin $\mathbf{0}$: 
\begin{align}\label{res-0-reg}
\sup_{j\ge 0}\sup_{0<|\mathbf{t}| \le b^{-j}}  \frac{ |K_j(\mathbf{t}) -K_j(\mathbf{0})|}{b^{2j\alpha_0} |\mathbf{t}|^{ 2\alpha_0}}<\infty. 
\end{align}
\end{itemize}
 We will also say that $K$ is {\it sharply-$\sigma^b(\alpha_0)$-regular},  if the above condition \eqref{Kj-bdd} in (H2) is replaced by the following stronger condition (H2'): 
 \begin{itemize}
 \item[(H2')] Eventual constant of $K_j(\mathbf{0})$: there exists $j_0\ge 0$ such that for all $j\ge j_0$,
 \begin{align}\label{Kj-const}
 K_j(\mathbf{0}) = \log b.
 \end{align}
 \end{itemize}
\end{definition}

{\flushleft \bf Terminology:} A stationary kernel $K$ is called  $\sigma(\alpha_0)$-regular if it is $\sigma^b(\alpha_0)$-regular for some integer $b\geq2$, and  called $\sigma$-regular if it is $\sigma(\alpha_0)$-regular for some $\alpha_0\in (0,1]$.  The sharp-$\sigma$-regularity is defined similarly. Hence  we have the following terminologies:
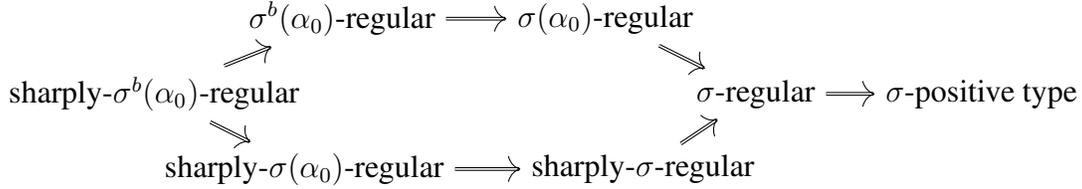
\begin{figure}[h]
\begin{center}
\begin{tikzpicture}
    \node (A) at (0,0) {sharply-$\sigma^b(\alpha_0)$-regular};
    \node (B) at (2.5,1) {$\sigma^b(\alpha_0)$-regular};
    \node (C) at (6,1) {$\sigma(\alpha_0)$-regular};
    \node(G) at (2,-1){sharply-$\sigma(\alpha_0)$-regular};
    \node (D) at (6.5,-1) {sharply-$\sigma$-regular};
    \node (E) at (8,0) {$\sigma$-regular};
    \node (F) at (11,0) {$\sigma$-positive type};
    \draw[->, double] (A) -- (G);
    \draw[->, double] (G) -- (D);
    \draw[->, double] (A) -- (B);
    \draw[->, double] (B) -- (C);
    \draw[->, double] (C) -- (E);
    \draw[->, double] (D) -- (E);
    \draw[->, double] (E) -- (F);
\end{tikzpicture}
\end{center}
\caption{The terminologies of $\sigma$-regular.}
\label{figure-sigma-regular}
\end{figure}

\begin{remark*}
	It is important to note that we only require the rescaled uniform regularity \eqref{res-0-reg} at the single point $\mathbf{0}$. Indeed,   this H\"older condition  cannot  be uniformly extended to any neighborhood of $\mathbf{0}$ (except that $K_j$ is a constant) once $\alpha_0\in (1/2, 1]$.  
\end{remark*}

\begin{remark*}
	One can also define a similar notion of  $\sigma$-regularity for non-stationary kernels. However, although we shall indeed deal with non-stationary kernels,  instead of using  a  notion of $\sigma$-regularity of non-stationary kernels, we are going to use  Assumptions~\ref{assum-indep-abs}, \ref{assum-Lp0-abs} and \ref{assum-alp0-abs} for  the associated sequence of independent exponential Gaussian processes related to certain modified non-stationary kernels (see the definition of  the modified kernels $L_\lambda$ in \eqref{def-L-lambda} and the proofs of Lemmas~\ref{lem-K-lambda} and \ref{lem-3-ass-new} below for details). 
\end{remark*}

\begin{lemma}\label{lem-most-log}
Let  $K$ be a  $\sigma$-regular stationary kernel as in Definition~\ref{def-sigma-reg}. Then 
\begin{align}\label{up-log}
 K(\mathbf{t})  =   \big(1 + o(1)\big) \log \frac{1}{|\mathbf{t}|}, \quad \text{as $\mathbf{t} \to \mathbf{0}$}. 
\end{align}
Moreover, if $K$ is  sharply-$\sigma$-regular, then 
\begin{align}\label{eq-log-bdd}
K(\mathbf{t})= \log \frac{1}{|\mathbf{t}|} + O(1), \quad \text{as $\mathbf{t} \to \mathbf{0}$}. 
\end{align}
\end{lemma}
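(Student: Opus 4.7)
The plan is to insert the $\sigma^b(\alpha_0)$-admissible decomposition $K(\mathbf{t})=\sum_{j\ge 0}K_j(\mathbf{t})$ into the quantity to be estimated and peel off the three hypotheses in the natural order: truncate the series via (H1), diagonalise via (H3), and finally evaluate the surviving sum of $K_j(\mathbf{0})$'s using (H2) or (H2').

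\emph{Truncation.} For $\mathbf{t}$ near $\mathbf{0}$, set $N=N(\mathbf{t}):=\lfloor \log_b(1/|\mathbf{t}|)\rfloor$, so that $b^{-(N+1)}<|\mathbf{t}|\le b^{-N}$. The shrinking-support condition (H1) forces $K_j(\mathbf{t})=0$ for every $j\ge j_0$ with $j>N$, while the initial block $\sum_{j<j_0}K_j(\mathbf{t})$ is a finite sum of bounded continuous functions and is therefore $O(1)$. Thus
\[
K(\mathbf{t}) \;=\; \sum_{j=j_0}^{N(\mathbf{t})}K_j(\mathbf{t}) \;+\; O(1).
\]

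\emph{Diagonalisation.} For each $j$ in the surviving range one has $|\mathbf{t}|\le b^{-j}$, so the rescaled H\"older bound (H3) at the origin gives $|K_j(\mathbf{t})-K_j(\mathbf{0})|\lesssim b^{2j\alpha_0}|\mathbf{t}|^{2\alpha_0}$ with a uniform constant. Summing the geometric series and using $b^{-N(\mathbf{t})}\asymp |\mathbf{t}|$,
\[
\sum_{j=j_0}^{N(\mathbf{t})}\bigl|K_j(\mathbf{t})-K_j(\mathbf{0})\bigr| \;\lesssim\; |\mathbf{t}|^{2\alpha_0}\,b^{2N(\mathbf{t})\alpha_0} \;=\; O(1),
\]
so that the problem is reduced to
\[
K(\mathbf{t}) \;=\; \sum_{j=j_0}^{N(\mathbf{t})}K_j(\mathbf{0}) \;+\; O(1).
\]

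\emph{Evaluation of the diagonal sum.} In the sharply-$\sigma^b(\alpha_0)$-regular case, (H2') forces $K_j(\mathbf{0})=\log b$ for all $j\ge j_0$, whence the sum equals $(N(\mathbf{t})-j_0+1)\log b = \log(1/|\mathbf{t}|)+O(1)$, which is precisely \eqref{eq-log-bdd}. In the merely $\sigma$-regular case, (H2) supplies $K_j(\mathbf{0})=(1+o(1))\log b$ as $j\to\infty$, hence $\sum_{j=j_0}^{N(\mathbf{t})}K_j(\mathbf{0})=(1+o(1))N(\mathbf{t})\log b = (1+o(1))\log(1/|\mathbf{t}|)$, yielding \eqref{up-log}.

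The only genuinely delicate point I anticipate is the bookkeeping in the diagonalisation step: one must verify that the geometric sum $\sum_{j_0\le j\le N(\mathbf{t})}b^{2j\alpha_0}$ really does absorb the $|\mathbf{t}|^{2\alpha_0}$ factor uniformly in $\mathbf{t}$, which hinges on the dominant term coming from $j=N(\mathbf{t})$ and on $b^{-N(\mathbf{t})}$ being comparable to $|\mathbf{t}|$. Everything else reduces to unpacking the three defining hypotheses of $\sigma$-regularity and adding up an arithmetic progression.
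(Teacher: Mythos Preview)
Your three-step argument---truncate via (H1), diagonalise via (H3) and a geometric sum, then evaluate $\sum_{j\le N}K_j(\mathbf{0})$ via (H2)/(H2')---is exactly the paper's proof, and the sharply-regular case is handled correctly.

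One point needs care in the merely $\sigma$-regular case: you assert that ``(H2) supplies $K_j(\mathbf{0})=(1+o(1))\log b$ as $j\to\infty$,'' but (H2) states only $\limsup_{j\to\infty}K_j(\mathbf{0})=\log b$, which is strictly weaker than convergence---nothing in the definition rules out, say, $K_j(\mathbf{0})$ taking the value $0$ along a subsequence. From the $\limsup$ alone one obtains $\sum_{j\le N}K_j(\mathbf{0})\le (1+o(1))N\log b$, i.e.\ the upper half of \eqref{up-log}, but not the matching lower bound. The paper's own proof is equally terse at this juncture (it simply writes ``under the condition \eqref{Kj-bdd}, we obtain the desired equality \eqref{up-log}'' without further argument), so your write-up is no weaker than the original; but the explicit claim $K_j(\mathbf{0})\to\log b$ should not be recorded, as it does not follow from the stated hypothesis.
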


The existence and  basic construction of $\sigma$-regular kernels will be postponed to \S\ref{sec-con-reg} below. Here we mention that, we are going to construct a natural class of  sharply-$\sigma$-regular kernels of the form 
\[
K(\mathbf{t})= \log_{+}\Big(\frac{1}{|\mathbf{t}|}\Big) + g(|\mathbf{t}|),  \quad \mathbf{t} \in \bar{B}(\mathbf{0},1)\setminus\{\mathbf{0}\},\quad \text{with $g\in C^1([0,1])$ and $\sup_{x\in (0, 1]}\frac{|g(x)-g(0)|}{x^2} <\infty$}. 
\]

We shall also mention that, if $d = 1$, inspired by the proof  that the exact log-kernel is of $\sigma$-positive type (see e.g., Rhodes-Vargas  \cite[Proposition~2.15]{RV14}), we can also show that it is sharply-$\sigma^b(1/2)$-regular.   

\begin{lemma}\label{lem-log-sigma}
Let $d = 1$ and consider the exact log-kernel 
\[
K_{\mathrm{exact}}(t) = \log_{+}\Big(\frac{1}{|t|}\Big), \quad t \in \R. 
\]
Then the kernel $K_{\mathrm{exact}}$ is sharply-$\sigma^b(1/2)$-regular for any integer $b\ge 2$. 
\end{lemma}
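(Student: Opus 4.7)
The plan is to construct an explicit $\sigma^b(1/2)$-admissible decomposition of $K_{\mathrm{exact}}$ by $b$-adically discretizing a continuous scale representation, taking as building block the one-dimensional Fej\'er triangle
\[
k(t) := (1-|t|)_+,
\]
which is non-negative, continuous, supported in $[-1,1]$, and positive definite (its Fourier transform is $(\sin\pi\xi/(\pi\xi))^2 \ge 0$). The starting point is the classical identity
\[
\log\frac{1}{|t|} = k(t) + \int_1^{\infty} \frac{k(ut)}{u}\,\mathrm{d}u, \qquad 0<|t|<1,
\]
which follows by splitting the integral at the cutoff $u = 1/|t|$ and computing $\int_1^{1/|t|} \frac{1-u|t|}{u}\,\mathrm{d}u = \log(1/|t|) - 1 + |t|$, then adding $k(t) = 1-|t|$; for $|t| \ge 1$ both sides vanish, so the identity in fact recovers $K_{\mathrm{exact}}$ on all of $(-1,1)\setminus\{0\}$. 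This is the one-dimensional scale representation underlying Rhodes--Vargas \cite[Proposition~2.15]{RV14}.

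Next I would discretize the $u$-integral by setting
\[
K_0(t) := k(t) + \int_1^{b} \frac{k(ut)}{u}\,\mathrm{d}u, \qquad K_j(t) := \int_{b^j}^{b^{j+1}} \frac{k(ut)}{u}\,\mathrm{d}u \quad (j \ge 1),
\]
so that $K_{\mathrm{exact}} = \sum_{j \ge 0} K_j$ on $(-1,1)\setminus\{0\}$. Each $K_j$ is bounded, continuous and non-negative on $[-1,1]$, and positive definite, since positive definiteness of $k$ is preserved under dilation ($t \mapsto k(ut)$ is positive definite for each $u>0$) and then under positive integration against $\mathrm{d}u/u$.

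It then remains to verify (H1), (H2') and (H3) with $j_0 = 1$ and $\alpha_0 = 1/2$. For (H1), $k(ut) = 0$ as soon as $u|t| \ge 1$; since $u \ge b^j$ throughout the integration range, $\supp(K_j) \subset [-b^{-j}, b^{-j}]$. For (H2'), for every $j \ge 1$ one has $K_j(0) = \int_{b^j}^{b^{j+1}} u^{-1}\,\mathrm{d}u = \log b$. For (H3) with $2\alpha_0 = 1$, the key point is that $k$ is $1$-Lipschitz with $k(0)=1$, so uniformly $|k(ut) - 1| \le \min(u|t|,\, 1) \le u|t|$; integrating over $u \in [b^j, b^{j+1}]$ yields
\[
|K_j(t) - K_j(0)| \le \int_{b^j}^{b^{j+1}} \frac{u|t|}{u}\,\mathrm{d}u = (b-1)\, b^j\, |t|
\]
uniformly in $j \ge 1$ and $0 < |t| \le b^{-j}$. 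The $j=0$ term picks up the extra $|k(t) - 1| \le |t|$ and satisfies the same estimate with $b$ in place of $b-1$. This proves sharp-$\sigma^b(1/2)$-regularity.

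The only mildly delicate step is (H3) when $|t|$ approaches the support boundary $b^{-j}$, where the piecewise definition of $k$ would normally force a case split of the $u$-integral at $u = 1/|t|$; the uniform envelope $|k(ut) - 1| \le \min(u|t|,1) \le u|t|$ bypasses any such case analysis and directly delivers Lipschitz control at scale $b^{-j}$, which is exactly $2\alpha_0 = 1$. All remaining verifications are by direct inspection.
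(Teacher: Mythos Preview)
Your proof is correct and is essentially the same as the paper's, up to the change of variables $u\mapsto 1/u$ in the scale integral: the paper writes $K_{\mathrm{exact}}(t)=\int_{[0,1]}(u-|t|)_+\,\nu(\mathrm{d}u)$ with $\nu(\mathrm{d}u)=u^{-2}\mathrm{d}u+\delta_1$ and slices over $u\in(b^{-(j+1)},b^{-j}]$, while you use the dilation form $k(t)+\int_1^\infty k(ut)\,u^{-1}\mathrm{d}u$ and slice over $u\in[b^j,b^{j+1}]$; since $(1-u|t|)_+/u=(1/u-|t|)_+$, the resulting $K_j$'s coincide, and the verifications of (H1), (H2'), (H3) with $\alpha_0=1/2$ and bound $b-1$ are identical.
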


\subsection{Consequences of $\sigma$-regular kernels}
\begin{proposition}\label{prop-kernel-ass}
Let $\alpha_0\in (0,1]$ and let  $K$ be a  stationary  kernel admitting the $\sigma^b(\alpha_0)$-admissible  decomposition  \eqref{K-sigma-pos}. Then for all $\gamma \in (0, \sqrt{2d})$, the following sequence defined as in \eqref{Field-dec}: 
\begin{align}\label{def-P-g-j}
\Big\{\XX_{\gamma, j}(\mathbf{t})=   \exp\big( \gamma \psi_j(\mathbf{t}) - \frac{\gamma^2}{2} \E[\psi_j(\mathbf{t})^2] \big): \mathbf{t} \in [0,1)^d \Big\}_{j\ge 0}
\end{align}
satisfies Assumptions~\ref{assum-indep-abs}, \ref{assum-Lp0-abs} and \ref{assum-alp0-abs} for the given $\alpha_0$ and for all  $1< p_0 <  \min\{\frac{2d}{\gamma^2}, 2\}$. 
\end{proposition}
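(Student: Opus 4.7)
The plan is to verify Assumptions~\ref{assum-indep-abs}, \ref{assum-Lp0-abs} and \ref{assum-alp0-abs} one by one, leveraging the three structural properties (H1)--(H3) of the $\sigma^b(\alpha_0)$-admissible decomposition. Throughout, the basic ingredient is the log-normal identity
\[
\mathbb{E}\big[\XX_{\gamma,j}(\mathbf{t})^p\big] \;=\; \exp\Big(\frac{p(p-1)\gamma^2}{2}K_j(\mathbf{0})\Big),
\]
valid for every real $p$ by stationarity of $\psi_j$.

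For Assumption~\ref{assum-indep-abs} I would take a grid coloring of $\mathscr{D}_{k-1}^b$ into at most $3^d$ sub-families so that any two distinct cubes in the same sub-family have Euclidean distance strictly larger than $b^{-k}$. For every $k>j_0$, property (H1) forces $K_k$ to vanish outside $\bar{B}(\mathbf{0}, b^{-k})$, hence the Gaussian process $\psi_k$ restricted to two same-coloured cubes has zero covariance and is therefore independent; this independence is inherited by the exponentials $\XX_{\gamma,k}^{\mathbf{I}}$. This gives Assumption~\ref{assum-indep-abs} with $k_0 = j_0$ and the constant (hence sub-exponential) sequence $N_m \equiv 3^d$. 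For Assumption~\ref{assum-Lp0-abs}, the log-normal identity combined with (H2) yields
\[
\limsup_{j\to\infty}\sup_{\mathbf{t}\in[0,1)^d}\mathbb{E}\big[\XX_{\gamma,j}(\mathbf{t})^{p_0}\big] \;=\; b^{p_0(p_0-1)\gamma^2/2},
\]
which is strictly smaller than $b^{d(p_0-1)}$ exactly when $p_0<2d/\gamma^2$, precisely our hypothesis; pointwise finiteness for each $j$ follows since every $K_j$ is bounded.

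The delicate step, and the main obstacle, is Assumption~\ref{assum-alp0-abs}. The elementary inequality $|e^a-e^b|\le \max(e^a,e^b)|a-b|$ gives
\[
\big|\XX_{\gamma,j}(\mathbf{t})-\XX_{\gamma,j}(\mathbf{s})\big| \;\le\; \gamma\,\max\{\XX_{\gamma,j}(\mathbf{t}),\,\XX_{\gamma,j}(\mathbf{s})\}\cdot\big|\psi_j(\mathbf{t})-\psi_j(\mathbf{s})\big|.
\]
Because $p_0<2d/\gamma^2$ strictly, one can choose $r>1$ with $rp_0<2d/\gamma^2$ and apply H\"older's inequality with conjugate exponents $(r,r')$. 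The factor $\mathbb{E}[\max\{\XX_{\gamma,j}(\mathbf{t}),\XX_{\gamma,j}(\mathbf{s})\}^{rp_0}]^{1/r}$ is uniformly bounded in $j,\mathbf{t},\mathbf{s}$ by the same Gaussian computation as in Assumption~\ref{assum-Lp0-abs}, now applied with $rp_0$ in place of $p_0$. The other factor is a moment of the centered Gaussian $\psi_j(\mathbf{t})-\psi_j(\mathbf{s})$, which reduces to a power of its variance $2(K_j(\mathbf{0})-K_j(\mathbf{t}-\mathbf{s}))$.

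The crux is therefore to bound this variance by $C\,b^{2j\alpha_0}|\mathbf{t}-\mathbf{s}|^{2\alpha_0}$ uniformly for $\mathbf{t},\mathbf{s}$ in a common $\mathbf{I}\in\mathscr{D}_j^b$. When $|\mathbf{t}-\mathbf{s}|\le b^{-j}$ this is exactly (H3); when $b^{-j}<|\mathbf{t}-\mathbf{s}|\le\sqrt{d}\,b^{-j}$ (the only other case for points in a common level-$j$ cube) and $j\ge j_0$, (H1) forces $K_j(\mathbf{t}-\mathbf{s})=0$ while $K_j(\mathbf{0})$ is bounded by (H2), so the variance is $O(1)=O(b^{2j\alpha_0}|\mathbf{t}-\mathbf{s}|^{2\alpha_0})$; the finitely many $j<j_0$ cause no trouble since each $K_j$ is bounded continuous on a compact set. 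Combining these ingredients produces $\mathbb{E}[|\XX_{\gamma,j}(\mathbf{t})-\XX_{\gamma,j}(\mathbf{s})|^{p_0}]\le C\,b^{j\alpha_0 p_0}|\mathbf{t}-\mathbf{s}|^{\alpha_0 p_0}$ uniformly in $j$ and in $\mathbf{t},\mathbf{s}$ in a common cube of $\mathscr{D}_j^b$, which is exactly Assumption~\ref{assum-alp0-abs}.
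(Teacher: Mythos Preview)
Your proposal is correct and follows essentially the same route as the paper. For Assumption~\ref{assum-indep-abs} the paper uses a $2^d$-coloring (parity mod $2$) of $\mathscr{D}_{k-1}^b$ rather than your $3^d$-coloring, which already gives $\dist(\mathbf{I},\mathbf{I}')\ge b^{-(k-1)}>b^{-k}$; for Assumption~\ref{assum-Lp0-abs} the arguments are identical.

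The only genuine methodological difference is in Assumption~\ref{assum-alp0-abs}. Since $p_0<2$, the paper observes that it suffices to bound the \emph{second} moment, and computes it in closed form:
\[
\mathbb{E}\big[|\XX_{\gamma,j}(\mathbf{t})-\XX_{\gamma,j}(\mathbf{s})|^2\big]
= 2\Big[e^{\gamma^2 K_j(\mathbf{0})}-e^{\gamma^2 K_j(\mathbf{t}-\mathbf{s})}\Big]
\le C\big(K_j(\mathbf{0})-K_j(\mathbf{t}-\mathbf{s})\big),
\]
then invokes (H3). Your pointwise bound $|e^a-e^b|\le\max(e^a,e^b)|a-b|$ followed by H\"older is equally valid but slightly less direct; note also that the constraint $rp_0<2d/\gamma^2$ you impose on $r$ is not actually needed---any $r>1$ works, since $\sup_j K_j(\mathbf{0})<\infty$ already makes $\mathbb{E}[\XX_{\gamma,j}^{rp_0}]$ uniformly bounded regardless of how large $rp_0$ is. On the other hand, your explicit treatment of the range $b^{-j}<|\mathbf{t}-\mathbf{s}|\le\sqrt{d}\,b^{-j}$ (where (H3) does not directly apply) is a point the paper glosses over.
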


Recall  that for any integer $d\ge 1$ and any $\gamma\in (0, \sqrt{2d})$,  we defined the quantity $D_{\gamma, d}$ in  \eqref{def-D-gamma}  by 
\[
D_{\gamma,d} =\left\{
	\begin{array}{cl}
		d-\gamma^2 & \text{if  $0<\gamma<\sqrt{2d}/2$}
		\vspace{2mm}
		\\
		(\sqrt{2d}-\gamma)^2 & \text{if $\sqrt{2d}/2\leq\gamma<\sqrt{2d}$}
	\end{array}\right..
\]

\begin{proposition}\label{thm-sigma-reg}
Let $\alpha_0\in (0,1]$ and let  $K$ be a $\sigma(\alpha_0)$-regular  stationary kernel on $[0,1)^d$.  Then for all $\gamma\in (0, \sqrt{2d})$,  the GMC measure $\mathrm{GMC}_{K}^\gamma$ is non-degenerate and almost surely has polynomial Fourier decay with
\[
\dim_F(\mathrm{GMC}_K^\gamma) \ge \min\big\{2 \alpha_0,  D_{\gamma, d}\big\} >0.
\]
\end{proposition}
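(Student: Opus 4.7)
The plan is to reduce this to a direct application of the unified results, plus a one-variable optimization. Since $K$ is $\sigma^b(\alpha_0)$-regular, Proposition~\ref{prop-kernel-ass} tells us that the exponential Gaussian sequence $\{P_{\gamma,j}\}_{j\ge 0}$ in \eqref{def-P-g-j} satisfies Assumptions~\ref{assum-indep-abs}, \ref{assum-Lp0-abs} and \ref{assum-alp0-abs} for every $1<p_0<\min\{2d/\gamma^2,2\}$. Non-degeneracy of $\mathrm{GMC}_K^\gamma$ then follows from Proposition~\ref{non-degene-abs}, and Theorem~\ref{Fou-Dec-abs} yields
\[
\dim_F(\mathrm{GMC}_K^\gamma)\ge \min\Big\{2\alpha_0,\,\sup_{1<p\le p_0}\frac{2\Theta(p)}{p\log b}\Big\}
\]
for every admissible $p_0$. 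So all that remains is to identify this lower bound as $p_0\nearrow\min\{2d/\gamma^2,2\}$.

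To compute $\Theta(p)$ I would exploit the fact that $\psi_j$ is centered Gaussian with stationary covariance $K_j$, so that $\mathrm{Var}(\psi_j(\mathbf{t}))=K_j(\mathbf{0})$ independently of $\mathbf{t}$; the log-normal moment formula gives
\[
\mathbb{E}[P_{\gamma,j}(\mathbf{t})^p]=\exp\Big(\frac{p(p-1)\gamma^2}{2}K_j(\mathbf{0})\Big).
\]
By condition (H2) of $\sigma^b(\alpha_0)$-regularity, $\limsup_{j\to\infty}K_j(\mathbf{0})=\log b$. Substituting into \eqref{def-Theta-abs} gives
\[
\Theta(p)=(p-1)\Big(d-\frac{p\gamma^2}{2}\Big)\log b,\qquad \frac{2\Theta(p)}{p\log b}=2\Big(1-\frac{1}{p}\Big)\Big(d-\frac{p\gamma^2}{2}\Big).
\]
A direct calculation shows this smooth function has a unique critical point $p^\ast=\sqrt{2d}/\gamma$ on $(1,\infty)$, with critical value $(\sqrt{2d}-\gamma)^2$.

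Finally I would perform a brief case analysis to identify the supremum over the constrained range $1<p<\min\{2d/\gamma^2,2\}$. If $0<\gamma<\sqrt{2d}/2$ then $p^\ast\ge 2$ and $\min\{2d/\gamma^2,2\}=2$, so the function is increasing up to the endpoint and the supremum, attained in the limit $p\nearrow 2$, equals $d-\gamma^2$. If $\sqrt{2d}/2\le \gamma<\sqrt{2d}$ then $p^\ast\le \min\{2d/\gamma^2,2\}$ (the inequality $\sqrt{2d}/\gamma\le 2d/\gamma^2$ is equivalent to $\gamma\le\sqrt{2d}$), so the supremum is $(\sqrt{2d}-\gamma)^2$. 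In both regimes this reproduces exactly $D_{\gamma,d}$ from \eqref{def-D-gamma}, completing the lower bound; positivity $D_{\gamma,d}>0$ and $2\alpha_0>0$ is clear. Since Proposition~\ref{prop-kernel-ass} does all the heavy analytic lifting, no serious obstacle is expected in this reduction; the only care needed is the bookkeeping at the upper endpoint of the admissible range of $p_0$, where continuity of $\Theta(p)$ in $p$ makes the passage to the limit routine.
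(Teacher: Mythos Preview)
Your proposal is correct and follows essentially the same route as the paper: invoke Proposition~\ref{prop-kernel-ass} to verify Assumptions~\ref{assum-indep-abs}--\ref{assum-alp0-abs}, apply Proposition~\ref{non-degene-abs} and Theorem~\ref{Fou-Dec-abs}, then optimize $2\Theta(p)/(p\log b)$ over the admissible range of $p$. Your explicit mention of Proposition~\ref{non-degene-abs} for non-degeneracy and your case analysis for the optimization are clean, and your computed supremum $D_{\gamma,d}$ is what the argument actually yields (the paper's displayed value $D_{\gamma,d}/2$ at that step appears to be a typo).
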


Combining Proposition~\ref{thm-sigma-reg}  and the multifractal analysis of GMC measures (in particular the formula for the correlation dimension, see Bertacco \cite[Theorem~3.1 and Formula~(3.2)]{Ber23}, as well as Rhodes-Vargas  \cite[Section~4.2]{RV14} and   Garban-Vargas \cite[Remark~2]{GV23}), we obtain the exact values of the Fourier dimensions of 1D and 2D sub-critical GMC measures.

\begin{corollary}[1D and 2D GMC]\label{cor-low-dim}
Let $d\in \{1,2\}$ and  let  $K$ be a  stationary kernel of the form 
\[
K(\mathbf{t})= \log_{+}\Big( \frac{1}{|\mathbf{t}|}\Big) + g(|\mathbf{t}|),  \quad \mathbf{t}\in [0,1)^d, \quad\text{with $g$ bounded and continuous}. 
\]
Assume that $K$ is $\sigma(1)$-regular.   Then   for all $\gamma\in (0, \sqrt{2d})$,  almost surely, 
\[
\dim_F(\mathrm{GMC}_K^\gamma)  =  D_{\gamma, d}.
\]
\end{corollary}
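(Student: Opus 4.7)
The plan is a sandwich argument: combine the lower bound already delivered by Proposition~\ref{thm-sigma-reg} with the standard upper bound by the correlation dimension. Since the conclusion is stated as an equality, no further work on polynomial Fourier decay is needed beyond what has been prepared; the corollary is essentially a bookkeeping consequence of the general lower bound Theorem~\ref{Fou-Dec-abs} plus classical multifractal analysis of subcritical GMC.

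First I would unpack the lower bound. By hypothesis $K$ is $\sigma(1)$-regular, i.e.\ $\sigma(\alpha_0)$-regular with $\alpha_0 = 1$, and $\gamma \in (0,\sqrt{2d})$ is subcritical. Proposition~\ref{thm-sigma-reg} then yields, almost surely,
\[
\dim_F(\mathrm{GMC}_K^\gamma) \;\ge\; \min\{\,2\alpha_0,\; D_{\gamma,d}\,\} \;=\; \min\{\,2,\; D_{\gamma,d}\,\}.
\]
Now one checks that for $d\in\{1,2\}$ and $\gamma\in(0,\sqrt{2d})$ the quantity $D_{\gamma,d}$ defined in \eqref{def-D-gamma} always satisfies $D_{\gamma,d}\le 2$. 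Indeed, for $d=1$ both branches give $D_{\gamma,1}\le 1$, while for $d=2$ the first branch gives $D_{\gamma,2}=2-\gamma^2\le 2$ and the second gives $D_{\gamma,2}=(2-\gamma)^2\le 1$. Hence the minimum equals $D_{\gamma,d}$ and we get the one-sided bound $\dim_F(\mathrm{GMC}_K^\gamma)\ge D_{\gamma,d}$ almost surely.

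For the matching upper bound I would invoke the elementary inequality $\dim_F(\nu)\le \dim_2(\nu)\le \dim_H(\nu)$, valid for any finite Borel measure $\nu$ (the first inequality follows directly from the Plancherel/energy identity $\int\int|\mathbf{x}-\mathbf{y}|^{-s}\,\nu(\mathrm{d}\mathbf{x})\nu(\mathrm{d}\mathbf{y}) \asymp \int |\widehat{\nu}(\xi)|^2|\xi|^{s-d}\,\mathrm{d}\xi$ up to boundary terms, together with the definition of $\dim_F$). Applying this to $\nu=\mathrm{GMC}_K^\gamma$ and using the computation of the $L^2$-regularity exponent for subcritical GMC measures on $[0,1]^d$ (the formula for the correlation dimension in, e.g., Bertacco \cite{Ber23}, Rhodes--Vargas \cite{RV14}, and the remark of Garban--Vargas \cite{GV23}), one has, almost surely on $\{\mathrm{GMC}_K^\gamma\ne 0\}$,
\[
\dim_2(\mathrm{GMC}_K^\gamma) \;=\; D_{\gamma,d}.
\]
This identifies the upper bound $\dim_F(\mathrm{GMC}_K^\gamma)\le D_{\gamma,d}$ and completes the equality.

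There is essentially no obstacle: the entire substance of the corollary lies in Proposition~\ref{thm-sigma-reg} (for the sharp lower bound) and in the multifractal formula for $\dim_2$ (for the upper bound). The only point worth being careful about is the range check $D_{\gamma,d}\le 2\alpha_0=2$, which is automatic for $d\in\{1,2\}$ but would fail for $d\ge 3$ in the regime $\gamma$ away from the critical value—this is precisely why Theorem~\ref{thm-gen-highD} only yields an exact equality of Fourier dimensions for $\gamma$ near $\sqrt{2d}$, and why Corollary~\ref{cor-low-dim} is restricted to $d\in\{1,2\}$.
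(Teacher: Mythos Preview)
Your proposal is correct and follows essentially the same approach as the paper: the paper's proof likewise combines the lower bound from Proposition~\ref{thm-sigma-reg} (with $\alpha_0=1$) with the upper bound $\dim_F\le\dim_2=D_{\gamma,d}$, the latter being packaged as Lemma~\ref{Uppbou-FourierDim-dDGMC} using exactly the references you cite (Bertacco, Rhodes--Vargas, Garban--Vargas). The only cosmetic difference is that the paper records $D_{\gamma,d}\in(0,2)$ rather than $D_{\gamma,d}\le 2$, but your inequality is what is actually needed.
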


\subsection{Proofs of Lemma~\ref{lem-most-log} and Lemma~\ref{lem-log-sigma}}
\begin{proof}[Proof of Lemma~\ref{lem-most-log}]
Fix any $\mathbf{t}$ with $b^{-(m+1)} < |\mathbf{t}|\le b^{-m}$.  By  \eqref{K-sigma-pos} and \eqref{supp-Kj}, we have 
\begin{align}\label{K-sum-finite}
	K(\mathbf{t})= \sum_{j=0}^m K_j(\mathbf{t}). 
\end{align}
Note that  $|\mathbf{t}|\le b^{-m} \le b^{-j}$ for all $0\le j \le m$. Hence by the rescaled uniform regularity condition \eqref{res-0-reg},   there exist constants $C, C'>0$ (indepedent of $m$ and $\mathbf{t}$) such that  
\[
\sum_{j=0}^m |K_j(\mathbf{t})- K_j(\mathbf{0})| \le C  \sum_{j=0}^m  b^{2j \alpha_0} |\mathbf{t}|^{2 \alpha_0}  = C' b^{2m \alpha_0} |\mathbf{t}|^{2\alpha_0}   \le C'. 
\]
Therefore, by \eqref{K-sum-finite}, 
\[
K(\mathbf{t}) = \sum_{j=0}^m K_j(\mathbf{0}) + O(1). 
\] 
Consequently, under the condition \eqref{Kj-bdd}, we obtain the desired equality \eqref{up-log}; while under the stronger condition \eqref{Kj-const}, we obtain the desired equality \eqref{eq-log-bdd}. This completes the proof of Lemma~\ref{lem-most-log}. 
\end{proof}

\begin{proof}[Proof of Lemma~\ref{lem-log-sigma}]
Our proof of Lemma~\ref{lem-log-sigma} relies on  the following simple equality: 
\[
K_{\mathrm{exact}}(t) = \log_{+} \Big(\frac{1}{|t|}\Big)= \int_{[0,1]} (u-|t|)_{+} \nu(\mathrm{d}u),\quad\text{where $\nu(\mathrm{d}u) = \frac{\mathrm{d}u}{u^2} + \delta_1(\mathrm{d}u)$}. 
\]
Fix any integer $b\ge 2$, write 
\[
K_{\mathrm{exact}}(t) = \sum_{j=0}^\infty  \underbrace{\int_{(b^{-(j+1)}, b^{-j}]}  (u-|t|)_{+} \nu(\mathrm{d}u)}_{\text{denoted $K_j(t)$}}.
\]
Clearly, $K_j$ is non-negative and $\supp(K_j) \subset \bar{B}(0, b^{-j})$. Since the function $x\mapsto (1- |x|)_{+}$ is positive definite on $\R$ and $\nu$ is positive, the function $K_j$ is positive definite.   For any integer  $j\ge 1$,  
\[
K_j(0) = \int_{b^{-(j+1)}}^{b^{-j}}  \frac{\mathrm{d}u}{u}  = \log b. 
\]
Moreover, using the H\"older regularity at the origin $0$:
\[
| (u - |t|)_{+} - u |\le |t| \quad \text{for all $u\in [0,1]$},
\]
we obtain, for any $j \ge 1$,  by taking $\alpha_0=1/2$ and $|t|\le b^{-j}$, 
\[
 \frac{ |K_j(t) -K_j(0)|}{b^{2j\alpha_0} |t|^{ 2\alpha_0}} \le   \frac{1}{b^{j} |t|} \int_{b^{-(j+1)}}^{b^{-j}}   | (u - |t|)_{+} - u |  \frac{\mathrm{d}u}{u^2}  \le  \frac{1}{b^{j}} \int_{b^{-(j+1)}}^{b^{-j}}  \frac{\mathrm{d}u}{u^2}  =  b-1. 
\]
For $j=0$, we clearly have 
\[
\sup_{0<|t| \le 1}  \frac{ |K_0(t) -K_0(0)|}{|t|}<\infty.
\]
Consequently, we obtain the desired inequality \eqref{res-0-reg} with $\alpha_0 = 1/2$. This completes the proof that $K_{\mathrm{exact}}$ is sharply-$\sigma^b(1/2)$-regular. 
\end{proof}

\subsection{Proof of Proposition~\ref{prop-kernel-ass}}
Let $\alpha_0\in (0,1]$ and  $b\ge 2$ be an integer. Assume that $K$ is a stationary kernel admitting a given $\sigma^b(\alpha_0)$-admissible  decomposition  \eqref{K-sigma-pos}. Let $\psi_j(\mathbf{t})$ be the sequence of independent  Gaussian  process  with  kernel $K_j(\mathbf{t}, \mathbf{s})= K_j(\mathbf{t}-\mathbf{s})$.  Our goal is to show that, for any $\gamma\in (0,\sqrt{2d})$,  the sequence of independent stochastic processes 
\begin{align}\label{gamma-proc}
\Big\{\XX_{\gamma, j}(\mathbf{t})=   \exp\big( \gamma \psi_j(\mathbf{t}) - \frac{\gamma^2}{2} \E[\psi_j(\mathbf{t})^2] \big): \mathbf{t} \in [0,1)^d \Big\}_{j\ge 0}
\end{align}
satisfies Assumptions~\ref{assum-indep-abs}, \ref{assum-Lp0-abs} and \ref{assum-alp0-abs} for the given $\alpha_0$ and for all  $p_0$ with $1< p_0 < \min\{\frac{2d}{\gamma^2},2\}$.

{\flushleft \it 1). Verification of Assumption~\ref{assum-indep-abs}.}  Recall the notation \eqref{def-b-dya-abs} for $\mathscr{D}_j^b$. For any integer $j\ge 0$ and each $\theta = (\theta_1, \cdots, \theta_d) \in \{0,1\}^d$,  define a sub-family $\mathscr{D}_{j, \theta}^b\subset \mathscr{D}_j^b$ as 
\[
\mathscr{D}_{j, \theta}^b : = \Big\{ \mathbf{I} =  \prod_{\beta=1}^d \Big[\frac{h_\beta-1}{b^j}, \frac{h_\beta}{b^j}\Big):   (h_1, \cdots, h_d) \equiv (\theta_1,\cdots, \theta_d)  \, (\mathrm{mod} 2)\Big\}. 
\]
Clearly, we have 
\begin{align}\label{dec-odd-even}
\mathscr{D}_{j}^b = \bigsqcup_{\theta \in \{0,1\}^d} \mathscr{D}_{j, \theta}^b. 
\end{align}
Moreover, for any $\theta \in \{0, 1\}^d$, inside each  sub-family $ \mathscr{D}_{j, \theta}^b$, all pairs of distinct sub-cubes $\mathbf{I}, \mathbf{I}'$ satisfy 
\begin{align}\label{dist-I-I}
\dist (\mathbf{I}, \mathbf{I}')  = \inf\{|\mathbf{t}-\mathbf{t}'|: \mathbf{t}\in \mathbf{I}\,\,\mathrm{and}\,\,\mathbf{t}' \in \mathbf{I}'\} \ge b^{-j}. 
\end{align}
The above inequality follows from the fact that once $\mathbf{I} \ne  \mathbf{I}'$ are in  the same sub-family $\mathscr{D}_{j,\theta}^b$ with 
\[
\mathbf{I} =  \prod_{\beta=1}^d \Big[\frac{h_\beta-1}{b^j}, \frac{h_\beta}{b^j}\Big) \anand \mathbf{I}' =  \prod_{\beta=1}^d \Big[\frac{h_\beta'-1}{b^j}, \frac{h_\beta'}{b^j}\Big),
\]
and then
\[
\inf_{1\le \beta \le d} |h_\beta- h_\beta'|\ge 2. 
\]
We can now verify Assumption~\ref{assum-indep-abs} for the sequence \eqref{gamma-proc} of independent stochastic processes with respect to  the partition \eqref{dec-odd-even} and the bounded  (thus sub-exponential) sequence $N_j= \#(\{0,1\}^d) = 2^d$.  Indeed, it suffices to show that, for each $\theta\in \{0,1\}^d$ and each pair $(\mathbf{I}, \mathbf{I}')$ of distinct sub-cubes in $\mathscr{D}_{j, \theta}^b$,  the following two stochastic processes  are independent: 
\[
\{\psi_j(\mathbf{t}): \mathbf{t} \in \mathbf{I}\}, \quad \{\psi_j(\mathbf{t}'): \mathbf{t}' \in \mathbf{I}'\}.
\]
 Since they are both Gaussian processes,  we only need to show that for any $\mathbf{t} \in \mathbf{I}$ and $\mathbf{t}' \in \mathbf{I}'$, 
 \[
 \Cov(\psi_j(\mathbf{t}), \psi_j(\mathbf{t}')) = K_j(\mathbf{t}-\mathbf{t}') = 0.
 \]
This equality follows  from   the inequality \eqref{dist-I-I} and the condition \eqref{supp-Kj} on the support of $K_j$ (here we also used the continuity assumption of $K_j$, which implies that $K_j(\mathbf{t}-\mathbf{t}') = 0$ if $|\mathbf{t}-\mathbf{t}'|= b^{-j}$). 
 
{\flushleft \it 2). Verification of Assumption~\ref{assum-Lp0-abs}.} For any $j\ge 0$, by the definition  \eqref{gamma-proc} of $\XX_{\gamma, j}(\mathbf{t})$, we have 
\[
	\E[\XX_{\gamma, j}(\mathbf{t})^{p_0}]   =  \E\Big[\exp\Big( \gamma p_0 \psi_j(\mathbf{t}) - \frac{\gamma^2 p_0}{2} \E[\psi_j(\mathbf{t})^2] \Big)\Big]  =  \exp\Big( \frac{\gamma^2 p_0(p_0-1)}{2} \E[\psi_j(\mathbf{t})^2] \Big)
\]
and hence
\begin{align}\label{P-p-m}
\E[\XX_{\gamma, j}(\mathbf{t})^{p_0}]   =   \exp\Big( \frac{\gamma^2 p_0(p_0-1)}{2} K_j(\mathbf{0}) \Big)<\infty.
\end{align}
Then for any $\gamma\in (0, \sqrt{2d})$ and  $1<p_0 < \min\{\frac{2d}{\gamma^2},2\}$,  by the condition \eqref{Kj-bdd}, 
\begin{align}\label{lim-sup-P}
\limsup_{j\to\infty}\sup_{\mathbf{t}\in[0,1)^d}\mathbb{E}[\XX_{\gamma, j}(\mathbf{t})^{p_0}] = \exp\Big( \frac{\gamma^2 p_0(p_0-1)}{2} \limsup_{j\to\infty} K_j(\mathbf{0}) \Big)  = b^{\frac{\gamma^2 p_0(p_0-1)}{2}} <    b^{d (p_0-1)},
\end{align}
where in the last inequality, we used $\gamma^2 p_0  <2d$.  Therefore,  Assumption~\ref{assum-Lp0-abs} holds. 

{\flushleft \it 3). Verification of Assumption~\ref{assum-alp0-abs}.} Since $1<p_0 < \min\{\frac{2d}{\gamma^2},2\}\le 2$, we only need to show
\begin{align}\label{hol-goal}
\sup_{j\in \N}\sup_{\mathbf{I}\in\mathscr{D}_j^b}\sup_{\mathbf{t},\mathbf{s}\in\mathbf{I} \atop \mathbf{t}\neq\mathbf{s}} \mathbb{E}\Big[\Big| \frac{\XX_{\gamma, j}(\mathbf{t})-\XX_{\gamma, j}(\mathbf{s})}{b^{j\alpha_0}|\mathbf{t}-\mathbf{s}|^{\alpha_0}} \Big|^{2}\Big]<\infty. 
\end{align}
Now let  $j\ge 0$. Take any $\mathbf{I} \in \mathscr{D}_j^b$ and $\mathbf{t}, \mathbf{s}\in \mathbf{I}$, by \eqref{P-p-m}, we have 
\[
\E [ \XX_{\gamma, j}(\mathbf{t})^2] = \E[ \XX_{\gamma, j}(\mathbf{s})^2] =   \exp\big(\gamma^2 K_j(\mathbf{0}) \big). 
\]
We also have 
\[
\E[\XX_{\gamma, j}(\mathbf{t})   \XX_{\gamma, j}(\mathbf{s})]  = \E\Big[ \exp\Big( \gamma \big[\psi_j(\mathbf{t})   + \psi_j(\mathbf{s}) \big] - \frac{\gamma^2}{2} \big(\E[\psi_j(\mathbf{t})^2] + \E[\psi_j(\mathbf{s})^2]\big)  \Big)\Big] 
\]
and then
\[
\E[\XX_{\gamma, j}(\mathbf{t}) \XX_{\gamma, j}(\mathbf{s})] = \exp\big(\gamma^2 \E[\psi_j(\mathbf{t}) \psi_j(\mathbf{s})]\big)=\exp\big(\gamma^2 K_j(\mathbf{t}-\mathbf{s})\big). 
\]
It follows that 
\[
\E\big[\big|  \XX_{\gamma, j}(\mathbf{t})-\XX_{\gamma, j}(\mathbf{s})  \big|^{2}\big]  =   2 \big[ \exp\big(\gamma^2 K_j(\mathbf{0}) \big) - \exp\big(\gamma^2 K_j(\mathbf{t}-\mathbf{s})\big) \big] 
\]
and hence 
\begin{align}\label{2-2-upper}
\begin{split}
&\quad\,\,   \sup_{j\in \N}\sup_{\mathbf{I}\in\mathscr{D}_j^b}\sup_{\mathbf{t},\mathbf{s}\in\mathbf{I} \atop \mathbf{t}\neq\mathbf{s}} \mathbb{E}\Big[\Big| \frac{\XX_{\gamma, j}(\mathbf{t})-\XX_{\gamma, j}(\mathbf{s})}{b^{j\alpha_0}|\mathbf{t}-\mathbf{s}|^{\alpha_0}} \Big|^{2}\Big] 
\\
& \le  2  \exp\big(\gamma^2 \sup_{j\ge 0} K_j(\mathbf{0}) \big) \cdot  \sup_{j\ge 0} \sup_{0<|\mathbf{t}|\le b^{-j}} \frac{ 1 -  \exp\big(-\gamma^2 [ K_j(\mathbf{0})- K_j(\mathbf{t})] \big)}{b^{2j\alpha_0} |\mathbf{t}|^{2\alpha_0}}.
\end{split}
\end{align}
Since $K_j$ is non-negative and positive definite,  for any $\mathbf{t}$, we have 
$
0\le K_j(\mathbf{t}) \le K_j(\mathbf{0})$. Then by \eqref{Kj-bdd}, 
\[
\sup_{j\ge 0}   |K_j(\mathbf{0}) - K_j(\mathbf{t})| \le \sup_{j\ge 0}  K_j(\mathbf{0})<\infty. 
\]
Hence for any $\gamma>0$, there exists a constant $ C(\gamma)>0$ such that  for any $j\ge 0$ and any $|\mathbf{t}|\le b^{-j}$, 
\begin{align}\label{holder-holder}
\big|1 -  \exp\big(- \gamma^2 [K_j(\mathbf{0})- K_j(\mathbf{t})]\big)\big| \le C(\gamma) |K_j(\mathbf{0})- K_j(\mathbf{t})|.
\end{align}
Combining  \eqref{2-2-upper}, \eqref{holder-holder} with the rescaled uniform regularity condition \eqref{res-0-reg}, we obtain the desired inequality \eqref{hol-goal} and thus complete the verification of Assumption~\ref{assum-alp0-abs}.

\subsection{Proof of Proposition~\ref{thm-sigma-reg}}  
We now derive Proposition~\ref{thm-sigma-reg} from the unified Theorem~\ref{Fou-Dec-abs}. 
Let $\alpha_0\in (0,1]$ and let  $K$ be a $\sigma(\alpha_0)$-regular  stationary kernel on $[0,1)^d$. Fix any $\gamma \in (0, \sqrt{2d})$. By Proposition~\ref{prop-kernel-ass}, the sequence of independent stochastic processes defined in \eqref{def-P-g-j} satisfies Assumptions~\ref{assum-indep-abs}, \ref{assum-Lp0-abs} and \ref{assum-alp0-abs} for the given $\alpha_0$ and for all  $1< p_0 <  \min\{\frac{2d}{\gamma^2}, 2\}$. Therefore, by Theorem~\ref{Fou-Dec-abs},  almost surely, we have 
\[
	\dim_F(\mathrm{GMC}_{K}^\gamma)\geq  \min\Big\{2 \alpha_0,\sup_{1<p\leq p_0}\frac{2  \Theta_{\gamma}(p)}{p\log b}\Big\},
\]
where $\Theta_{\gamma}(p)$ is defined as in  \eqref{def-Theta-abs}:
\[
	\Theta_{\gamma}(p):=d(p-1)\log b-\log\Big(\limsup_{j\to\infty}\sup_{\mathbf{t}\in[0,1)^d}\mathbb{E}[\XX_{\gamma,j}(\mathbf{t})^p]\Big).
\]
Since $1<p_0<\min\{\frac{2d}{\gamma^2}, 2\}$ is chosen arbitrarily, 
almost surely, we have 
\[
	\dim_F(\mathrm{GMC}_{K}^\gamma)\geq  \min\Big\{2 \alpha_0,\sup_{1<p\leq  \min\{\frac{2d}{\gamma^2}, 2\} }\frac{2  \Theta_{\gamma}(p)}{p\log b}\Big\}.
\]
Note that by \eqref{lim-sup-P}, 
\begin{align}\label{def-zeta-p-1}
\frac{\Theta_{\gamma}(p)}{p\log b}   =d+\frac{\gamma^2}{2}-\Big(\frac{\gamma^2p}{2}+\frac{d}{p}\Big). 
\end{align}
Then by an elementary computation of the minimal value of $\frac{\gamma^2 p}{2} + \frac{d}{p}$ on $p\in[1, \min\{\frac{2d}{\gamma^2}, 2\}]$, we obtain 
\begin{align}\label{def-zeta-p-2}
\sup_{1<p\leq  \min\{\frac{2d}{\gamma^2}, 2\} }\frac{2  \Theta_{\gamma}(p)}{p\log b} =  \frac{D_{\gamma, d}}{2},
\end{align}
where $D_{\gamma, d}$ is given by the formula \eqref{def-D-gamma}.  This completes the whole proof of Proposition~\ref{thm-sigma-reg}.

\subsection{Proof of Corollary~\ref{cor-low-dim}}
The following lemma is due to Bertacco \cite[Theorem~3.1 and Formula~(3.2)]{Ber23}, as well as Rhodes-Vargas  \cite[Section~4.2]{RV14} and   Garban-Vargas \cite[Remark~2]{GV23}. 

\begin{lemma}\label{Uppbou-FourierDim-dDGMC}
Let $d\ge 1$ be an integer.  Assume that $K$ is a Gaussian  kernel on $[0,1]^d$ of the  form: 
\[
K(\mathbf{t},\mathbf{s})= \log_{+}\Big( \frac{1}{|\mathbf{t}-\mathbf{s}|}\Big) + g(\mathbf{t}, \mathbf{s}), \quad \mathbf{t}, \mathbf{s}\in [0,1]^d,
\]
where $g$ is  bounded continuous on $[0,1]^d\times [0,1]^d$.  	Then for each $\gamma\in(0,\sqrt{2d})$, almost surely, we have
	\[
	\mathrm{dim}_{F}(\mathrm{GMC}_{K}^\gamma)\leq D_{\gamma,d}.
	\]
\end{lemma}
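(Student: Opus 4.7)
The plan is to combine the classical energy/Parseval bound $\dim_F(\nu)\le \dim_2(\nu)$, valid for every finite positive Borel measure $\nu$ on $[0,1]^d$, with the known identification of the correlation dimension of sub-critical GMC. The first inequality is the easy direction: for any $s\in(0,d)$, the Riesz/Plancherel formula
\[
I_s(\nu):=\iint |\mathbf{t}-\mathbf{s}|^{-s}\,\nu(\mathrm{d}\mathbf{t})\,\nu(\mathrm{d}\mathbf{s}) = c_{s,d}\int_{\R^d} |\widehat{\nu}(\mathbf{x})|^2\,|\mathbf{x}|^{s-d}\,\mathrm{d}\mathbf{x},
\]
combined with $|\widehat{\nu}(\mathbf{x})|\le \nu([0,1]^d)<\infty$ and polynomial Fourier decay of order strictly larger than $s$, forces $I_s(\nu)<\infty$ and hence $\dim_2(\nu)\ge s$; letting $s\uparrow \dim_F(\nu)$ gives the claim.

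It therefore suffices to show that almost surely $\dim_2(\mathrm{GMC}_K^\gamma)\le D_{\gamma,d}$. In the $L^2$ regime $0<\gamma<\sqrt{2d}/2$ this is elementary. Writing $K(\mathbf{t},\mathbf{s}) = \log_{+}|\mathbf{t}-\mathbf{s}|^{-1} + g(\mathbf{t},\mathbf{s})$ with $g$ bounded and using Fubini,
\[
\E\big[I_s(\mathrm{GMC}_K^\gamma)\big] \,\asymp\, \iint_{[0,1]^d\times[0,1]^d}|\mathbf{t}-\mathbf{s}|^{-(s+\gamma^2)}\,\mathrm{d}\mathbf{t}\,\mathrm{d}\mathbf{s},
\]
which is finite precisely when $s < d-\gamma^2 = D_{\gamma,d}$. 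To upgrade the divergence of the expectation into an a.s. divergence of $I_s$ for $s > D_{\gamma,d}$, I would truncate the Gaussian field at a small scale, work with the resulting $L^2$ approximation, and pass to the limit using a Kahane-convexity / second-moment argument to conclude that $I_s=\infty$ a.s. on $\{\mathrm{GMC}_K^\gamma\ne 0\}$. In the super-$L^2$ regime $\sqrt{2d}/2\le \gamma<\sqrt{2d}$ the total mass is no longer in $L^2$, so one must instead invoke the multifractal analysis: a Girsanov shift centered at a typical point produces the fractional-moment asymptotic
\[
\E\big[\mathrm{GMC}_K^\gamma(B(\mathbf{x},r))^p\big]\asymp r^{\xi(p)},\qquad \xi(p)=\Big(d+\tfrac{\gamma^2}{2}\Big)p-\tfrac{\gamma^2 p^2}{2},
\]
valid up to the freezing threshold $p^\ast=\sqrt{2d}/\gamma$. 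Optimizing in $p$ pins down $\dim_2(\mathrm{GMC}_K^\gamma)=(\sqrt{2d}-\gamma)^2=D_{\gamma,d}$.

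In practice, I would assemble the two regimes by invoking Bertacco's identification \cite[Theorem~3.1 and (3.2)]{Ber23} of $\dim_2(\mathrm{GMC}_K^\gamma) = D_{\gamma,d}$ (compatibility with the log-correlated class allowed in the statement follows from Kahane's convexity inequality, which is harmless for upper bounds on $\dim_2$ and $\dim_F$), and then conclude $\dim_F(\mathrm{GMC}_K^\gamma)\le \dim_2(\mathrm{GMC}_K^\gamma) = D_{\gamma,d}$ almost surely. The main obstacle is genuinely the super-$L^2$ range: there, direct second-moment computations break down because $\E[\mathrm{GMC}_K^\gamma([0,1]^d)^2]=\infty$, and one has to reach into the full multifractal/thick-point description of GMC to extract the correct energy threshold $(\sqrt{2d}-\gamma)^2$; the $L^2$ range, by contrast, can be handled by bare hands from the Fubini computation above.
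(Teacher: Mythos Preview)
Your proposal is correct and matches the paper's proof essentially line for line: the paper also reduces the statement to the standard inequality $\dim_F(\nu)\le\dim_2(\nu)$ and then invokes Bertacco \cite[Theorem~3.1 and (3.2)]{Ber23} (together with Rhodes--Vargas and Garban--Vargas) to obtain $\dim_2(\mathrm{GMC}_K^\gamma)=D_{\gamma,d}$ almost surely. Your additional heuristic sketch of the $L^2$-phase energy computation and the super-$L^2$ multifractal argument is accurate but ultimately not needed once you cite Bertacco, which is exactly what the paper does.
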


\begin{proof}
	Recall that the correlation dimension $\dim_2(\nu)$ of a measure $\nu$ is defined by (see, e.g., \cite[Lemma~2.6.6 and Definition~2.6.7]{BSS23})
	\begin{align}\label{def-sup-ball-dDGMC}
		\dim_2(\nu): =  \liminf_{\delta\to 0^+}\frac{ \log \Big(\sup \sum_i \nu \big(B(\mathbf{x}_i, \delta)\big)^2 \Big)}{\log \delta},
	\end{align}
	where the supremum is taken over all families of disjoint balls. In \cite{Ber23}, Bertacco  introduced the $L^q$-spectrum of the measure $\nu$ by 
	\[
	\tau_\nu(q): =   \limsup_{\delta \to 0^+}\frac{ \log \Big(\sup \sum_i \nu \big(B(\mathbf{x}_i, \delta)\big)^q \Big)}{-\log \delta}, \quad q\in \mathbb{R},
	\]
	where the supremum is taken over all families of disjoint balls. From Bertacco's definition, we have  (the $\limsup$ becomes $\liminf$ after multiplication by $-1$)
	\begin{align}\label{B-Lq-def-dDGMC}
		-\tau_\nu(q) =   \liminf_{\delta \to 0^+}\frac{ \log \Big(\sup \sum_i \nu \big(B(\mathbf{x}_i, \delta)\big)^q \Big)}{\log \delta}.
	\end{align}
	By comparing  \eqref{def-sup-ball-dDGMC} and \eqref{B-Lq-def-dDGMC}, we get 
	\[
	\dim_2(\nu)= - \tau_\nu(2). 
	\]
	Note that the above equality is a particular case of \cite[Lemma~2.6.6]{BSS23}. 
	
	Bertacco \cite[Theorem~3.1 and Formula~(3.2)]{Ber23}, as well as Rhodes-Vargas  \cite[Section~4.2]{RV14} and   Garban-Vargas \cite[Remark~2]{GV23}, obtained the precise value of the $L^q$-spectrum  $\tau_{ \mathrm{GMC}_K^\gamma}(q)$ of the $d$-dimensional GMC measure  for all $q\in \mathbb{R}$.  It was shown that for any $\gamma\in(0,\sqrt{2d})$,  almost surely,  
	\[
		\dim_2( \mathrm{GMC}_K^\gamma)= - \tau_{ \mathrm{GMC}_K^\gamma}(2)= 
		\left\{
		\begin{array}{cc}
			\xi_{ \mathrm{GMC}_K^\gamma}(2) - d & \text{if $2\le \sqrt{2d}/\gamma$}
			\vspace{2mm}
			\\
			2\xi_{\mathrm{GMC}_K^\gamma}'(\sqrt{2d}/\gamma) & \text{if $2\ge  \sqrt{2d}/\gamma$} 
		\end{array}
		\right., 
	\]
	where $\xi_{ \mathrm{GMC}_K^\gamma}(q)$ (see \cite[Formula~(2.5)]{Ber23}) is the power law spectrum of $ \mathrm{GMC}_K^\gamma$ given by 
	\[
	\xi_{ \mathrm{GMC}_K^\gamma}(q) = \Big(d + \frac{1}{2}\gamma^2 \Big)q - \frac{1}{2}\gamma^2 q^2, \quad q \in \mathbb{R}. 
	\]
	Therefore, by an elementary computation, we obtain that for any $\gamma\in(0,\sqrt{2d})$, almost surely,  
	\[
	\dim_2( \mathrm{GMC}_K^\gamma) = \left\{
	\begin{array}{cl}
		d-\gamma^2 & \text{if $0<\gamma \le \sqrt{2d}/2$}
		\vspace{2mm}
		\\
		2d+\gamma^2 - 2 \sqrt{2d} \gamma & \text{if $\sqrt{2d}/2\le \gamma <\sqrt{2d}$} 
	\end{array}
	\right..
	\]
	Then for any $\gamma\in(0,\sqrt{2d})$, recall the definition of $D_{\gamma,d}$ in \eqref{def-D-gamma}, almost surely,  
	\[
	\dim_2( \mathrm{GMC}_K^\gamma)   = D_{\gamma,d} .
	\]
	Finally, we complete the proof of Lemma~\ref{Uppbou-FourierDim-dDGMC} by the following standard inequality in potential theory: 
	\[
	\dim_F(\nu)\le \dim_2(\nu)
	\]
	 for any finite Borel measure $\nu$ supported on a compact subset. 
\end{proof}

\begin{proof}[Proof of Corollary~\ref{cor-low-dim}]
Since $d\in\{1,2\}$, $K$ is $\sigma(1)$-regular and $D_{\gamma,d}\in (0,2)$ for any $\gamma \in (0, \sqrt{2d})$, Corollary~\ref{cor-low-dim} follows immediately  from Proposition~\ref{thm-sigma-reg} and Lemma~\ref{Uppbou-FourierDim-dDGMC}.
\end{proof}

\subsection{Construction of $\sigma$-regular and $*$-scale invariant kernels}\label{sec-con-reg} 
In this subsection, we are going to construct  isotropic stationary $\sigma$-regular kernels  (here  isotropic means that 
$K(\mathbf{t})= K(|\mathbf{t}|)$).

\begin{lemma}\label{lem-base-fn}
There exists a non-negative, positive definite and isotropic  function $\Phi \in C^\infty(\R^d)$ with 
\[
\supp(\Phi) \subset \bar{B}(\mathbf{0},1), \quad \Phi(\mathbf{0}) = 1 \anand  \sup_{\mathbf{t}\in \R^d\setminus \{\mathbf{0}\}} \frac{|\Phi(\mathbf{t}) - \Phi(\mathbf{0})|}{|\mathbf{t}|^{2}}<\infty. 
\]
\end{lemma}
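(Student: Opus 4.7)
The plan is to realize $\Phi$ as the normalized autoconvolution of a smooth radial bump, leveraging the fact that any autocorrelation is positive definite (by Bochner, since its Fourier transform is the nonnegative function $|\widehat{\eta}|^2$).

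First I would fix any nonnegative $h \in C^\infty(\R)$ with $h \not\equiv 0$ and $\supp(h) \subset [0, 1/4]$, and set $\eta(\mathbf{x}) := h(|\mathbf{x}|^2)$. Composing with $|\cdot|^2$ (rather than $|\cdot|$) is what keeps $\eta \in C^\infty(\R^d)$ at the origin; moreover $\eta \geq 0$, $\eta$ is radial, and $\supp(\eta) \subset \bar{B}(\mathbf{0}, 1/2)$. Then I would put $\psi := \eta * \eta$ and $\Phi := \psi / \psi(\mathbf{0})$. Standard properties of convolution give $\psi \in C^\infty(\R^d)$, $\psi \geq 0$, $\psi$ radial, and $\supp(\psi) \subset \bar{B}(\mathbf{0}, 1)$; positive definiteness comes from $\widehat{\psi} = |\widehat{\eta}|^2 \geq 0$; and $\psi(\mathbf{0}) = \int \eta^2 > 0$ legitimizes the normalization so that $\Phi(\mathbf{0}) = 1$.

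For the quadratic bound, I would split $\R^d \setminus \{\mathbf{0}\}$ into $|\mathbf{t}| \geq 1$ and $0 < |\mathbf{t}| < 1$. On $|\mathbf{t}| \geq 1$ the support condition gives $\Phi(\mathbf{t}) = 0$, hence $|\Phi(\mathbf{t}) - 1|/|\mathbf{t}|^2 \leq 1$. On $|\mathbf{t}| < 1$, radiality forces $\Phi(\mathbf{x}) = \Phi(-\mathbf{x})$, so $\nabla \Phi(\mathbf{0}) = 0$ and second-order Taylor with integral remainder yields $|\Phi(\mathbf{t}) - 1| \leq \frac{1}{2}\|\mathrm{Hess}\,\Phi\|_\infty \, |\mathbf{t}|^2$; the Hessian is globally bounded because $\Phi \in C_c^\infty(\R^d)$. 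Combining these cases gives the uniform bound claimed. The corresponding element $f \in \mathscr{P}_d$ is then obtained by $f(r) := \Phi(r \mathbf{e})$ for any fixed unit vector $\mathbf{e} \in \R^d$.

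No step really presents an obstacle here. The only point that requires care is the choice $h(|\mathbf{x}|^2)$: writing $h(|\mathbf{x}|)$ instead would generically destroy smoothness at the origin and would undermine both the positive-definiteness argument (via the Fourier transform) and the Taylor expansion used to obtain the $|\mathbf{t}|^2$ decay.
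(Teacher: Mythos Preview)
Your construction is exactly the paper's: $\Phi$ is the normalized self-convolution of $\eta(\mathbf{x}) = h(|\mathbf{x}|^2)$ with the same support condition on $h$, and the verifications of smoothness, support, isotropy, non-negativity, and positive definiteness proceed along the same lines (the paper checks positive definiteness via $\Phi_h(\mathbf{t}-\mathbf{s}) = \langle \varphi_\mathbf{t}, \varphi_\mathbf{s}\rangle_{L^2}$, you via $\widehat{\psi} = |\widehat{\eta}|^2 \geq 0$, both standard). The only real difference is the quadratic bound at the origin: you use evenness to get $\nabla\Phi(\mathbf{0}) = 0$ and then Taylor with a global Hessian bound, whereas the paper carries out an explicit polar-coordinate expansion of $\Phi_h(\mathbf{t}) - \Phi_h(\mathbf{0})$ and tracks the first- and second-order terms by hand---your route is shorter and equally valid.
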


The proof of Lemma~\ref{lem-base-fn} is postponed to the end of this subsection.  

We now proceed to the construction of sharply-$\sigma$-regular kernel as follows.  Let  $\Phi \in C^\infty(\R^d)$ be  a function satisfying all conditions in Lemma~\ref{lem-base-fn}. We may define a stationary kernel
\begin{align}\label{K-dial-dec}
  K(\mathbf{t})= \int_1^\infty \Phi(u \mathbf{t}) \frac{\mathrm{d} u}{u}.
\end{align}
In the literature of multiplicative chaos, the  kernel $K$ given in the above is referred to as the $*$-scale invariant kernel,  see, e.g., \cite[Section~2]{RV14}.   Note that $\supp(\Phi)\subset \bar{B}(\mathbf{0},1)$ implies that
\[
\text{$\supp(K)\subset \bar{B}(\mathbf{0},1)$. }
\]

\begin{proposition}\label{prop-star-scale}
 The $*$-scale invariant kernel $K$ defined in \eqref{K-dial-dec} is sharply-$\sigma^b(1)$-regular for any integer $b\ge 2$ and satisfies that  for any $\mathbf{t} \in \bar{B}(\mathbf{0},1)\setminus\{\mathbf{0}\}$, 
\begin{align}\label{L-log-error}
K(\mathbf{t})= \log_{+} \Big(\frac{1}{|\mathbf{t}|}\Big) + g(|\mathbf{t}|)\quad    \text{ with $g\in C^1([0,\sqrt{d}\,])$,\, $\supp(g)\subset [0,1]$ and $\sup_{x\in (0, 1]}\frac{|g(x)-g(0)|}{x^2} <\infty$}. 
\end{align}
Moreover, $g$ can be written as 
\[
g(x) = \int_{x}^1 \frac{f(v) - 1}{v} \mathrm{d} v, 
\]
with $f$ belongs to the class $\mathscr{P}_d$ introduced in \eqref{def-Pd}. 
\end{proposition}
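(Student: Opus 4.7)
The plan is to rewrite $K$ in radial form so as to peel off its logarithmic singularity, and then to exhibit the obvious $b$-adic slicing of the dilation integral as a sharply-$\sigma^b(1)$-admissible decomposition.

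First, I would use the isotropy of $\Phi$ to introduce the radial profile $f:[0,\infty)\to[0,\infty)$ by $f(r):=\Phi(r\mathbf{e})$ for any unit vector $\mathbf{e}\in\R^d$. Lemma~\ref{lem-base-fn} directly certifies $f\in\mathscr{P}_d$: $f$ is continuous, $f(0)=\Phi(\mathbf{0})=1$, $\supp(f)\subset[0,1]$, the $|\mathbf{t}|^2$-bound for $\Phi$ at the origin gives $\sup_{v\in(0,1]}|f(v)-1|/v^2<\infty$, and $\mathbf{t}\mapsto f(|\mathbf{t}|)=\Phi(\mathbf{t})$ is positive definite. The change of variable $w=u|\mathbf{t}|$ in the defining integral yields $K(\mathbf{t})=\int_{|\mathbf{t}|}^\infty f(w)/w\,dw$, which vanishes for $|\mathbf{t}|\ge 1$; for $|\mathbf{t}|<1$ the splitting $f(w)/w=1/w+(f(w)-1)/w$ produces the announced decomposition $K(\mathbf{t})=\log_+(1/|\mathbf{t}|)+g(|\mathbf{t}|)$ with $g(x)=\int_x^1(f(v)-1)/v\,dv$ and $g\equiv 0$ on $[1,\sqrt d\,]$. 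The integrand is $O(v)$ near $0$, so $g(0)$ is finite and the bound $|g(x)-g(0)|\le C\int_0^x v\,dv=Cx^2/2$ at the origin follows at once.

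For the $\sigma^b(1)$-admissible decomposition, fix $b\ge 2$ and slice the dilation range by the $b$-adic intervals
\[
K_j(\mathbf{t}):=\int_{b^j}^{b^{j+1}}\Phi(u\mathbf{t})\,\frac{du}{u},\qquad j\ge 0,
\]
so that $K=\sum_{j\ge 0}K_j$ by Fubini. Each $K_j$ is non-negative (since $\Phi\ge 0$) and positive definite (as a positive integral over $u$ of the positive-definite dilates $\mathbf{t}\mapsto\Phi(u\mathbf{t})$). The shrinking-support condition (H1) is immediate: $u\in[b^j,b^{j+1}]$ and $|\mathbf{t}|>b^{-j}$ force $|u\mathbf{t}|>1$, hence $\Phi(u\mathbf{t})=0$, so $\supp(K_j)\subset\bar B(\mathbf{0},b^{-j})$. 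The eventual-constant condition (H2') holds for every $j$ because $\Phi(\mathbf{0})=1$ gives
\[
K_j(\mathbf{0})=\int_{b^j}^{b^{j+1}}\Phi(\mathbf{0})\,\frac{du}{u}=\log b.
\]
Finally, for the rescaled uniform regularity (H3) with $\alpha_0=1$, I plug the quadratic bound $|\Phi(\mathbf{x})-\Phi(\mathbf{0})|\le C|\mathbf{x}|^2$ from Lemma~\ref{lem-base-fn} under the integral sign to get
\[
|K_j(\mathbf{t})-K_j(\mathbf{0})|\le C|\mathbf{t}|^2\int_{b^j}^{b^{j+1}}u\,du=\tfrac{C(b^2-1)}{2}\,b^{2j}|\mathbf{t}|^2,
\]
which is the required uniform control of $|K_j(\mathbf{t})-K_j(\mathbf{0})|/(b^{2j}|\mathbf{t}|^{2\alpha_0})$.

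The one delicate point, and the main technical obstacle, is the boundary matching of $g$ at $x=1$ needed for the full $C^1([0,\sqrt d\,])$ claim: the raw formula gives $g'(x)=(1-f(x))/x$, which need not tend to $0$ as $x\to 1^-$ from mere continuity of $f$. I would resolve this by exploiting the explicit self-convolution construction in Lemma~\ref{lem-base-fn}, which makes $\Phi$ (and hence $f$) $C^\infty$ and flat at the boundary of its support in an appropriate sense, so that $g$ and $g'$ extend continuously across $x=1$ together with the zero extension on $(1,\sqrt d\,]$; everything else in the proof is a direct computation from the definitions.
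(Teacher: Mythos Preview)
Your proposal follows the same route as the paper: the $b$-adic slicing $K_j(\mathbf{t})=\int_{b^j}^{b^{j+1}}\Phi(u\mathbf{t})\,du/u$ is exactly the paper's decomposition, and your verifications of (H1), (H2'), (H3), the integral formula for $g$, and the quadratic bound $|g(x)-g(0)|\lesssim x^2$ all match the paper's (your (H3) bookkeeping is in fact slightly cleaner).

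Your instinct about the boundary matching at $x=1$ is correct, but your proposed resolution does not work. The explicit self-convolution construction does make $f\in C^\infty$ and flat at $v=1$, so $f(1)=0$; but then $g'(1^-)=(1-f(1))/1=1$, while the zero extension on $(1,\sqrt d\,]$ has $g'(1^+)=0$. No amount of flatness of $f$ at $1$ can make $(1-f(1))/1$ vanish. So $g$ is \emph{not} $C^1$ across $x=1$ with the zero extension. The paper's own proof in fact only establishes $g\in C^1([0,1])$ and does not address the extension to $[0,\sqrt d\,]$; the stronger $C^1([0,\sqrt d\,])$ claim in the statement is a harmless imprecision, since nowhere else in the paper is more than boundedness of $g$, the integral formula $g(x)=\int_x^1(f(v)-1)/v\,dv$, and the quadratic bound at $0$ ever used.
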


\begin{proof}
Fix any integer $b\ge 2$. For any integer $j\ge 0$, define 
\begin{align}\label{def-Lj}
K_j(\mathbf{t}) = \int_{b^j}^{b^{j+1}} \Phi(u \mathbf{t}) \frac{\mathrm{d} u}{u} \anand 
K(\mathbf{t}) = \sum_{j=0}^\infty K_j(\mathbf{t}). 
\end{align}
Let us now verify all the conditions (H1), (H2') and (H3) in Definition~\ref{def-sigma-reg}. 

{\it (H1) condition:} The series in \eqref{def-Lj} is a finite sum for all $\mathbf{t}\ne \mathbf{0}$ since we have 
\begin{align}\label{supp-L}
\supp(K_j) \subset \bar{B}(\mathbf{0}, b^{-j}). 
\end{align}
Indeed, take any $|\mathbf{t}|>b^{-j}$,  the assumption $\supp(\Phi) \subset \bar{B}(\mathbf{0},1)$ implies  $\Phi(u\mathbf{t})  =0$ for all $u \ge b^{j}$ and hence $K_j(\mathbf{t}) = 0$. This  proves the assertion \eqref{supp-L}.  

{\it (H2') condition:} The definition \eqref{def-Lj}  and the assumption $\Phi(\mathbf{0}) = 1$ together imply that 
\[
K_j(\mathbf{0})= \int_{b^j}^{b^{j+1}} \Phi(\mathbf{0}) \frac{\mathrm{d}u}{u} = \log b \quad \text{for all integers $j\ge 0$}. 
\]

{\it (H3) condition:} For any integer $j\ge 0$, we have 
\[
\sup_{0<|\mathbf{t}| \le b^{-j}}  \frac{ |K_j(\mathbf{t}) -K_j(\mathbf{0})|}{b^{2j} |\mathbf{t}|^{ 2}}  
\le  \sup_{0<|\mathbf{t}| \le b^{-j}}     \int_{b^j}^{b^{j+1}}   \frac{| \Phi(u \mathbf{t}) - \Phi( \mathbf{0}) |}{|b^{j} \mathbf{t}|^{2}} \frac{\mathrm{d} u}{u}
\le   \sup_{0<|\mathbf{t}| \le b^{-j}}     \int_{b^j}^{b^{j+1}}   \frac{| \Phi(u \mathbf{t}) - \Phi( \mathbf{0}) |}{ |u \mathbf{t}/b|^{ 2}} \frac{\mathrm{d} u}{u}
\]
and hence
\[
\sup_{0<|\mathbf{t}| \le b^{-j}}  \frac{ |K_j(\mathbf{t}) -K_j(\mathbf{0})|}{b^{2j} |\mathbf{t}|^{ 2}}  \le\int_{b^j}^{b^{j+1}}    \sup_{0<|\mathbf{t}| \le b^{-j}}   \frac{| \Phi(u \mathbf{t}) - \Phi( \mathbf{0}) |}{ |u \mathbf{t}/b|^{ 2}} \frac{\mathrm{d} u}{u}  \le       \int_{b^j}^{b^{j+1}}    \sup_{\mathbf{s} \in \R^d \setminus \{\mathbf{0}\}}   \frac{| \Phi(\mathbf{s}) - \Phi( \mathbf{0}) |}{ | \mathbf{s}/b|^{ 2}} \frac{\mathrm{d} u}{u}.
\]
This implies that
\[
\sup_{0<|\mathbf{t}| \le b^{-j}}  \frac{ |K_j(\mathbf{t}) -K_j(\mathbf{0})|}{b^{2j} |\mathbf{t}|^{ 2}} \le
b^{2} \log b   \sup_{\mathbf{s} \in \R^d\setminus \{\mathbf{0}\}}  \frac{ |\Phi(\mathbf{s}) -\Phi(\mathbf{0})|}{| \mathbf{s}|^{ 2}}<\infty.
\]
It follows that 
\[
\sup_{j\ge 0}\sup_{0<|\mathbf{t}| \le b^{-j}}  \frac{ |K_j(\mathbf{t}) -K_j(\mathbf{0})|}{b^{2j} |\mathbf{t}|^{ 2}}  \le  b^{2} \log b \sup_{\mathbf{s} \in \R^d\setminus \{\mathbf{0}\}}  \frac{ |\Phi(\mathbf{s}) -\Phi(\mathbf{0})|}{| \mathbf{s}|^{ 2}}<\infty.
\]
Thus, we complete the proof that $K$ is sharply-$\sigma^b(1)$-regular.

Finally, we prove that $K$ is of the form \eqref{L-log-error}.  Indeed, by \eqref{K-dial-dec},  using the change-of-variable $u\to 1/u$ and the assumption $\supp(\Phi) \subset \bar{B}(\mathbf{0}, 1)$, we obtain that for all $0<|\mathbf{t}|<1$, 
\[
K(\mathbf{t}) = \int_{0}^1 \Phi\Big( \frac{\mathbf{t}}{u}\Big) \frac{\mathrm{d}u}{u} = \int_{|\mathbf{t}|}^1 \Phi\Big( \frac{\mathbf{t}}{u}\Big) \frac{\mathrm{d}u}{u} = \log\frac{1}{|\mathbf{t}|} + \int_{|\mathbf{t}|}^1 \Big[  \Phi\Big( \frac{\mathbf{t}}{u}\Big) -1\Big] \frac{\mathrm{d}u}{u} = \log \frac{1}{|\mathbf{t}|} + r(\mathbf{t}). 
\]
Since $\Phi$ is isotropic, we may write $\Phi(\mathbf{t})= f(|\mathbf{t}|)$ and $r(\mathbf{t}) = g(|\mathbf{t}|)$. Then using the change of variable $x/u = v$,  we have $\mathrm{d}u/u = - \mathrm{d}v/v$ and
  \begin{align}\label{g-int-form}
  g(x)= \int_{x}^1 \Big[  f\Big( \frac{x}{u}\Big) -1  \Big] \frac{\mathrm{d}u}{u} =   \int_{x}^1 (f(v) -1)\frac{\mathrm{d}v}{v} \quad \text{for all $0<x\le 1$}.
  \end{align}
  Since $\Phi$ satisfies all the conditions of Lemma~\ref{lem-base-fn} and $f(x)= \Phi((x, 0, \cdots, 0))$,  we know that the function $v\mapsto (f(v)-1)/v$ is continuous on $[0,1]$. Therefore, $g\in C^1([0,1])$.  Note also that by \eqref{g-int-form} and $f(x) = \Phi((x, 0, \cdots, 0))$, for any $0<x\le 1$, we have 
\begin{align*}
|  g(x) - g(0) |  & \le \int_0^x \frac{|f(v) - 1|}{v} \mathrm{d}v = \int_0^x \frac{| \Phi((v, 0, \cdots, 0) - \Phi(\mathbf{0})|}{v^2}  v \mathrm{d}v
\\
&  \le    \sup_{\mathbf{s} \in \R^d\setminus \{\mathbf{0}\}}  \frac{ |\Phi(\mathbf{s}) -\Phi(\mathbf{0})|}{| \mathbf{s}|^{ 2}}  \cdot  \int_0^x    v \mathrm{d}v=\sup_{\mathbf{s} \in \R^d\setminus \{\mathbf{0}\}}  \frac{ |\Phi(\mathbf{s}) -\Phi(\mathbf{0})|}{| \mathbf{s}|^{ 2}}\cdot \frac{x^2}{2}.
\end{align*}
 Hence $g$ satisfies all the conditions in \eqref{L-log-error} and we complete the proof of Proposition~\ref{prop-star-scale}. 
\end{proof}

It remains to prove Lemma~\ref{lem-base-fn}. 

\begin{proof}[Proof of Lemma~\ref{lem-base-fn}]
Multiplying a constant if necessary,   we only need to prove that there indeed exists  a  non-negative, positive definite and isotropic function $\Phi \in C^\infty(\R^d)$ such that 
\[
\supp(\Phi) \subset \bar{B}(\mathbf{0},1),\quad \Phi(\mathbf{0}) >0 \anand  \sup_{\mathbf{t}\in \R^d\setminus \{\mathbf{0}\}} \frac{|\Phi(\mathbf{t}) - \Phi(\mathbf{0})|}{|\mathbf{t}|^{2}}<\infty. 
\]
Take any non-negative  smooth function  $h\in C^\infty(\R)$ defined on  the real line $\R$ such that  
\[
h\not\equiv 0 \anand
\supp(h) =  \big[0,\frac{1}{4}\big].
\]
And  set
\begin{align}\label{def-phih}
\Phi_h(\mathbf{t})= \int_{\R^d} h(|\mathbf{x}-\mathbf{t}|^2) h(|\mathbf{x}|^2)\mathrm{d}\mathbf{x}, \quad \mathbf{t}\in \R^d. 
\end{align}

Now  let us show that this function $\Phi_h$ satisfies all the desired conditions.  

{\flushleft \it 1). $\Phi_h\in C^\infty(\R^d)$, $\Phi_h\geq0$,  $\Phi_h(\mathbf{0})>0$ and $\supp(\Phi_h) \subset \bar{B}(\mathbf{0},1)$.}
The assumptions on $h$ imply that
\[
\Phi_h(\mathbf{t})= \int_{\bar{B}(\mathbf{0},1/2)} h(|\mathbf{x}-\mathbf{t}|^2) h(|\mathbf{x}|^2)\mathrm{d}\mathbf{x}\geq0.
\]
The assumption $h\in C^\infty(\R)$ then implies that $\Phi_h\in C^\infty(\R^d)$.  Moreover, since $h$ is not identically zero, real-valued and belongs to $C^\infty(\R)$, we have  $\Phi_h(\mathbf{0}) >0$.  Finally, if $|\mathbf{t}|>1$, then for any $\mathbf{x} \in \bar{B}(\mathbf{0},1/2)$, 
\[
|\mathbf{x} - \mathbf{t}|^2 \ge (|\mathbf{t}|- |\mathbf{x}|)^2 \ge  1/4 
\]
and hence $h(|\mathbf{x}-\mathbf{t}|^2) =0$. It follows that $\Phi_h(\mathbf{t})=0$ for all $|\mathbf{t}|>1$. In other words, $\supp(\Phi_h) \subset \bar{B}(\mathbf{0}, 1)$.  

{\flushleft \it 2). $\Phi_h$ is isotropic.} Let $O(d)$ denote the group of orthogonal transformations of $\R^d$. By the definition \eqref{def-phih} and the $O(d)$-invariance of the Lebesgue measure $\mathrm{d}\mathbf{x}$, as well as the $O(d)$-invariance of the Euclidean metric, for any orthogonal transformation $U\in O(d)$, we have 
\begin{align*}
	\Phi_h(U \mathbf{t})&= \int_{\R^d} h(|\mathbf{x}-U\mathbf{t}|^2) h(|\mathbf{x}|^2)\mathrm{d}\mathbf{x} =  \int_{\R^d} h(|U^{-1}\mathbf{x}-\mathbf{t}|^2) h(|U^{-1}\mathbf{x}|^2)\mathrm{d}\mathbf{x} \\
	&= \int_{\R^d} h(|\mathbf{x}-\mathbf{t}|^2) h(|\mathbf{x}|^2)\mathrm{d}\mathbf{x}=\Phi_h(\mathbf{t}).
\end{align*}
Therefore, $\Phi_h$ is isotropic. 

{\flushleft \it 3). $\Phi_h$ is positive definite.} For any $\mathbf{t}\in \R^d$, define  a real-valued function
\[
\varphi_\mathbf{t} (\mathbf{x})= h(|\mathbf{x}-\mathbf{t}|^2). 
\]
Then by the translation-invariance of the Lebesgue measure, for any $\mathbf{t}, \mathbf{s}\in \R^d$, 
\[
\Phi_h(\mathbf{t}-\mathbf{s}) = \int_{\R^d} h(|\mathbf{x}-\mathbf{t} +\mathbf{s}|^2) h(|\mathbf{x}|^2)\mathrm{d}\mathbf{x}  = \int_{\R^d} h(|\mathbf{x}-\mathbf{t}|^2) h(|\mathbf{x}-\mathbf{s}|^2)\mathrm{d}\mathbf{x}  = \langle \varphi_\mathbf{t}, \varphi_\mathbf{s}\rangle_{L^2(\R^d)}. 
\]
This implies that $\Phi_h$ is positive definite. 

{\flushleft \it 4). $\Phi_h$ has second-order regularity at origin $\mathbf{0}$.} We now show that 
\begin{align}\label{0-lip-goal}
\sup_{\mathbf{t}\in \R^d\setminus \{\mathbf{0}\}} \frac{|\Phi_h(\mathbf{t}) - \Phi_h(\mathbf{0})|}{|\mathbf{t}|^{2}}<\infty.
\end{align}
For any $\mathbf{t}\in \R^d$, using the polar-coordinate system and the assumption $\supp(h) \subset [0,1/4]$, we have 
\begin{align}\label{phi-diff}
\begin{split}
\Phi_h(\mathbf{t}) - \Phi_h(\mathbf{0})& =  \int_{\R^d} \big(h(|\mathbf{x}-\mathbf{t}|^2) - h(|\mathbf{x}|^2)\big) \cdot h(|\mathbf{x}|^2)\mathrm{d}\mathbf{x} 
\\
& = \int_0^{1/2}  h(r^2) r^{d-1} \Big[  \underbrace{  \int_{\mathbb{S}^{d-1}}   \big(     h(|r \mathbf{v} - \mathbf{t}|^2)   - h(r^2)\big) \mathrm{d}\Sigma(\mathbf{v})}_{\text{denoted $T(r,  \mathbf{t})$}}\Big] \mathrm{d}r,
\end{split}
\end{align}
where $\mathbb{S}^{d-1}$ is the unit sphere in $\R^d$ equipped with the standard surface measure $\mathrm{d}\Sigma$. 

Now the assumptions that $h\in C^\infty(\R)$ and has compact support imply that there exists a bounded continuous function $\widetilde{h}(x,y)$ such that 
\[
h(y) - h(x) = h'(x) \cdot (y-x) + \widetilde{h}(x,y) \cdot (y-x)^2 \quad \text{for all $x, y\in \R$}.  
\]
Consequently, we have
\begin{align}\label{def-T12}
\begin{split}
	T(r, \mathbf{t})  =       h'(r^2)  \underbrace{\int_{\mathbb{S}^{d-1}}     (|r \mathbf{v} - \mathbf{t}|^2   -r^2) \mathrm{d}\Sigma(\mathbf{v})}_{\text{denoted $T_1(r, \mathbf{t})$}} 
	+ \underbrace{  \int_{\mathbb{S}^{d-1}}    \widetilde{h}(r^2, |r\mathbf{v}-\mathrm{t}|^2) \cdot (|r\mathbf{v} - \mathbf{t}|^2   -r^2)^2 \mathrm{d}\Sigma(\mathbf{v}).}_{\text{denoted $T_2(r, \mathbf{t})$}}
\end{split}
\end{align}
For the first  term $T_1(r, \mathbf{t})$, we have 
\begin{align}\label{T1-eq}
T_1(r, \mathbf{t}) = \int_{\mathbb{S}^{d-1}}     (|r \mathbf{v} - \mathbf{t}|^2   -r^2) \mathrm{d}\Sigma(\mathbf{v}) = \int_{\mathbb{S}^{d-1}}     (|\mathbf{t}|^2 -  2 \mathbf{v} \cdot \mathbf{t}) \mathrm{d}\Sigma(\mathbf{v})  =  |\mathbf{t}|^2 \cdot  \Sigma(\mathbb{S}^{d-1}),
\end{align}
where we used the following elementary identity  (since the map $\mathbf{v}\mapsto -\mathbf{v}$ preserves the measure $\mathrm{d}\Sigma$): 
\[
\int_{\mathbb{S}^{d-1}}    ( \mathbf{v} \cdot \mathbf{t})  \mathrm{d}\Sigma(\mathbf{v})  = 0. 
\]
For the second term $T_2(r, \mathbf{t})$, we have 
\begin{align*}
|T_2(r, \mathbf{t})| & \le  \|\widetilde{h}\|_{\infty} \cdot \Sigma(\mathbb{S}^{d-1}) \cdot    \sup_{\mathbf{v}\in \mathbb{S}^{d-1}}   (|r\mathbf{v} - \mathbf{t}|^2   -r^2)^2 
\\
& \le \|\widetilde{h}\|_{\infty} \cdot \Sigma(\mathbb{S}^{d-1}) \cdot  \sup_{\mathbf{v}\in \mathbb{S}^{d-1}}   (|\mathbf{t}|^2 - 2 r \mathbf{v}\cdot \mathbf{t})^2 
\\
& =  \|\widetilde{h}\|_{\infty} \cdot \Sigma(\mathbb{S}^{d-1}) \cdot   (|\mathbf{t}|^2  +2 r | \mathbf{t}|)^2,
\end{align*}
where $\|\widetilde{h}\|_\infty = \|\widetilde{h}\|_{L^\infty(\R^2)}<\infty$. Hence 
\begin{align}\label{T2-ineq}
|T_2(r, \mathbf{t})|  \le  \|\widetilde{h}\|_{\infty} \cdot \Sigma(\mathbb{S}^{d-1}) \cdot   (|\mathbf{t}|^2  + | \mathbf{t}|)^2 \quad \text{for all $r\in [0,1/2]$}.
\end{align}

It follows from \eqref{phi-diff}, \eqref{def-T12}, \eqref{T1-eq} and \eqref{T2-ineq} that  (with $C(h,d), C'(h,d)$  depending only on $h, d$)
\begin{align}\label{upp-phi-diff}
\begin{split}
|\Phi_h(\mathbf{t}) - \Phi_h(\mathbf{0})| & = \Big| \int_0^{1/2}  h(r^2) r^{d-1} h'(r^2) \mathrm{d}r \cdot \Sigma(\mathbb{S}^{d-1})  \cdot |\mathbf{t}|^2 +    \int_0^{1/2}  h(r^2) r^{d-1}  T_2(r, \mathbf{t}) \mathrm{d}r\Big| 
\\
& \le C(h, d) \cdot |\mathbf{t}|^2 + C'(h, d) \cdot (|\mathbf{t}|+1)^2 |\mathbf{t}|^2.
\end{split}
\end{align}
Therefore, combining \eqref{upp-phi-diff} with  $\supp(\Phi_h)\subset \bar{B}(\mathbf{0},1)$, we obtain 
\begin{align*}
\sup_{\mathbf{t}\in \R^d\setminus \{\mathbf{0}\}} \frac{|\Phi_h(\mathbf{t}) - \Phi_h(\mathbf{0})|}{|\mathbf{t}|^{2}} & \le \sup_{0<|\mathbf{t}|\le 1} \frac{|\Phi_h(\mathbf{t}) - \Phi_h(\mathbf{0})|}{|\mathbf{t}|^{2}} + \sup_{|\mathbf{t}| \ge 1} \frac{|\Phi_h(\mathbf{t}) - \Phi_h(\mathbf{0})|}{|\mathbf{t}|^{2}} 
\\
& \le  C(h,d) +4 C'(h,d) + \Phi_h(\mathbf{0})<\infty. 
\end{align*}
Thus we obtain the desired inequality \eqref{0-lip-goal}. 

The proof of Lemma~\ref{lem-base-fn} is complete. 
\end{proof}

\subsection{Comparison results of GMC}\label{sec-compare}
In this subsection, we  are going to deal with many kernels. To distinguish them, we shall use different letters (for instance, the positive definite kernel $L$ defined in \eqref{def-L-kernel} below will  in fact play the role of the kernel $K$ in Theorem~\ref{thm-gen-12DGMC} or Theorem~\ref{thm-gen-highD}).

We now consider the centered  log-correlated Gaussian field with the following kernel:  
\begin{align}\label{def-L-kernel}
L(\mathbf{t}, \mathbf{s}) = \log_{+} \Big(\frac{1}{|\mathbf{t}-\mathbf{s}|}\Big) + G(\mathbf{t}, \mathbf{s}), \quad \mathbf{t}, \mathbf{s}\in [0,1]^d,
\end{align}
with $G$  being bounded and continuous on $[0,1]^d \times [0,1]^d$ such that the conditions \eqref{error-cond} and \eqref{error-cond-bis} are satisfied. That is, 
\begin{align}\label{G-C2}
\sup_{\mathbf{t}, \mathbf{s}\in [0,1]^d
\atop \mathbf{t}\ne \mathbf{s}} \frac{|G(\mathbf{t},\mathbf{t}) -  G(\mathbf{s},\mathbf{s})|}{|\mathbf{t}-\mathbf{s}|}<\infty, \quad  \sup_{\mathbf{t}, \mathbf{s}\in [0,1]^d
\atop \mathbf{t}\ne \mathbf{s}} \frac{|G(\mathbf{t},\mathbf{t}) + G(\mathbf{s}, \mathbf{s})- 2 G(\mathbf{t},\mathbf{s})|}{|\mathbf{t}-\mathbf{s}|^2}<\infty
\end{align}
and  there is a constant $\lambda>0$ such that the following  kernel  is positive definite on $[0,1]^d \times [0,1]^d$:
\begin{align}\label{error-cond-bis}
R_\lambda(\mathbf{t}, \mathbf{s}) : = \lambda + G(\mathbf{t}, \mathbf{s}) - g (|\mathbf{t}-\mathbf{s}|) \quad \text{with} \, \, g(|\mathbf{t}-\mathbf{s}|) = \int_{|\mathbf{t}-\mathbf{s}|}^1 \frac{f(v)-1}{v} \mathrm{d}v,
\end{align}
and $f$ belongs to the class $\mathscr{P}_d$ introduced in \eqref{def-Pd}. 

In what follows, we define the kernel $L_\lambda$ by 
\begin{align}\label{def-L-lambda}
L_\lambda(\mathbf{t}, \mathbf{s}) = L(\mathbf{t}, \mathbf{s})  + \lambda, \quad \mathbf{t}, \mathbf{s}\in [0,1]^d.
\end{align}
Clearly, the  two GMC measures $\mathrm{GMC}_{L_\lambda}^{\gamma}$ and  $\mathrm{GMC}_{L}^{\gamma}$ can be coupled such that 
\begin{align}\label{coupling-gmc}
\mathrm{GMC}_{L_\lambda}^{\gamma} (\mathrm{d} \mathbf{t}) =  e^{\gamma Z - \lambda^2 \gamma^2 /2}  \cdot  \mathrm{GMC}_{L}^{\gamma} (\mathrm{d} \mathbf{t}),
\end{align}
where $Z$ is a standard Gaussian random variable independent of $\mathrm{GMC}_{L}^{\gamma}$.  In particular, under the coupling \eqref{coupling-gmc}, for any $\lambda>0$, almost surely, we have 
\begin{align}\label{coupling-eq}
\dim_F (\mathrm{GMC}_{L_\lambda}^{\gamma}) = \dim_F(\mathrm{GMC}_{L}^{\gamma}). 
\end{align}

The kernel $L$, or more precisely $L_\lambda$ in \eqref{def-L-kernel} or \eqref{def-L-lambda}, will be compared to the $*$-scale invariant kernel $K_{\mathrm{good}}$ defined in \eqref{K-dial-dec}. We recall that $K_{\mathrm{good}}$ has been  shown to be sharply-$\sigma^b(1)$-regular for any integer $b\ge 2$ in Proposition~\ref{prop-star-scale} and is of the form 
\begin{align}\label{def-K-good}
K_{\mathrm{good}}(\mathbf{t}, \mathbf{s})= \log_{+} \Big(\frac{1}{|\mathbf{t}-\mathbf{s}|}\Big) + g(|\mathbf{t}-\mathbf{s}|) =      \log_{+} \Big(\frac{1}{|\mathbf{t}-\mathbf{s}|}\Big)  + \int_{|\mathbf{t}-\mathbf{s}|}^1 \frac{f(v) - 1}{v} \mathrm{d} v, 
\end{align}
with $f$ belongs to the class $\mathscr{P}_d$ introduced in \eqref{def-Pd} and  $g\in C^1([0,\sqrt{d}\,])$ and $\supp(g)\subset [0,1]$ such that 
\begin{align}\label{g-zero-reg}
\sup_{x\in (0, 1]}\frac{|g(x)-g(0)|}{x^2} <\infty. 
\end{align}

\begin{lemma}\label{lem-K-lambda}
The positive definite kernel $R_\lambda$ defined in  \eqref{error-cond-bis} satisfies 
 \begin{align}\label{L-K-comp}
L_\lambda  = K_{\mathrm{good}} + R_\lambda
\end{align}
and 
\begin{align}\label{R-C2}
\sup_{\mathbf{t}, \mathbf{s}\in [0,1]^d
\atop \mathbf{t}\ne \mathbf{s}} \frac{|R_\lambda(\mathbf{t},\mathbf{t}) -  R_\lambda(\mathbf{s},\mathbf{s})|}{|\mathbf{t}-\mathbf{s}|}<\infty \anand \sup_{\mathbf{t}, \mathbf{s}\in [0,1]^d
\atop \mathbf{t}\ne \mathbf{s}} \frac{|R_\lambda(\mathbf{t},\mathbf{t}) + R_\lambda(\mathbf{s}, \mathbf{s})- 2 R_\lambda(\mathbf{t},\mathbf{s})|}{|\mathbf{t}-\mathbf{s}|^2}<\infty. 
\end{align}
\end{lemma}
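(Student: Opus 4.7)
The proof will be a direct computational verification. Everything reduces to the two lists of hypotheses already collected at \eqref{G-C2} for $G$ and \eqref{g-zero-reg} for $g$, plus the explicit definitions of $L_\lambda$, $K_{\mathrm{good}}$, and $R_\lambda$ given in \eqref{def-L-lambda}, \eqref{def-K-good}, and \eqref{error-cond-bis}.

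\textbf{Step 1: the identity \eqref{L-K-comp}.} This is a one-line algebraic check. Adding $K_{\mathrm{good}}(\mathbf{t},\mathbf{s})$ and $R_\lambda(\mathbf{t},\mathbf{s})$ the two copies of $g(|\mathbf{t}-\mathbf{s}|)$ cancel, leaving $\log_{+}(1/|\mathbf{t}-\mathbf{s}|)+G(\mathbf{t},\mathbf{s})+\lambda = L(\mathbf{t},\mathbf{s})+\lambda = L_\lambda(\mathbf{t},\mathbf{s})$.

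\textbf{Step 2: the diagonal Lipschitz bound for $R_\lambda$.} Evaluating $R_\lambda$ on the diagonal gives $R_\lambda(\mathbf{t},\mathbf{t})=\lambda+G(\mathbf{t},\mathbf{t})-g(0)$, so $R_\lambda(\mathbf{t},\mathbf{t})-R_\lambda(\mathbf{s},\mathbf{s})=G(\mathbf{t},\mathbf{t})-G(\mathbf{s},\mathbf{s})$, and the first assertion of \eqref{R-C2} is immediate from the first half of \eqref{G-C2}.

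\textbf{Step 3: the second-order bound for $R_\lambda$.} A straightforward computation gives
\[
R_\lambda(\mathbf{t},\mathbf{t})+R_\lambda(\mathbf{s},\mathbf{s})-2R_\lambda(\mathbf{t},\mathbf{s}) = \bigl[G(\mathbf{t},\mathbf{t})+G(\mathbf{s},\mathbf{s})-2G(\mathbf{t},\mathbf{s})\bigr] + 2\bigl[g(|\mathbf{t}-\mathbf{s}|)-g(0)\bigr].
\]
The first bracket is $O(|\mathbf{t}-\mathbf{s}|^2)$ by the second half of \eqref{G-C2}. The second bracket is $O(|\mathbf{t}-\mathbf{s}|^2)$ because $g$ was constructed in Proposition~\ref{prop-star-scale} to satisfy \eqref{g-zero-reg}, namely $\sup_{x\in(0,1]}|g(x)-g(0)|/x^2<\infty$; note that whenever $|\mathbf{t}-\mathbf{s}|>1$ the corresponding supremum is automatically finite (both $g$ and $G$ are bounded on the compact square $[0,1]^d\times[0,1]^d$), so the two local estimates combine into the required global estimate on $[0,1]^d\times[0,1]^d$. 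Combining these yields the second assertion of \eqref{R-C2}.

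There is no real obstacle here: once the decomposition \eqref{L-K-comp} is observed, the regularity bounds for $R_\lambda$ follow by transferring, term by term, the corresponding hypotheses on $G$ (which are given) and on $g$ (which were secured in Proposition~\ref{prop-star-scale}). The only point worth double-checking is that the second-order control on $g$ at the origin is precisely the form needed, i.e.\ quadratic rather than merely H\"older, so that it matches the quadratic control required by the second condition in \eqref{R-C2}; this is ensured by our explicit construction of $f\in\mathscr{P}_d$.
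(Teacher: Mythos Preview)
Your proof is correct and follows essentially the same approach as the paper: both verify \eqref{L-K-comp} directly from the definitions, then reduce the two bounds in \eqref{R-C2} to the corresponding hypotheses \eqref{G-C2} on $G$ and \eqref{g-zero-reg} on $g$ via the identities $R_\lambda(\mathbf{t},\mathbf{t})-R_\lambda(\mathbf{s},\mathbf{s})=G(\mathbf{t},\mathbf{t})-G(\mathbf{s},\mathbf{s})$ and $R_\lambda(\mathbf{t},\mathbf{t})+R_\lambda(\mathbf{s},\mathbf{s})-2R_\lambda(\mathbf{t},\mathbf{s})=[G(\mathbf{t},\mathbf{t})+G(\mathbf{s},\mathbf{s})-2G(\mathbf{t},\mathbf{s})]+2[g(|\mathbf{t}-\mathbf{s}|)-g(0)]$. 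Your extra remark handling the range $|\mathbf{t}-\mathbf{s}|>1$ via boundedness is a nice completeness check that the paper leaves implicit.
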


 Lemma~\ref{lem-K-lambda} implies that, the centered Gaussian fields $X_{L_{\lambda}}$ and  $X_{K_{\mathrm{good}}}$ corresponding to the kernels $L_\lambda$ and $K_{\mathrm{good}}$ respectively can be coupled such that 
\begin{align}\label{L-good-couple}
X_{L _{\lambda}}= X_{K_{\mathrm{good}}} + Z_\lambda \quad \text{with $X_{K_{\mathrm{good}}},\,Z_\lambda$ independent},
\end{align}
where  
\begin{align}\label{def-Z-lambda}
Z_\lambda = \{Z_\lambda (\mathbf{t}): \mathbf{t}\in [0,1]^d \}
\end{align}
 is a centered Gaussian process (which  is defined for all $\mathbf{t}$) corresponding to the covariance kernel $R_\lambda$.
 
 \begin{remark*}
 Note that the decomposition \eqref{L-good-couple} implies that the two GMC measures $\mathrm{GMC}_{L_\lambda}^\gamma$ and $\mathrm{GMC}_{K_{\mathrm{good}}}^\gamma$ can be coupled such that 
\begin{align}\label{2-GMC-comp}
 \mathrm{GMC}_{L_\lambda}^\gamma(\mathrm{d} \mathbf{t})=      \exp\Big( \gamma Z_\lambda(\mathbf{t}) -  \frac{\gamma^2}{2}  \E[Z_\lambda(\mathbf{t})^2] \Big) \cdot \mathrm{GMC}_{K_{\mathrm{good}}}^\gamma(\mathrm{d}\mathbf{t}). 
\end{align}
 In general, due to the lack of sufficient regularity (H\"older regularity is not sufficient in the general case) of the sample path of the stochastic process $Z_\lambda(\mathbf{t})$, there is no a priori relation between the polynomial Fourier decay of two GMC measures $\mathrm{GMC}_{K_1}^\gamma$ and $\mathrm{GMC}_{K_1}^\gamma$  related through
 \[
  \mathrm{GMC}_{K_1}^\gamma(\mathrm{d} \mathbf{t})=      \exp\Big( \gamma Z_\lambda(\mathbf{t}) -  \frac{\gamma^2}{2}  \E[Z_\lambda(\mathbf{t})^2] \Big) \cdot \mathrm{GMC}_{K_2}^\gamma(\mathrm{d}\mathbf{t}). 
  \]
  Therefore, instead of using the measure-theoretical relation \eqref{2-GMC-comp}, we are going to use the more fundamental relation of the kernels $L_\lambda  = K_{\mathrm{good}} + R_\lambda$. Indeed,  since $K_{\mathrm{good}}$ has a very special structure of sharp-$\sigma$-regularity in the sense of Definition~\ref{def-sigma-reg},  by  requiring a regularity condition \eqref{G-C2} on $G$ and thus on $R_\lambda$, we are able to show that the kernel $L_\lambda$  also has sharp-$\sigma$-regularity (in the sense of non-stationary kernels). Then  by applying Theorem~\ref{Fou-Dec-abs}, we show that the two GMC measures $\mathrm{GMC}_{L_\lambda}^\gamma$ and $\mathrm{GMC}_{K_{\mathrm{good}}}^\gamma$ share the same Fourier dimensions.
 \end{remark*}

For any $\gamma\in (0,\sqrt{2d})$, define a stochastic process 
$
\{P_{\gamma}^{(\lambda)}(\mathbf{t}): \mathbf{t} \in [0,1]^d\}
$ by 
\begin{align}\label{def-P-lambda}
P_\gamma^{(\lambda)}(\mathbf{t})=   \exp\Big( \gamma Z_\lambda(\mathbf{t}) -  \frac{\gamma^2}{2}  \E[Z_\lambda(\mathbf{t})^2] \Big) =   \exp\Big( \gamma Z_\lambda(\mathbf{t}) -  \frac{\gamma^2}{2} R_\lambda(\mathbf{t}, \mathbf{t}) \Big). 
\end{align}

\begin{lemma}\label{lem-gp}
Assume that the conditions in Lemma~\ref{lem-K-lambda} are satisfied.  Then for any $\gamma\in (0, \sqrt{2d})$,
 \[
	\sup_{\mathbf{t},\mathbf{s}\in [0,1]^d \atop \mathbf{t}\neq\mathbf{s}}\mathbb{E}\Big[\Big| \frac{ P_\gamma^{(\lambda)}(\mathbf{t}) -P_\gamma^{(\lambda)}(\mathbf{s})}{|\mathbf{t}-\mathbf{s}|} \Big|^{2}\Big]<\infty.
	\]
\end{lemma}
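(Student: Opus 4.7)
The plan is to reduce everything to an explicit Gaussian second-moment calculation and then exploit the two regularity bounds from \eqref{R-C2} in Lemma~\ref{lem-K-lambda}, with particular care taken to track cancellations so that the linear-in-$|\mathbf{t}-\mathbf{s}|$ contributions do not ruin the desired $|\mathbf{t}-\mathbf{s}|^2$ decay.

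First, I would compute the second moment in closed form. Since $Z_\lambda$ is a centered Gaussian process with covariance $R_\lambda$, standard Gaussian moment generating function identities give $\mathbb{E}[P_\gamma^{(\lambda)}(\mathbf{t})] = 1$, $\mathbb{E}[P_\gamma^{(\lambda)}(\mathbf{t})^{2}] = e^{\gamma^{2} R_\lambda(\mathbf{t},\mathbf{t})}$, and $\mathbb{E}[P_\gamma^{(\lambda)}(\mathbf{t}) P_\gamma^{(\lambda)}(\mathbf{s})] = e^{\gamma^{2} R_\lambda(\mathbf{t},\mathbf{s})}$. Hence
\[
\mathbb{E}\bigl[|P_\gamma^{(\lambda)}(\mathbf{t}) - P_\gamma^{(\lambda)}(\mathbf{s})|^{2}\bigr] = e^{\gamma^{2} R_\lambda(\mathbf{t},\mathbf{t})} + e^{\gamma^{2} R_\lambda(\mathbf{s},\mathbf{s})} - 2 e^{\gamma^{2} R_\lambda(\mathbf{t},\mathbf{s})}.
\]
Set $a = R_\lambda(\mathbf{t},\mathbf{t})$, $b = R_\lambda(\mathbf{s},\mathbf{s})$, $c = R_\lambda(\mathbf{t},\mathbf{s})$; since $R_\lambda$ is a continuous (hence bounded) kernel on $[0,1]^d\times[0,1]^d$, these three quantities lie in a bounded interval. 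Factoring $e^{\gamma^{2}c}$ gives
\[
\mathbb{E}\bigl[|P_\gamma^{(\lambda)}(\mathbf{t}) - P_\gamma^{(\lambda)}(\mathbf{s})|^{2}\bigr] = e^{\gamma^{2} c}\bigl(e^{\gamma^{2}(a-c)} + e^{\gamma^{2}(b-c)} - 2\bigr),
\]
with $e^{\gamma^2 c}$ uniformly bounded.

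Second, I would control $a-c$ and $b-c$ in terms of $|\mathbf{t}-\mathbf{s}|$ using the identity
\[
R_\lambda(\mathbf{t},\mathbf{t}) - R_\lambda(\mathbf{t},\mathbf{s}) = \tfrac{1}{2}\bigl[R_\lambda(\mathbf{t},\mathbf{t}) + R_\lambda(\mathbf{s},\mathbf{s}) - 2R_\lambda(\mathbf{t},\mathbf{s})\bigr] + \tfrac{1}{2}\bigl[R_\lambda(\mathbf{t},\mathbf{t}) - R_\lambda(\mathbf{s},\mathbf{s})\bigr].
\]
The first bracket is $O(|\mathbf{t}-\mathbf{s}|^{2})$ by the second bound of \eqref{R-C2}; the second is $O(|\mathbf{t}-\mathbf{s}|)$ by the first bound of \eqref{R-C2}. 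Therefore $|a-c| = O(|\mathbf{t}-\mathbf{s}|)$, and symmetrically $|b-c| = O(|\mathbf{t}-\mathbf{s}|)$. Crucially, adding the two copies of this identity and cancelling yields
\[
(a-c) + (b-c) = R_\lambda(\mathbf{t},\mathbf{t}) + R_\lambda(\mathbf{s},\mathbf{s}) - 2 R_\lambda(\mathbf{t},\mathbf{s}) = O(|\mathbf{t}-\mathbf{s}|^{2}),
\]
again by \eqref{R-C2}.

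Third, I would Taylor-expand. Since $|a-c|$ and $|b-c|$ are uniformly bounded on $[0,1]^d\times[0,1]^d$, the inequality $|e^{x}-1-x| \le C_\gamma x^{2}$ holds uniformly for $x = \gamma^{2}(a-c)$ and $x=\gamma^{2}(b-c)$, giving
\[
e^{\gamma^{2}(a-c)} + e^{\gamma^{2}(b-c)} - 2 = \gamma^{2}\bigl[(a-c)+(b-c)\bigr] + O\bigl((a-c)^{2} + (b-c)^{2}\bigr).
\]
Both terms on the right are $O(|\mathbf{t}-\mathbf{s}|^{2})$ by the previous step. Combining with the bounded factor $e^{\gamma^{2} c}$ yields
\[
\mathbb{E}\bigl[|P_\gamma^{(\lambda)}(\mathbf{t}) - P_\gamma^{(\lambda)}(\mathbf{s})|^{2}\bigr] \le C\, |\mathbf{t}-\mathbf{s}|^{2},
\]
with a constant $C$ depending only on $\gamma$ and the sup-norm and regularity constants of $R_\lambda$, which is exactly the claim. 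The only subtle point (and really the only thing to get right) is the cancellation in step two that turns the a priori linear bound on $a-c$ and $b-c$ individually into a quadratic bound on their sum; without this, the first-order Taylor term would give only $O(|\mathbf{t}-\mathbf{s}|)$ and the lemma would fail. Note that the hypothesis $\gamma \in (0,\sqrt{2d})$ plays no role here — the estimate holds for every $\gamma>0$, but this is the range in which the ambient GMC theory is applied.
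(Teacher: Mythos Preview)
Your proof is correct and follows essentially the same approach as the paper: both compute the second moment via the Gaussian moment generating function to obtain $e^{\gamma^2 a}+e^{\gamma^2 b}-2e^{\gamma^2 c}$ and then exploit the two regularity bounds \eqref{R-C2} to extract the $O(|\mathbf{t}-\mathbf{s}|^2)$ decay. The only cosmetic difference is the algebraic decomposition---the paper pivots at $e^{\gamma^2(a+b)/2}$ and uses $\cosh(x)-1=O(x^2)$ together with $e^x-1=O(x)$, whereas you pivot at $e^{\gamma^2 c}$ and use a single Taylor expansion with the cancellation $(a-c)+(b-c)=a+b-2c$; both routes isolate exactly the same two quantities $|a-b|$ and $|a+b-2c|$ controlled by \eqref{R-C2}.
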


Recall again that $K_{\mathrm{good}}$ has been  shown to be sharply-$\sigma^b(1)$-regular for any integer $b\ge 2$ in Proposition~\ref{prop-star-scale}. Therefore, the decomposition \eqref{def-Lj} of the kernel $K_{\mathrm{good}}$  gives rise to a decomposition of the Gaussian field $X_{K_{\mathrm{good}}}$ into independent Gaussian processes, which is  informally written as 
\begin{align}\label{X-good-dec}
X_{K_{\mathrm{good}}}  = \sum_{j=0}^\infty \psi_j. 
\end{align}
For any $\gamma\in (0, \sqrt{2d})$,  the decomposition  \eqref{X-good-dec} gives rise to the sequence of indepedent stochastic processes  defined as in \eqref{def-P-g-j}: 
\begin{align}\label{P-good}
\Big\{\XX_{\gamma, j}(\mathbf{t})=   \exp\big( \gamma \psi_j(\mathbf{t}) - \frac{\gamma^2}{2} \E[\psi_j(\mathbf{t})^2] \big): \mathbf{t} \in [0,1)^d \Big\}_{j\ge 0}
\end{align}
Moreover, by Proposition~\ref{prop-kernel-ass}, the sequence  \eqref{P-good} of independent stochastic processes  satisfies Assumptions~\ref{assum-indep-abs}, \ref{assum-Lp0-abs} and \ref{assum-alp0-abs} for $\alpha_0 = 1$ (since here $K_{\mathrm{good}}$ is sharply-$\sigma^b(1)$-regular) and for all  $1< p_0 <  \min\{\frac{2d}{\gamma^2}, 2\}$. 

By using the decomposition \eqref{L-good-couple} of the kernel $L_\lambda$ and the decomposition \eqref{X-good-dec} of the Gaussian field $X_{K_{\mathrm{good}}}$, we obtain the decomposition  of the Gaussian field $X_{L_\lambda}$ into indepedent Gaussian processes:  
\begin{align}\label{X-L-lambda-dec}
X_{L_\lambda}  =  \sum_{j=0}^\infty \widetilde{\psi}_j \quad \text{with $\widetilde{\psi}_0 = Z_\lambda + \psi_0$ and $\widetilde{\psi}_j = \psi_j$ for all $j\ge 1$}. 
\end{align}
Here $Z_\lambda$  is the Gaussian process introduced in \eqref{def-Z-lambda}. In particular, $Z_\lambda$ is independent of the Gaussian field $X_{K_{\mathrm{good}}}$ and hence of all the stochastic processes in $\{\psi_j\}_{j\ge 0}$.  Now, for any $\gamma\in (0, \sqrt{2d})$, the above decomposition \eqref{X-L-lambda-dec} gives rise to the following sequence of indepedent stochastic processes  defined as in \eqref{def-P-g-j}: 
\begin{align}\label{P-mod}
\Big\{\widetilde{\XX}_{\gamma, j}^{(\lambda)}(\mathbf{t})=   \exp\big( \gamma \widetilde{\psi}_j(\mathbf{t}) - \frac{\gamma^2}{2} \E[\widetilde{\psi}_j(\mathbf{t})^2] \big): \mathbf{t} \in [0,1)^d \Big\}_{j\ge 0}.
\end{align}
Clearly, we have 
\begin{align}\label{zero-gp}
\widetilde{\XX}_{\gamma, 0}^{(\lambda)}(\mathbf{t})  =  P_\gamma^{(\lambda)}(\mathbf{t}) \cdot \XX_{\gamma, 0} (\mathbf{t}) \anand  \widetilde{\XX}_{\gamma, j}^{(\lambda)}(\mathbf{t}) = \XX_{\gamma, j}(\mathbf{t}) \text{ for all $j\ge 1$},
\end{align}
where $P_\gamma^{(\lambda)}(\mathbf{t})$ is given in \eqref{def-P-lambda}. 

\begin{lemma}\label{lem-3-ass-new}
Assume that the conditions in Lemma~\ref{lem-K-lambda} are satisfied. Then for any $\gamma\in (0, \sqrt{2d})$, the sequence  of independent stochastic processes given in \eqref{P-mod}   satisfies Assumptions~\ref{assum-indep-abs}, \ref{assum-Lp0-abs} and \ref{assum-alp0-abs} for $\alpha_0 = 1$ and for any  $1< p_0 <  \min\{\frac{2d}{\gamma^2}, 2\}$. 
\end{lemma}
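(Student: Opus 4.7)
The plan is to bootstrap off Proposition~\ref{prop-kernel-ass}: by \eqref{zero-gp} we have $\widetilde{\XX}_{\gamma,j}^{(\lambda)} = \XX_{\gamma,j}$ for every $j\ge 1$, while only the zeroth term is modified by the independent factor $P_\gamma^{(\lambda)}$. Since Proposition~\ref{prop-kernel-ass}, applied to the sharply-$\sigma^b(1)$-regular kernel $K_{\mathrm{good}}$ (Proposition~\ref{prop-star-scale}), already provides Assumptions~\ref{assum-indep-abs}, \ref{assum-Lp0-abs}, \ref{assum-alp0-abs} for $\{\XX_{\gamma,j}\}_{j\ge 0}$ with $\alpha_0=1$ and any $1<p_0<\min\{2d/\gamma^2,2\}$, the entire task reduces to checking that the zeroth-level modification $P_\gamma^{(\lambda)}\cdot \XX_{\gamma,0}$ does not break any of these assumptions.

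For Assumption~\ref{assum-indep-abs}, I would simply take $k_0$ to be any integer $\ge 1$ and use the partition of $\mathscr{D}_{k-1}^b$ into parity classes produced in the proof of Proposition~\ref{prop-kernel-ass}; this works for all $k\ge 2$ because $\widetilde{\XX}_{\gamma,k}^{(\lambda)}=\XX_{\gamma,k}$ there, and $\widetilde{\XX}_{\gamma,k}^{(\lambda)}\not\equiv 1$ since $\psi_k$ is a nontrivial Gaussian process. For Assumption~\ref{assum-Lp0-abs}, the crucial ``$\limsup_{j\to\infty}$'' is identical to that of $\{\XX_{\gamma,j}\}$ because only $j=0$ has been altered, and it remains $<b^{d(p_0-1)}$ by \eqref{lim-sup-P}. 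The only extra point is the single $L^{p_0}$-bound at $j=0$: since $Z_\lambda$ is independent of $\psi_0$,
\[
\E[\widetilde{\XX}_{\gamma,0}^{(\lambda)}(\mathbf{t})^{p_0}] \;=\; \E[P_\gamma^{(\lambda)}(\mathbf{t})^{p_0}]\cdot \E[\XX_{\gamma,0}(\mathbf{t})^{p_0}] \;=\; \exp\!\Big(\tfrac{\gamma^2 p_0(p_0-1)}{2}R_\lambda(\mathbf{t},\mathbf{t})\Big)\cdot \E[\XX_{\gamma,0}(\mathbf{t})^{p_0}],
\]
which is uniformly bounded in $\mathbf{t}\in[0,1)^d$ because $R_\lambda$ is continuous on a compact set and the second factor is already controlled.

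The main work is Assumption~\ref{assum-alp0-abs}. For $j\ge 1$ it is inherited verbatim. For $j=0$ I would use the product-rule decomposition
\[
\widetilde{\XX}_{\gamma,0}^{(\lambda)}(\mathbf{t})-\widetilde{\XX}_{\gamma,0}^{(\lambda)}(\mathbf{s})
= P_\gamma^{(\lambda)}(\mathbf{t})\bigl[\XX_{\gamma,0}(\mathbf{t})-\XX_{\gamma,0}(\mathbf{s})\bigr]
+ \XX_{\gamma,0}(\mathbf{s})\bigl[P_\gamma^{(\lambda)}(\mathbf{t})-P_\gamma^{(\lambda)}(\mathbf{s})\bigr],
\]
combined with $|a+b|^{p_0}\le 2^{p_0}(|a|^{p_0}+|b|^{p_0})$ and the independence of $P_\gamma^{(\lambda)}$ from $\XX_{\gamma,0}$. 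The first resulting term factorizes as $\E[P_\gamma^{(\lambda)}(\mathbf{t})^{p_0}]\cdot \E[|\XX_{\gamma,0}(\mathbf{t})-\XX_{\gamma,0}(\mathbf{s})|^{p_0}]$, and the second factor is bounded by $C|\mathbf{t}-\mathbf{s}|^{p_0}$ by Assumption~\ref{assum-alp0-abs} applied to $\{\XX_{\gamma,j}\}$ at level $j=0$ (noting $b^{0\cdot 1}=1$). The second term factorizes as $\E[\XX_{\gamma,0}(\mathbf{s})^{p_0}]\cdot \E[|P_\gamma^{(\lambda)}(\mathbf{t})-P_\gamma^{(\lambda)}(\mathbf{s})|^{p_0}]$; here I would invoke Lemma~\ref{lem-gp} to get the $L^2$-Lipschitz bound $\E[|P_\gamma^{(\lambda)}(\mathbf{t})-P_\gamma^{(\lambda)}(\mathbf{s})|^2]\le C|\mathbf{t}-\mathbf{s}|^2$, and then apply Jensen's inequality (valid since $p_0<2$) to descend to the $L^{p_0}$-Lipschitz bound $C|\mathbf{t}-\mathbf{s}|^{p_0}$.

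The main obstacle is really conceptual rather than technical: one must notice that the non-stationary Gaussian perturbation $Z_\lambda$ sits entirely inside the $j=0$ component $\widetilde{\psi}_0$ of the decomposition \eqref{X-L-lambda-dec}, so its regularity is only ever tested against the ``$j=0$ slot'' of Assumption~\ref{assum-alp0-abs}, where the exponential weight $b^{j\alpha_0}=1$ is absent and ordinary Lipschitz control—precisely what Lemma~\ref{lem-gp} provides—suffices. If $Z_\lambda$ were instead absorbed into a generic $\widetilde{\psi}_j$ for large $j$, the rescaled factor $b^{j\alpha_0}$ would rule out this argument, which is why the decomposition \eqref{X-L-lambda-dec} is set up to place the perturbation at $j=0$.
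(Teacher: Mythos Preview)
Your proposal is correct and follows essentially the same approach as the paper: both reduce to checking Assumption~\ref{assum-alp0-abs} at $j=0$ via the product-rule decomposition, independence of $P_\gamma^{(\lambda)}$ and $\XX_{\gamma,0}$, and Lemma~\ref{lem-gp}. The only cosmetic difference is that the paper first reduces the $L^{p_0}$-bound to an $L^2$-bound (using $p_0\le 2$) and then decomposes, whereas you decompose in $L^{p_0}$ and apply Jensen to one term at the end; these are equivalent reorderings of the same argument.
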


Now we proceed to the proofs of Lemmas~\ref{lem-K-lambda}, \ref{lem-gp} and \ref{lem-3-ass-new}. 

\begin{proof}[Proof of Lemma~\ref{lem-K-lambda}]
The equality \eqref{L-K-comp} follows from the definition~\eqref{def-L-kernel} of $L$,  the definition~\eqref{error-cond-bis} of $R_\lambda$,  the definition~\eqref{def-L-lambda}  of $L_\lambda$ and the definition \eqref{def-K-good} of $K_{\mathrm{good}}$.  
By combining the equalities 
\[
\frac{R_\lambda(\mathbf{t},\mathbf{t}) -  R_\lambda(\mathbf{s},\mathbf{s})}{|\mathbf{t}-\mathbf{s}|} =  \frac{G(\mathbf{t},\mathbf{t}) -  G(\mathbf{s},\mathbf{s})}{|\mathbf{t}-\mathbf{s}|}
\]
and 
\[
\frac{R_\lambda(\mathbf{t}, \mathbf{t}) + R_\lambda(\mathbf{s}, \mathbf{s}) - 2 R_\lambda(\mathbf{t}, \mathbf{s})}{|\mathbf{t}-\mathbf{s}|^2} =  \frac{G(\mathbf{t}, \mathbf{t}) + G(\mathbf{s}, \mathbf{s}) - 2 G(\mathbf{t}, \mathbf{s})}{|\mathbf{t}-\mathbf{s}|^2}  - \frac{2g(0)  - 2 g(|\mathbf{t}-\mathbf{s}|)}{|\mathbf{t}-\mathbf{s}|^2}
\]
with the conditions \eqref{G-C2} and \eqref{g-zero-reg}, we obtain the desired inequalities \eqref{R-C2}.  
\end{proof}
 
\begin{proof}[Proof of Lemma~\ref{lem-gp}]
By the definition \eqref{def-P-lambda}, we have 
\[
\E[|  P_\gamma^{(\lambda)}(\mathbf{t}) -P_\gamma^{(\lambda)}(\mathbf{s})|^2] =  \exp\big(\gamma^2 R_\lambda(\mathbf{t}, \mathbf{t})\big) + \exp\big(\gamma^2 R_\lambda(\mathbf{s}, \mathbf{s})\big)- 2   \exp\big(\gamma^2 R_\lambda(\mathbf{t}, \mathbf{s})\big). 
\]
Thus we can write 
\begin{align*}
\E[|  P_\gamma^{(\lambda)}(\mathbf{t}) -P_\gamma^{(\lambda)}(\mathbf{s})|^2]   &  = \underbrace{\exp\big(\gamma^2 R_\lambda(\mathbf{t}, \mathbf{t})\big) + \exp\big(\gamma^2 R_\lambda(\mathbf{s}, \mathbf{s})\big)-  2 \exp\Big(\frac{\gamma^2 R_\lambda(\mathbf{t}, \mathbf{t}) + \gamma^2 R_\lambda(\mathbf{s}, \mathbf{s})}{2}\Big)}_{\text{denoted $A_{\gamma, \lambda}(\mathbf{t}, \mathbf{s})$}}
\\
& \quad\,\,+ \underbrace{2 \exp\Big(\frac{\gamma^2 R_\lambda(\mathbf{t}, \mathbf{t}) + \gamma^2 R_\lambda(\mathbf{s}, \mathbf{s})}{2}\Big)  -  2   \exp\big(\gamma^2 R_\lambda(\mathbf{t}, \mathbf{s})\big)}_{\text{denoted $B_{\gamma, \lambda}(\mathbf{t}, \mathbf{s})$}}. 
\end{align*}
Using the hyperbolic cosine function $\cosh(x) = \frac{e^{x}+e^{-x}}{2}$, we obtain 
\[
A_{\gamma, \lambda}(\mathbf{t}, \mathbf{s}) =   2 \exp\Big(\frac{\gamma^2 R_\lambda(\mathbf{t}, \mathbf{t}) + \gamma^2 R_\lambda(\mathbf{s}, \mathbf{s})}{2}\Big) \Big[  \cosh\Big(\frac{\gamma^2 R_\lambda(\mathbf{t}, \mathbf{t}) - \gamma^2 R_\lambda(\mathbf{s}, \mathbf{s})}{2}\Big)  -1\Big].
\]
Then, since $(\cosh(x) -1)/x^2$ is continuous and   $R_\lambda$ is bounded continuous on $[0,1]^d\times [0,1]^d$, there exists a constant $C = C(\gamma, \|R_\lambda\|_\infty)>0$ depending on $\gamma$ and the $L^\infty$-norm  of $R_{\lambda}$ on  $[0,1]^d\times [0,1]^d$ such that 
\[
|A_{\gamma, \lambda}(\mathbf{t}, \mathbf{s})| \le 2 \exp(\gamma^2 \|R_\lambda\|_\infty) \cdot C \cdot |R_\lambda(\mathbf{t}, \mathbf{t}) - R_\lambda(\mathbf{s}, \mathbf{s}) |^2.
\]
Therefore, by applying the first inequality in \eqref{R-C2}, we obtain 
\begin{align}\label{A-C2}
\sup_{\mathbf{t}, \mathbf{s}\in [0,1]^d
\atop \mathbf{t}\ne \mathbf{s}} \frac{|A_{\gamma, \lambda}(\mathbf{t}, \mathbf{s})|}{|\mathbf{t}-\mathbf{s}|^2}<\infty. 
\end{align}

For the term $B_{\gamma, \lambda}(\mathbf{t}, \mathbf{s})$, we have 
\[
B_{\gamma, \lambda}(\mathbf{t}, \mathbf{s}) =    2   \exp\big(\gamma^2 R_\lambda(\mathbf{t}, \mathbf{s})\big) \Big[   \exp\Big(\frac{\gamma^2}{2}\big[ R_\lambda(\mathbf{t}, \mathbf{t}) +  R_\lambda(\mathbf{s}, \mathbf{s})-2 R_\lambda(\mathbf{t},\mathbf{s})\big]\Big)  -  1\Big].
\]
Again since   $R_\lambda$ is bounded continuous on $[0,1]^d\times [0,1]^d$ and the function $(e^x -1)/x$ is continuous, there exists a constant $C' = C'(\gamma, \|R_\lambda\|_\infty)>0$ such that 
\[
|B_{\gamma, \lambda}(\mathbf{t}, \mathbf{s})| \le 2 \exp(\gamma^2 \|R_\lambda\|_\infty) \cdot C' \cdot  |R_\lambda(\mathbf{t}, \mathbf{t}) +  R_\lambda(\mathbf{s}, \mathbf{s})-2 R_\lambda(\mathbf{t},\mathbf{s})|.
\]
Therefore, by applying the second inequality in \eqref{R-C2}, we obtain 
\begin{align}\label{B-C2}
\sup_{\mathbf{t}, \mathbf{s}\in [0,1]^d
\atop \mathbf{t}\ne \mathbf{s}} \frac{|B_{\gamma, \lambda}(\mathbf{t}, \mathbf{s})|}{|\mathbf{t}-\mathbf{s}|^2}<\infty. 
\end{align}
Combining \eqref{A-C2} and \eqref{B-C2}, we complete the proof of Lemma~\ref{lem-gp}. 
\end{proof}

\begin{proof}[Proof of Lemma~\ref{lem-3-ass-new}]
By Proposition~\ref{prop-kernel-ass}, the sequence  \eqref{P-good} of independent stochastic processes  satisfies Assumptions~\ref{assum-indep-abs}, \ref{assum-Lp0-abs} and \ref{assum-alp0-abs} for $\alpha_0 = 1$ (since here $K_{\mathrm{good}}$ is sharply-$\sigma^b(1)$-regular) and for all  $1< p_0 <  \min\{\frac{2d}{\gamma^2}, 2\}\le 2$.  Therefore, it suffices to  prove 
\[
\sup_{\mathbf{t},\mathbf{s}\in [0,1]^d \atop \mathbf{t}\neq\mathbf{s}}\mathbb{E}\Big[\Big| \frac{ P_\gamma^{(\lambda)}(\mathbf{t}) \cdot \XX_{\gamma, 0} (\mathbf{t}) -P_\gamma^{(\lambda)}(\mathbf{s}) \cdot \XX_{\gamma, 0} (\mathbf{s})}{|\mathbf{t}-\mathbf{s}|} \Big|^{p_0}\Big]<\infty.
\]
Since  \eqref{zero-gp} and $1<p_0\le 2$, it suffices to prove
\begin{align}\label{desired-2-zero}
\sup_{\mathbf{t},\mathbf{s}\in [0,1]^d \atop \mathbf{t}\neq\mathbf{s}}\mathbb{E}\Big[\Big| \frac{ P_\gamma^{(\lambda)}(\mathbf{t}) \cdot \XX_{\gamma, 0} (\mathbf{t}) -P_\gamma^{(\lambda)}(\mathbf{s}) \cdot \XX_{\gamma, 0} (\mathbf{s})}{|\mathbf{t}-\mathbf{s}|} \Big|^{2}\Big]<\infty. 
\end{align}
But since $P_\gamma^{(\lambda)}$ and  $\XX_{\gamma, 0}$ are independent, using $(a+b)^2\le 2a^2 + 2b^2$,  we have 
\begin{align*}
& \quad \,\, \E\big[ \big| P_\gamma^{(\lambda)}(\mathbf{t}) \cdot \XX_{\gamma, 0} (\mathbf{t}) -P_\gamma^{(\lambda)}(\mathbf{s}) \cdot \XX_{\gamma, 0} (\mathbf{s})\big|^2\big]
\\
& \le  2 \E\big[ \big| P_\gamma^{(\lambda)}(\mathbf{t}) \cdot \XX_{\gamma, 0} (\mathbf{t}) -P_\gamma^{(\lambda)}(\mathbf{t}) \cdot \XX_{\gamma, 0} (\mathbf{s})\big|^2\big]
+ 2 \E\big[ \big| P_\gamma^{(\lambda)}(\mathbf{t}) \cdot \XX_{\gamma, 0} (\mathbf{s}) -P_\gamma^{(\lambda)}(\mathbf{s}) \cdot \XX_{\gamma, 0} (\mathbf{s})\big|^2\big] 
\\
& =  2 \E\big[ P_\gamma^{(\lambda)}(\mathbf{t})^2\big]\cdot \E \big[\big| \XX_{\gamma, 0} (\mathbf{t}) - \XX_{\gamma, 0} (\mathbf{s})\big|^2\big] + 2 \E\big[ \big| P_\gamma^{(\lambda)}(\mathbf{t})  -P_\gamma^{(\lambda)}(\mathbf{s})\big|^2\big]  \cdot \E\big[ \XX_{\gamma, 0} (\mathbf{s})^2\big]
\\
 & =     2  \exp\big(\gamma^2 R_\lambda(\mathbf{t}, \mathbf{t})\big) \cdot \E \big[\big| \XX_{\gamma, 0} (\mathbf{t}) - \XX_{\gamma, 0} (\mathbf{s})\big|^2\big]  +  2 \E\big[ \big| P_\gamma^{(\lambda)}(\mathbf{t})  -P_\gamma^{(\lambda)}(\mathbf{s})\big|^2\big]  \cdot \E\big[ \XX_{\gamma, 0} (\mathbf{0})^2\big],
\end{align*}
where we used the translation-invariance of the stochastic process $P_{\gamma, 0}$.  Now the desired inequality \eqref{desired-2-zero} follows from  the inequality \eqref{hol-goal} and   Lemma~\ref{lem-gp} for $P_{\gamma}^{(\lambda)}$, as well as the boundedness of the continuous function $R_\lambda$ on $[0,1]^d\times [0,1]^d$. 
\end{proof}

\subsection{Proofs of Theorem~\ref{thm-gen-12DGMC} and  Theorem~\ref{thm-gen-highD}}
Here we use the notation in \S\ref{sec-compare}.    In particular, the kernel $L$ below in fact play the role of the kernel $K$ in Theorem~\ref{thm-gen-12DGMC} or Theorem~\ref{thm-gen-highD}.  

For any integer  $d\ge 1$ and  fix any $\gamma\in (0, \sqrt{2d})$.  Consider a positive definite kernel  $L$ of the form \eqref{def-L-kernel} such that the assumption \eqref{G-C2} is satisfied.  Recall the definition  \eqref{def-L-lambda} of the kernel $L_\lambda$.  Then by the equality \eqref{coupling-eq}, it suffices to show that for large enough $\lambda>0$, almost surely, 
\begin{align}\label{low-upp-bdd}
\min\{2,D_{\gamma,d}\}\leq  \mathrm{dim}_{F}(\mathrm{GMC}_{L_\lambda}^\gamma) = \mathrm{dim}_{F}(\mathrm{GMC}_{L}^\gamma)  \leq D_{\gamma,d}.
\end{align}
The upper bound of the inequality \eqref{low-upp-bdd} follows from  Lemma~\ref{Uppbou-FourierDim-dDGMC}.   The lower bound follows immediately from Lemma~\ref{lem-3-ass-new} and our unified Theorem~\ref{Fou-Dec-abs}, together with exactly the same elementary compuations as in \eqref{def-zeta-p-1} and \eqref{def-zeta-p-2}.

\section{Fourier dimensions of Mandelbrot random coverings}\label{S-poi-abs}
This section is devoted to the derivation of Theorem~\ref{thm-manrancov-poi} and Theorem~\ref{salemcanonicalcase-poi} from our unified Theorem~\ref{Fou-Dec-abs}.  The main step is to prove   that the  MRC measure is a multiplicative chaos measure associated to a sequence of independent stochastic processes satisfying Assumptions~\ref{assum-indep-abs}, \ref{assum-Lp0-abs} and \ref{assum-alp0-abs}.

\subsection{Multiplicative structure of MRC measures}
Recall the definitions of $E_{\Lambda}(\epsilon)$ in \eqref{def-E-set} and $\mathrm{MRC}_\Lambda^\epsilon$ in \eqref{def-muPCepsilon-poi}. 
For simplifying notation,  given any $\epsilon>0$,  denote 
\[
G_\Lambda(\epsilon):= \mathcal{U}[\mathrm{PPP}(\omega_{\Lambda})\cap S_\epsilon] =    \bigcup_{z\in \mathrm{PPP}(\omega_{\Lambda})\cap S_\epsilon}I_z.
\]
Then the random measure $\mathrm{MRC}_\Lambda^\epsilon$ on the unit interval $[0,1]$ can be rewritten as
\begin{align}\label{def-muPCm-poi}
	\mathrm{MRC}_\Lambda^\epsilon(\mathrm{d}t)=\frac{1-\mathds{1}_{G_\Lambda(\epsilon)}(t)}{1-\mathbb{E}[\mathds{1}_{G_\Lambda(\epsilon)}(t)]}\mathrm{d}t.
\end{align}

Fix any integer $b\geq2$. For any integer $m\geq1$ and any $t\in[0,1]$, define 
\[
D^{b}_{m}(t):=\Big\{(x,y)\in\mathbb{R}\times(0,1)\,\Big|\,b^{-m}\leq y<1\an t-y<x<t\Big\}.
\]
Clearly,  $t\notin G_\Lambda(b^{-m})$ if and only if  $\mathrm{PPP}(\omega_{\Lambda})\cap D^{b}_{m}(t) =\varnothing$. Hence by \eqref{def-muPCm-poi},
\begin{align}\label{muPCm-num-poi}
	\mathrm{MRC}_\Lambda^{b^{-m}}(\mathrm{d}t)=\frac{\mathds{1} (\mathrm{PPP}(\omega_{\Lambda})\cap D^{b}_{m}(t) =\varnothing )}{\PP( \mathrm{PPP}(\omega_{\Lambda})\cap D^{b}_{m}(t) =\varnothing )}\mathrm{d}t.
\end{align}

For any $t\in[0,1]$ and any integer $j\geq1$, consider the set
\begin{align}\label{def-Delm-poi}
	\Delta^{b}_j(t):=\Big\{(x,y)\in\mathbb{R}\times(0,1)\,\Big|\,b^{-j}\leq y<b^{-(j-1)}\an t-y<x<t\Big\}
\end{align}
and define
\begin{align}\label{muPCm-numDelta-poi}
	\mathcal{X}_{b,0}(t):\equiv1\anand\mathcal{X}_{b,j}(t):=\frac{\mathds{1}(\mathrm{PPP}(\omega_{\Lambda})\cap\Delta^{b}_j(t) = \varnothing)}{\mathbb{P}(\mathrm{PPP}(\omega_{\Lambda})\cap\Delta^{b}_j(t) = \varnothing)}\text{ for each $j\geq1$}.
\end{align}
Then we obtain a sequence of independent stochastic processes:
\[
\{\mathcal{X}_{b,j}(t): t\in [0,1]\}_{j\ge 0}  \quad \text{with $\mathcal{X}_{b,j}(t)\geq0$ and $\E[\mathcal{X}_{b,j}(t)]\equiv 1$}.
\]
Note that for any $t\in[0,1]$,
\[
D^{b}_{m}(t)=\bigsqcup_{j=0}^{m}\Delta^{b}_j(t)
\anand
\mathds{1} (\mathrm{PPP}(\omega_{\Lambda})\cap D^{b}_{m}(t) =\varnothing) =  \prod_{j=0}^m  \mathds{1}(\mathrm{PPP}(\omega_{\Lambda})\cap\Delta^{b}_j(t) = \varnothing).
\]
Therefore, by  \eqref{muPCm-num-poi} and \eqref{muPCm-numDelta-poi}, we have 
\begin{align}\label{def-muPCb-m-poi}
	\mathrm{MRC}_\Lambda^{b^{-m}}(\mathrm{d}t)=\Big[\prod_{j=0}^{m}\mathcal{X}_{b,j}(t)\Big] \mathrm{d}t.
\end{align}
Hence, by comparing \eqref{def-muPCb-m-poi} and \eqref{weacon-muPC-poi} , we have the following limit in the sense of weak convergence:
\begin{align}\label{weacon-muPCm-poi}
	\lim_{m\to\infty}\mathrm{MRC}_\Lambda^{b^{-m}}=\mathrm{MRC}_\Lambda.
\end{align}

For the fixed integer $b\geq2$, recall the definition of $\chi(b,\Lambda)$ in \eqref{def-chi-b}.

\begin{proposition}\label{prop-manrancov-ass}
Suppose that $\chi(b,\Lambda)<1$, then the independent stochastic processes defined as \eqref{muPCm-numDelta-poi}: 
\[
\{\mathcal{X}_{b,j}(t): t\in [0,1]\}_{j\ge 0}
\]
satisfies Assumptions~\ref{assum-indep-abs}, \ref{assum-Lp0-abs} and \ref{assum-alp0-abs} for $\alpha_0=1/2$ and $p_0=2$. 
\end{proposition}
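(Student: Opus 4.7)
The plan is to exploit the explicit Poisson structure of each $\mathcal{X}_{b,j}$. I would first set $c_j := \omega_\Lambda(\Delta^b_j(t)) = \int_{[b^{-j}, b^{-(j-1)})} y\,\Lambda(\mathrm{d}y)$, which is independent of $t$ by translation invariance in the $x$-variable. Since $\#(\mathrm{PPP}(\omega_\Lambda) \cap \Delta^b_j(t))$ is Poisson of parameter $c_j$, one has $\mathcal{X}_{b,j}(t) = e^{c_j}\mathds{1}(\mathrm{PPP}(\omega_\Lambda) \cap \Delta^b_j(t) = \varnothing)$ and $\mathbb{E}[\mathcal{X}_{b,j}(t)^p] = e^{(p-1)c_j}$; the definition of $\chi(b,\Lambda)$ then translates to $\chi(b,\Lambda) = \limsup_j c_j / \log b$.

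For Assumption~\ref{assum-indep-abs}, I would take $N_{j-1} \equiv 2$ and partition $\mathscr{D}^b_{j-1}$ into odd- and even-indexed sub-cubes. For any two cubes $I, I'$ in the same sub-family, their left endpoints differ by at least $2b^{-(j-1)}$, and a direct geometric check shows that the ``influence regions'' $\bigcup_{t \in I} \Delta^b_j(t)$ and $\bigcup_{t \in I'}\Delta^b_j(t)$ (which at height $y < b^{-(j-1)}$ are $x$-intervals of length $y + b^{-(j-1)} \le 2b^{-(j-1)}$) are disjoint subsets of $\mathbb{R}\times(0,1)$. Joint independence of $\{\mathcal{X}_{b,j}^I\}_I$ across such a sub-family then follows from the independence property of Poisson processes on disjoint sets. (Trivial levels with $\mathcal{X}_{b,j} \equiv 1$, arising when $\Lambda([b^{-j}, b^{-(j-1)})) = 0$, can be absorbed harmlessly or reindexed away; the infinite total mass of $\Lambda$ forces infinitely many non-trivial levels.)

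For Assumption~\ref{assum-Lp0-abs} with $p_0 = 2$ and $d = 1$, the condition $\chi(b,\Lambda) < 1$ reads exactly $\limsup_j c_j < \log b$, so
\[
\limsup_{j\to\infty}\sup_{t\in[0,1)}\mathbb{E}[\mathcal{X}_{b,j}(t)^2] = \limsup_{j\to\infty} e^{c_j} = b^{\chi(b,\Lambda)} < b = b^{d(p_0-1)}.
\]
Pointwise finiteness in $j$ follows from the Radon property of $\Lambda$ on $(0,1)$.

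For Assumption~\ref{assum-alp0-abs} with $\alpha_0 = 1/2$ and $p_0 = 2$, a standard Poisson inclusion-exclusion calculation, together with the observation that for $t,s \in I \in \mathscr{D}^b_j$ one has $|t-s| < b^{-j}$ and hence $\omega_\Lambda(\Delta^b_j(t) \cap \Delta^b_j(s)) = c_j - |t-s|\,\Lambda([b^{-j}, b^{-(j-1)}))$, should yield
\[
\mathbb{E}[|\mathcal{X}_{b,j}(t) - \mathcal{X}_{b,j}(s)|^2] = 2e^{c_j}\bigl(1 - e^{-|t-s|\,\Lambda([b^{-j}, b^{-(j-1)}))}\bigr).
\]
Combining $1 - e^{-x} \le x$ with the elementary bound $\Lambda([b^{-j}, b^{-(j-1)})) \le b^j c_j$ (immediate from $c_j \ge b^{-j}\Lambda([b^{-j}, b^{-(j-1)}))$), I would conclude
\[
\mathbb{E}\Big[\Big|\frac{\mathcal{X}_{b,j}(t) - \mathcal{X}_{b,j}(s)}{b^{j/2}|t-s|^{1/2}}\Big|^2\Big] \le 2 e^{c_j} c_j,
\]
uniformly bounded in $j$ since $\sup_j c_j < \infty$.

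The main conceptual obstacle will be the geometric disjointness check for Assumption~\ref{assum-indep-abs}: once the influence regions are understood as slabs whose $x$-extent depends on the height $y$, the bound is straightforward, but one must verify disjointness uniformly over all $y \in [b^{-j}, b^{-(j-1)})$, not only at the extreme value $y = b^{-(j-1)}$. The other two verifications collapse to the two elementary inequalities $e^{c_j} < b$ (eventually) and $\Lambda([b^{-j}, b^{-(j-1)})) \le b^j c_j$, which is why the same $p_0 = 2$ and $\alpha_0 = 1/2$ serve for both.
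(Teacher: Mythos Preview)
Your proposal is correct and follows essentially the same route as the paper. The paper organizes the argument into three lemmas (one per assumption) but uses the identical ingredients: the odd/even partition of $\mathscr{D}^b_{j-1}$ together with disjointness of the sets $\Delta^b_j(t)$ once $|t-s|\ge b^{-(j-1)}$ for Assumption~\ref{assum-indep-abs}; the formula $\mathbb{E}[\mathcal{X}_{b,j}(t)^p]=e^{(p-1)c_j}$ for Assumption~\ref{assum-Lp0-abs}; and the same second-moment identity combined with the bound $\int_{[b^{-j},b^{-(j-1)})}\Lambda(\mathrm{d}y)\le b^{j}c_j$ for Assumption~\ref{assum-alp0-abs}.
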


\subsection{Proof of Proposition~\ref{prop-manrancov-ass}}
One can verify Assumption~\ref{assum-indep-abs} by the following Lemma~\ref{elem-prop-poi}. And it follows from Lemmas~\ref{p-moment-X_PCm-poi}  and \ref{Holder-p-moment-X_PCm-poi} that  Assumptions~\ref{assum-Lp0-abs} and \ref{assum-alp0-abs} hold for $\alpha_0=1/2$ and $p_0=2$. 

\begin{lemma}\label{elem-prop-poi}
	The stochastic processes $\mathcal{X}_{b,j}$ satisfy the following properties:
	\begin{itemize}
		\item[(P1)] The stochastic processes $\{\mathcal{X}_{b,j}\}_{j\geq1}$ are  independent$;$  
		\item[(P2)] For any sub-intervals $T,S\subset[0,1]$ satisfying
		\[
		\mathrm{dist}(T,S) = \inf\{|t_1-t_2|: t_1\in T\,\,\mathrm{and\,\,}t_2 \in S\} \geq  b^{-(j-1)},
		\]
		the stochastic processes $\{\mathcal{X}_{b,j}(t):t\in T\}$ and $\{\mathcal{X}_{b,j}(t): t\in S\}$ are independent.
	\end{itemize}
\end{lemma}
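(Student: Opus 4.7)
The plan is to invoke the fundamental property of Poisson point processes that their restrictions to pairwise disjoint Borel subsets are jointly independent. Both (P1) and (P2) will then be reduced to elementary disjointness statements about the regions $\Delta^b_j(t) \subset \R\times(0,1)$ defined in \eqref{def-Delm-poi}, since, by \eqref{muPCm-numDelta-poi}, each $\mathcal{X}_{b,j}(t)$ is a measurable function of $\mathrm{PPP}(\omega_\Lambda)\cap \Delta^b_j(t)$.

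For (P1), I would observe that the entire stochastic process $\{\mathcal{X}_{b,j}(t):t\in[0,1]\}$ is a measurable function of the restriction of $\mathrm{PPP}(\omega_\Lambda)$ to the horizontal strip
\[
\Xi_j := \R\times[b^{-j},b^{-(j-1)}),
\]
because $\bigcup_{t\in[0,1]}\Delta^b_j(t)\subset \Xi_j$. The strips $\Xi_j$ are pairwise disjoint for distinct $j\ge 1$, and (P1) follows immediately from the Poisson independence property applied to $\{\Xi_j\}_{j\ge 1}$.

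For (P2), the core geometric check is that under $\mathrm{dist}(T,S)\ge b^{-(j-1)}$ the two unions $\bigcup_{t\in T}\Delta^b_j(t)$ and $\bigcup_{t\in S}\Delta^b_j(t)$ are disjoint. To see this, fix $t_1\in T$ and $t_2\in S$ and assume WLOG $t_1<t_2$, so that $t_2-t_1\ge b^{-(j-1)}$. Any $(x,y)\in\Delta^b_j(t_1)$ satisfies $x<t_1$, whereas any $(x',y')\in\Delta^b_j(t_2)$ satisfies $y'<b^{-(j-1)}$ and hence
\[
x' > t_2 - y' > t_2 - b^{-(j-1)} \ge t_1,
\]
so $\Delta^b_j(t_1)\cap \Delta^b_j(t_2)=\varnothing$. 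Taking unions over $T$ and $S$ and invoking Poisson independence one more time yields (P2).

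There is no serious obstacle here; the lemma is essentially bookkeeping of supports of the generating PPP. The only subtlety worth noting is that the strict upper bound $y<b^{-(j-1)}$ in the definition \eqref{def-Delm-poi} is exactly what lets the closed distance condition $\mathrm{dist}(T,S)\ge b^{-(j-1)}$ propagate into strict separation of the $x$-coordinates; the half-open annulus was chosen precisely to accommodate this argument.
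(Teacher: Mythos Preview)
Your proof is correct and follows essentially the same approach as the paper: both reduce (P1) and (P2) to the disjointness of the regions $\Delta^b_j(t)$ (or the enclosing strips $\Xi_j$) and then invoke the independence of a Poisson point process on disjoint Borel sets. Your write-up is in fact more explicit than the paper's, which simply asserts the two disjointness claims as ``clear'' and concludes.
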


\begin{lemma}\label{p-moment-X_PCm-poi}
	Suppose that $\chi(b,\Lambda)<1$, then for any $p>1$, 
	\[
	\limsup_{j\to\infty}\sup_{t\in[0,1]}\mathbb{E}[\mathcal{X}_{b,j}^{p}(t)]=\exp\big((p-1)\chi(b,\Lambda)\log b\big)<b^{p-1}.
	\]
\end{lemma}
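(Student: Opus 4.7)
The plan is to exploit the explicit Poisson structure underlying $\mathcal{X}_{b,j}(t)$ together with the translation-invariance of the intensity $\omega_\Lambda$ to reduce the $p$-moment to a single deterministic integral, and then invoke the very definition of $\chi(b,\Lambda)$.

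First I would observe that, by the indicator nature of the numerator, the random variable $\mathcal{X}_{b,j}(t)$ takes only the two values $0$ and $1/\pi_j(t)$, where $\pi_j(t) := \mathbb{P}(\mathrm{PPP}(\omega_\Lambda)\cap\Delta^b_j(t) = \varnothing)$. Consequently
\[
\mathcal{X}_{b,j}(t)^p = \pi_j(t)^{-(p-1)}\cdot \mathcal{X}_{b,j}(t),
\]
so that $\mathbb{E}[\mathcal{X}_{b,j}(t)^p] = \pi_j(t)^{-(p-1)} = \pi_j(t)^{1-p}$, since $\mathbb{E}[\mathcal{X}_{b,j}(t)]=1$.

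Next, by the standard void-probability formula for the Poisson point process with intensity $\omega_\Lambda$,
\[
\pi_j(t) = \exp\bigl(-\omega_\Lambda(\Delta^b_j(t))\bigr).
\]
Using the product structure $\omega_\Lambda(\mathrm{d}x\,\mathrm{d}y) = \mathrm{d}x\otimes\Lambda(\mathrm{d}y)$ and the description \eqref{def-Delm-poi} of $\Delta^b_j(t)$ as the slab of base length $y$ at height $y\in[b^{-j},b^{-(j-1)})$, Fubini gives
\[
\omega_\Lambda(\Delta^b_j(t)) = \int_{[b^{-j},\,b^{-(j-1)})} y\,\Lambda(\mathrm{d}y),
\]
which is independent of $t\in[0,1]$. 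Hence
\[
\sup_{t\in[0,1]}\mathbb{E}[\mathcal{X}_{b,j}^p(t)] = \exp\!\Bigl((p-1)\!\int_{[b^{-j},\,b^{-(j-1)})} y\,\Lambda(\mathrm{d}y)\Bigr).
\]

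Finally, taking $\limsup_{j\to\infty}$ and invoking the definition \eqref{def-chi-b} of $\chi(b,\Lambda)$ yields
\[
\limsup_{j\to\infty}\sup_{t\in[0,1]}\mathbb{E}[\mathcal{X}_{b,j}^p(t)] = \exp\!\bigl((p-1)\chi(b,\Lambda)\log b\bigr),
\]
and the hypothesis $\chi(b,\Lambda)<1$ immediately gives the strict inequality $\exp((p-1)\chi(b,\Lambda)\log b) < b^{p-1}$. There is no real obstacle here: the argument is entirely a matter of reducing an $L^p$-moment of a Bernoulli-type quantity to its one-point void probability and computing a two-dimensional Lebesgue-Stieltjes integral; the only mildly delicate point is being careful that the $\limsup$ commutes with the exponential, which is trivial by continuity and monotonicity.
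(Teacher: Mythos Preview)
Your proof is correct and follows essentially the same route as the paper: compute $\mathbb{E}[\mathcal{X}_{b,j}^p(t)]=\pi_j(t)^{1-p}$, evaluate the void probability via $\omega_\Lambda(\Delta_j^b(t))=\int_{[b^{-j},b^{-(j-1)})}y\,\Lambda(\mathrm{d}y)$, and invoke the definition of $\chi(b,\Lambda)$. The only cosmetic difference is that you first isolate the Bernoulli structure of $\mathcal{X}_{b,j}(t)$, whereas the paper writes the $p$-moment directly; the content is identical.
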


\begin{lemma}\label{Holder-p-moment-X_PCm-poi}
	Suppose that $\chi(b,\Lambda)<1$, then
	\[
	\sup_{j\in\N}\sup_{0<|t-s|\leq b^{-j} }\mathbb{E}\Big[\Big| \frac{\mathcal{X}_{b,j}(t)-\mathcal{X}_{b,j}(s)}{\sqrt{b^{j}  |t-s|}} \Big|^2\Big]<\infty.
	\]
\end{lemma}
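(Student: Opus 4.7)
The plan is to reduce everything to an explicit Poisson computation, then use the hypothesis $\chi(b,\Lambda)<1$ to produce uniform bounds. For each $j\ge 1$, write $\mathcal{X}_{b,j}(t)=N_j(t)/p_j$ where $N_j(t):=\mathds{1}(\mathrm{PPP}(\omega_\Lambda)\cap \Delta_j^b(t)=\varnothing)$ and, by translation invariance of $\omega_\Lambda$,
\[
p_j=\mathbb{P}(\mathrm{PPP}(\omega_\Lambda)\cap \Delta_j^b(t)=\varnothing)=e^{-a_j},\qquad a_j:=\omega_\Lambda(\Delta_j^b(t))=\int_{b^{-j}}^{b^{-(j-1)}}y\,\Lambda(\mathrm{d}y),
\]
so $p_j$ is independent of $t$. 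Using that $N_j$ is $\{0,1\}$-valued,
\[
\mathbb{E}[(N_j(t)-N_j(s))^2]=2p_j-2\mathbb{E}[N_j(t)N_j(s)]=2e^{-a_j}-2e^{-\omega_\Lambda(\Delta_j^b(t)\cup\Delta_j^b(s))}.
\]

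The key geometric computation is $\omega_\Lambda(\Delta_j^b(t)\cap\Delta_j^b(s))$. For $0<|t-s|\le b^{-j}$, a direct look at \eqref{def-Delm-poi} shows the intersection is $\{(x,y):b^{-j}\le y<b^{-(j-1)},\ \max(t,s)-y<x<\min(t,s)\}$, whose $x$-section has length $y-|t-s|$. Hence
\[
\omega_\Lambda(\Delta_j^b(t)\cap\Delta_j^b(s))=a_j-|t-s|\,\Lambda([b^{-j},b^{-(j-1)})),
\]
so $\omega_\Lambda(\Delta_j^b(t)\cup\Delta_j^b(s))=a_j+|t-s|\,\Lambda([b^{-j},b^{-(j-1)}))$. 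Dividing by $p_j^{2}=e^{-2a_j}$ and using $1-e^{-x}\le x$,
\[
\mathbb{E}\big[|\mathcal{X}_{b,j}(t)-\mathcal{X}_{b,j}(s)|^2\big]=2e^{a_j}\big(1-e^{-|t-s|\Lambda([b^{-j},b^{-(j-1)}))}\big)\le 2e^{a_j}\,|t-s|\,\Lambda([b^{-j},b^{-(j-1)})).
\]

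Finally, the trivial inequality $a_j\ge b^{-j}\Lambda([b^{-j},b^{-(j-1)}))$ gives $\Lambda([b^{-j},b^{-(j-1)}))\le b^{j}a_j$, hence
\[
\frac{\mathbb{E}[|\mathcal{X}_{b,j}(t)-\mathcal{X}_{b,j}(s)|^2]}{b^{j}|t-s|}\le 2\,a_j\,e^{a_j}.
\]
Since $\chi(b,\Lambda)=\limsup_{j\to\infty}a_j/\log b<1$, the sequence $(a_j)_{j\ge 1}$ is bounded, making the right-hand side uniformly bounded in $j$; small values of $j\ge 1$ contribute only finitely many additional finite terms because $\Lambda$ is Radon on $(0,1)$ and for $j=0$ the process is constant. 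No step is really the main obstacle; the only point requiring care is the geometric identity for $\omega_\Lambda(\Delta_j^b(t)\cap\Delta_j^b(s))$, but the picture is elementary once one draws the two parallelogram-like regions.
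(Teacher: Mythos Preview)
Your proof is correct and follows essentially the same route as the paper: both compute $\mathbb{E}[|\mathcal{X}_{b,j}(t)-\mathcal{X}_{b,j}(s)|^2]=2e^{a_j}\bigl(1-e^{-|t-s|\,\Lambda([b^{-j},b^{-(j-1)}))}\bigr)$ via the Poisson void probability on $\Delta_j^b(t)\cup\Delta_j^b(s)$, then bound using $\Lambda([b^{-j},b^{-(j-1)}))\le b^{j}a_j$ and the boundedness of $(a_j)$ implied by $\chi(b,\Lambda)<1$. The only cosmetic differences are that the paper reaches the union measure via $\Delta_j^b(s)\setminus\Delta_j^b(t)$ rather than your intersection computation, and it inserts the (unnecessary) intermediate bound $1-e^{-x}\le e^{x}-1$ before linearizing, whereas you apply $1-e^{-x}\le x$ directly.
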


\begin{proof}[Proof of Lemma~\ref{elem-prop-poi}]
Clearly, the family of sets $\{\Delta^{b}_j(t):t\in[0,1],\,j\geq1\}$ defined in \eqref{def-Delm-poi} satisfies:
\begin{itemize}
	\item   $\Delta^{b}_j(t)$ and $\Delta^{b}_k(s)$ are disjoint for any $j\neq k$ and any $t,s\in[0,1]$;
	\item  $\Delta^{b}_j(t_1)$ and $\Delta^{b}_j(t_2)$ are disjoint for any $t_1, t_2\in[0,1]$ satisfying $|t_1-t_2|\geq b^{-(j-1)}$. Consequently, for any sub-intervals $T,S\subset[0,1]$ satisfying
	\[
	\mathrm{dist}(T,S) = \inf\{|t_1-t_2|: t_1\in T\,\,\mathrm{and}\,\,t_2 \in S\} \geq  b^{-(j-1)},
	\]
	the two subsets $\bigcup\limits_{t_1 \in T} \Delta^{b}_j(t_1)$ and $\bigcup\limits_{t_2\in S} \Delta^{b}_j(t_2)$ are disjoint.
\end{itemize}
Then  Lemma~\ref{elem-prop-poi} follows immediately by the elementary property of Poisson point process. 
\end{proof}

\begin{proof}[Proof of Lemma~\ref{p-moment-X_PCm-poi}]
Fix any $t\in[0,1]$ and $p>1$. By the definitions \eqref{def-Delm-poi} and \eqref{muPCm-numDelta-poi} for $\Delta^{b}_j(t)$ and $\mathcal{X}_{b,j}(t)$ respectively, for any  $j\ge 1$,  we have
\[
\mathbb{E}[\mathcal{X}_{b,j}^{p}(t)]=\mathbb{E}\Big[\Big|\frac{\mathds{1}(\mathrm{PPP}(\omega_{\Lambda})\cap\Delta^{b}_j(t) = \varnothing)}{\mathbb{P}(\mathrm{PPP}(\omega_{\Lambda})\cap\Delta^{b}_j(t) = \varnothing)}\Big|^p\Big]=[\mathbb{P}(\mathrm{PPP}(\omega_{\Lambda})\cap\Delta^{b}_j(t) = \varnothing)]^{1-p}.
\]
Recall that $\omega_\Lambda(\mathrm{d}x\mathrm{d}y)= \mathrm{d}x  \otimes \Lambda(\mathrm{d}y)$. Then by the elementary property of Poisson point process, we have 
\begin{align}\label{cal-numdel=0-poi}
		\mathbb{P} (\mathrm{PPP}(\omega_{\Lambda})\cap\Delta^{b}_j(t) = \varnothing)=\exp\Big(-\int_{[b^{-j},b^{-(j-1)})}y\Lambda(\mathrm{d}y)\Big)
\end{align}
and hence
\begin{align}\label{cal-pmomentofXPCm-poi}
	\mathbb{E}[\mathcal{X}_{b,j}^{p}(t)]=\exp\Big((p-1)\int_{[b^{-j},b^{-(j-1)})}y\Lambda(\mathrm{d}y)\Big).
\end{align}
Then by the condition $\chi(b,\Lambda)<1$,
we obtain
\[
\limsup_{j\to\infty}\sup_{t\in[0,1]}\mathbb{E}[\mathcal{X}_{b,j}^{p}(t)]=\exp\big((p-1)\chi(b,\Lambda)\log b\big)<b^{p-1}.
\]
This completes the proof of Lemma~\ref{p-moment-X_PCm-poi}.
\end{proof}

\begin{proof}[Proof of Lemma~\ref{Holder-p-moment-X_PCm-poi}]
We only need to consider $j\geq1$ since $\mathcal{X}_{b,0}(t)\equiv1$. Fix $j\geq1$ and $t,s\in[0,1]$ satisfying $|t-s|\leq b^{-j}$, without loss of generality, we may assume that $t>s$. Write 
\begin{align}\label{cal1-2momentdiffer-poi}
	\mathbb{E}[|\mathcal{X}_{b,j}(t)-\mathcal{X}_{b,j}(s)|^2]=\mathbb{E}[\mathcal{X}^2_{b,j}(t)]+\mathbb{E}[\mathcal{X}^2_{b,j}(s)]-2\mathbb{E}[\mathcal{X}_{b,j}(t)\mathcal{X}_{b,j}(s)].
\end{align}
It follows from \eqref{cal-pmomentofXPCm-poi} that
\begin{align}\label{cal2-2momentdiffer-poi}
	\mathbb{E}[\mathcal{X}^2_{b,j}(t)]=\mathbb{E}[\mathcal{X}^2_{b,j}(s)]=\exp\Big(\int_{[b^{-j},b^{-(j-1)})}y\Lambda(\mathrm{d}y)\Big).
\end{align}

By the definition of $\mathcal{X}_{b,j}$ in \eqref{muPCm-numDelta-poi}, we have
\begin{align*}
\mathbb{E}[\mathcal{X}_{b,j}(t)\mathcal{X}_{b,j}(s)] & =\frac{\mathbb{E}[\mathds{1}(\mathrm{PPP}(\omega_{\Lambda})\cap\Delta^{b}_j(t) = \varnothing)\cdot\mathds{1}(\mathrm{PPP}(\omega_{\Lambda})\cap\Delta^{b}_j(s) = \varnothing)]}{\mathbb{P}(\mathrm{PPP}(\omega_{\Lambda})\cap\Delta^{b}_j(t) = \varnothing)\cdot\mathbb{P}(\mathrm{PPP}(\omega_{\Lambda})\cap\Delta^{b}_j(s) = \varnothing)}
\\
& = \frac{\mathbb{P}(\mathrm{PPP}(\omega_{\Lambda})\cap(\Delta^{b}_j(t) \cup \Delta_j^b(s)) = \varnothing)}{\mathbb{P}(\mathrm{PPP}(\omega_{\Lambda})\cap\Delta^{b}_j(t) = \varnothing)\cdot\mathbb{P}(\mathrm{PPP}(\omega_{\Lambda})\cap\Delta^{b}_j(s) = \varnothing)}.
\end{align*}
Then by using the partition  $\Delta^{b}_j(t) \cup \Delta_j^b(s)= \Delta_j^b(t) \sqcup    (\Delta_j^b(s)\setminus \Delta_j^b(t))$, we obtain 
\begin{align*}
 & \quad \,\, \PP(\mathrm{PPP}(\omega_{\Lambda})\cap(\Delta^{b}_j(t) \cup \Delta_j^b(s)) = \varnothing)
\\
&= 
\PP(\mathrm{PPP}(\omega_{\Lambda})\cap\Delta^{b}_j(t) = \varnothing)\cdot \PP(\mathrm{PPP}(\omega_{\Lambda})\cap(\Delta^{b}_j(s)  \setminus \Delta_j^b(t))
= \varnothing).
\end{align*}
Hence 
\[
\mathbb{E}[\mathcal{X}_{b,j}(t)\mathcal{X}_{b,j}(s)]=\frac{\PP(\mathrm{PPP}(\omega_{\Lambda})\cap(\Delta^{b}_j(s)  \setminus \Delta_j^b(t))
= \varnothing)}{\mathbb{P}(\mathrm{PPP}(\omega_{\Lambda})\cap\Delta^{b}_j(s) = \varnothing)}.
\]
 Recall that $\omega_\Lambda(\mathrm{d}x\mathrm{d}y)= \mathrm{d}x  \otimes \Lambda(\mathrm{d}y)$. By the definition of $\mathrm{PPP}(\omega_\Lambda)$, we have 
\[
\PP(\mathrm{PPP}(\omega_{\Lambda})\cap(\Delta^{b}_j(s)  \setminus \Delta_j^b(t))
= \varnothing)=\exp\Big(-(t-s)\int_{[b^{-j},b^{-(j-1)})}\Lambda(\mathrm{d}y)\Big)
\]
and by \eqref{cal-numdel=0-poi},
\[
\mathbb{P}(\mathrm{PPP}(\omega_{\Lambda})\cap\Delta^{b}_j(s) = \varnothing)=\exp\Big(-\int_{[b^{-j},b^{-(j-1)})}y\Lambda(\mathrm{d}y)\Big).
\]
Therefore, 
\begin{align}\label{cal3-2momentdiffer-poi}
	\mathbb{E}[\mathcal{X}_{b,j}(t)\mathcal{X}_{b,j}(s)]=\exp\Big(\int_{[b^{-j},b^{-(j-1)})}y\Lambda(\mathrm{d}y)\Big)\cdot\exp\Big(-(t-s)\int_{[b^{-j},b^{-(j-1)})}\Lambda(\mathrm{d}y)\Big).
\end{align}

Now by \eqref{cal1-2momentdiffer-poi}, \eqref{cal2-2momentdiffer-poi} and \eqref{cal3-2momentdiffer-poi}, we have
\[
\mathbb{E}[|\mathcal{X}_{b,j}(t)-\mathcal{X}_{b,j}(s)|^2]=2\exp\Big(\int_{[b^{-j},b^{-(j-1)})}y\Lambda(\mathrm{d}y)\Big)\cdot\Big[1-\exp\Big(-(t-s)\int_{[b^{-j},b^{-(j-1)})}\Lambda(\mathrm{d}y)\Big)\Big].
\]
By the condition $\chi(b,\Lambda)<1$ and the elementary inequality
\[
\int_{[b^{-j},b^{-(j-1)})}\Lambda(\mathrm{d}y)\leq b^{j}\int_{[b^{-j},b^{-(j-1)})}y\Lambda(\mathrm{d}y),
\]
there exist constants $T,T'>0$ such that
\[
\mathbb{E}[|\mathcal{X}_{b,j}(t)-\mathcal{X}_{b,j}(s)|^2]\leq T\cdot[e^{T\cdot b^j(t-s)}-1]\leq T'\cdot b^j(t-s).
\]
This completes the proof  of Lemma~\ref{Holder-p-moment-X_PCm-poi}. 
\end{proof}

\begin{proof}[Proof of Proposition~\ref{prop-manrancov-ass}]
Proposition~\ref{prop-manrancov-ass} follows immediately from Lemmas~\ref{elem-prop-poi}, \ref{p-moment-X_PCm-poi} and \ref{Holder-p-moment-X_PCm-poi}. 
\end{proof}

\subsection{Proofs of Theorem~\ref{thm-manrancov-poi} and Theorem~\ref{salemcanonicalcase-poi}}
\begin{proof}[Proof of Theorem~\ref{thm-manrancov-poi}]
For any integer $b\geq2$ satisfying $\chi(b,\Lambda)<1$, by Proposition~\ref{prop-manrancov-ass}, we can apply Theorem~\ref{Fou-Dec-abs} to  $\mathrm{MRC}_\Lambda$ under the constructions  \eqref{def-muPCb-m-poi} and \eqref{weacon-muPCm-poi}. By \eqref{def-Theta-abs} and Lemma~\ref{p-moment-X_PCm-poi}, we have
\[
\Theta_b(p):=(p-1)\log b-\log\Big(\limsup_{j\to\infty}\sup_{t\in[0,1]}\mathbb{E}[\mathcal{X}_{b,j}^{p}(t)]\Big)=(p-1)[1-\chi(b,\Lambda)]\log b
\]
and hence 
\[
\sup_{1<p\leq2}\frac{\Theta_b(p)}{p\log b}=\frac{\Theta_b(2)}{2\log b}=\frac{1}{2}[1-\chi(b, \Lambda)].
\]
Hence by Proposition~\ref{prop-manrancov-ass}  and Theorem~\ref{Fou-Dec-abs}, we obtain that almost surely,
\[
\dim_{F}(\mathrm{MRC}_{\Lambda})\geq\min\Big\{2 \alpha_0,\sup_{1<p\leq 2}\frac{2  \Theta_b(p)}{p\log b}\Big\}=\min\Big\{1,1-\chi(b, \Lambda)\Big\}=1-\chi(b, \Lambda).
\]
Consequently, by the definition \eqref{def-dimF-set} of the Fourier dimension of a set,  almost surely,  we have  
\[
\dim_F(E_\Lambda) \ge \dim_{F}(\mathrm{MRC}_{\Lambda}) \ge 1-\chi(b, \Lambda).
\]

By the definitions \eqref{def-chi-b}  of $\chi(b, \Lambda)$ and $\chi(\Lambda)$, recalling the condition of Theorem~\ref{thm-manrancov-poi} that
\[
\chi(\Lambda)=\inf_{b\in \N_{\ge 2}}\chi(b, \Lambda)<1,
\]
there exists a sequence $\{b_n\}\subset\N_{\geq2}$ such that
\[
\lim_{n\to\infty}\chi(b_n, \Lambda)=\chi(\Lambda)\anand\chi(b_n,\Lambda)<1\text{ for all $b_n$}.
\]
Therefore, almost surely, we have
$
\dim_F(E_\Lambda) \ge \dim_{F}(\mathrm{MRC}_{\Lambda})\geq1-\chi(b_n, \Lambda)
$
and hence
\[
\dim_F(E_\Lambda) \ge \dim_{F}(\mathrm{MRC}_{\Lambda})\geq1-\chi(\Lambda).
\]
This completes the whole proof of Theorem~\ref{thm-manrancov-poi}.
\end{proof}

\begin{proof}[Proof of Theorem~\ref{salemcanonicalcase-poi}]
By the definition  \eqref{canonicalcase-poi} of the measure $\Lambda_\alpha$, for any integer $b\ge 2$, we have 
\[
\int_{[b^{-j}, b^{-(j-1)})}y\Lambda_\alpha(\mathrm{d}y) =\sum_{b^{-j}\leq\frac{\alpha}{n}<b^{-(j-1)}}\frac{\alpha}{n}=\alpha\sum_{\alpha b^{j-1}<n\leq\alpha b^j}\frac{1}{n},
\]
which implies
\[
\limsup_{j\to\infty}\int_{[b^{-j}, b^{-(j-1)})}y\Lambda_\alpha(\mathrm{d}y) =\alpha\log b.
\]
Then by \eqref{def-chi-b},
we have  $\chi(b, \Lambda_\alpha)= \alpha$ and hence  $\chi(\Lambda_\alpha)= \alpha\in(0,1)$. It follows from Theorem~\ref{thm-manrancov-poi} that almost surely,
\[
\dim_{F}(E_{\Lambda_\alpha})\geq\dim_{F}(\mathrm{MRC}_{\Lambda_\alpha})\geq1-\alpha.
\]

On the other hand, note that the Hausdorff dimension of the uncovered set in $[0,1]$ is smaller than the Hausdorff dimension of the uncovered set in $[0,\infty)$ and thus by  a result of Fitzsimmons, Fristedt and Shepp \cite[Corollary~3]{FFS85},  almost surely,  we have 
\[
\dim_{H}(E_{\Lambda_\alpha})\leq1-\alpha.
\]
Consequently, by  the definition \eqref{def-Hdim-m} of Hausdorff dimension of a measure,   almost surely, we have 
\[
\dim_{H}(\mathrm{MRC}_{\Lambda_\alpha})\leq\dim_{H}(E_{\Lambda_\alpha})\leq1-\alpha.
\]

Finally,  using  the following natural inequalities (see, e.g., \cite{BSS23}):
\[
	\dim_{F}(\mathrm{MRC}_{\Lambda_\alpha})\leq\dim_{H}(\mathrm{MRC}_{\Lambda_\alpha})\anand\dim_{F}(E_{\Lambda_\alpha})\leq\dim_{H}(E_{\Lambda_\alpha}),
\]
we complete the proof of Theorem~\ref{salemcanonicalcase-poi}. 
\end{proof}

\section{Fourier decay of Poisson multiplicative chaos}\label{S-pmc-abs}
This section is devoted to the derivation of Theorem~\ref{thm-poimulcha-pmc} from our unified Theorem~\ref{Fou-Dec-abs}. We are going to show that  the  PMC measure  is a multiplicative chaos measure associated to a sequence of indepedent stochastic processes satisfying Assumptions~\ref{assum-indep-abs}, \ref{assum-Lp0-abs} and \ref{assum-alp0-abs}.

\subsection{Multiplicative structure of PMC measures}
Fix  $a\in(0,1)$. 
Recall the definitions of $\mathscr{D}_{\epsilon}$ in \eqref{def-Depsil-pmc} and $\mathrm{PMC}_{\Lambda}^{a,\epsilon}$ in \eqref{def-mu-a-pmc}. Fix any integer $b\geq2$. For any integer $m\geq1$ and any $t\in[0,1]$, denote
\[
D^{b}_{m}(t):=\Big\{(x,y)\in\mathbb{R}\times(0,1)\,\Big|\,b^{-m}\leq y<1\an t-y<x<t\Big\}.
\]
Recall that $\omega_\Lambda(\mathrm{d}x\mathrm{d}y)= \mathrm{d}x  \otimes \Lambda(\mathrm{d}y)$. Then by the elementary property of Poisson point process, 
\[
\E[a^{\#(\mathrm{PPP} (\omega_\Lambda)\cap D^{b}_{m}(t))}] = \exp\Big(-(1-a)\omega_\Lambda(D_m^b(t))\Big).
\]
Hence, by \eqref{def-mu-a-pmc}, we can write
\begin{align}\label{muam-num-pmc}
	\mathrm{PMC}_{\Lambda}^{a,b^{-m}}(\mathrm{d}t)=a^{\#(\mathrm{PPP} (\omega_\Lambda)\cap D^{b}_{m}(t))} \exp\Big((1-a) \omega_\Lambda(D_m^b(t)) \Big)\mathrm{d}t.
\end{align}

For any $t\in[0,1]$ and any integer $j\geq1$, consider the set
\begin{align}\label{def-Delm-pmc}
	\Delta^{b}_j(t):=\Big\{(x,y)\in\mathbb{R}\times(0,1)\,\Big|\,b^{-j}\leq y< b^{-(j-1)}\an t-y<x<t\Big\}
\end{align}
and define 
\begin{align}\label{muPCm-numDelta-pmc}
	\mathcal{X}_{a,b,0}(t):\equiv1\anand\mathcal{X}_{a,b,j}(t):=a^{\#(\mathrm{PPP} (\omega_\Lambda)\cap\Delta^{b}_j(t))}\exp\Big((1-a)\omega_\Lambda(\Delta^{b}_j(t))\Big)\text{ for each $j\geq1$}.
\end{align}
Hence  we obtain a sequence of independent stochastic processes:
\[
\{\mathcal{X}_{a,b,j}(t): t\in [0,1]\}_{j\ge 0}  \quad \text{with $\mathcal{X}_{a,b,j}(t)\geq0$ and $\E[\mathcal{X}_{a,b,j}(t)]\equiv 1$}.
\]
Note that for any $t\in[0,1]$,
\[
D^{b}_{m}(t)=\bigsqcup_{j=0}^{m}\Delta^{b}_j(t)
\anand
 \#(\mathrm{PPP}(\omega_{\Lambda})\cap D^{b}_{m}(t)) =  \sum_{j=0}^m  \#(\mathrm{PPP} (\omega_\Lambda)\cap\Delta^{b}_j(t)). 
\]
Therefore, by \eqref{muam-num-pmc} and \eqref{muPCm-numDelta-pmc}, we have
\begin{align}\label{def-muPCb-m-pmc}
	\mathrm{PMC}_{\Lambda}^{a,b^{-m}}(\mathrm{d}t)=\Big[\prod_{j=0}^{m}\mathcal{X}_{a,b,j}(t)\Big] \mathrm{d}t.
\end{align}
Hence, by comparing \eqref{def-muPCb-m-pmc} and  \eqref{weacon-mu-a-pmc}, we have the following limit in the sense of weak convergence:
\begin{align}\label{weacon-mu-am-pmc}
	\lim_{m\to\infty}\mathrm{PMC}_{\Lambda}^{a,b^{-m}}=\mathrm{PMC}_{\Lambda}^{a}.
\end{align}

For the fixed integer $b\geq2$, recall the definition of $\chi(b,\Lambda)$ in \eqref{def-chi-b}.

\begin{proposition}\label{prop-manrancov-ass-pmc}
	Suppose that $\chi(b,\Lambda)<1$, then the independent stochastic processes defined as \eqref{muPCm-numDelta-pmc}: 
	\[
	\{\mathcal{X}_{a,b,j}(t): t\in [0,1]\}_{j\ge 0}
	\]
	satisfies Assumptions~\ref{assum-indep-abs}, \ref{assum-Lp0-abs} and \ref{assum-alp0-abs} for $\alpha_0=1/2$ and $p_0=2$.
\end{proposition}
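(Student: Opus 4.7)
The plan is to mirror the proof of Proposition~\ref{prop-manrancov-ass}, adapting each of the three verifications to the exponential-Poisson structure of $\mathcal{X}_{a,b,j}$ in place of the indicator structure of $\mathcal{X}_{b,j}$. The key observation is that all geometric features (the sets $\Delta_j^b(t)$, their pairwise disjointness across levels, and their disjointness at the same level when the base points lie at distance $\ge b^{-(j-1)}$) are identical to those of the MRC setting, while only the functional form of the multiplier changes. In particular, Assumption~\ref{assum-indep-abs} follows verbatim from the argument of Lemma~\ref{elem-prop-poi}: since $a^{\#(\mathrm{PPP}(\omega_\Lambda)\cap A)}\exp((1-a)\omega_\Lambda(A))$ depends only on the restriction of $\mathrm{PPP}(\omega_\Lambda)$ to $A$, the independence inside odd/even sub-families of $\mathscr{D}_{k-1}^b$ used in the MRC proof carries over immediately with bounded partition size $N_m = 2$.

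Next, for Assumption~\ref{assum-Lp0-abs} with $p_0 = 2$, let $\lambda_j(t) := \omega_\Lambda(\Delta_j^b(t)) = \int_{[b^{-j}, b^{-(j-1)})} y\, \Lambda(\mathrm{d}y)$. Using the Poisson moment generating identity $\E[a^{pN}] = \exp(\lambda(a^p - 1))$, a direct computation gives
\[
\E[\mathcal{X}_{a,b,j}^p(t)] = \exp\bigl(\lambda_j(t)\bigl[(a^p - 1) + p(1-a)\bigr]\bigr),
\]
and at $p = 2$ the bracket reduces to $(1-a)^2$. Passing to the $\limsup$ and invoking the definition of $\chi(b,\Lambda)$, I obtain
\[
\limsup_{j\to\infty}\sup_{t\in[0,1]} \E[\mathcal{X}_{a,b,j}^2(t)] = b^{(1-a)^2 \chi(b,\Lambda)},
\]
which is strictly less than $b = b^{p_0 - 1}$ whenever $\chi(b,\Lambda) < 1$ and $a\in(0,1)$.

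For Assumption~\ref{assum-alp0-abs} with $\alpha_0 = 1/2$, take $s < t$ with $|t-s| \le b^{-j}$ and decompose $\Delta_j^b(t) = A \sqcup B$ and $\Delta_j^b(s) = A \sqcup C$, where $A := \Delta_j^b(t) \cap \Delta_j^b(s)$, $B := \Delta_j^b(t) \setminus \Delta_j^b(s)$, and $C := \Delta_j^b(s) \setminus \Delta_j^b(t)$; since $A, B, C$ are pairwise disjoint, the Poisson counts on them are independent. This allows the factorisation $\mathcal{X}_{a,b,j}(t) = U V_t$ and $\mathcal{X}_{a,b,j}(s) = U V_s$ with $V_t, V_s$ independent and all three factors of mean $1$, hence
\[
\E[|\mathcal{X}_{a,b,j}(t) - \mathcal{X}_{a,b,j}(s)|^2] = \E[U^2]\bigl(\E[V_t^2] + \E[V_s^2] - 2\bigr).
\]
Here $\E[U^2]$ is bounded uniformly in $j$ by the computation of the previous step (since $\omega_\Lambda(A) \le \lambda_j(t)$), while $\E[V_t^2] = \exp((1-a)^2 \omega_\Lambda(B))$ with $\omega_\Lambda(B) = \omega_\Lambda(C) = (t-s)\Lambda([b^{-j}, b^{-(j-1)}))$ by a direct integration. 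Using the crude bound $\Lambda([b^{-j}, b^{-(j-1)})) \le b^j \int_{[b^{-j}, b^{-(j-1)})} y \Lambda(\mathrm{d}y)$, which is $O(b^j)$ under $\chi(b,\Lambda) < 1$, together with $e^x - 1 \le x e^x$ on bounded arguments, one concludes that $\E[|\mathcal{X}_{a,b,j}(t) - \mathcal{X}_{a,b,j}(s)|^2] \lesssim b^j |t-s|$, which is exactly the required $L^2$-Hölder bound with exponent $1/2$.

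The main obstacle is the third step: unlike in the MRC case, where $\mathcal{X}_{b,j}$ is an indicator normalised by its mean and the difference reduces to a single Poisson void probability, here one must exploit the multiplicative factorisation $\mathcal{X}_{a,b,j} = U V_t$ along disjoint Poisson sub-regions, carefully identify which contributions cancel in the difference, and control the exponential $\exp((1-a)^2 \omega_\Lambda(B))$ near $1$ by a linear expression in $(t-s) b^j$. Once the decomposition and the Poisson independence are in place, the remaining estimates are elementary and parallel to Lemma~\ref{Holder-p-moment-X_PCm-poi}.
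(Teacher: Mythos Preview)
Your proposal is correct and matches the paper's own proof essentially line by line: the paper likewise reduces Assumption~\ref{assum-indep-abs} to the disjointness argument of Lemma~\ref{elem-prop-poi}, computes $\E[\mathcal{X}_{a,b,j}^p(t)]=\exp\bigl((a^p-ap+p-1)\,\omega_\Lambda(\Delta_j^b(t))\bigr)$ for Assumption~\ref{assum-Lp0-abs}, and for Assumption~\ref{assum-alp0-abs} uses the same three-region decomposition $A,B,C$ of $\Delta_j^b(t)\cup\Delta_j^b(s)$ to obtain $\E[\mathcal{X}_{a,b,j}(t)\mathcal{X}_{a,b,j}(s)]=\exp\bigl((1-a)^2\omega_\Lambda(A)\bigr)$ and hence the same bound $\lesssim b^j|t-s|$. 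Your factorisation $\mathcal{X}_{a,b,j}=UV_t$ is a slightly cleaner way of organising the same computation the paper carries out by expanding the square directly.
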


\subsection{Proof of Proposition~\ref{prop-manrancov-ass-pmc}}
We shall  verify Assumptions~\ref{assum-indep-abs}, \ref{assum-Lp0-abs} and \ref{assum-alp0-abs} for $\alpha_0=1/2$ and $p_0=2$  by the following Lemmas~\ref{elem-prop-pmc}, \ref{p-moment-X_PCm-pmc} and \ref{Holder-p-moment-X_PCm-pmc}. 

\begin{lemma}\label{elem-prop-pmc}
	The stochastic processes $\mathcal{X}_{a,b,j}$ satisfy the following properties:
	\begin{itemize}
		\item[(P1)] The stochastic processes $\{\mathcal{X}_{a,b,j}\}_{j\geq1}$ are  independent$;$  
		\item[(P2)] For any sub-intervals $T,S\subset[0,1]$ satisfying
		\[
		\mathrm{dist}(T,S) = \inf\{|t_1-t_2|: t_1\in T\,\,\mathrm{and\,\,}t_2 \in S\} \geq  b^{-(j-1)},
		\]
		the stochastic processes $\{\mathcal{X}_{a,b,j}(t):t\in T\}$ and $\{\mathcal{X}_{a,b,j}(t): t\in S\}$ are independent.
	\end{itemize}
\end{lemma}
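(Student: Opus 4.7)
The plan is to follow exactly the same strategy as in the proof of Lemma~\ref{elem-prop-poi}, since the random quantity $\mathcal{X}_{a,b,j}(t)$ is by construction a measurable function of $\mathrm{PPP}(\omega_\Lambda)\cap\Delta_j^b(t)$ only (through the cardinality $\#(\mathrm{PPP}(\omega_\Lambda)\cap\Delta_j^b(t))$ and through the deterministic intensity term $\omega_\Lambda(\Delta_j^b(t))$). Hence the independence statements reduce to verifying disjointness of the appropriate collections of sets $\Delta_j^b(t)$ and then invoking the standard fact that a Poisson point process assigns independent random variables to disjoint measurable regions.

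First I would record the two elementary geometric observations about the family $\{\Delta_j^b(t):t\in[0,1],\,j\geq1\}$ defined in \eqref{def-Delm-pmc}: (a) for any $j\neq k$ and any $t,s\in[0,1]$, the sets $\Delta_j^b(t)$ and $\Delta_k^b(s)$ lie in disjoint horizontal strips $\{b^{-j}\le y<b^{-(j-1)}\}$ and $\{b^{-k}\le y<b^{-(k-1)}\}$, hence are disjoint; and (b) within a fixed level $j$, whenever $|t_1-t_2|\geq b^{-(j-1)}$ the horizontal projections of $\Delta_j^b(t_1)$ and $\Delta_j^b(t_2)$ are disjoint, so $\Delta_j^b(t_1)\cap\Delta_j^b(t_2)=\varnothing$. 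From (b) I would deduce that for $T,S\subset[0,1]$ with $\mathrm{dist}(T,S)\geq b^{-(j-1)}$, the two unions $\bigcup_{t\in T}\Delta_j^b(t)$ and $\bigcup_{s\in S}\Delta_j^b(s)$ are disjoint.

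To conclude (P1), fix any finite collection $j_1<\cdots<j_n$ and any test points $t_1,\dots,t_n\in[0,1]$; observation (a) gives that the sets $\Delta_{j_i}^b(t_i)$ are pairwise disjoint, so by the independence property of $\mathrm{PPP}(\omega_\Lambda)$ the random variables $\#(\mathrm{PPP}(\omega_\Lambda)\cap\Delta_{j_i}^b(t_i))$ are mutually independent, and hence so are the $\mathcal{X}_{a,b,j_i}(t_i)$. Since this holds for all finite-dimensional marginals, the processes $\{\mathcal{X}_{a,b,j}\}_{j\geq1}$ are independent. For (P2), observation (b) together with the independence of the Poisson point process on the disjoint sets $\bigcup_{t\in T}\Delta_j^b(t)$ and $\bigcup_{s\in S}\Delta_j^b(s)$ shows that the whole field $\{\#(\mathrm{PPP}(\omega_\Lambda)\cap\Delta_j^b(t)):t\in T\}$ is independent of $\{\#(\mathrm{PPP}(\omega_\Lambda)\cap\Delta_j^b(s)):s\in S\}$, and hence so are the two processes $\{\mathcal{X}_{a,b,j}(t):t\in T\}$ and $\{\mathcal{X}_{a,b,j}(t):t\in S\}$.

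There is no real obstacle here; the only thing to be a little careful about is that $\mathcal{X}_{a,b,j}(t)$ in the PMC setting is not an indicator but an exponential weight $a^{\#(\cdots)}$ multiplied by the deterministic factor $\exp((1-a)\omega_\Lambda(\Delta_j^b(t)))$. This deterministic factor does not affect measurability with respect to the relevant Poisson sigma-algebras, so the argument transfers verbatim from Lemma~\ref{elem-prop-poi}.
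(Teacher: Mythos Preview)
Your proposal is correct and follows exactly the paper's approach: the paper's proof simply states that the argument is the same as that of Lemma~\ref{elem-prop-poi}, which in turn relies on precisely the two disjointness observations (a) and (b) you record, together with the independence of the Poisson point process on disjoint regions. Your additional remark that the deterministic factor $\exp((1-a)\omega_\Lambda(\Delta_j^b(t)))$ does not affect measurability is a useful clarification but is not needed beyond what the paper already implicitly uses.
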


\begin{lemma}\label{p-moment-X_PCm-pmc}
	Suppose that $\chi(b,\Lambda)<1$, then  for any $p>1$, 
	\[
	\limsup_{j\to\infty}\sup_{t\in[0,1]}\mathbb{E}[\mathcal{X}_{a,b,j}^{p}(t)]=\exp\big((a^p-ap+p-1)\chi(b,\Lambda)\log b\big)<b^{p-1}.
	\]
\end{lemma}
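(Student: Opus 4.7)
The plan is to reduce the computation of $\mathbb{E}[\mathcal{X}_{a,b,j}^{p}(t)]$ to the moment generating function of a Poisson random variable and then compare the resulting exponent with $(p-1)\log b$.

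First I would unpack the definition \eqref{muPCm-numDelta-pmc}: since the deterministic factor $\exp((1-a)\omega_\Lambda(\Delta_j^b(t)))$ factors out of the expectation, I obtain
\[
\mathbb{E}[\mathcal{X}_{a,b,j}^{p}(t)]=\exp\big(p(1-a)\,\omega_\Lambda(\Delta_j^b(t))\big)\cdot\mathbb{E}\big[a^{p\,\#(\mathrm{PPP}(\omega_\Lambda)\cap\Delta_j^b(t))}\big].
\]
The count $\#(\mathrm{PPP}(\omega_\Lambda)\cap\Delta_j^b(t))$ is Poisson with mean $\omega_\Lambda(\Delta_j^b(t))$, and the standard Poisson generating function identity $\mathbb{E}[s^N]=\exp(\lambda(s-1))$ (with $s=a^p\in(0,1)$) yields
\[
\mathbb{E}\big[a^{p\,\#(\mathrm{PPP}(\omega_\Lambda)\cap\Delta_j^b(t))}\big]=\exp\big((a^p-1)\,\omega_\Lambda(\Delta_j^b(t))\big).
\]
Combining the two factors collapses the exponent to $(a^p-ap+p-1)\,\omega_\Lambda(\Delta_j^b(t))$.

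Next, using $\omega_\Lambda=\mathrm{d}x\otimes\Lambda(\mathrm{d}y)$ and the definition \eqref{def-Delm-pmc}, Fubini gives $\omega_\Lambda(\Delta_j^b(t))=\int_{[b^{-j},b^{-(j-1)})}y\,\Lambda(\mathrm{d}y)$, which is independent of $t\in[0,1]$. Hence
\[
\sup_{t\in[0,1]}\mathbb{E}[\mathcal{X}_{a,b,j}^{p}(t)]=\exp\Big((a^p-ap+p-1)\int_{[b^{-j},b^{-(j-1)})}y\,\Lambda(\mathrm{d}y)\Big).
\]
Observing that $a^p-ap+p-1>0$ (since on $(0,1)$ the map $a\mapsto a^p-ap+p-1$ is strictly decreasing with value $0$ at $a=1$), I can pass to $\limsup_{j\to\infty}$ using the definition \eqref{def-chi-b} of $\chi(b,\Lambda)$, obtaining the claimed equality
\[
\limsup_{j\to\infty}\sup_{t\in[0,1]}\mathbb{E}[\mathcal{X}_{a,b,j}^{p}(t)]=\exp\big((a^p-ap+p-1)\chi(b,\Lambda)\log b\big).
\]

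Finally, to establish the strict inequality $<b^{p-1}$, it suffices to check $(a^p-ap+p-1)\chi(b,\Lambda)<p-1$. Because $a\in(0,1)$ and $p>1$ force $a^{p-1}<1<p$, we have $a^p<ap$, hence $a^p-ap+p-1<p-1$. Combined with $\chi(b,\Lambda)<1$ and positivity of both factors, this immediately yields $(a^p-ap+p-1)\chi(b,\Lambda)<p-1$, completing the proof. There is no real obstacle here; the only point requiring slight care is verifying that $a^p-ap+p-1$ is strictly positive so that the direction of the inequality after multiplying by $\chi(b,\Lambda)<1$ is preserved, which is handled by the elementary monotonicity argument above.
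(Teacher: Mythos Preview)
Your proposal is correct and follows essentially the same route as the paper's proof: factor out the deterministic normalization, apply the Poisson generating function identity $\mathbb{E}[s^N]=\exp(\lambda(s-1))$, compute $\omega_\Lambda(\Delta_j^b(t))=\int_{[b^{-j},b^{-(j-1)})}y\,\Lambda(\mathrm{d}y)$, and then use $a^p-ap<0$ together with $\chi(b,\Lambda)<1$ to obtain the strict bound. Your argument is in fact slightly more careful than the paper's, since you explicitly verify the positivity of $a^p-ap+p-1$ that is needed both to pass the $\limsup$ through the exponential and to preserve the inequality direction when multiplying by $\chi(b,\Lambda)$.
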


\begin{lemma}\label{Holder-p-moment-X_PCm-pmc}
	Suppose that $\chi(b,\Lambda)<1$, then
	\[
	\sup_{j\in\N}\sup_{0<|t-s|\leq b^{-j} }\mathbb{E}\Big[\Big| \frac{\mathcal{X}_{a,b,j}(t)-\mathcal{X}_{a,b,j}(s)}{\sqrt{b^{j}  |t-s|}} \Big|^2\Big]<\infty.
	\]
\end{lemma}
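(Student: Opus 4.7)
The plan is to expand the $L^2$-norm as
\[
\mathbb{E}[|\mathcal{X}_{a,b,j}(t)-\mathcal{X}_{a,b,j}(s)|^2]=\mathbb{E}[\mathcal{X}_{a,b,j}(t)^2]+\mathbb{E}[\mathcal{X}_{a,b,j}(s)^2]-2\mathbb{E}[\mathcal{X}_{a,b,j}(t)\mathcal{X}_{a,b,j}(s)],
\]
and compute each term via the Poisson generating function $\mathbb{E}[a^{\#(\mathrm{PPP}(\omega_\Lambda)\cap A)}]=\exp((a-1)\omega_\Lambda(A))$. By translation-invariance of $\omega_\Lambda$ together with the proof of Lemma~\ref{p-moment-X_PCm-pmc} (applied with $p=2$), both $\mathbb{E}[\mathcal{X}_{a,b,j}(t)^2]$ and $\mathbb{E}[\mathcal{X}_{a,b,j}(s)^2]$ equal $\exp((1-a)^2 I_j)$, where $I_j:=\int_{[b^{-j},b^{-(j-1)})}y\,\Lambda(\mathrm{d}y)$.

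Next, I will compute the cross term. Assuming $t>s$ and $|t-s|\leq b^{-j}$, decompose
\[
\Delta^b_j(t)\cup\Delta^b_j(s)=\bigl(\Delta^b_j(t)\cap\Delta^b_j(s)\bigr)\sqcup\bigl(\Delta^b_j(t)\setminus\Delta^b_j(s)\bigr)\sqcup\bigl(\Delta^b_j(s)\setminus\Delta^b_j(t)\bigr),
\]
and use the independence of Poisson counts over disjoint sets. The key cancellation is that on each symmetric-difference piece the exponent contributed by $a^{N}$ (namely $a-1$) and by $\exp((1-a)\omega_\Lambda)$ (namely $1-a$) sum to zero, while on the intersection they sum to $(a^2-1)+2(1-a)=(1-a)^2$. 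Hence
\[
\mathbb{E}[\mathcal{X}_{a,b,j}(t)\mathcal{X}_{a,b,j}(s)]=\exp\bigl((1-a)^2\,\omega_\Lambda(\Delta^b_j(t)\cap\Delta^b_j(s))\bigr).
\]
A direct geometric computation (using $|t-s|<y$ for $y\in[b^{-j},b^{-(j-1)})$) gives $\omega_\Lambda(\Delta^b_j(t)\setminus\Delta^b_j(s))=(t-s)\Lambda_j$ with $\Lambda_j:=\Lambda([b^{-j},b^{-(j-1)}))$, whence $\omega_\Lambda(\Delta^b_j(t)\cap\Delta^b_j(s))=I_j-(t-s)\Lambda_j$.

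Putting these together,
\[
\mathbb{E}[|\mathcal{X}_{a,b,j}(t)-\mathcal{X}_{a,b,j}(s)|^2]=2\exp\bigl((1-a)^2 I_j\bigr)\Bigl[1-\exp\bigl(-(1-a)^2(t-s)\Lambda_j\bigr)\Bigr],
\]
and the elementary inequality $1-e^{-x}\leq x$ combined with $\Lambda_j\leq b^j I_j$ yields
\[
\frac{\mathbb{E}[|\mathcal{X}_{a,b,j}(t)-\mathcal{X}_{a,b,j}(s)|^2]}{b^j|t-s|}\leq 2(1-a)^2\,I_j\exp\bigl((1-a)^2 I_j\bigr).
\]
The hypothesis $\chi(b,\Lambda)<1$ means $\limsup_{j\to\infty}I_j<\log b$, so the right-hand side is bounded in $j$ at infinity, while for each of the finitely many small $j$ the quantity $I_j$ is finite. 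Taking suprema concludes the proof.

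The only real subtlety is the geometric step: verifying that the symmetric differences $\Delta^b_j(t)\triangle\Delta^b_j(s)$ contribute zero exponent upon taking expectation (owing to the cancellation $(a-1)+(1-a)=0$), and computing their $\omega_\Lambda$-measure explicitly as $(t-s)\Lambda_j$. Everything else is a direct parallel to the proof of Lemma~\ref{Holder-p-moment-X_PCm-poi}.
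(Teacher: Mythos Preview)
Your proof is correct and follows essentially the same approach as the paper: expand the second moment, compute the cross term by splitting $\Delta^b_j(t)\cup\Delta^b_j(s)$ into the intersection and the two set-differences, observe the cancellation $(a-1)+(1-a)=0$ on the symmetric-difference pieces and $(a^2-1)+2(1-a)=(1-a)^2$ on the intersection, and finish with $\Lambda_j\le b^j I_j$ together with $\sup_j I_j<\infty$. The only cosmetic difference is that you factor the result as $2e^{(1-a)^2 I_j}\bigl[1-e^{-(1-a)^2(t-s)\Lambda_j}\bigr]$ and apply $1-e^{-x}\le x$, whereas the paper factors it as $2e^{(1-a)^2\omega_\Lambda(\cap)}\bigl[e^{(1-a)^2\omega_\Lambda(\setminus)}-1\bigr]$ and bounds $e^x-1$ for bounded $x$; your route is marginally cleaner since $1-e^{-x}\le x$ requires no boundedness assumption.
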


\begin{proof}[Proof of Lemma~\ref{elem-prop-pmc}]
The proof of Lemma~\ref{elem-prop-pmc} is the same as  that of  Lemma~\ref{elem-prop-poi}.
\end{proof}

\begin{proof}[Proof of Lemma~\ref{p-moment-X_PCm-pmc}]
Fix any $t\in[0,1]$ and $p>1$. By the definitions \eqref{def-Delm-pmc} and \eqref{muPCm-numDelta-pmc} for $\Delta^{b}_j(t)$ and $\mathcal{X}_{a,b,j}(t)$ respectively, for any $j\ge 1$,  we have
\begin{align}\label{p-2-use}
\begin{split}
\mathbb{E}[\mathcal{X}_{a,b,j}^{p}(t)]&=\mathbb{E}\big[a^{p \#(\mathrm{PPP} (\omega_\Lambda)\cap\Delta^{b}_j(t))}\big]  \cdot \exp\big((1-a)p \omega_\Lambda(\Delta^{b}_j(t))\big)
\\
& = \exp\big((a^p-1)\omega_\Lambda(\Delta^{b}_j(t)) \big)\cdot \exp\big((1-a)p \omega_\Lambda(\Delta^{b}_j(t))\big)
\\
& = \exp\big( (a^p - ap +p-1) \omega_\Lambda(\Delta_j^b(t))\big).
\end{split} 
\end{align}
Since $\omega_\Lambda(\mathrm{d}x\mathrm{d}y)= \mathrm{d}x  \otimes \Lambda(\mathrm{d}y)$, we obtain 
\begin{align}\label{ind-t-omega}
\omega_\Lambda(\Delta^{b}_j(t))=\int_{[b^{-j},b^{-(j-1)})}y\Lambda(\mathrm{d}y).
\end{align}
and hence 
\[
		\mathbb{E}[\mathcal{X}_{a,b,j}^{p}(t)] =\exp\Big((a^p-ap+p-1)\int_{[b^{-j},b^{-(j-1)})}y\Lambda(\mathrm{d}y)\Big).
\]
Since $a^p-ap=a(a^{p-1}-p)<0$,  by the condition $\chi(b,\Lambda)<1$,
\[
	\limsup_{j\to\infty}\sup_{t\in[0,1]}\mathbb{E}[\mathcal{X}_{a,b,j}^{p}(t)]=\exp\big((a^p-ap+p-1)\chi(b,\Lambda)\log b\big)<b^{p-1}.
\]
This completes the proof of Lemma~\ref{p-moment-X_PCm-pmc}.
\end{proof}

\begin{proof}[Proof of Lemma~\ref{Holder-p-moment-X_PCm-pmc}]
	We only need to consider $j\geq1$ since $\mathcal{X}_{a,b,0}(t)\equiv1$. Fix $j\geq1$ and $t,s\in[0,1]$ satisfying $|t-s|\leq b^{-j}$, without loss of generality, we may assume that $t>s$. Write 
	\begin{align}\label{cal1-2momentdiffer-pmc}
		\mathbb{E}[|\mathcal{X}_{a,b,j}(t)-\mathcal{X}_{a,b,j}(s)|^2]=\mathbb{E}[\mathcal{X}^2_{a,b,j}(t)]+\mathbb{E}[\mathcal{X}^2_{a,b,j}(s)]-2\mathbb{E}[\mathcal{X}_{a,b,j}(t)\mathcal{X}_{a,b,j}(s)].
	\end{align}
	Take $p=2$ in  \eqref{p-2-use} and use the independence of $t$ in \eqref{ind-t-omega},  we get 
	\begin{align}\label{cal2-2momentdiffer-pmc}
		\mathbb{E}[\mathcal{X}^2_{a,b,j}(t)]=\mathbb{E}[\mathcal{X}^2_{a,b,j}(s)]=\exp\Big((1-a)^2\omega_\Lambda(\Delta^b_j(t))\Big).
	\end{align}
	
	By the definition of $\mathcal{X}_{a,b,j}$ in \eqref{muPCm-numDelta-pmc}, we have
	\begin{align}\label{cal1-num-pmc}
	\mathbb{E}[\mathcal{X}_{a,b,j}(t)\mathcal{X}_{a,b,j}(s)]=\mathbb{E}\big[a^{  \#(\mathrm{PPP} (\omega_\Lambda)\cap\Delta^{b}_j(t)) + \#(\mathrm{PPP} (\omega_\Lambda)\cap\Delta^{b}_j(s))}\big]  \exp\big((1-a)\big[\omega_\Lambda(\Delta^{b}_{j}(t))+\omega_\Lambda(\Delta^{b}_{j}(s))\big]\big).
	\end{align}
	Define three independent Poisson variables as
	\begin{align*}
	\#_{1}:=&\#\big(\mathrm{PPP} (\omega_\Lambda)\cap(\Delta^{b}_j(t)\setminus\Delta^{b}_j(s))\big),
	\\
	\#_{2}:=&\#\big(\mathrm{PPP} (\omega_\Lambda)\cap(\Delta^{b}_j(s)\setminus\Delta^{b}_j(t))\big),
	\\
	\#_{3}:=&\#\big(\mathrm{PPP} (\omega_\Lambda)\cap(\Delta^{b}_j(t)\cap\Delta^{b}_j(s))\big).
	\end{align*}
	Then we have
	\[
	\mathbb{E}\big[a^{  \#(\mathrm{PPP} (\omega_\Lambda)\cap\Delta^{b}_j(t)) + \#(\mathrm{PPP} (\omega_\Lambda)\cap\Delta^{b}_j(s))}\big]=\mathbb{E}\big[a^{\#_1+\#_1+2\#_3}\big]=\mathbb{E}\big[a^{\#_1}\big]\cdot\mathbb{E}\big[a^{\#_2}\big]\cdot\mathbb{E}\big[a^{2\#_3}\big].
	\]
	By the elementary equalities
	\begin{align*}
	\mathbb{E}\big[a^{\#_1}\big]&=\exp\big((a-1)\omega_\Lambda(\Delta^{b}_j(t)\setminus\Delta^{b}_j(s))\big),
	\\
	\mathbb{E}\big[a^{\#_2}\big]&=\exp\big((a-1)\omega_\Lambda(\Delta^{b}_j(s)\setminus\Delta^{b}_j(t))\big),
\\
	\mathbb{E}\big[a^{2\#_3}\big]&=\exp\big((a^2-1)\omega_\Lambda(\Delta^{b}_j(t)\cap\Delta^{b}_j(s))\big),
	\end{align*}
	and the elementary computation
	\begin{align*}
&  	(a-1)\omega_\Lambda(\Delta^{b}_j(t)\setminus\Delta^{b}_j(s)) +(a-1)\omega_\Lambda(\Delta^{b}_j(s)\setminus\Delta^{b}_j(t))  +  (a^2-1)\omega_\Lambda(\Delta^{b}_j(t)\cap\Delta^{b}_j(s))
\\
&=  (a-1)\big[\omega_\Lambda(\Delta^{b}_{j}(t))+\omega_\Lambda(\Delta^{b}_{j}(s))\big] + (1-a)^2\omega_\Lambda(\Delta^{b}_j(t)\cap\Delta^{b}_j(s)), 
\end{align*}
we obtain 
	\begin{align}\label{cal3-num-pmc}
		\begin{split}
			&\quad\,\ \mathbb{E}\big[a^{  \#(\mathrm{PPP} (\omega_\Lambda)\cap\Delta^{b}_j(t)) + \#(\mathrm{PPP} (\omega_\Lambda)\cap\Delta^{b}_j(s))}\big]
			\\
			&=\exp\Big((a-1)\big[\omega_\Lambda(\Delta^{b}_{j}(t))+\omega_\Lambda(\Delta^{b}_{j}(s))\big]\Big)\cdot\exp\Big((1-a)^2\omega_\Lambda(\Delta^{b}_j(t)\cap\Delta^{b}_j(s))\Big).
		\end{split}
	\end{align}
	Combining  \eqref{cal1-num-pmc} and \eqref{cal3-num-pmc}, we get
	\begin{align}\label{cal3-2momentdiffer-pmc}
		\mathbb{E}[\mathcal{X}_{a,b,j}(t)\mathcal{X}_{a,b,j}(s)]=\exp\Big((1-a)^2\omega_\Lambda(\Delta^{b}_j(t)\cap\Delta^{b}_j(s))\Big).
	\end{align}
	
	Now by \eqref{cal1-2momentdiffer-pmc}, \eqref{cal2-2momentdiffer-pmc} and \eqref{cal3-2momentdiffer-pmc}, we have
	\begin{align*}
		&\quad\,\,\mathbb{E}[|\mathcal{X}_{a,b,j}(t)-\mathcal{X}_{a,b,j}(s)|^2]\\
		&=2\exp\Big((1-a)^2\omega_\Lambda(\Delta^b_j(t))\Big)-2\exp\Big((1-a)^2\omega_\Lambda(\Delta^{b}_j(t)\cap\Delta^{b}_j(s))\Big)\\
		&=2\exp\Big((1-a)^2\omega_\Lambda(\Delta^{b}_j(t)\cap\Delta^{b}_j(s))\Big)\cdot\Big[\exp\Big((1-a)^2\omega_\Lambda(\Delta^b_j(t)\setminus\Delta^b_j(s))\Big)-1\Big].
	\end{align*}
Recall that $\omega_\Lambda(\mathrm{d}x\mathrm{d}y)= \mathrm{d}x  \otimes \Lambda(\mathrm{d}y)$. 	By the condition $\chi(b,\Lambda)<1$ and the elementary inequalities 
	\[
	\omega_\Lambda(\Delta^{b}_j(t)\cap\Delta^{b}_j(s))\leq\omega_\Lambda(\Delta^{b}_j(t))=\int_{[b^{-j},b^{-(j-1)})}y\Lambda(\mathrm{d}y),
	\]
	\[
	\omega_\Lambda(\Delta^b_j(t)\setminus\Delta^b_j(s))=(t-s)\int_{[b^{-j},b^{-(j-1)})}\Lambda(\mathrm{d}y)\leq(t-s)b^{j}\int_{[b^{-j},b^{-(j-1)})}y\Lambda(\mathrm{d}y),
	\]
	there exist constants $H,H'>0$ such that
	\[
	\mathbb{E}[|\mathcal{X}_{a,b,j}(t)-\mathcal{X}_{a,b,j}(s)|^2]\leq H\cdot[e^{H\cdot b^j(t-s)}-1]\leq H'\cdot b^j(t-s).
	\]
	This completes the proof  of Lemma~\ref{Holder-p-moment-X_PCm-pmc}.
\end{proof}

\begin{proof}[Proof of Proposition~\ref{prop-manrancov-ass-pmc}]
	Proposition~\ref{prop-manrancov-ass-pmc} follows immediately from Lemmas~\ref{elem-prop-pmc}, \ref{p-moment-X_PCm-pmc} and \ref{Holder-p-moment-X_PCm-pmc}.
\end{proof}

\subsection{Proof of Theorem~\ref{thm-poimulcha-pmc}}
	For any integer $b\geq2$ satisfying $\chi(b,\Lambda)<1$, by Proposition~\ref{prop-manrancov-ass-pmc}, we can apply Theorem~\ref{Fou-Dec-abs} to  $\mathrm{PMC}_\Lambda^a$ under the constructions  \eqref{def-muPCb-m-pmc} and \eqref{weacon-mu-am-pmc}. By \eqref{def-Theta-abs} and Lemma~\ref{p-moment-X_PCm-pmc}, we have
	\begin{align*}
		\Theta_{a,b}(p):&=(p-1)\log b-\log\Big(\limsup_{j\to\infty}\sup_{t\in[0,1]}\mathbb{E}[\mathcal{X}_{a,b,j}^{p}(t)]\Big)\\
		&=[(p-1)-(a^p-ap+p-1)\chi(b,\Lambda)]\log b
	\end{align*}
	and then one can show that
	\[
	\sup_{1<p\leq2}\frac{\Theta_{a,b}(p)}{p\log b}=\frac{\Theta_{a,b}(2)}{2\log b}=\frac{1}{2}[1-(1-a)^2\chi(b, \Lambda)].
	\]
	Hence by Proposition~\ref{prop-manrancov-ass-pmc}  and Theorem~\ref{Fou-Dec-abs}, we obtain that almost surely,
	\[
	 \dim_{F}(\mathrm{PMC}_{\Lambda}^a)\geq\min\Big\{2 \alpha_0,\sup_{1<p\leq 2}\frac{2  \Theta_{a,b}(p)}{p\log b}\Big\}=\min\Big\{1,1-(1-a)^2\chi(b, \Lambda)\Big\}=1-(1-a)^2\chi(b, \Lambda).
	\]
	
	By the definitions \eqref{def-chi-b} of $\chi(b, \Lambda)$ and $\chi(\Lambda)$, recalling the condition of Theorem~\ref{thm-poimulcha-pmc} that
	\[
	\chi(\Lambda)=\inf_{b\in \N_{\ge 2}}\chi(b, \Lambda)<1,
	\]
 there exists a sequence $\{b_n\}\subset\N_{\geq2}$ such that
	\[
	\lim_{n\to\infty}\chi(b_n, \Lambda)=\chi(\Lambda)\anand\chi(b_n,\Lambda)<1\text{ for all $b_n$}.
	\]
	Therefore, almost surely, we have
	$
	 \dim_{F}(\mathrm{PMC}_{\Lambda}^a)\geq1-(1-a)^2\chi(b_n, \Lambda)
	$
	and hence
	\[
	 \dim_{F}(\mathrm{PMC}_{\Lambda}^a)\geq1-(1-a)^2\chi(\Lambda).
	\]
	This completes the whole proof of Theorem~\ref{thm-poimulcha-pmc}.

\section{Fourier decay of Generalized Mandelbrot cascades}\label{S-genercas-abs}
This section is devoted to the derivation of Theorem~\ref{thm-selsim-cas} from our unified  Theorem~\ref{Fou-Dec-abs}, as well as the proofs of Corollary~\ref{MC-FouDec-cas} and Corollary~\ref{GBM-FouDec-cas} from Theorem~\ref{thm-selsim-cas}.

\subsection{Proof of Theorem~\ref{thm-selsim-cas}}
We shall show that the sequence of independent stochastic processes \eqref{def-Xm-cas} in the construction \eqref{def-geneMC-cas} of the generalized Mandelbrot cascades satisfies Assumptions~\ref{assum-indep-abs}, \ref{assum-Lp0-abs} and \ref{assum-alp0-abs}, and then Theorem~\ref{thm-selsim-cas} follows from Proposition~\ref{non-degene-abs} and Theorem~\ref{Fou-Dec-abs} immediately.

First of all,  by the construction of $\mathrm{MC}_\mathscr{W}^b$ in \S\ref{S-cas-abs},  Assumption~\ref{assum-indep-abs} is satisfied automatically.

We now turn to verify  Assumption~\ref{assum-Lp0-abs} and Assumption~\ref{assum-alp0-abs}.   Recall the definition of the random function $\mathscr{W}(\mathbf{t})$ in \eqref{def-Wt-cas} and the independent random functions $\{\mathscr{W}_{\mathbf{I}}(\mathbf{t}):\mathbf{t}\in\mathbf{I}\}$ indexed by $\mathbf{I}\in\bigsqcup\limits_{m=1}^{\infty}\mathscr{D}_{m}^{b}$ in \eqref{def-WIt-cas}.  By the assumption  \eqref{condi-thm-selsim-cas} of Theorem~\ref{thm-selsim-cas}: there exist $p_0\in (1,2]$ and $\alpha_0\in(0,1]$ such that
\[
\sup_{\mathbf{t}\in[0,1)^d}\E[\mathscr{W}(\mathbf{t})^{p_0}]<b^{d(p_0-1)}\anand\sup_{\mathbf{t},\mathbf{s}\in[0,1)^d,\mathbf{t}\neq\mathbf{s}}\mathbb{E}\Big[\Big|\frac{\mathscr{W}(\mathbf{t})-\mathscr{W}(\mathbf{s})}{|\mathbf{t}-\mathbf{s}|^{\alpha_0}} \Big|^{p_0}\Big]<\infty.
\] 
For any integer $j>0$, we have the decomposition
\[
[0,1)^d=\bigsqcup_{\mathbf{I}\in\mathscr{D}_j^b}\mathbf{I},
\]
then by the expression \eqref{def-XWm-cas}, we can write
\[
\sup_{\mathbf{t}\in[0,1)^d}\mathbb{E}[\XX_{\mathscr{W},j}^{p_0}(\mathbf{t})]=\sup_{\mathbf{I}\in\mathscr{D}_j^b}\sup_{\mathbf{t}\in\mathbf{I}}\mathbb{E}\Big[\Big(\sum_{\mathbf{I}\in\mathscr{D}_{j}^b}\mathscr{W}_{\mathbf{I}}(\mathbf{t})\mathds{1}_{\mathbf{I}}(\mathbf{t})\Big)^{p_0}\Big]=\sup_{\mathbf{I}\in\mathscr{D}_j^b}\sup_{\mathbf{t}\in\mathbf{I}}\mathbb{E}[\mathscr{W}_{\mathbf{I}}^{p_0}(\mathbf{t})].
\]
By the self-similar property \eqref{selsimproper-cas} and the definition of $\ell_{\mathbf{I}}$ in \eqref{ell-I-cas},
we know
\[
\sup_{\mathbf{t}\in\mathbf{I}}\mathbb{E}[\mathscr{W}_{\mathbf{I}}^{p_0}(\mathbf{t})]=\sup_{\mathbf{t}\in\mathbf{I}}\mathbb{E}\big[\mathscr{W}^{p_0}\big(b^j(\mathbf{t}-\ell_{\mathbf{I}})\big)\big]=\sup_{\mathbf{t}\in[0,1)^d}\E[\mathscr{W}^{p_0}(\mathbf{t})]
\]
and hence for any integer $j>0$,
\[
\sup_{\mathbf{t}\in[0,1)^d}\mathbb{E}[\XX_{\mathscr{W},j}^{p_0}(\mathbf{t})]=\sup_{\mathbf{I}\in\mathscr{D}_j^b}\sup_{\mathbf{t}\in[0,1)^d}\E[\mathscr{W}^{p_0}(\mathbf{t})]=\sup_{\mathbf{t}\in[0,1)^d}\E[\mathscr{W}^{p_0}(\mathbf{t})].
\]
Therefore, we obtain
\begin{align}\label{limsupEPwjp-cas}
	\limsup_{j\to\infty}\sup_{\mathbf{t}\in[0,1)^d}\mathbb{E}[\XX_{\mathscr{W},j}^{p_0}(\mathbf{t})]=\sup_{\mathbf{t}\in[0,1)^d}\E[\mathscr{W}^{p_0}(\mathbf{t})]<b^{d(p_0-1)}.
\end{align}
Hence,   Assumption~\ref{assum-Lp0-abs} is satisfied.

Similarly, we have
\[
\sup_{j>0}\sup_{\mathbf{I}\in\mathscr{D}_j^b}\sup_{\mathbf{t},\mathbf{s}\in\mathbf{I} \atop\mathbf{t}\neq\mathbf{s}}\mathbb{E}\Big[\Big| \frac{\XX_{\mathscr{W},j}(\mathbf{t})-\XX_{\mathscr{W},j}(\mathbf{s})}{b^{j\alpha_0}|\mathbf{t}-\mathbf{s}|^{\alpha_0}} \Big|^{p_0}\Big]=\sup_{j>0}\sup_{\mathbf{I}\in\mathscr{D}_j^b}\sup_{\mathbf{t},\mathbf{s}\in\mathbf{I} \atop\mathbf{t}\neq\mathbf{s}}\mathbb{E}\Big[\Big| \frac{\mathscr{W}_{\mathbf{I}}(\mathbf{t})-\mathscr{W}_{\mathbf{I}}(\mathbf{s})}{b^{j\alpha_0}|\mathbf{t}-\mathbf{s}|^{\alpha_0}}\Big|^{p_0}\Big]
\]
and then
\begin{align*}
	&\sup_{j>0}\sup_{\mathbf{I}\in\mathscr{D}_j^b}\sup_{\mathbf{t},\mathbf{s}\in\mathbf{I} \atop\mathbf{t}\neq\mathbf{s}}\mathbb{E}\Big[\Big| \frac{\XX_{\mathscr{W},j}(\mathbf{t})-\XX_{\mathscr{W},j}(\mathbf{s})}{b^{j\alpha_0}|\mathbf{t}-\mathbf{s}|^{\alpha_0}} \Big|^{p_0}\Big]=\sup_{j>0}\sup_{\mathbf{I}\in\mathscr{D}_j^b}\sup_{\mathbf{t},\mathbf{s}\in\mathbf{I} \atop\mathbf{t}\neq\mathbf{s}}\mathbb{E}\Big[\Big| \frac{\mathscr{W}\big(b^j(\mathbf{t}-\ell_{\mathbf{I}})\big)-\mathscr{W}\big(b^j(\mathbf{s}-\ell_{\mathbf{I}})\big)}{b^{j\alpha_0}|\mathbf{t}-\mathbf{s}|^{\alpha_0}}\Big|^{p_0}\Big]\\
	&=\sup_{j>0}\sup_{\mathbf{I}\in\mathscr{D}_j^b}\sup_{\mathbf{t},\mathbf{s}\in[0,1)^d \atop\mathbf{t}\neq\mathbf{s}}\mathbb{E}\Big[\Big|\frac{\mathscr{W}(\mathbf{t})-\mathscr{W}(\mathbf{s})}{|\mathbf{t}-\mathbf{s}|^{\alpha_0}} \Big|^{p_0}\Big]=\sup_{\mathbf{t},\mathbf{s}\in[0,1)^d,\mathbf{t}\neq\mathbf{s}}\mathbb{E}\Big[\Big|\frac{\mathscr{W}(\mathbf{t})-\mathscr{W}(\mathbf{s})}{|\mathbf{t}-\mathbf{s}|^{\alpha_0}}\Big|^{p_0}\Big].
\end{align*}
Hence by noting that $\XX_{\mathscr{W},0}(\mathbf{t})\equiv1$, we get
\[
\sup_{j\in\N}\sup_{\mathbf{I}\in\mathscr{D}_j^b}\sup_{\mathbf{t},\mathbf{s}\in\mathbf{I} \atop\mathbf{t}\neq\mathbf{s}}\mathbb{E}\Big[\Big| \frac{\XX_{\mathscr{W},j}(\mathbf{t})-\XX_{\mathscr{W},j}(\mathbf{s})}{b^{j\alpha_0}|\mathbf{t}-\mathbf{s}|^{\alpha_0}} \Big|^{p_0}\Big]=\sup_{\mathbf{t},\mathbf{s}\in[0,1)^d,\mathbf{t}\neq\mathbf{s}}\mathbb{E}\Big[\Big|\frac{\mathscr{W}(\mathbf{t})-\mathscr{W}(\mathbf{s})}{|\mathbf{t}-\mathbf{s}|^{\alpha_0}}\Big|^{p_0}\Big]<\infty,
\]
that is, Assumption~\ref{assum-alp0-abs} is satisfied.  

Finally, by Proposition~\ref{non-degene-abs}, Theorem~\ref{Fou-Dec-abs} and \eqref{limsupEPwjp-cas}, the generalized Mandelbrot cascade measure $\mathrm{MC}_\mathscr{W}^b$ is non-degenerate and almost surely on $\{ \mathrm{MC}_\mathscr{W}^b \neq0\}$,
\[
\dim_{F}(\mathrm{MC}_\mathscr{W}^b)\geq \min\Big\{2 \alpha_0,\sup_{1<p\leq p_0 }  \Big[  2d \big(1- \frac{1}{p}) - 2\log_b \Big( \sup_{\mathbf{t}\in[0,1)^d}  \big(\mathbb{E}[\mathscr{W}^{p}(\mathbf{t})]\big)^{\frac{1}{p}}\Big)   \Big]\Big\}.
\]
This completes the proof of Theorem~\ref{thm-selsim-cas}.

\subsection{Proofs of Corollary~\ref{MC-FouDec-cas} and Corollary~\ref{GBM-FouDec-cas}}
\begin{proof}[Proof of Corollary~\ref{MC-FouDec-cas}]
Note that $\mathscr{W}(\mathbf{t})=\mathscr{W}(\mathbf{s})=W$ for any $\mathbf{t},\mathbf{s}\in[0,1]^d$, we have $\alpha_0=1$ for the second condition in \eqref{condi-thm-selsim-cas} of Theorem~\ref{thm-selsim-cas}. The first condition in \eqref{condi-thm-selsim-cas} of Theorem~\ref{thm-selsim-cas} follows from the assumptions that $\mathbb{E}[W\log W]<d\log b$ and $\E[W^{1+\varepsilon}]<\infty$ for some $\varepsilon>0$, which can be seen, e.g., \cite{KP76, CHQW24}. Then Corollary~\ref{MC-FouDec-cas} follows from Theorem~\ref{thm-selsim-cas} immediately.
\end{proof}

\begin{proof}[Proof of Corollary~\ref{GBM-FouDec-cas}]
Let $\mathscr{W}_\sigma$ be  given as in \eqref{def-GBM}. Then for any $p>1$, we have
\[
\E[\mathscr{W}_\sigma^{p}(t)]=\E\big[\exp\big(p\sigma\mathrm{B}(t)-\frac{p\sigma^2}{2}t\big)\big]=\exp\big(\frac{p(p-1) \sigma^2}{2}t\big)
\]
and hence
\begin{align}\label{sup-p-norm}
\sup_{t\in[0,1)}\E[\mathscr{W}_\sigma^{p}(t)]=\exp\big(\frac{p(p-1)\sigma^2}{2}\big).
\end{align}
Therefore, for sufficiently small $\varepsilon>0$ and  $p_0 = p_0(\varepsilon)=\min\{\frac{2\log b}{\sigma^2}-\varepsilon,2\}$,  the first condition in \eqref{condi-thm-selsim-cas} of Theorem~\ref{thm-selsim-cas} is satisfied:
\[
\sup_{t\in[0,1)}\E[\mathscr{W}_\sigma^{p_0}(t)]=\exp\big(\frac{p_0(p_0-1) \sigma^2}{2}\big)\leq\exp\Big((p_0-1)\log b-\frac{\varepsilon(p_0-1) \sigma^2}{2}\Big)<b^{p_0-1}.
\]

On the other hand,  for any $t,s\in[0,1]$, by a routine calculation for Brownian motion, we obtain 
\[
\E[|\mathscr{W}_\sigma(t)-\mathscr{W}_\sigma(s)|^2]=\big|e^{\sigma^2t}- e^{\sigma^2s}\big|.
\]
It follows that there exists a constant $C = C_\sigma>0$ such that for all $t, s\in [0,1]$,
\[
\E[|\mathscr{W}_\sigma(t)-\mathscr{W}_\sigma(s)|^2]\leq C|t-s| .
\]
Hence,  the second condition in \eqref{condi-thm-selsim-cas} of Theorem~\ref{thm-selsim-cas} holds for $\alpha_0=1/2$:
\[
\sup_{t,s\in[0,1),t\neq s}\mathbb{E}\Big[\Big| \frac{\mathscr{W}_\sigma(t)-\mathscr{W}_\sigma(s)}{\sqrt{|t-s|}}\Big|^{p_0}\Big]\leq\Big(\sup_{t,s\in[0,1),t\neq s}\mathbb{E}\Big[\Big| \frac{\mathscr{W}_\sigma(t)-\mathscr{W}_\sigma(s)}{\sqrt{|t-s|}}\Big|^2\Big]\Big)^{p_0/2}<\infty.
\]

Therefore, by Theorem~\ref{thm-selsim-cas} and \eqref{sup-p-norm}, we get that almost surely,
\[
\dim_{F}(\mathrm{MC}_{\mathscr{W}_\sigma}^b) \geq   \min\Big\{1,\sup_{1<p\leq p_0 }   \Big[  2\big(1- \frac{1}{p}) - \frac{(p-1) \sigma^2}{\log b}  \Big]\Big\}.
\]
Note that when $\varepsilon$ decreases to $0$, the exponent $p_0=p_0(\varepsilon)$ increases to $\min\{\frac{2\log b}{\sigma^2}, 2\}$, hence almost surely,
\[
\dim_{F}(\mathrm{MC}_{\mathscr{W}_\sigma}^b) \geq   \min\Big\{1,\sup_{1<p\leq \min\{\frac{2\log b}{\sigma^2}, 2\}}   \Big[  2 \big(1- \frac{1}{p}) - \frac{(p-1) \sigma^2}{\log b}  \Big]\Big\}.
\]
Recall the definition \eqref{def-D-sigma} of $D_\sigma$. A simple computation yields that
\[
\sup_{1<p\leq \min\{\frac{2\log b}{\sigma^2}, 2\}}   \Big[  2 \big(1- \frac{1}{p}) - \frac{(p-1) \sigma^2}{\log b}  \Big]=D_\sigma.
\]
This completes the proof of Corollary~\ref{GBM-FouDec-cas}.
\end{proof}

\section{The non-degeneracy of the multiplicative chaos}\label{S-Pf-Thm-Nondegen-abs}
This section is devoted to the proof of Proposition~\ref{non-degene-abs}.  

Throughout this section, we assume that Assumptions~\ref{assum-indep-abs} and \ref{assum-Lp0-abs} are satisfied by a given  sequence of independent stochastic processes with the natural conditions in \eqref{def-Xm-abs}: 
\[
\{\XX_m(\mathbf{t}): \mathbf{t}\in [0,1]^d\}_{m\geq0}  \quad \text{with $\XX_m(\mathbf{t})\geq0$ and $\E[\XX_m(\mathbf{t})]\equiv 1$}.
\]
For any $m\ge 0$, recall the definition \eqref{def-num-abs} of the random measure $\mu_m$:
\[
\mu_{m}(\mathrm{d}\mathbf{t})=\Big[\prod_{j=0}^{m}\XX_{j}(\mathbf{t}) \Big] \mathrm{d}\mathbf{t}.
\]
Let $(U_m)_{m\geq0}$ be  the martingale with respect to the natural filtration \eqref{def-filtra-abs} defined by 
\[
U_m := \mu_m([0,1]^d)=\int_{[0,1]^d}\Big[\prod_{j=0}^{m}\XX_{j}(\mathbf{t}) \Big] \mathrm{d}\mathbf{t}.
\]
Our goal is to prove that for any $1<p\leq p_0\le 2$ (here $p_0$ is given as in Assumption~\ref{assum-Lp0-abs}),
\begin{align}\label{propA-goal}
	\sup_{m\geq0}\mathbb{E}[U_m^p]<\infty.
\end{align}

\subsection{Elementary lemmas}
We start with the following elementary lemmas. For further reference, we include their simple proofs.

Recall the definitions of $\mathcal{L}_F$ and $\Theta(p)$ in \eqref{def-LF-abs} and \eqref{def-Theta-abs} respectively:
\[ 
\mathcal{L}_F=\min\Big\{2 \alpha_0,\sup_{1<p\leq p_0}\frac{2  \Theta(p)}{p\log b}\Big\}
\anand
\Theta(p)=d(p-1)\log b-\log\Big(\limsup_{j\to\infty}\sup_{\mathbf{t}\in[0,1)^d}\mathbb{E}[\XX_j^p(\mathbf{t})]\Big).
\]
And, as in \eqref{def-phi-abs},  we fix a large enough integer $j_0>0$ and denote
\begin{align}\label{def-phi-abs-again}
	\zeta(p)=\zeta_{j_0}(p):=d(p-1)\log b-\log\Big(\sup_{j>j_0}\sup_{\mathbf{t}\in[0,1)^d}\mathbb{E}[\XX_j^p(\mathbf{t})]\Big).
\end{align}

\begin{lemma}\label{lem-LF}
	Assumption~\ref{assum-Lp0-abs} implies $0<\mathcal{L}_F\leq d$.
\end{lemma}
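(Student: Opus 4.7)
The plan is to establish the two inequalities $\mathcal{L}_F>0$ and $\mathcal{L}_F\le d$ separately, using only Jensen's inequality and the quantitative part of Assumption~\ref{assum-Lp0-abs}. Since the definition
\[
\mathcal{L}_F=\min\Big\{2\alpha_0,\,\sup_{1<p\le p_0}\frac{2\Theta(p)}{p\log b}\Big\}
\]
is a minimum of two nonnegative quantities, and $2\alpha_0\in(0,2]$ is trivially positive and (when $d\ge 2$) trivially bounded by $d$, the whole game is to control the second quantity, namely $\sup_{1<p\le p_0}\frac{2\Theta(p)}{p\log b}$.

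For the lower bound $\mathcal{L}_F>0$, I would plug in the specific exponent $p=p_0$ that is supplied by Assumption~\ref{assum-Lp0-abs}. By the strict inequality
\[
\limsup_{j\to\infty}\sup_{\mathbf{t}\in[0,1)^d}\mathbb{E}[\XX_j(\mathbf{t})^{p_0}]<b^{d(p_0-1)},
\]
taking logarithms and comparing with the definition \eqref{def-Theta-abs} gives $\Theta(p_0)>0$. Hence $\sup_{1<p\le p_0}\tfrac{2\Theta(p)}{p\log b}\ge \tfrac{2\Theta(p_0)}{p_0\log b}>0$, and since $2\alpha_0>0$ as well, the minimum $\mathcal{L}_F$ is strictly positive.

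For the upper bound $\mathcal{L}_F\le d$, the key input is Jensen's inequality applied to the convex function $x\mapsto x^p$ ($p>1$): since $\mathbb{E}[\XX_j(\mathbf{t})]=1$ by \eqref{def-Xm-abs}, we have $\mathbb{E}[\XX_j(\mathbf{t})^p]\ge 1$ for every $j$ and every $\mathbf{t}\in[0,1)^d$. Taking first the supremum in $\mathbf{t}$ and then the $\limsup$ in $j$ yields
\[
\limsup_{j\to\infty}\sup_{\mathbf{t}\in[0,1)^d}\mathbb{E}[\XX_j(\mathbf{t})^p]\ge 1,
\]
so its logarithm is nonnegative. Plugging this into \eqref{def-Theta-abs} gives $\Theta(p)\le d(p-1)\log b$, and therefore
\[
\frac{2\Theta(p)}{p\log b}\le \frac{2d(p-1)}{p}=2d\Big(1-\frac{1}{p}\Big)\le 2d\Big(1-\frac{1}{2}\Big)=d,
\]
where the last inequality uses $p\le p_0\le 2$. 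Taking the supremum over $1<p\le p_0$ still gives a quantity $\le d$, and hence $\mathcal{L}_F\le d$.

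No step is truly an obstacle here; the only mild point worth checking is that $\Theta(p)$ is well defined (i.e.\ the relevant $\limsup$ is finite) for all $p\in(1,p_0]$, which follows from $\mathbb{E}[\XX_j(\mathbf{t})^p]\le\big(\mathbb{E}[\XX_j(\mathbf{t})^{p_0}]\big)^{p/p_0}$ together with the first part of Assumption~\ref{assum-Lp0-abs}. Everything else is a one-line application of Jensen on one side and of the quantitative $L^{p_0}$-condition on the other.
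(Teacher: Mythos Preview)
Your proof is correct. For the upper bound $\mathcal{L}_F\le d$ you argue exactly as the paper does: Jensen gives $\mathbb{E}[\XX_j(\mathbf{t})^p]\ge 1$, hence $\Theta(p)\le d(p-1)\log b$ and $\tfrac{2\Theta(p)}{p\log b}\le 2d(1-\tfrac1p)\le d$ since $p\le p_0\le 2$.

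For the lower bound you take a slightly shorter route than the paper. You simply evaluate at $p=p_0$ and use the strict inequality in Assumption~\ref{assum-Lp0-abs} to get $\Theta(p_0)>0$, which suffices. The paper instead introduces the tilted probability $\mathbb{Q}_{j,\mathbf{t}}[A]=\mathbb{E}[\mathds{1}_A\,\XX_j(\mathbf{t})]$ and uses monotonicity of $\alpha\mapsto(\mathbb{E}_{j,\mathbf{t}}[X^\alpha])^{1/\alpha}$ to deduce
\[
\big(\mathbb{E}[\XX_j^p(\mathbf{t})]\big)^{\tfrac{1}{p-1}}\le\big(\mathbb{E}[\XX_j^{p_0}(\mathbf{t})]\big)^{\tfrac{1}{p_0-1}},
\]
so that in fact $\limsup_j\sup_\mathbf{t}\mathbb{E}[\XX_j^p(\mathbf{t})]<b^{d(p-1)}$ for \emph{every} $p\in(1,p_0]$, i.e.\ $\Theta(p)>0$ throughout the range. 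This stronger conclusion is not needed for Lemma~\ref{lem-LF} itself, but the paper reuses the same inequality in Lemma~\ref{lem-small-p}; your direct argument is cleaner here at the cost of not producing that reusable intermediate step.
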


\begin{proof}
	Fix any $j\ge 0$ and $\mathbf{t}\in [0,1)^d$. Since $\XX_j(\mathbf{t})\ge 0$ and $\E[\XX_j(\mathbf{t})]\equiv 1$, we may define a new probability measure with density $\XX_j(\mathbf{t})$:  
	\[
	\mathbb{Q}_{j, \mathbf{t}} [A]: = \E[\mathds{1}_A\cdot \XX_j(\mathbf{t})]
	\] and write the corresponding expectation by $\E_{j, \mathbf{t}}$.   Then 
	\[
	(\E[\XX_j^p(\mathbf{t})])^{\frac{1}{p-1}} = (\E[\XX_j^{p-1}(\mathbf{t}) \cdot \XX_j(\mathbf{t})])^{\frac{1}{p-1}}  =   (\E_{j, \mathbf{t}}[\XX_j^{p-1}(\mathbf{t})])^{\frac{1}{p-1}}. 
	\]
	Now since the map  $\alpha \mapsto 
	( \E_{j, \mathbf{t}}[X^\alpha])^{1/\alpha}$ is non-decreasing on $\alpha>0$,  for any $1<p\le p_0\le 2$, we have 
	\begin{align}\label{ine-hol-jen}
		(\E[\XX_j^p(\mathbf{t})])^{\frac{1}{p-1}}\le (\E[\XX_j^{p_0}(\mathbf{t})])^{\frac{1}{p_0-1}}.
	\end{align}
	Then by condition \eqref{def-Xm-abs} and Assumption~\ref{assum-Lp0-abs}, inequality \eqref{ine-hol-jen} implies that for any $1<p\leq p_0\leq2$, 
	\[
	1\le \limsup_{j\to\infty}\sup_{\mathbf{t}\in[0,1)^d}\mathbb{E}[\XX_j^{p}(\mathbf{t})]<b^{d(p-1)}
	\]
	and hence
	\[
	0<\sup_{1<p\leq p_0 }\frac{2\Theta(p)}{p\log b} \le d.
	\]
	Therefore, by noting that $\alpha_0\in(0,1]$,  we get $\mathcal{L}_F\in(0,d\,]$.
\end{proof}

 \begin{lemma}\label{lem-small-p}
 Assumption~\ref{assum-Lp0-abs} implies that for sufficiently large integer $j_0>0$, if $1<p\leq p_0\le 2$, then
\begin{align}\label{ass2-sma-p-abs}
	C_p = C_{j_0, p}: = \sup_{0\leq j\leq j_0}\sup_{\mathbf{t}\in[0,1)^d}\mathbb{E}[\XX_j^{p}(\mathbf{t})]<\infty\anand  S_p = S_{j_0, p}: = \sup_{j>j_0}\sup_{\mathbf{t}\in[0,1)^d}\mathbb{E}[\XX_j^{p}(\mathbf{t})]<b^{d(p-1)}.
\end{align}
 \end{lemma}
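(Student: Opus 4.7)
\textbf{Proof proposal for Lemma~\ref{lem-small-p}.} My plan is to deduce both conclusions from a single interpolation inequality:
\[
\mathbb{E}[\XX_j^p(\mathbf{t})] \le \bigl(\mathbb{E}[\XX_j^{p_0}(\mathbf{t})]\bigr)^{(p-1)/(p_0-1)}, \qquad 1 < p \le p_0,
\]
valid for every $j \ge 0$ and every $\mathbf{t} \in [0,1)^d$.

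First, I would establish this interpolation inequality using the change-of-measure trick already exploited in the proof of Lemma~\ref{lem-LF}. Since $\XX_j(\mathbf{t}) \ge 0$ and $\mathbb{E}[\XX_j(\mathbf{t})] = 1$, the random variable $\XX_j(\mathbf{t})$ is the density of a probability measure $\mathbb{Q}_{j,\mathbf{t}}$ with respect to $\mathbb{P}$, and for any $\alpha > 0$ one has $\mathbb{E}_{\mathbb{Q}_{j,\mathbf{t}}}[\XX_j^\alpha(\mathbf{t})] = \mathbb{E}[\XX_j^{\alpha+1}(\mathbf{t})]$. Jensen's inequality makes $\alpha \mapsto (\mathbb{E}_{\mathbb{Q}_{j,\mathbf{t}}}[\XX_j^\alpha(\mathbf{t})])^{1/\alpha}$ non-decreasing, so applying it at $\alpha = p-1$ and $\alpha = p_0-1$ gives $(\mathbb{E}[\XX_j^p(\mathbf{t})])^{1/(p-1)} \le (\mathbb{E}[\XX_j^{p_0}(\mathbf{t})])^{1/(p_0-1)}$, which is the advertised inequality after raising both sides to the power $p-1$.

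Next, I would handle the two bounds separately. For the bound on $C_p$, Assumption~\ref{assum-Lp0-abs} asserts $\sup_{\mathbf{t}\in[0,1)^d}\mathbb{E}[\XX_j^{p_0}(\mathbf{t})] < \infty$ for every single $j \ge 0$. Over the \emph{finite} index set $\{0, 1, \dots, j_0\}$, the supremum over $j$ of these quantities is therefore finite, and the interpolation inequality gives $C_p < \infty$ for every $1 < p \le p_0$. For the bound on $S_p$, the hypothesis $\limsup_{j\to\infty}\sup_{\mathbf{t}\in[0,1)^d}\mathbb{E}[\XX_j^{p_0}(\mathbf{t})] < b^{d(p_0-1)}$ allows me to pick $j_0$ so large that
\[
M := \sup_{j > j_0}\sup_{\mathbf{t}\in[0,1)^d}\mathbb{E}[\XX_j^{p_0}(\mathbf{t})] < b^{d(p_0-1)}.
\]
The interpolation inequality then yields $S_p \le M^{(p-1)/(p_0-1)}$. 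Since $M < b^{d(p_0-1)}$ and $(p-1)/(p_0-1) \in (0,1]$, monotonicity of $x \mapsto x^{(p-1)/(p_0-1)}$ on $(0,\infty)$ gives $M^{(p-1)/(p_0-1)} < b^{d(p-1)}$, so $S_p < b^{d(p-1)}$.

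Crucially, the same choice of $j_0$ works simultaneously for every $1 < p \le p_0$, so both conclusions hold with one common $j_0$. I do not foresee any genuine obstacle: the argument is purely elementary once one recognizes the change-of-measure interpolation, and the only subtlety is making sure to choose $j_0$ based on the $L^{p_0}$ control rather than separately for each $p$.
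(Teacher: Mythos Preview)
Your proposal is correct and follows essentially the same approach as the paper: both arguments rest on the change-of-measure interpolation inequality $(\mathbb{E}[\XX_j^p(\mathbf{t})])^{1/(p-1)} \le (\mathbb{E}[\XX_j^{p_0}(\mathbf{t})])^{1/(p_0-1)}$ (established in the proof of Lemma~\ref{lem-LF}), from which both bounds in \eqref{ass2-sma-p-abs} follow directly. Your write-up is simply more explicit than the paper's one-line appeal to inequality~\eqref{ine-hol-jen}.
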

 
 \begin{proof}
 For $p=p_0$, the assertion \eqref{ass2-sma-p-abs} follows from Assumption~\ref{assum-Lp0-abs} directly.   
By the inequality \eqref{ine-hol-jen}, it is clear that Assumption~\ref{assum-Lp0-abs} implies the assertion \eqref{ass2-sma-p-abs} for any $p\in(1,  p_0]$. 
\end{proof}
 
\begin{lemma}\label{lem-finite-m}
Assumption~\ref{assum-Lp0-abs} implies tha for any fixed integer $m\ge 0$, if $1<p\le p_0\le 2$,  then $\E[U_m^p]<\infty$.
\end{lemma}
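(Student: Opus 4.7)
The plan is to exploit the fact that $m$ is fixed, so the martingale $U_m$ is simply an integral of a product of finitely many independent non-negative factors. In this regime there is no need for any delicate martingale estimate, a soft combination of Jensen's inequality, Fubini's theorem, and independence will suffice.

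Concretely, I would first apply Jensen's inequality to the convex function $x \mapsto x^p$ (valid since $p>1$) with respect to the Lebesgue probability measure $\mathrm{d}\mathbf{t}$ on $[0,1]^d$:
\[
U_m^p = \Big(\int_{[0,1]^d}\prod_{j=0}^m\XX_j(\mathbf{t})\,\mathrm{d}\mathbf{t}\Big)^p \le \int_{[0,1]^d}\Big(\prod_{j=0}^m\XX_j(\mathbf{t})\Big)^p\mathrm{d}\mathbf{t} = \int_{[0,1]^d}\prod_{j=0}^m\XX_j(\mathbf{t})^p\,\mathrm{d}\mathbf{t}.
\]
Next I would take expectations and invoke Fubini (everything is non-negative) to get
\[
\E[U_m^p] \le \int_{[0,1]^d}\E\Big[\prod_{j=0}^m\XX_j(\mathbf{t})^p\Big]\mathrm{d}\mathbf{t}.
\]
The independence of $\XX_0,\XX_1,\dots,\XX_m$ postulated in \eqref{def-Xm-abs} then factorizes the integrand pointwise in $\mathbf{t}$:
\[
\E\Big[\prod_{j=0}^m\XX_j(\mathbf{t})^p\Big] = \prod_{j=0}^m\E[\XX_j(\mathbf{t})^p].
\]

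Finally, Lemma~\ref{lem-small-p} (applied with any fixed threshold $j_0\ge m$, or equivalently by unifying the bounds into a single finite quantity $\max\{C_p,S_p\}<\infty$) gives
\[
\sup_{0\le j\le m}\sup_{\mathbf{t}\in[0,1)^d}\E[\XX_j(\mathbf{t})^p] <\infty,
\]
so the finite product on the right is uniformly bounded in $\mathbf{t}$ by some constant $M_{m,p}<\infty$. Integrating over the unit cube of Lebesgue measure $1$ yields $\E[U_m^p]\le M_{m,p}<\infty$, which is the desired conclusion. There is no genuine obstacle here: the statement is essentially a sanity check that individual martingale levels lie in $L^p$, and the only content is that Assumption~\ref{assum-Lp0-abs} supplies the required pointwise $L^p$ bounds on each $\XX_j$.
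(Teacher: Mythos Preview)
Your proof is correct and follows essentially the same route as the paper: both arguments reduce to the independence factorization $\E\big[\prod_{j=0}^m \XX_j(\mathbf t)^p\big]=\prod_{j=0}^m \E[\XX_j(\mathbf t)^p]$ together with the uniform $L^p$-bounds supplied by Assumption~\ref{assum-Lp0-abs} (via Lemma~\ref{lem-small-p}). The only cosmetic difference is that you push the $p$-th power inside the $\mathrm d\mathbf t$-integral via Jensen's inequality, whereas the paper uses the Minkowski (triangle) inequality in $L^p(\PP)$; either device yields the same finite bound.
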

 
\begin{proof}
By the definition \eqref{def-num-abs} of $\mu_m$ and using the triangle inequality, for each fixed integer $m\ge 0$,
\[
(\E[U_m^p])^{1/p} = (\mathbb{E}[\mu^p_{m}([0,1]^d)])^{1/p}=\Big\{\mathbb{E}\Big[\Big|\int_{[0,1]^d}\Big[\prod_{j=0}^{m}\XX_{j}(\mathbf{t})\Big]\mathrm{d}\mathbf{t}\Big|^p\Big]\Big\}^{1/p}\leq\int_{[0,1]^d}\Big\|\prod_{j=0}^{m}\XX_{j}(\mathbf{t})\Big\|_{L^p(\mathbb{P})}\mathrm{d}\mathbf{t}.
\]
By the independence of the stochastic processes $\{\XX_j\}_{0\le j \le m}$, 
\[
\Big\|\prod_{j=0}^{m}\XX_{j}(\mathbf{t})\Big\|_{L^p(\mathbb{P})} =  \prod_{j=0}^{m}  \big\| \XX_{j}(\mathbf{t})\big\|_{L^p(\mathbb{P})} = \prod_{j=0}^{m} \big(\E[\XX_j^p(\mathbf{t})]\big)^{1/p}. 
\]
Therefore, by  Assumption~\ref{assum-Lp0-abs} and Lemma~\ref{lem-small-p}, for any $1<p\le p_0\le 2$,   we have
\[
	(\mathbb{E}[U^p_{m}])^{1/p} \leq    \prod_{j=0}^{m}   \Big[ \sup_{\mathbf{t}\in [0,1)^d}\big (\E[\XX_j^p(\mathbf{t})]\big)^{1/p}\Big]<\infty.
\]
This completes the proof of the lemma. 
\end{proof}

\subsection{Proof of Proposition~\ref{non-degene-abs}}
We now turn to the proof of Proposition~\ref{non-degene-abs}. By Lemma~\ref{lem-finite-m}, to prove  the desired inequality \eqref{propA-goal}, it suffices to prove 
\begin{align}\label{large-m-sup}
\sup_{m>k_0} \E[U_m^p]<\infty,
\end{align}
where $k_0$ is the integer given as in Assumption~\ref{assum-indep-abs}. 

We divide the proof of the inequality \eqref{large-m-sup} into the following six steps. 

\medskip
{\flushleft \bf Step 1.  Use the martingale structure and  Burkholder inequality.}
\medskip

For any $m>k_0$, write
\[
U_m=U_{k_0}+\sum_{k=k_0+1}^{m} (U_{k}-U_{k-1}).
\]
Since $(U_m)_{m\ge 0}$ is a martingale and $1<p\leq p_0\leq2$, the classical Burkholder inequality \eqref{def-Mtype-scalar} implies 
\begin{align}\label{fir-Burk-abs}
\mathbb{E}[U_m^p]\lesssim_p \mathbb{E}[U^p_{k_0}]+ \sum_{k=k_0+1}^{m} \mathbb{E}\big[|U_{k}-U_{k-1}|^p\big],
\end{align}
where $\lesssim_p$ means that the above inequality holds up to a multiplicative constant depending only on $p$.

\medskip
{\flushleft \bf Step 2. Use the sub-exponential partition in  Assumption~\ref{assum-indep-abs}.}
\medskip

Recall the notation \eqref{def-b-dya-abs} for the $b$-adic structure.  For each $b$-adic sub-cube $\mathbf{I}\in\mathscr{D}_{k-1}^b$, set
\[
\mathcal{R}_{\mathbf{I}}:=\int_{\mathbf{I}}\Big[\prod_{j=0}^{k-1}\XX_{j}(\mathbf{t})\Big]\mathring{\XX}_k(\mathbf{t})\mathrm{d}\mathbf{t}  \quad \text{with $\mathring{\XX}_{k}(\mathbf{t}):=\XX_k(\mathbf{t})-1$}.
\] 
For each $k>k_0$, by using the decomposition 
\[
[0,1)^d = \bigsqcup_{\mathbf{I}\in\mathscr{D}_{k-1}^b}\mathbf{I},
\]
then we get
\[
U_k - U_{k-1} = \mu_{k}([0,1]^d)-\mu_{k-1}([0,1]^d)=\int_{[0,1]^d}\Big[\prod_{j=0}^{k-1}\XX_{j}(\mathbf{t})\Big]\mathring{\XX}_k(\mathbf{t})\mathrm{d}\mathbf{t}=\sum_{\mathbf{I}\in\mathscr{D}_{k-1}^b}\mathcal{R}_{\mathbf{I}}.
\]
By applying the sub-exponential partition  \eqref{dec-subexp-dec-abs} of  the family $\mathscr{D}_{k-1}^b$ in Assumption~\ref{assum-indep-abs},  we obtain 
\[
U_{k}-U_{k-1}=\sum_{i=1}^{N_{k-1}}\sum_{\mathbf{I}\in\mathscr{D}_{k-1,i}^{b}}\mathcal{R}_{\mathbf{I}}   \quad (\text{where  $\mathscr{D}_{k-1}^b=\bigsqcup_{i=1}^{N_{k-1}}\mathscr{D}_{k-1,i}^{b}$})
\]
and hence 
\begin{align}\label{diff-ineq-p}
\E[|U_{k}-U_{k-1}|^p]\le N_{k-1}^{p-1}  \sum_{i=1}^{N_{k-1}}  \E\Big[ \Big|\sum_{\mathbf{I}\in\mathscr{D}_{k-1,i}^{b}}\mathcal{R}_{\mathbf{I}}   \Big|^p\Big], 
\end{align}
where we used the following Jensen's inequality for the function $x\mapsto x^p$: 
\begin{align}\label{ele-ine-jen-abs}
	\Big( \frac{1}{N_{k-1}} \sum_{i=1}^{N_{k-1}}|x_i|\Big)^p\leq \frac{1}{N_{k-1}}\sum_{i=1}^{N_{k-1}}|x_i|^p.
\end{align}

\medskip
{\flushleft \bf Step 3. Use  the independence  condition in Assumption~\ref{assum-indep-abs} and Burkholder inequality.}
\medskip

Fix any integer $1\le i \le N_{k-1}$.     By Assumption~\ref{assum-indep-abs}, conditioned on the natural filtration $\mathscr{G}_{k-1}$ defined in \eqref{def-filtra-abs}, the random variables $\{\mathcal{R}_{\mathbf{I}}\}_{\mathbf{I}\in\mathscr{D}_{k-1,i}^{b}}$ are jointly independent and $\E[\mathcal{R}_\mathbf{I}|\mathscr{G}_{k-1}] = 0$.  Therefore, with respect to the conditional expectation
\[
\E_{k-1}[\cdot]=\E[\cdot|\mathscr{G}_{k-1}],
\]
we may apply the Burkholder inequality  \eqref{def-ind-Mtype-scalar} (since $1<p<p_0\le 2$) and obtain 
\[
\mathbb{E}_{k-1}\Big[\Big|\sum_{\mathbf{I}\in\mathscr{D}_{k-1,i}^{b}}\mathcal{R}_{\mathbf{I}}\Big|^p\Big]\lesssim_p \sum_{\mathbf{I}\in\mathscr{D}_{k-1,i}^{b}}\mathbb{E}_{k-1}[|\mathcal{R}_{\mathbf{I}}|^p].
\]
Hence, by taking expectation on both sides, we get
\begin{align}\label{cond-Burk}
\mathbb{E}\Big[\Big|\sum_{\mathbf{I}\in\mathscr{D}_{k-1,i}^{b}}\mathcal{R}_{\mathbf{I}}\Big|^p\Big]\lesssim_p \sum_{\mathbf{I}\in\mathscr{D}_{k-1,i}^{b}}\mathbb{E}[|\mathcal{R}_{\mathbf{I}}|^p].
\end{align}

\medskip
{\flushleft \bf Step 4. Reduce to local estimate.}
\medskip

Combining \eqref{fir-Burk-abs},  \eqref{diff-ineq-p} and \eqref{cond-Burk}, we obtain the upper-estimate of $\E[U_m^p]$ for $m>k_0$ by local estimate: 
\begin{align}\label{sesec-Burk-abs}
\begin{split}
	\mathbb{E}[U_m^p]&\lesssim_p \mathbb{E}[U^p_{k_0}]+\sum_{k=k_0+1}^{m} N_{k-1}^{p-1}\sum_{i=1}^{N_{k-1}}\sum_{\mathbf{I}\in\mathscr{D}_{k-1,i}^{b}}\mathbb{E}[|\mathcal{R}_{\mathbf{I}}|^p]  
	\\
&=\mathbb{E}[U^p_{k_0}]+\sum_{k=k_0+1}^{m}N_{k-1}^{p-1}\sum_{\mathbf{I}\in\mathscr{D}_{k-1}^b}\mathbb{E}[|\mathcal{R}_{\mathbf{I}}|^p] \quad (\text{here use again $\mathscr{D}_{k-1}^b=\bigsqcup_{i=1}^{N_{k-1}}\mathscr{D}_{k-1,i}^{b}$}).
\end{split}
\end{align}

\medskip
{\flushleft \bf Step 5. Use triangle inequality and Assumption~\ref{assum-Lp0-abs}.}
\medskip

In this step, we use Assumption~\ref{assum-Lp0-abs} to control the local estimate $\E[|\mathcal{R}_\mathbf{I}|^p]$.  For each $k>k_0$ and $\mathbf{I}\in\mathscr{D}_{k-1}^b$, by the triangle inequality, we have
\[
(\mathbb{E}[|\mathcal{R}_{\mathbf{I}}|^p])^{1/p}= \Big\|\int_{\mathbf{I}}\Big[\prod_{j=0}^{k-1}\XX_{j}(\mathbf{t})\Big]\mathring{\XX}_k(\mathbf{t})\mathrm{d}\mathbf{t}\Big\|_{L^p(\PP)}\leq\int_{\mathbf{I}}\Big\|\Big[\prod_{j=0}^{k-1}\XX_{j}(\mathbf{t})\Big]|\mathring{\XX}_k(\mathbf{t})|\Big\|_{L^p(\mathbb{P})}\mathrm{d}\mathbf{t}.  
\]
Since the stochastic processes $\{\XX_j\}_{j\ge 0}$ are independent, for any $\mathbf{t}\in \mathbf{I}$,  we have
\[
\Big\|\Big[\prod_{j=0}^{k-1}\XX_{j}(\mathbf{\mathbf{t}})\Big]|\mathring{\XX}_k(\mathbf{\mathbf{t}})|\Big\|_{L^p(\mathbb{P})}=\Big(\Big[\prod_{j=0}^{k-1}\E[\XX_j^p(\mathbf{t})]\Big]\cdot\E[|\mathring{\XX}_k(\mathbf{t})|^p]\Big)^{1/p}
\]
and hence 
\begin{align}\label{Y-local}
 \mathbb{E}[|\mathcal{R}_{\mathbf{I}}|^p] \le \sup_{\mathbf{t}\in [0,1)^d}  \Big[\prod_{j=0}^{k-1}\E[\XX_j^p(\mathbf{t})]\Big]\cdot\E[|\mathring{\XX}_k(\mathbf{t})|^p]   \cdot  |\mathbf{I}|^p,
\end{align}
where $|\mathbf{I}| = b^{d(k-1)}$ is the volume of $\mathbf{I}$.  Therefore, combining \eqref{Y-local} with  \eqref{ass2-sma-p-abs}, we get 
\begin{align}\label{desired-Y-local}
 \mathbb{E}[|\mathcal{R}_{\mathbf{I}}|^p]   \le C_p \cdot S_p^{k-j_0} \cdot \E[|\mathring{\XX}_k(\mathbf{t})|^p]  \cdot b^{-d(k-1)p}\lesssim S_p^k\cdot b^{-dkp},
\end{align}
where $\lesssim$ means that  the above inequality holds up to a multiplicative constant $C'$ depending only on the constants  $C_p, d, b, p, j_0$. In particular, here we used the fact that 
\[
\E[|\mathring{\XX}_k(\mathbf{t})|^p]  \le 2^p \cdot \E[\XX^p_k(\mathbf{t})].
\]

\medskip
{\flushleft \bf Step 6. Conclusion.}
\medskip

By \eqref{sesec-Burk-abs} and \eqref{desired-Y-local}, as well as the cardinality  equality $\#\mathscr{D}_{k-1}^b=b^{d(k-1)}$,  for any $m>k_0$, 
\[
\mathbb{E}[U^p_{m}]\lesssim\mathbb{E}[U^p_{k_0}]+\sum_{k=k_0+1}^{m} N_{k-1}^{p-1}\cdot S_p^k\cdot b^{-dkp}\cdot\#\mathscr{D}_{k-1}^b \lesssim\mathbb{E}[U^p_{k_0}]+\sum_{k=k_0+1}^{m} N_{k-1}^{p-1}\cdot S_p^k\cdot b^{-dk(p-1)}.
\]
By the definition \eqref{def-phi-abs-again} of $\zeta(p)$ and the definition \eqref{ass2-sma-p-abs} of $S_p$, we have
\[
\zeta(p)=d(p-1)\log b-\log\Big(\sup_{j>j_0}\sup_{\mathbf{t}\in[0,1)^d}\mathbb{E}[\XX_j^{p}(\mathbf{t})]\Big) = d (p-1) \log b - \log S_p
\]
and hence 
\begin{align}\label{Sp-phi-abs}
	S_p=b^{d(p-1)-\frac{\zeta(p)}{\log b}}. 
\end{align}
Moreover, by Lemma~\ref{lem-small-p}, we have
$\zeta(p)>0$ for any $1<p\le p_0$.
It follows that 
\[
\sup_{m>k_0}\mathbb{E}[U^p_{m} ]\lesssim\mathbb{E}[U^p_{k_0}]+\sum_{k=k_0+1}^{\infty} N_{k-1}^{p-1}\cdot b^{-k\frac{\zeta(p)}{\log b}}<\infty,
\]
where in the last step, the inequality $\mathbb{E}[U^p_{k_0}]<\infty$ follows from  Lemma~\ref{lem-finite-m}  and the convergence of the  series  follows from   the sub-exponential growth condition  \eqref{def-subexp-abs} of $N_{k-1}$.
This completes the proof of the desired inequality \eqref{large-m-sup} and thus the whole proof of Proposition~\ref{non-degene-abs}.

\section{The unified theorem for polynomial Fourier decay}\label{S-Pf-Thm-PolyFoudec-abs}
This section is devoted to the proof of the unified Theorem~\ref{Fou-Dec-abs}.   We are going to prove Theorem~\ref{Fou-Dec-abs} following the strategy outlined in \S\ref{S-metvecvalmar-abs}. The procedure is presented as in  Figure~\ref{figure-procedure-proving}. 

The main idea  of the proof of  Theorem~\ref{Fou-Dec-abs} is similar to the proof of its counterpart of  in our previous paper \cite[Theorem 1.1]{LQT24} on 1D GMC (note that the proof in \cite{LQT24} only works for  1D GMC since it relies on the specific whitenoise decomposition,  which can not be directly generalized to higher dimension).  For the reader's convenience, we write the proof in a parallel way to that of \cite[Theorem 1.1]{LQT24}.  The reader is invited to take a look at the proof of  \cite[Theorem 1.1]{LQT24} to get the main idea. 

Throughout this section, to simplify notation and to ease the comparison between the proof of Theorem~\ref{Fou-Dec-abs} and that of \cite[Theorem 1.1]{LQT24},   by writing $\mathbb{Z}^{d}$ as the union of $2^d$ orthants and  by slightly modifying the notation,  we may  and will always restrict the indices $\mathbf{n}\in\mathbb{Z}^{d}$ to $\mathbf{n}\in\mathbb{N}^{d}\setminus\{\mathbf{0}\}$ in all the quantities appeared in \S\ref{subS-metvecvalmar-abs}.

\subsection{Reduction to a localization estimate}
In this subsection, we shall prove Lemma~\ref{dec-vm-abs} to obtain the reduction to a localization estimate. 
The proof of Lemma~\ref{dec-vm-abs} relies on  twice applications of Pisier's martingale type $p$ inequalities  \eqref{def-Mtype} and \eqref{def-ind-Mtype} for the Banach space $\ell^q$ with $1<p\leq2\leq q<\infty$. 

Recall the definitions of $\mathcal{L}_F$ in \eqref{def-LF-abs} (note that in particular, $\mathcal{L}_F \le 2 \alpha_0$).   From now on, let us  fix positive numbers $\tau$, $p$, $q$ such that 
\begin{align}\label{fix-exponents}
 \tau\in (0,    \mathcal{L}_F)  \subset (0, 2\alpha_0) \anand  1<p\leq p_0\leq\max\Big\{2,\frac{2d}{2\alpha_0-\tau}\Big\}<q<\infty,
\end{align}
where $\alpha_0$ and $p_0$ are the parameters given in Assumptions \ref{assum-Lp0-abs} and \ref{assum-alp0-abs}. 

Consider the $\ell^q$-valued martingale $(\mathcal{M}_m)_{m\geq 0}$ defined in \eqref{vec-val-mar-abs} with respect to the increasing filtration $(\mathscr{G}_m)_{m\ge 0}$ defined in \eqref{def-filtra-abs}.  Recall the integer $k_0>0$ given in Assumption \ref{assum-indep-abs}.  For any $m>k_0$, write $\mathcal{M}_m$ as the sum of martingale differences: 
\[
\mathcal{M}_m =\mathcal{M}_{k_0} + \sum_{k=k_0+1}^{m}(\mathcal{M}_{k}-\mathcal{M}_{k-1}).
\]
By the martingale type $p$ inequality \eqref{def-Mtype} for the Banach space $\ell^q$, we have
\begin{align}\label{fir-mar-ty-ine-abs}
	\mathbb{E}[\|\mathcal{M}_m\|_{\ell^q}^p]\lesssim_{p,q}  \E[\|\mathcal{M}_{k_0}\|_{\ell^q}^p] + \sum_{k=k_0+1}^{m}\mathbb{E}[\|\mathcal{M}_{k}-\mathcal{M}_{k-1}\|_{\ell^q}^p].
\end{align}

Now, for each $k>k_0$ and $\mathbf{n}=(n_1,\cdots,n_d)\in\mathbb{N}^{d}\setminus\{\mathbf{0}\}$,  by the definitions \eqref{def-num-abs} and  \eqref{vec-val-mar-abs}, we have  
\[
\mathcal{M}_k(\mathbf{n})=|\mathbf{n}|^{\frac{\tau}{2}}\widehat{\mu_{k}}(\mathbf{n})=|\mathbf{n}|^{\frac{\tau}{2}}\int_{[0,1]^d}e^{-2\pi i\mathbf{n}\cdot\mathbf{t}}\mu_{k}(\mathrm{d}\mathbf{t})=|\mathbf{n}|^{\frac{\tau}{2}}\int_{[0,1]^d}\Big[\prod_{j=0}^{k}\XX_j(\mathbf{t})\Big]e^{-2\pi i\mathbf{n}\cdot\mathbf{t}}\mathrm{d}\mathbf{t}.
\]
Hence we get
\[
\mathcal{M}_{k}(\mathbf{n})-\mathcal{M}_{k-1}(\mathbf{n})=|\mathbf{n}|^{\frac{\tau}{2}}\int_{[0,1]^d}\Big[\prod_{j=0}^{k-1}\XX_j(\mathbf{t})\Big]\mathring{\XX}_k(\mathbf{t})e^{-2\pi i\mathbf{n}\cdot\mathbf{t}}\mathrm{d}\mathbf{t} \quad \text{with $\mathring{\XX}_k(\mathbf{t})=\XX_k(\mathbf{t})-1$}.
\]
Recall the notation $\mathscr{D}_{k-1}^b$ introduced in \eqref{def-b-dya-abs} for the family of $b$-adic sub-cubes and the definition \eqref{def-Y-abs} for the random vector $\mathscr{Y}_\mathbf{I}$ defined for any $b$-adic sub-cube $\mathbf{I}\in \mathscr{D}_{k-1}^b$: 
\[
\mathscr{Y}_\mathbf{I}(\mathbf{n})= |\mathbf{n}|^{\frac{\tau}{2}}\int_{\mathbf{I}}\Big[\prod_{j=0}^{k-1}\XX_j(\mathbf{t})\Big]  \mathring{\XX}_{k} (\mathbf{t}) e^{-2\pi i\mathbf{n}\cdot\mathbf{t}}\mathrm{d}\mathbf{t}.
\] 
Therefore, we obtain
\[
\mathcal{M}_{k}(\mathbf{n})-\mathcal{M}_{k-1}(\mathbf{n})= \sum_{\mathbf{I}\in \mathscr{D}_{k-1}^b}\mathscr{Y}_\mathbf{I}(\mathbf{n}) \quad \text{for all $\mathbf{n}\in\mathbb{N}^{d}\setminus\{\mathbf{0}\}$}. 
\]
That is, as random vectors, we have the equality 
\[
\mathcal{M}_k-\mathcal{M}_{k-1}= \sum_{\mathbf{I}\in \mathscr{D}_{k-1}^b}\mathscr{Y}_\mathbf{I}. 
\]

For using again the martingale type $p$ inequality, we  consider the sub-exponential decomposition \eqref{dec-subexp-dec-abs}:
\[
\mathscr{D}_{k-1}^b=\bigsqcup_{i=1}^{N_{k-1}}\mathscr{D}_{k-1,i}^{b}\quad\text{with all $\mathscr{D}_{k-1,i}^{b}$ being the sub-families of $\mathscr{D}_{k-1}^b$}.
\]
Then we have 
\[
\mathcal{M}_{k}-\mathcal{M}_{k-1}=\sum_{i=1}^{N_{k-1}}\sum_{\mathbf{I}\in\mathscr{D}_{k-1,i}^{b}}\mathscr{Y}_\mathbf{I}. 
\]
Therefore, by the triangle inequality of $\ell^q$, we have 
\[
\|\mathcal{M}_{k}-\mathcal{M}_{k-1}\|_{\ell^q}^p\leq\Big(\sum_{i=1}^{N_{k-1}}\Big\|\sum_{\mathbf{I}\in\mathscr{D}_{k-1,i}^{b}}\mathscr{Y}_\mathbf{I}\Big\|_{\ell^q}\Big)^p
\]
and hence by the Jensen's inequality \eqref{ele-ine-jen-abs}, we obtain
\[
\|\mathcal{M}_{k}-\mathcal{M}_{k-1}\|_{\ell^q}^p\leq N_{k-1}^{p-1}\sum_{i=1}^{N_{k-1}}\Big\|\sum_{\mathbf{I}\in\mathscr{D}_{k-1,i}^{b}}\mathscr{Y}_\mathbf{I}\Big\|_{\ell^q}^p.
\]
It follows that
\begin{align}\label{D-D-dec-abs}
	\E[\|\mathcal{M}_{k}-\mathcal{M}_{k-1}\|_{\ell^q}^p] \leq  N_{k-1}^{p-1}\sum_{i=1}^{N_{k-1}}\E\Big[ \Big\|  \sum_{\mathbf{I}\in\mathscr{D}_{k-1,i}^{b}}\mathscr{Y}_\mathbf{I} \Big\|_{\ell^q}^p\Big]. 
\end{align}

Now a crucial observation is that, by Assumption~\ref{assum-indep-abs}, for each $i=1,2,\cdots,N_{k-1}$, conditioned on the filtration $\mathscr{G}_{k-1}$,   the random vectors $\mathscr{Y}_\mathbf{I}$ indexed by $\mathbf{I}\in\mathscr{D}_{k-1,i}^{b}$ are independent (and also conditionally centered by \eqref{Y-I-cond-abs}) and hence with respect to the conditional expectation
\[
\E_{k-1}[\cdot]=\E[\cdot|\mathscr{G}_{k-1}],
\]
we may apply the martingale type $p$ inequality  \eqref{def-ind-Mtype} for the Banach space $\ell^q$ and obtain 
\[
\E_{k-1}\Big[\Big\|\sum_{\mathbf{I}\in\mathscr{D}_{k-1,i}^{b}}\mathscr{Y}_\mathbf{I}\Big\|_{\ell^q}^p\Big]\lesssim_{p,q}\sum_{\mathbf{I}\in\mathscr{D}_{k-1,i}^{b}}\E_{k-1}[\|\mathscr{Y}_\mathbf{I}\|_{\ell^q}^p]. 
\]
Hence, by taking expectation on both sides, we get 
\begin{align}\label{aft-conexp-abs}
	\E\Big[\Big\|\sum_{\mathbf{I}\in\mathscr{D}_{k-1,i}^{b}}\mathscr{Y}_\mathbf{I}\Big\|_{\ell^q}^p\Big]\lesssim_{p,q}\sum_{\mathbf{I}\in\mathscr{D}_{k-1,i}^{b}}\E[\|\mathscr{Y}_\mathbf{I}\|_{\ell^q}^p].
\end{align}

Finally,  by  combining the inequalities \eqref{fir-mar-ty-ine-abs}, \eqref{D-D-dec-abs} and \eqref{aft-conexp-abs},  we get  the desired inequality 
\begin{align*}
	\mathbb{E}[\|\mathcal{M}_{m}\|_{\ell^q}^p]&\lesssim_{p,q}  \E[\|\mathcal{M}_{k_0}\|_{\ell^q}^p]  +   \sum_{k=k_0+1}^{m}N_{k-1}^{p-1}\sum_{i=1}^{N_{k-1}}\sum_{\mathbf{I}\in\mathscr{D}_{k-1,i}^{b}}\mathbb{E}[\|\mathscr{Y}_\mathbf{I} \|_{\ell^q}^p]\\
	&=  \E[\|\mathcal{M}_{k_0}\|_{\ell^q}^p]  +   \sum_{k=k_0+1}^{m}N_{k-1}^{p-1}\sum_{\mathbf{I} \in \mathscr{D}_{k-1}^b}\mathbb{E}[\|\mathscr{Y}_\mathbf{I} \|_{\ell^q}^p].
\end{align*}
This completes the proof of Lemma~\ref{dec-vm-abs}.

\subsection{Establishment of the localization estimate} 
This subsection is devoted to the proof of Lemma~\ref{UB-YZW-abs} on the localization estimate of $\mathbb{E}[\|\mathscr{Z}_\mathbf{I} \|_{\ell^q}^p]$. 

Recall the definition of $\zeta(p)=\zeta_{j_0}(p)$ in \eqref{def-phi-abs} (and repeated in \eqref{def-phi-abs-again}, in particular, here $j_0$ is a fixed integer $j_0\ge 1$).    Recall also that $\tau$, $p$, $q$ are fixed as in \eqref{fix-exponents}.   In what follows, by writing $A\lesssim B$, we mean that there exists a finite constant $C=C(d,b,\tau,p,q,j_0)>0$ depending on the parameters $d$, $b$, $\tau$, $p$, $q$, $j_0$ such that $A\leq CB$.

\subsubsection{An outline of the proof of Lemma \ref{UB-YZW-abs}}
The proof of Lemma~\ref{UB-YZW-abs} is divided into the following twelve steps and the main steps are outlined as follows: take  any integer $m\geq0$ and any $b$-adic sub-cube $\mathbf{I}\in \mathscr{D}_{m}^b$ defined in \eqref{def-b-dya-abs}, recall the $\mathscr{G}_m$-measurable random vector $\mathscr{Z}_\mathbf{I}=\big(\mathscr{Z}_\mathbf{I}(\mathbf{n})\big)_{\mathbf{n}\in\mathbb{N}^{d}\setminus\{\mathbf{0}\}}$ defined in \eqref{def-Z-abs}:
\[
	\mathscr{Z}_\mathbf{I}(\mathbf{n})=|\mathbf{n}|^{\frac{\tau}{2}}\int_{\mathbf{I}}\Big[\prod_{j=0}^{m}\XX_j(\mathbf{t})\Big]e^{-2\pi i\mathbf{n}\cdot\mathbf{t}}\mathrm{d}\mathbf{t} \anand 
\|\mathscr{Z}_\mathbf{I}\|_{\ell^q}^p = \Big\{\sum_{\mathbf{n}\in\mathbb{N}^{d}\setminus\{\mathbf{0}\}}|\mathscr{Z}_\mathbf{I}(\mathbf{n})|^q\Big\}^{p/q}.
\]
The key in obtaining the desired upper estimate of $\E[\|\mathscr{Z}_\mathbf{I}\|_{\ell^q}^p]$ is to establish  the following separation-of-variable pointwise upper estimate for $|\mathscr{Z}_\mathbf{I}(\mathbf{n})|$ (which is inspired by the standard Littlewood-Paley decomposition in harmonic analysis): 
\[
|\mathscr{Z}_\mathbf{I}(\mathbf{n})| \le    v_0(\mathbf{n}) R_0 +  \sum_{L=1}^\infty v_{L}(\mathbf{n})  R_{L} + \sum_{L=1}^\infty\sum_{\beta=1}^{d} w_{L,\beta}(\mathbf{n})  Q_{L,\beta} \quad \text{for all $\mathbf{n}\in\mathbb{N}^{d}\setminus\{\mathbf{0}\}$},
\]
where $R_0$, $R_{L}$, $Q_{L,\beta}$ are non-negative random variables and  $v_0$, $v_{L}$, $w_{L,\beta}$ are deterministic (without randomness)  sequences of non-negative numbers with supports 
\[
\supp(v_0)=\{1\leq|\mathbf{n}|\leq b^m\},\quad\supp(v_{L})=\{b^{m+L-1}<|\mathbf{n}|\leq b^{m+L}\}
\]
and
\[
\supp(w_{L,\beta})=\Big\{b^{m+L-1}<|\mathbf{n}|\leq b^{m+L},n_{\beta}>\frac{b^{m+L-1}}{\sqrt{d}},n_{\beta'}\leq\frac{b^{m+L-1}}{\sqrt{d}},1\leq\beta'\leq\beta-1\Big\}.
\]

In particular, for any $L\ge 1$ and $1\leq\beta\leq d$, the constructions of $v_{L}$, $w_{L,\beta}$ and $R_{L}$, $Q_{L,\beta}$  rely on a $b$-adic-discrete-time approximation of the stochastic process  
\[
\prod_{j=0}^{m}\XX_j(\mathbf{t}),\quad\mathbf{t}\in\mathbf{I}. 
\]
The level of the discrete-time approximation  depends on each $b$-adic sub-cube  $\mathbf{I}$ and on $\mathbf{n}$ via its $b$-adic level $b^{m+L-1}<|\mathbf{n}|\leq b^{m+L}$.  It should be mentioned that, such approximation is reasonable (meaning that the difference can be controlled) by Assumption~\ref{assum-Lp0-abs} and Assumption~\ref{assum-alp0-abs}.

\subsubsection{The proof of Lemma \ref{UB-YZW-abs}}
We now proceed to the proof of Lemma \ref{UB-YZW-abs}. 

\medskip
{\flushleft \bf Step 1. The lower-frequency part $1\leq|\mathbf{n}|\leq b^m$.}
\medskip

For the lower-frequency part $1\leq|\mathbf{n}|\leq b^m$,  the quantities $\mathscr{Z}_{\mathbf{I}}(\mathbf{n})$ are controlled by the total mass of $\mu_{m}$ on the $b$-adic sub-cube $\mathbf{I}$. More precisely, here we use very rough upper estimate of $\mathscr{Z}_\mathbf{I}(\mathbf{n})$: 
\[
|\mathscr{Z}_\mathbf{I}(\mathbf{n})|=  \Big|  |\mathbf{n}|^{\frac{\tau}{2}}\int_{\mathbf{I}}\Big[\prod_{j=0}^{m}\XX_j(\mathbf{t})\Big] e^{-2\pi i\mathbf{n}\cdot\mathbf{t}}\mathrm{d}\mathbf{t}  \Big| \le      |\mathbf{n}|^{\frac{\tau}{2}}\int_{\mathbf{I}}\Big[\prod_{j=0}^{m}\XX_j(\mathbf{t})\Big] \mathrm{d}\mathbf{t}. 
\]
Hence by defining
\begin{align}\label{def-v-0-abs}
	v_0(\mathbf{n}): = |\mathbf{n}|^{\frac{\tau}{2}}\cdot \mathds{1}(1\le |\mathbf{n}| \le b^m)
\end{align}
and 
\begin{align}\label{def-R-0-abs}
	R_0: = \int_{\mathbf{I}}\Big[\prod_{j=0}^{m}\XX_j(\mathbf{t})\Big] \mathrm{d}\mathbf{t},
\end{align}
we obtain 
\begin{align}\label{low-Y-abs}
	|\mathscr{Z}_\mathbf{I}(\mathbf{n})| \le v_0(\mathbf{n})  R_0 \quad \text{for all $1\le |\mathbf{n}| \le b^m$}.
\end{align}

\medskip
{\flushleft \bf Step 2. $b$-adic-discrete-time approximation for the higher-frequency part.}
\medskip

For the higher-frequency part $\mathscr{Z}_\mathbf{I}(\mathbf{n})$ with $|\mathbf{n}|>b^m$,  we shall use a  finer estimate by applying a $b$-adic-discrete-time approximation of the stochastic process   
\begin{align}\label{def-Dk-abs}
	\mathcal{D}_m(\mathbf{t}): = \Big[\prod_{j=0}^{m} \XX_j(\mathbf{t})\Big], \quad \mathbf{t}\in \mathbf{I}. 
\end{align}
Namely, we shall approximate $\mathcal{D}_m(\mathbf{t})$ by the value of $\mathcal{D}_m$ at some $b$-adic $\mathbf{t}$. It is important for our purpose to use  finer and finer approximation of $\mathcal{D}_m(\mathbf{t})$ when $|\mathbf{n}|$ grows. That is, to control $\mathscr{Z}_\mathbf{I}(\mathbf{n})$, the level of the $b$-adic-discrete-time approximation depends on each $b^{m+L-1}<|\mathbf{n}|\leq b^{m+L}$.

More precisely,  given any integer $L\ge 1$,  by using the same $b$-adic decomposition of $\mathbf{I}$, we shall decompose $\mathscr{Z}_\mathbf{I}(\mathbf{n})$  of the same  manner for all $b^{m+L-1}<|\mathbf{n}|\leq b^{m+L}$. That is, we divide the $b$-adic sub-cube  $\mathbf{I}\in\mathscr{D}_{m}^b$  into $b^{dL}$ sub-pieces (hence each sub-cube has side length $b^{-(m+L)}$). In other words, denote by  $\mathscr{D}_{m+L}^b(\mathbf{I})$ the family of sub-cubes $\mathbf{J}\subset\mathbf{I}$ in $\mathscr{D}_{m+L}^b$:
\begin{align}\label{DLk-1I-abs}
	\mathscr{D}_{m+L}^b(\mathbf{I}): = \Big\{\mathbf{J}\subset\mathbf{I}:   \mathbf{J}\in \mathscr{D}_{m+L}^b \Big\}. 
\end{align}
By using  the decomposition 
\[
\mathbf{I}=\bigsqcup_{\mathbf{J}\in\mathscr{D}_{m+L}^b(\mathbf{I})}\mathbf{J}, 
\]
we can  decompose $\mathscr{Z}_\mathbf{I}(\mathbf{n})$ as 
\[
\mathscr{Z}_\mathbf{I}(\mathbf{n})=|\mathbf{n}|^{\frac{\tau}{2}}\int_{\mathbf{I}}\mathcal{D}_{m}(\mathbf{t})e^{-2\pi i\mathbf{n}\cdot\mathbf{t}}\mathrm{d}\mathbf{t}=|\mathbf{n}|^{\frac{\tau}{2}}\sum_{\mathbf{J}\in\mathscr{D}_{m+L}^b(\mathbf{I})}\int_{\mathbf{J}}\mathcal{D}_{m}(\mathbf{t})e^{-2\pi i\mathbf{n}\cdot\mathbf{t}}\mathrm{d}\mathbf{t}. 
\]

Then on each sub-cube $\mathbf{J}\in\mathscr{D}_{m+L}^b(\mathbf{I})$, we approximate $\mathcal{D}_m(\mathbf{t})$ with $\mathcal{D}_m$ evaluated on the minimum vertex of $\mathbf{J}$. Here the minimum vertex of $\mathbf{J}$ is defined by 
\[
\ell_\mathbf{J}:=\Big(\frac{h_1-1}{b^{m+L}},\frac{h_2-1}{b^{m+L}},\cdots,\frac{h_d-1}{b^{m+L}}\Big)\quad\text{if}\quad\mathbf{J} = \prod_{\beta=1}^{d}\Big[\frac{h_\beta-1}{b^{m+L}}, \frac{h_\beta}{b^{m+L}}\Big)\in\mathscr{D}_{m+L}^b(\mathbf{I})\subset\mathscr{D}_{m+L}^b.
\]
Using the decomposition 
\[
\mathcal{D}_m(\mathbf{t})=[\mathcal{D}_m(\mathbf{t})-\mathcal{D}_m(\ell_\mathbf{J})]+\mathcal{D}_m(\ell_\mathbf{J}),  
\]
we obtain that for any $b^{m+L-1}<|\mathbf{n}|\leq b^{m+L}$,
\begin{align}\label{Y-UV-abs}
	\begin{split}
		\mathscr{Z}_\mathbf{I}(\mathbf{n}) =   \underbrace{ |\mathbf{n}|^{\frac{\tau}{2}}\sum_{\mathbf{J}\in \mathscr{D}_{m+L}^b(\mathbf{I})}  \int_{\mathbf{J}} [\mathcal{D}_{m} (\mathbf{t}) -\mathcal{D}_m(\ell_\mathbf{J})]e^{-2\pi i\mathbf{n}\cdot\mathbf{t}}\mathrm{d}\mathbf{t}}_{\text{denoted $\mathscr{V}_\mathbf{I}(\mathbf{n})$}}+ \underbrace{|\mathbf{n}|^{\frac{\tau}{2}}\sum_{\mathbf{J}\in \mathscr{D}_{m+L}^b(\mathbf{I})}  \int_{\mathbf{J}}  \mathcal{D}_m(\ell_\mathbf{J}) e^{-2\pi i\mathbf{n}\cdot\mathbf{t}}\mathrm{d}\mathbf{t}}_{\text{denoted $\mathscr{U}_\mathbf{I}(\mathbf{n})$}}. 
	\end{split}
\end{align}
The two terms $\mathscr{V}_\mathbf{I}(\mathbf{n})$ and $\mathscr{U}_\mathbf{I}(\mathbf{n})$ will be controlled by different methods. 

\medskip
{\flushleft \bf Step 3. The simple control of $\mathscr{V}_\mathbf{I}(\mathbf{n})$.}
\medskip

The term $\mathscr{V}_\mathbf{I}(\mathbf{n})$ defined in \eqref{Y-UV-abs} is controlled directly by using the triangle inequality: 
\[
|\mathscr{V}_\mathbf{I}(\mathbf{n})| \le |\mathbf{n}|^{\frac{\tau}{2}} \sum_{\mathbf{J}\in \mathscr{D}_{m+L}^b(\mathbf{I})}  \int_{\mathbf{J}} |\mathcal{D}_{m} (\mathbf{t}) -\mathcal{D}_n(\ell_\mathbf{J}) | \mathrm{d}\mathbf{t}. 
\]
Hence for all $L\leq1$, by defining 
\begin{align}\label{def-v-l-abs}
	v_L(\mathbf{n}) : = |\mathbf{n}|^{\frac{\tau}{2}}\cdot  \mathds{1}(b^{m+L-1} <|\mathbf{n}| \le b^{m+L})
\end{align}
and
\begin{align}\label{def-Rl-abs}
	R_L: =   \sum_{\mathbf{J}\in \mathscr{D}_{m+L}^b(\mathbf{I})}  \int_{\mathbf{J}} |\mathcal{D}_{m} (\mathbf{t}) -\mathcal{D}_m(\ell_\mathbf{J}) | \mathrm{d}\mathbf{t}, 
\end{align}
we obtain 
\begin{align}\label{V-wQ-abs}
	|\mathscr{V}_\mathbf{I}(\mathbf{n})| \le v_L(\mathbf{n}) R_L \quad \text{for all $b^{m+L-1} <|\mathbf{n}| \le b^{m+L}$}. 
\end{align}
It should be emphasized that  the random variable  $R_L$ defined as above  depends on $L$ (and of course it depends on $m$, which is determined by $\mathbf{I}$), but does not  depend on $\mathbf{n}$.   In other words, all  $b^{m+L-1} <|\mathbf{n}|\le b^{m+L}$ share the same $R_L$. 

\medskip
{\flushleft \bf Step 4. The Abel's summation method for controlling $\mathscr{U}_\mathbf{I}(\mathbf{n})$.}
\medskip

We shall apply the Abel's summation method to control the term $\mathscr{U}_\mathbf{I}(\mathbf{n})$ defined in \eqref{Y-UV-abs}.  It should be emphasized that, since the standard Abel summation works only  for one dimensional index set (say some interval in $\N$), in what follows, we need to divide the set $\{b^{m+L-1}<|\mathbf{n}|\leq b^{m+L}\}$ into $d$-parts and the Abel summation is applied on each part with respect to only one dimension. 

More precisely,  for each $L\geq1$ and $1\leq\beta\leq d$, denote
\[
\mathbf{S}_{L,\beta}:=\Big\{b^{m+L-1}<|\mathbf{n}|\leq b^{m+L},n_{\beta}>\frac{b^{m+L-1}}{\sqrt{d}},n_{\beta'}\leq\frac{b^{m+L-1}}{\sqrt{d}},1\leq\beta'\leq\beta-1\Big\}.
\]
That is, $\beta$ is the smallest integer in $\{1, 2, \cdots, d\}$ such that $n_\beta > b^{m+L-1}/\sqrt{d}$.  
Clearly,  we have
\[
\big\{b^{m+L-1}<|\mathbf{n}|\leq b^{m+L}\big\}=\bigsqcup_{\beta=1}^d\mathbf{S}_{L,\beta}.
\]
Indeed,  otherwise, there exists $\mathbf{n}\in \N^d\setminus \{0\}$ with  $b^{m+L-1}<|\mathbf{n}|\leq b^{m+L}$ and $n_\beta\le b^{m+L-1}/\sqrt{d}$ for all $\beta = 1, \cdots, d$. But this would imply that $|\mathbf{n}|\le b^{m+L-1}$ and we get a contradiction. 

Now given any cube 
\[
\mathbf{J}=\prod_{\beta'=1}^{d}\Big[\frac{h_{\beta'}-1}{b^{m+L}}, \frac{h_{\beta'}}{b^{m+L}}\Big)\in\mathscr{D}_{m+L}^b(\mathbf{I})\subset\mathscr{D}_{m+L}^b, 
\]
we define for each $1\leq\beta\leq d$, 
\[
\widehat{\mathbf{J}}_{\beta}:=\prod_{1\leq\beta'\leq d,\beta'\neq\beta}\Big[\frac{h_{\beta'}-1}{b^{m+L}}, \frac{h_{\beta'}}{b^{m+L}}\Big)\anand\mathbf{J}_\beta:=\Big[\frac{h_{\beta}-1}{b^{m+L}}, \frac{h_{\beta}}{b^{m+L}}\Big). 
\]
In other words, after a  permutation of coordinates, the cube $\mathbf{J}$ can be transformed to  the the cube
\[
\widehat{\mathbf{J}_\beta} \times \mathbf{J}_\beta. 
\]
Hence for any $\mathbf{n}\in\mathbf{S}_{L,\beta}$, we can write
\begin{align*}
\mathscr{U}_{\mathbf{I}}(\mathbf{n})&=|\mathbf{n}|^{\frac{\tau}{2}}\sum_{\mathbf{J}\in \mathscr{D}_{m+L}^b(\mathbf{I})}  \int_{\widehat{\mathbf{J}}_{\beta}}\Big[\int_{\mathbf{J}_\beta} \mathcal{D}_m(\ell_\mathbf{J})e^{-2\pi in_{\beta}t_{\beta}}\mathrm{d}t_{\beta}\Big] e^{-2\pi i\widehat{\mathbf{n}}_{\beta}\cdot\widehat{\mathbf{t}}_{\beta}}\mathrm{d}\widehat{\mathbf{t}}_{\beta}\\
&=|\mathbf{n}|^{\frac{\tau}{2}}\sum_{\substack{\text{all $\widehat{\mathbf{J}}_{\beta}$ induced by}\\\mathbf{J}\in \mathscr{D}_{m+L}^b(\mathbf{I})}}\int_{\widehat{\mathbf{J}}_{\beta}}\Big[\sum_{\substack{\text{all $\mathbf{J}_{\beta}$ induced by}\\\mathbf{J}\in \mathscr{D}_{m+L}^b(\mathbf{I})}}\int_{\mathbf{J}_\beta} \mathcal{D}_m(\ell_{\widehat{\mathbf{J}}_{\beta}},\ell_{\mathbf{J}_{\beta}})e^{-2\pi in_{\beta}t_{\beta}}\mathrm{d}t_{\beta}\Big]e^{-2\pi i\widehat{\mathbf{n}}_{\beta}\cdot\widehat{\mathbf{t}}_{\beta}}\mathrm{d}\widehat{\mathbf{t}}_{\beta},
\end{align*}
where   
\[\widehat{\mathbf{n}}_{\beta}=(n_1,\cdots,n_{\beta-1},n_{\beta+1},\cdots,n_d), \quad  \widehat{\mathbf{t}}_{\beta}=(t_1,\cdots,t_{\beta-1},t_{\beta+1},\cdots,t_d), \quad \mathrm{d}\widehat{\mathbf{t}}_{\beta}=\prod_{1\leq\beta'\leq d,\beta'\neq\beta}\mathrm{d}t_{\beta'}.
\]
Here we rewrite $\ell_\mathbf{J}=(\ell_{\widehat{\mathbf{J}}_{\beta}},\ell_{\mathbf{J}_{\beta}})$ by ignoring the coordinate order with the minimum vertex $\ell_{\widehat{\mathbf{J}}_{\beta}}$ of $\widehat{\mathbf{J}}_{\beta}$ and the left end-point $\ell_{\mathbf{J}_{\beta}}$ of $\mathbf{J}_{\beta}$. 

Ordering all the sub-intervals $\mathbf{J}_{\beta}$ induced by $\mathbf{J}\in \mathscr{D}_{m+L}^b(\mathbf{I})$ from left to right according to their natural ordering on the real line,  we get 
\[
\mathbf{J}_{\beta}(l) = [\ell_{\mathbf{J}_{\beta}(l)}, \ell_{\mathbf{J}_{\beta}(l+1)}), \quad 1\le l \le b^{L},
\]
and 
\[
\ell_{\mathbf{J}_{\beta}(l+1)}  - \ell_{\mathbf{J}_{\beta}(l)} = |\mathbf{J}_{\beta}(l)|= b^{-(m+L)}, \quad \text{i.e.}\quad 
\ell_{\mathbf{J}_{\beta}(l)} = \ell_{\mathbf{J}_{\beta}(1)}+  (l-1)\cdot b^{-(m+L)}. 
\]
Under the above notation, by using the elementary equality 
\[
\int_x^y e^{-2\pi i n t}\mathrm{d}t =  \frac{e^{-2\pi i n y} - e^{-2\pi i n x}}{- 2\pi in }, 
\]
we obtain
\[
\sum_{\substack{\text{all $\mathbf{J}_{\beta}$ induced by}\\\mathbf{J}\in \mathscr{D}_{m+L}^b(\mathbf{I})}}\int_{\mathbf{J}_\beta} \mathcal{D}_m(\ell_{\widehat{\mathbf{J}}_{\beta}},\ell_{\mathbf{J}_{\beta}})e^{-2\pi in_{\beta}t_{\beta}}\mathrm{d}t_{\beta}=\frac{1}{-2\pi in_\beta}\sum_{l=1}^{b^L}\mathcal{D}_m(\ell_{\widehat{\mathbf{J}}_{\beta}},\ell_{\mathbf{J}_{\beta}(l)})\big[e^{-2\pi in_{\beta}\ell_{\mathbf{J}_{\beta}(l+1)}}-e^{-2\pi in_{\beta}\ell_{\mathbf{J}_{\beta}(l)}}\big].
\]
An application of Abel's summation method then yields
\begin{align*}
	&\quad\,\,\sum_{\substack{\text{all $\mathbf{J}_{\beta}$ induced by}\\\mathbf{J}\in \mathscr{D}_{m+L}^b(\mathbf{I})}}\int_{\mathbf{J}_\beta} \mathcal{D}_m(\ell_{\widehat{\mathbf{J}}_{\beta}},\ell_{\mathbf{J}_{\beta}})e^{-2\pi in_{\beta}t_{\beta}}\mathrm{d}t_{\beta}\\
	&=\frac{1}{-2\pi in_\beta}\Big(\mathcal{D}_m(\ell_{\widehat{\mathbf{J}}_{\beta}},\ell_{\mathbf{J}_{\beta}(b^L)})e^{-2\pi in_{\beta}\ell_{\mathbf{J}_{\beta}(b^L+1)}}-\mathcal{D}_m(\ell_{\widehat{\mathbf{J}}_{\beta}},\ell_{\mathbf{J}_{\beta}(1)})e^{-2\pi in_{\beta}\ell_{\mathbf{J}_{\beta}(1)}}\\
	&\qquad\qquad\qquad\quad+\sum_{l=1}^{b^L-1}[\mathcal{D}_m(\ell_{\widehat{\mathbf{J}}_{\beta}},\ell_{\mathbf{J}_{\beta}(l)})-\mathcal{D}_m(\ell_{\widehat{\mathbf{J}}_{\beta}},\ell_{\mathbf{J}_{\beta}(l+1)})]e^{-2\pi in_{\beta}\ell_{\mathbf{J}_{\beta}(l+1)}}\Big).
\end{align*}
It follows that 
\begin{align*}
	&\quad\,\,\Big|\sum_{\substack{\text{all $\mathbf{J}_{\beta}$ induced by}\\\mathbf{J}\in \mathscr{D}_{m+L}^b(\mathbf{I})}}\int_{\mathbf{J}_\beta} \mathcal{D}_m(\ell_{\widehat{\mathbf{J}}_{\beta}},\ell_{\mathbf{J}_{\beta}})e^{-2\pi in_{\beta}t_{\beta}}\mathrm{d}t_{\beta}\Big|\\
	&\leq \frac{1}{2\pi n_\beta}\Big(|\mathcal{D}_m(\ell_{\widehat{\mathbf{J}}_{\beta}},\ell_{\mathbf{J}_{\beta}(b^L)})|+|\mathcal{D}_m(\ell_{\widehat{\mathbf{J}}_{\beta}},\ell_{\mathbf{J}_{\beta}(1)})|+\sum_{l=1}^{b^L-1}|\mathcal{D}_m(\ell_{\widehat{\mathbf{J}}_{\beta}},\ell_{\mathbf{J}_{\beta}(l)})-\mathcal{D}_m(\ell_{\widehat{\mathbf{J}}_{\beta}},\ell_{\mathbf{J}_{\beta}(l+1)})|\Big). 
\end{align*}
Therefore, note that $|\widehat{\mathbf{J}}_{\beta}|=b^{-(d-1)(m+L)}$,  then  for any $\mathbf{n}\in\mathbf{S}_{L,\beta}$, we have
\begin{align*}
	|\mathscr{U}_{\mathbf{I}}(\mathbf{n})|&\leq|\mathbf{n}|^{\frac{\tau}{2}}\frac{b^{-(d-1)(m+L)}}{2\pi n_{\beta}}\sum_{\substack{\text{all $\widehat{\mathbf{J}}_{\beta}$ induced by}\\\mathbf{J}\in \mathscr{D}_{m+L}^b(\mathbf{I})}}\Big(|\mathcal{D}_m(\ell_{\widehat{\mathbf{J}}_{\beta}},\ell_{\mathbf{J}_{\beta}(b^L)})|+|\mathcal{D}_m(\ell_{\widehat{\mathbf{J}}_{\beta}},\ell_{\mathbf{J}_{\beta}(1)})|\\
	&\qquad\qquad\qquad\qquad\qquad\qquad\quad+\sum_{l=1}^{b^L-1}|\mathcal{D}_m(\ell_{\widehat{\mathbf{J}}_{\beta}},\ell_{\mathbf{J}_{\beta}(l)})-\mathcal{D}_m(\ell_{\widehat{\mathbf{J}}_{\beta}},\ell_{\mathbf{J}_{\beta}(l+1)})|\Big).
\end{align*}

Hence for each $L\geq1$ and $1\leq\beta\leq d$, by defining 
\begin{align}\label{def-wl-abs}
	w_{L,\beta}(\mathbf{n}) : = \frac{|\mathbf{n}|^{\frac{\tau}{2}}}{n_{\beta}} \cdot \mathds{1}(\mathbf{n}\in\mathbf{S}_{L,\beta})
\end{align}
and
\begin{align}\label{def-Ql-abs}
	\begin{split}
		Q_{L,\beta}: &= \frac{b^{-(d-1)(m+L)}}{2\pi}\sum_{\substack{\text{all $\widehat{\mathbf{J}}_{\beta}$ induced by}\\\mathbf{J}\in \mathscr{D}_{m+L}^b(\mathbf{I})}}\Big(|\mathcal{D}_m(\ell_{\widehat{\mathbf{J}}_{\beta}},\ell_{\mathbf{J}_{\beta}(b^L)})|+|\mathcal{D}_m(\ell_{\widehat{\mathbf{J}}_{\beta}},\ell_{\mathbf{J}_{\beta}(1)})|\\
		&\qquad\qquad\qquad\qquad\qquad\quad\,\,+\sum_{l=1}^{b^L-1}|\mathcal{D}_m(\ell_{\widehat{\mathbf{J}}_{\beta}},\ell_{\mathbf{J}_{\beta}(l)})-\mathcal{D}_m(\ell_{\widehat{\mathbf{J}}_{\beta}},\ell_{\mathbf{J}_{\beta}(l+1)})|\Big), 
	\end{split}
\end{align}
we obtain 
\begin{align}\label{U-vR-abs}
	|\mathscr{U}_\mathbf{I}(\mathbf{n})| \le \sum_{\beta=1}^{d}w_{L,\beta}(\mathbf{n}) Q_{L,\beta}\quad \text{for all $b^{m+L-1} <|\mathbf{n}| \le b^{m+L}$}. 
\end{align}

\medskip
{\flushleft \bf Step 5. Separation-of-variable estimate of $\mathscr{Z}_\mathbf{I}(\mathbf{n})$.}
\medskip

Combining  \eqref{low-Y-abs}, \eqref{Y-UV-abs},  \eqref{V-wQ-abs} and \eqref{U-vR-abs}, we obtain  the desired separation-of-variable estimate
\begin{align}\label{sep-Y-abs}
	|\mathscr{Z}_\mathbf{I}(\mathbf{n})| \le    v_0(\mathbf{n}) R_0 +  \sum_{L=1}^\infty v_{L}(\mathbf{n})  R_{L} + \sum_{L=1}^\infty\sum_{\beta=1}^{d} w_{L,\beta}(\mathbf{n})  Q_{L,\beta} \quad \text{for all $\mathbf{n}\in\mathbb{N}^{d}\setminus\{\mathbf{0}\}$},
\end{align}
where $R_0$, $R_L$, $Q_{L,\beta}$ are non-negative random variables and  $v_0$, $v_L$, $w_{L,\beta}$ are deterministic (without randomness) sequences of non-negative numbers with supports 
\[
\supp(v_0)=\{1\leq|\mathbf{n}|\leq b^m\},\quad\supp(v_{L})=\{b^{m+L-1}<|\mathbf{n}|\leq b^{m+L}\}
\]
and
\[
\supp(w_{L,\beta})=\Big\{b^{m+L-1}<|\mathbf{n}|\leq b^{m+L},n_{\beta}>\frac{b^{m+L-1}}{\sqrt{d}},n_{\beta'}\leq\frac{b^{m+L-1}}{\sqrt{d}},1\leq\beta'\leq\beta-1\Big\}.
\]

\medskip
{\flushleft\bf Step 6. Upper estimate of $(\E[\|\mathscr{Z}_\mathbf{I}\|_{\ell^q}^p])^{1/p}$ via separation-of-variable.}
\medskip

By applying the triangle inequality of $\ell^q$ to \eqref{sep-Y-abs}, we have
\begin{align}\label{sep-Y-lq-abs}
	\|\mathscr{Z}_\mathbf{I}\|_{\ell^q}\leq\sum_{L=0}^\infty \|v_L\|_{\ell^q} \cdot R_L  +  \sum_{L=1}^\infty\sum_{\beta=1}^{d} \|w_{L,\beta}\|_{\ell^q} \cdot Q_{L,\beta}.
\end{align}
Then by the triangle inequality of $L^p(\PP)$ to \eqref{sep-Y-lq-abs}, we get
\begin{align}\label{E-Y-I-abs}
	(\E[\|\mathscr{Z}_\mathbf{I}\|_{\ell^q}^p])^{1/p}  = \big\| \|\mathscr{Z}_\mathbf{I}\|_{\ell^q} \big \|_{L^p(\PP)}  \le   \sum_{L=0}^\infty \|v_L\|_{\ell^q} \cdot (\E[R_L^p])^{1/p}  + \sum_{L=1}^\infty\sum_{\beta=1}^{d} \|w_{L,\beta}\|_{\ell^q}\cdot (\E[ Q_{L,\beta}^p])^{1/p}.
\end{align}

\medskip
{\flushleft \bf Step 7. Simple estimates of the quantities $\|v_L\|_{\ell^q}$ and $\|w_{L,\beta}\|_{\ell^q}$.}
\medskip

By the definitions \eqref{def-v-0-abs}, \eqref{def-v-l-abs} and \eqref{def-wl-abs} for $v_L$ and $w_{L,\beta}$, we have 
\begin{align}\label{cal-v-0-abs}
	\|v_0\|_{\ell^q}   = \Big( \sum_{1\leq|\mathbf{n}|\leq b^m} |\mathbf{n}|^{\frac{\tau q}{2}}\Big)^{1/q}  \lesssim \big(b^{\frac{\tau q}{2}m}\cdot b^{dm}\big)^{1/q} = b^{(\frac{\tau}{2} +\frac{d}{q})m}
\end{align}
and for all $L\ge 1$,
\begin{align}\label{vL-wL1-abs}
	\|v_L\|_{\ell^q}  = \Big(  \sum_{b^{m+L-1}<|\mathbf{n}|\le b^{m+L}} |\mathbf{n}|^{\frac{\tau q}{2}}  \Big)^{1/q}  \lesssim \big(b^{\frac{\tau q}{2}(m+L)}\cdot b^{d(m+L)}\big)^{1/q}=b^{(\frac{\tau}{2}+\frac{d}{q})(m+L)}. 
\end{align}
While for each $L\ge 1$ and $1\leq\beta\leq d$, since
\[
\mathbf{S}_{L,\beta}:=\Big\{b^{m+L-1}<|\mathbf{n}|\leq b^{m+L},n_{\beta}>\frac{b^{m+L-1}}{\sqrt{d}},n_{\beta'}\leq\frac{b^{m+L-1}}{\sqrt{d}},1\leq\beta'\leq\beta-1\Big\},
\]
we have
\begin{align}\label{vL-wL-abs}
	\|w_{L,\beta}\|_{\ell^q}    = \Big(  \sum_{\mathbf{n}\in\mathbf{S}_{L,\beta}} \frac{|\mathbf{n}|^{\frac{\tau q}{2}}}{n_{\beta}^q}  \Big)^{1/q}  \lesssim \Big(\frac{b^{\frac{\tau q}{2}(m+L)}}{b^{q(m+L)}}\cdot b^{d(m+L)}\Big)^{1/q}=  b^{ (\frac{\tau }{2} - 1+\frac{d}{q})(m+L)}.
\end{align}

\medskip
{\flushleft \bf Step 8. Estimate of $(\E[R_0^p])^{1/p}$.}
\medskip

Recall the definition \eqref{def-R-0-abs} for $R_0$. By the triangle inequality, we have 
\[
(\E[R_0^p])^{1/p}  = \Big\| 
\int_{\mathbf{I}}\Big[\prod_{j=0}^{m}\XX_j(\mathbf{t})\Big]    \mathrm{d}\mathbf{t}\Big\|_{L^p(\PP)}\le 
\int_{\mathbf{I}}  \Big\|  \prod_{j=0}^{m}\XX_j(\mathbf{t})  \Big\|_{L^p(\PP)} \mathrm{d}\mathbf{t}.
\]
Since the stochastic processes $\{\XX_j\}_{0\le j \le m}$ are independent, by Assumption~\ref{assum-Lp0-abs} and Lemma~\ref{lem-small-p}, as well as the definition of $S_p$ in \eqref{ass2-sma-p-abs}:
\[
S_p=\sup_{j>j_0}\sup_{\mathbf{t}\in[0,1)^d}\mathbb{E}[\XX_j^{p}(\mathbf{t})],
\]
we have 
\begin{align}\label{Dt-Lp-abs}
	\Big\|  \Big[\prod_{j=0}^{m}\XX_j(\mathbf{t})\Big] \Big\|_{L^p(\PP)} =   \Big(\prod_{j=0}^{m} \E[\XX_j^p(\mathbf{t})]   \Big)^{1/p}\lesssim S_p^{m/p}.
\end{align}
Therefore,  by recalling $|\mathbf{I}|=b^{-dm}$, we obtain 
\begin{align}\label{R0-es-abs}
	(\E[R_0^p])^{1/p} \lesssim S_p^{m/p}\cdot b^{-dm}. 
\end{align}

\medskip
{\flushleft \bf Step 9. Control of the difference $\mathcal{D}_m(\mathbf{t})-\mathcal{D}_m(\mathbf{s})$.}
\medskip

From the expressions \eqref{def-Rl-abs} and \eqref{def-Ql-abs}, we are led to study the difference $\mathcal{D}_m(\mathbf{t})-\mathcal{D}_m(\mathbf{s})$. Recall the definition \eqref{def-Dk-abs} of $\mathcal{D}_m(\mathbf{t})$, by the elementary identity
\[
\prod_{j=0}^{m}a_j-\prod_{j=0}^{m}b_j=\sum_{r=0}^{m}\Big(\prod_{j=0}^{r-1}b_j\Big) \big(a_r-b_r\big) \Big(\prod_{j=r+1}^{m}a_j\Big),
\]
we obtain 
\[
|\mathcal{D}_m(\mathbf{t})-\mathcal{D}_m(\mathbf{s})|  \le   \sum_{r=0}^{m}\Big[\prod_{j=0}^{r-1} \XX_j(\mathbf{s})\Big] \big|\XX_r(\mathbf{t})-\XX_r(\mathbf{s})\big| \Big[\prod_{j=r+1}^{m}\XX_j(\mathbf{t})\Big].
\]
By using the H\"older inequality, it follows from Assumption~\ref{assum-alp0-abs} that for any $1<p\leq p_0$, we have
\begin{align}\label{ass3-sma-p-abs}
	\sup_{r\in \N}\sup_{\mathbf{I}\in\mathscr{D}_r^b}\sup_{\mathbf{t},\mathbf{s}\in\mathbf{I} \atop\mathbf{t}\neq\mathbf{s}}\mathbb{E}\Big[\Big| \frac{\XX_r(\mathbf{t})-\XX_r(\mathbf{s})}{b^{r\alpha_0}|\mathbf{t}-\mathbf{s}|^{\alpha_0}} \Big|^{p}\Big]<\infty.
\end{align}
Therefore, for any $\mathbf{t},\mathbf{s}\in\mathbf{I}\in\mathscr{D}_{m}^b$, by applying  the triangle inequality, the independence of $\{\XX_j\}_{0\le j \le k}$ and then Assumption~\ref{assum-Lp0-abs}, Assumption~\ref{assum-alp0-abs}, as well as \eqref{ass2-sma-p-abs}, \eqref{ass3-sma-p-abs}, we obtain 
\begin{align}\label{Dt-Ds-E-abs}
	\begin{split}
		\| \mathcal{D}_m(\mathbf{t})-\mathcal{D}_m(\mathbf{s})\|_{L^p(\PP)}   &\le    \sum_{r=0}^{m}\Big\|\Big[\prod_{j=0}^{r-1} \XX_j(\mathbf{s}) \Big] \big|\XX_r(\mathbf{t})-\XX_r(\mathbf{s})\big| \Big[\prod_{j=r+1}^{m}|\XX_j(\mathbf{t})|\Big]\Big\|_{L^p(\PP)}
		\\
		& \lesssim \sum_{r=0}^{m}S_p^{m/p} \cdot  b^{r\alpha_0}\cdot|\mathbf{t}-\mathbf{s}|^{\alpha_0}
		 \lesssim S_p^{m/p}\cdot b^{m\alpha_0}\cdot|\mathbf{t}-\mathbf{s}|^{\alpha_0}.
	\end{split}
\end{align}

\medskip
{\flushleft \bf Step 10. Estimate of $(\E[R_L^p])^{1/p}$ for $L\ge 1$.}
\medskip

Recall the definition \eqref{def-Rl-abs} of $R_L$ for $L\ge 1$. We have 
\[
(\E[R_L^p])^{1/p}=  \Big\|  \sum_{\mathbf{J}\in \mathscr{D}_{m+L}^b(\mathbf{I})}  \int_{\mathbf{J}} |\mathcal{D}_{m} (\mathbf{t}) -\mathcal{D}_k(\ell_\mathbf{J}) | \mathrm{d}\mathbf{t}\Big\|_{L^p(\PP)}
\le  \sum_{\mathbf{J}\in \mathscr{D}_{m+L}^b(\mathbf{I})}  \int_{\mathbf{J}}   \|  \mathcal{D}_{m} (\mathbf{t}) -\mathcal{D}_k(\ell_\mathbf{J})  \|_{L^p(\PP)} \mathrm{d}\mathbf{t}.
\]
Now by \eqref{Dt-Ds-E-abs} and the fact that 
\[
|\mathbf{t}-\ell_\mathbf{J}|\leq\sqrt{d}\cdot b^{-(m+L)}
\]
with $\mathbf{t},\ell_\mathbf{J}\in\mathbf{J}\subset\mathbf{I}\in\mathscr{D}_{m}^b$, and then by $|\mathbf{J}|=b^{-d(m+L)}$, we obtain 
\[
(\E[R_L^p])^{1/p}\lesssim\sum_{\mathbf{J}\in\mathscr{D}_{m+L}^b(\mathbf{I})}S_p^{m/p}\cdot b^{m\alpha_0}\cdot b^{-(m+L)\alpha_0}\cdot b^{-d(m+L)}= S_p^{m/p}\cdot b^{-dm}\cdot b^{-(d+\alpha_0)L}\cdot\# \mathscr{D}_{m+L}^b(\mathbf{I}).
\]
Note that by the definition \eqref{DLk-1I-abs} of $\mathscr{D}_{m+L}^b(\mathbf{I})$ (recall that the $b$-adic sub-cube $\mathbf{I}\in\mathscr{D}_{m}^b$ is divided into $b^{dL}$ equal pieces), we have 
\[
\# \mathscr{D}_{m+L}^b(\mathbf{I})  = b^{dL}. 
\]
Hence we get
\begin{align}\label{RL-es-abs}
	(\E[R_L^p])^{1/p}   \lesssim   S_p^{m/p}\cdot b^{-dm} \cdot b^{-\alpha_0L}. 
\end{align}

\medskip
{\flushleft \bf Step 11. Estimate of $(\E[Q_{L,\beta}^p])^{1/p}$ for $L\ge 1$ and $1\leq\beta\leq d$.}
\medskip

Recall the definition \eqref{def-Ql-abs} of $Q_{L,\beta}$ for $L\ge 1$. We have  
\begin{align*}
	(\E[Q_{L,\beta}^p])^{1/p}&  = \frac{b^{-(d-1)(m+L)}}{2\pi}\Big\|
	\sum_{\substack{\text{all $\widehat{\mathbf{J}}_{\beta}$ induced by}\\\mathbf{J}\in \mathscr{D}_{m+L}^b(\mathbf{I})}}\Big(|\mathcal{D}_m(\ell_{\widehat{\mathbf{J}}_{\beta}},\ell_{\mathbf{J}_{\beta}(b^L)})|\\
	&\quad\,\,+|\mathcal{D}_m(\ell_{\widehat{\mathbf{J}}_{\beta}},\ell_{\mathbf{J}_{\beta}(1)})|+\sum_{l=1}^{b^L-1}|\mathcal{D}_m(\ell_{\widehat{\mathbf{J}}_{\beta}},\ell_{\mathbf{J}_{\beta}(l)})-\mathcal{D}_m(\ell_{\widehat{\mathbf{J}}_{\beta}},\ell_{\mathbf{J}_{\beta}(l+1)})|\Big)\Big\|_{L^p(\PP)}
\end{align*}
and hence
\begin{align*}
	(\E[Q_{L,\beta}^p])^{1/p}&\leq\frac{b^{-(d-1)(m+L)}}{2\pi}\sum_{\substack{\text{all $\widehat{\mathbf{J}}_{\beta}$ induced by}\\\mathbf{J}\in \mathscr{D}_{m+L}^b(\mathbf{I})}} \Big( \| \mathcal{D}_m(\ell_{\widehat{\mathbf{J}}_{\beta}},\ell_{\mathbf{J}_{\beta}(b^L)}) \|_{L^p(\PP)}   \\
	&\quad\,\,+ \|\mathcal{D}_m(\ell_{\widehat{\mathbf{J}}_{\beta}},\ell_{\mathbf{J}_{\beta}(1)})\|_{L^p(\PP)}+ \sum_{l=1}^{b^L-1} \| \mathcal{D}_m(\ell_{\widehat{\mathbf{J}}_{\beta}},\ell_{\mathbf{J}_{\beta}(l)})-\mathcal{D}_m(\ell_{\widehat{\mathbf{J}}_{\beta}},\ell_{\mathbf{J}_{\beta}(l+1)})\|_{L^p(\PP)}\Big). 
\end{align*}
By the same calculation as in \eqref{Dt-Lp-abs}, we have  
\[
\| \mathcal{D}_m(\ell_{\widehat{\mathbf{J}}_{\beta}},\ell_{\mathbf{J}_{\beta}(b^L)}) \|_{L^p(\PP)}\lesssim S_p^{m/p}  \anand \|\mathcal{D}_m(\ell_{\widehat{\mathbf{J}}_{\beta}},\ell_{\mathbf{J}_{\beta}(1)})\|_{L^p(\PP)}  \lesssim S_p^{m/p}. 
\]
On the other hand, by \eqref{Dt-Ds-E-abs} and by using
\[
|(\ell_{\widehat{\mathbf{J}}_{\beta}},\ell_{\mathbf{J}_{\beta}(l)})-(\ell_{\widehat{\mathbf{J}}_{\beta}},\ell_{\mathbf{J}_{\beta}(l+1)})| =|\ell_{\mathbf{J}_{\beta}(l)}-\ell_{\mathbf{J}_{\beta}(l+1)}|= b^{-(m+L)}
\]
with $(\ell_{\widehat{\mathbf{J}}_{\beta}},\ell_{\mathbf{J}_{\beta}(l)}),(\ell_{\widehat{\mathbf{J}}_{\beta}},\ell_{\mathbf{J}_{\beta}(l+1)})\in\mathbf{J}\subset\mathbf{I}\in\mathscr{D}_{m}^b$,
we obtain 
\[
\| \mathcal{D}_m(\ell_{\widehat{\mathbf{J}}_{\beta}},\ell_{\mathbf{J}_{\beta}(l)})-\mathcal{D}_m(\ell_{\widehat{\mathbf{J}}_{\beta}},\ell_{\mathbf{J}_{\beta}(l+1)})\|_{L^p(\PP)} \lesssim S_p^{m/p}\cdot b^{m\alpha_0}\cdot b^{-(m+L)\alpha_0}=S_p^{m/p}\cdot b^{-\alpha_0L}.
\]
It follows that
\begin{align*}
	&\|\mathcal{D}_m(\ell_{\widehat{\mathbf{J}}_{\beta}},\ell_{\mathbf{J}_{\beta}(b^L)}) \|_{L^p(\PP)}   + \|\mathcal{D}_m(\ell_{\widehat{\mathbf{J}}_{\beta}},\ell_{\mathbf{J}_{\beta}(1)})\|_{L^p(\PP)}+\sum_{l=1}^{b^L-1} \| \mathcal{D}_m(\ell_{\widehat{\mathbf{J}}_{\beta}},\ell_{\mathbf{J}_{\beta}(l)})-\mathcal{D}_m(\ell_{\widehat{\mathbf{J}}_{\beta}},\ell_{\mathbf{J}_{\beta}(l+1)})\|_{L^p(\PP)}\\
	&\lesssim S_p^{m/p}+S_p^{m/p}+b^L\cdot S_p^{m/p}\cdot b^{-\alpha_0L}=S_p^{m/p}\cdot b^{(1-\alpha_0)L}.
\end{align*}
Since
\[
\#\Big\{\text{all $\widehat{\mathbf{J}}_{\beta}$ induced by $\mathbf{J}\in \mathscr{D}_{m+L}^b(\mathbf{I})$}\Big\} = b^{(d-1)L},
\]
we get
\begin{align}\label{QL-es-abs}
	(\E[Q_{L,\beta}^p])^{1/p} \lesssim b^{-(d-1)(m+L)}\cdot b^{(d-1)L}\cdot S_p^{m/p}\cdot b^{(1-\alpha_0)L}=S_p^{m/p}\cdot b^{-(d-1)m}\cdot b^{(1-\alpha_0)L} .
\end{align}

\medskip
{\flushleft \bf Step 12. Conclusion of the estimate of $\E[\|\mathscr{Z}_\mathbf{I}\|_{\ell^q}^p]$.}
\medskip

Combining the inequalities \eqref{cal-v-0-abs} and \eqref{R0-es-abs}, we obtain 
\[
\|v_0\|_{\ell^q} \cdot (\E[R_0^p])^{1/p}       \lesssim     b^{(\frac{\tau}{2} +\frac{d}{q})m} \cdot S_p^{m/p}\cdot b^{-dm} =S_p^{m/p}\cdot b^{- (d - \frac{\tau }{2} - \frac{d}{q})m}.
\]
For all $L\ge 1$, by \eqref{vL-wL1-abs}, \eqref{RL-es-abs}, 
\begin{align*}
	\|v_L\|_{\ell^q} \cdot (\E[R_L^p])^{1/p} &\lesssim  b^{(\frac{\tau}{2}+\frac{d}{q})(m+L)}\cdot  S_p^{m/p}\cdot b^{-dm} \cdot b^{-\alpha_0L} \\
	&=S_p^{m/p}\cdot b^{- (d - \frac{\tau }{2} - \frac{d}{q})m} \cdot b^{-(\alpha_0-\frac{\tau}{2} -\frac{d}{q})L}.
\end{align*}
For each $L\geq1$ and $1\leq\beta\leq d$, by   \eqref{vL-wL-abs},  \eqref{QL-es-abs}, 
\begin{align*}
	\|w_{L,\beta}\|_{\ell^q} \cdot (\E[Q_{L,\beta}^p])^{1/p} &\lesssim  b^{ (\frac{\tau }{2} - 1+\frac{d}{q})(m+L)}\cdot S_p^{m/p}\cdot b^{-(d-1)m}\cdot b^{(1-\alpha_0)L} \\
	&= S_p^{m/p}\cdot b^{-(d -  \frac{\tau }{2} - \frac{d}{q})m} \cdot b^{-(\alpha_0-\frac{\tau}{2} -\frac{d}{q})L}. 
\end{align*}
Therefore, by \eqref{E-Y-I-abs}, we obtain 
\begin{align*}
	(\E[\|\mathscr{Z}_\mathbf{I}\|_{\ell^q}^p])^{1/p}  &\lesssim  S_p^{m/p}\cdot b^{- (d - \frac{\tau }{2} - \frac{d}{q})m}\Big[1 + \sum_{L=1}^\infty  b^{-(\alpha_0-\frac{\tau}{2} -\frac{d}{q})L} +  \sum_{L=1}^\infty \sum_{\beta=1}^{d} b^{-(\alpha_0-\frac{\tau}{2} -\frac{d}{q})L} \Big]\\
	&=S_p^{m/p}\cdot b^{- (d - \frac{\tau }{2} - \frac{d}{q})m}\Big[1 + (1+d)\sum_{L=1}^\infty  b^{-(\alpha_0-\frac{\tau}{2} -\frac{d}{q})L} \Big].
\end{align*}
Since $q>\frac{2d}{2\alpha_0-\tau}$, we have 
\[
\alpha_0-\frac{\tau}{2} -\frac{d}{q}>\alpha_0-\frac{\tau}{2}-\frac{2\alpha_0-\tau}{2} =   0.
\]
Hence we get
\[
\sum_{L=1}^\infty  b^{-(\alpha_0-\frac{\tau}{2} -\frac{d}{q})L}<\infty 
\]
and then
\[
(\E[\|\mathscr{Z}_\mathbf{I}\|_{\ell^q}^p])^{1/p}\lesssim S_p^{m/p}\cdot b^{- (d - \frac{\tau }{2} - \frac{d}{q})m},
\]
that is,
\[
\E[\|\mathscr{Z}_\mathbf{I}\|_{\ell^q}^p]  \lesssim  S_p^{m}\cdot b^{- (dp - \frac{\tau p}{2} - \frac{dp}{q})m}. 
\]
By the expression \eqref{Sp-phi-abs} of $S_p$:
\[
S_p=b^{d(p-1)-\frac{\zeta(p)}{\log b}},
\]
we get the desired inequality 
\[
\E[\|\mathscr{Z}_\mathbf{I}\|_{\ell^q}^p]=\mathbb{E}\Big[\Big\{\sum_{\mathbf{n}\in\mathbb{N}^{d}\setminus\{\mathbf{0}\}}|\mathscr{Z}_\mathbf{I}(\mathbf{n})|^q\Big\}^{p/q}\Big]\lesssim b^{-m[d+\frac{\zeta(p)}{\log b}-\frac{\tau p}{2}-\frac{dp}{q}]}.
\]

This completes the whole proof of Lemma~\ref{UB-YZW-abs}.

\subsection{Single weighted $L^p(\ell^q)$-boundedness}
We now prove Proposition~\ref{lem-lq-abs} by using Lemma~\ref{UB-YZW-abs}.  Recall again  that $\tau$, $p$, $q$ are fixed as in \eqref{fix-exponents}.   

Recall the definition \eqref{vec-val-mar-abs} of the vector-valued martingale $(\mathcal{M}_{m})_{m\ge 0}$ with respect to the natural increasing filtration $(\mathscr{G}_m)_{m\ge 0}$ defined in \eqref{def-filtra-abs} with any fixed $\tau\in(0,\mathcal{L}_F)$: 
\[
\mathcal{M}_{m}=\big(|\mathbf{n}|^{\frac{\tau}{2}}\widehat{\mu_{m}}(\mathbf{n})\big)_{\mathbf{n}\in\mathbb{N}^{d}\setminus\{\mathbf{0}\}}.
\]
And for each $\mathbf{n}=(n_1,\cdots,n_d)\in\mathbb{N}^{d}\setminus\{\mathbf{0}\}$, by the definition \eqref{def-num-abs} of $\mu_m$, we have
\[
\mathcal{M}_m(\mathbf{n})=|\mathbf{n}|^{\frac{\tau}{2}}\widehat{\mu_{m}}(\mathbf{n})=|\mathbf{n}|^{\frac{\tau}{2}}\int_{[0,1]^d}e^{-2\pi i\mathbf{n}\cdot\mathbf{t}}\mu_{m}(\mathrm{d}\mathbf{t})=|\mathbf{n}|^{\frac{\tau}{2}}\int_{[0,1]^d}\Big[\prod_{j=0}^{m}\XX_j(\mathbf{t})\Big]e^{-2\pi i\mathbf{n}\cdot\mathbf{t}}\mathrm{d}\mathbf{t}.
\]
Fix any integer $m\geq0$. For any $b$-adic sub-cube $\mathbf{I}\in \mathscr{D}_{m}^b$ defined in \eqref{def-b-dya-abs},  recall the $\mathscr{G}_m$-measurable random vector $\mathscr{Z}_\mathbf{I} = \big(\mathscr{Z}_\mathbf{I}(\mathbf{n})\big)_{\mathbf{n}\in\mathbb{N}^{d}\setminus\{\mathbf{0}\}}$ defined in \eqref{def-Z-abs}:
\[
\mathscr{Z}_\mathbf{I}(\mathbf{n})=|\mathbf{n}|^{\frac{\tau}{2}}\int_{\mathbf{I}}\Big[\prod_{j=0}^{m}\XX_j(\mathbf{t})\Big]e^{-2\pi i\mathbf{n}\cdot\mathbf{t}}\mathrm{d}\mathbf{t}.
\]
Hence as random vectors, we have the equality 
\[
	\mathcal{M}_m=\sum_{\mathbf{I}\in\mathscr{D}_{m}^b}\mathscr{Z}_\mathbf{I}.
\]

By the triangle inequality of $\ell^q$, we have
\[
\Big\{\sum_{\mathbf{n}\in\mathbb{N}^{d}\setminus\{\mathbf{0}\}}|\mathcal{M}_{m}(\mathbf{n})|^q\Big\}^{1/q}=\Big\{\sum_{\mathbf{n}\in\mathbb{N}^{d}\setminus\{\mathbf{0}\}}\Big|\sum_{\mathbf{I}\in\mathscr{D}_{m}^b}\mathscr{Z}_\mathbf{I}(\mathbf{n})\Big|^q\Big\}^{1/q}\leq\sum_{\mathbf{I}\in\mathscr{D}_{m}^b}\Big\{\sum_{\mathbf{n}\in\mathbb{N}^{d}\setminus\{\mathbf{0}\}}|\mathscr{Z}_\mathbf{I}(\mathbf{n})|^q\Big\}^{1/q}
\]
and hence
\[
\mathbb{E}\Big[\Big\{\sum_{\mathbf{n}\in\mathbb{N}^{d}\setminus\{\mathbf{0}\}}|\mathcal{M}_{m}(\mathbf{n})|^q\Big\}^{p/q}\Big]\leq\mathbb{E}\Big[\Big(\sum_{\mathbf{I}\in\mathscr{D}_{m}^b}\Big\{\sum_{\mathbf{n}\in\mathbb{N}^{d}\setminus\{\mathbf{0}\}}|\mathscr{Z}_\mathbf{I}(\mathbf{n})|^q\Big\}^{1/q}\Big)^p\Big].
\]
Then by the triangle inequality of $L^p(\PP)$, we get
\[
\Big(\mathbb{E}\Big[\Big\{\sum_{\mathbf{n}\in\mathbb{N}^{d}\setminus\{\mathbf{0}\}}    |\mathcal{M}_{m}(\mathbf{n})|^q\Big\}^{p/q}\Big]\Big)^{1/p}\leq\sum_{\mathbf{I}\in\mathscr{D}_{m}^b}\Big(\mathbb{E}\Big[\Big\{\sum_{\mathbf{\mathbf{n}}\in\mathbb{N}^{d}\setminus\{\mathbf{0}\}}|\mathscr{Z}_\mathbf{I}(\mathbf{n})|^q\Big\}^{p/q}\Big]\Big)^{1/p}.
\]
Therefore, by Lemma~\ref{UB-YZW-abs}, we obtain
\[
\Big(\mathbb{E}\Big[\Big\{\sum_{\mathbf{n}\in\mathbb{N}^{d}\setminus\{\mathbf{0}\}}    |\mathcal{M}_{m}(\mathbf{n})|^q\Big\}^{p/q}\Big]\Big)^{1/p}\leq \sum_{\mathbf{I}\in\mathscr{D}_{m}^b}C^{1/p}\cdot b^{-\frac{m}{p}[d + \frac{\zeta(p)}{\log b}-\frac{\tau p}{2}-\frac{dp}{q}]}=C^{1/p}\cdot b^{-\frac{m}{p}[d + \frac{\zeta(p)}{\log b}-\frac{\tau p}{2}-\frac{dp}{q}]}\cdot\#\mathscr{D}_{m}^b.
\]
Note that $\#\mathscr{D}_{m}^b=b^{dm}$, hence
\[
\Big(\mathbb{E}\Big[\Big\{\sum_{\mathbf{n}\in\mathbb{N}^{d}\setminus\{\mathbf{0}\}}    |\mathcal{M}_{m}(\mathbf{n})|^q\Big\}^{p/q}\Big]\Big)^{1/p}\leq C^{1/p}\cdot b^{-\frac{m}{p}[d + \frac{\zeta(p)}{\log b}-\frac{\tau p}{2}-\frac{dp}{q}]}\cdot b^{dm}=C^{1/p}\cdot b^{\frac{m}{p}[d(p-1) - \frac{\zeta(p)}{\log b}+\frac{\tau p}{2}+\frac{dp}{q}]}.
\]
This implies the desired inequality
\[
\mathbb{E}\Big[\Big\{\sum_{\mathbf{n}\in\mathbb{N}^{d}\setminus\{\mathbf{0}\}}    |\mathcal{M}_{m}(\mathbf{n})|^q\Big\}^{p/q}\Big]\leq C\cdot b^{m[d(p-1) - \frac{\zeta(p)}{\log b}+\frac{\tau p}{2}+\frac{dp}{q}]}<\infty\quad\text{for any fixed $m\geq0$}.
\]
Hence we complete the proof of Proposition~\ref{lem-lq-abs}.

\subsection{Asymptotic behavior of $\mathbb{E}[\|\mathscr{Y}_\mathbf{I} \|_{\ell^q}^p]$}
We  can now prove Lemma~\ref{UB-ZW-abs} by using Lemma~\ref{UB-YZW-abs}. 

Recall again  that $\tau$, $p$, $q$ are fixed as in \eqref{fix-exponents}.   For any integer $k\ge 1$ and any $b$-adic sub-cube $\mathbf{I}\in \mathscr{D}_{k-1}^b$ defined in \eqref{def-b-dya-abs}, recall the $\mathscr{G}_k$-measurable random vector $\mathscr{Y}_\mathbf{I}=\big(\mathscr{Y}_\mathbf{I}(\mathbf{n})\big)_{\mathbf{n}\in\mathbb{N}^{d}\setminus\{\mathbf{0}\}}$ defined in \eqref{def-Y-abs}:
\[
\mathscr{Y}_\mathbf{I}(\mathbf{n})= |\mathbf{n}|^{\tau/2}\int_{\mathbf{I}}\Big[\prod_{j=0}^{k-1}\XX_j(\mathbf{t})\Big]  \mathring{\XX}_{k} (\mathbf{t}) e^{-2\pi i\mathbf{n}\cdot\mathbf{t}}\mathrm{d}\mathbf{t}
\]
with
$
\mathring{\XX}_{k}(\mathbf{t})=\XX_{k}(\mathbf{t})-\E[\XX_{k}(\mathbf{t})]=\XX_{k}(\mathbf{t})-1.
$
Using the notation \eqref{DLk-1I-abs} for dividing $\mathbf{I}\in\mathscr{D}_{k-1}^b$ into $b^d$ equal pieces in $\mathscr{D}_{k}^b$:
\[
\mathbf{I}=\bigsqcup_{\mathbf{J}\in\mathscr{D}_{k}^b(\mathbf{I})}\mathbf{J} \quad \text{with $\mathscr{D}_{k}^b(\mathbf{I})= \Big\{\mathbf{J}\subset\mathbf{I}:\mathbf{J}\in\mathscr{D}_{k}^b\Big\}$},
\]
we obtain 
\begin{align*}
\mathscr{Y}_\mathbf{I}(\mathbf{n})& = |\mathbf{n}|^{\tau/2}\int_{\mathbf{I}}\Big[\prod_{j=0}^{k}\XX_j(\mathbf{t})\Big]e^{-2\pi i\mathbf{n}\cdot\mathbf{t}}\mathrm{d}\mathbf{t}-|\mathbf{n}|^{\tau/2}\int_{\mathbf{I}}\Big[\prod_{j=0}^{k-1}\XX_j(\mathbf{t})\Big]e^{-2\pi i\mathbf{n}\cdot\mathbf{t}}\mathrm{d}\mathbf{t}
\\
& =\Bigg( \sum_{\mathbf{J}\in\mathscr{D}_{k}^b(\mathbf{I})}|\mathbf{n}|^{\tau/2}\int_{\mathbf{J}}\Big[\prod_{j=0}^{k}\XX_j(\mathbf{t})\Big]e^{-2\pi i\mathbf{n}\cdot\mathbf{t}}\mathrm{d}\mathbf{t}\Bigg)-|\mathbf{n}|^{\tau/2}\int_{\mathbf{I}}\Big[\prod_{j=0}^{k-1}\XX_j(\mathbf{t})\Big]e^{-2\pi i\mathbf{n}\cdot\mathbf{t}}\mathrm{d}\mathbf{t}.
\end{align*}
It should be emphasized that here $\mathbf{I}\in \mathscr{D}_{k-1}^b$ and $\mathbf{J}\in\mathscr{D}_{k}^b(\mathbf{I})\subset\mathscr{D}_{k}^b$. Hence comparing the above equality with  $\mathscr{Z}_\mathbf{I}(\mathbf{n})$ defined in \eqref{def-Z-abs}, we may rewrite
\[
\mathscr{Y}_\mathbf{I}(\mathbf{n})=\Big(\sum_{\mathbf{J}\in\mathscr{D}_{k}^b(\mathbf{I})}\mathscr{Z}_\mathbf{J}(\mathbf{n})\Big)-\mathscr{Z}_\mathbf{I}(\mathbf{n}).
\]
By Lemma~\ref{UB-YZW-abs}, we have
\[
\E[\|\mathscr{Z}_\mathbf{I}\|_{\ell^q}^p]\lesssim b^{-(k-1)[d + \frac{\zeta(p)}{\log b}-\frac{\tau p}{2}-\frac{dp}{q}]}\lesssim b^{-k[d + \frac{\zeta(p)}{\log b}-\frac{\tau p}{2}-\frac{dp}{q}]}
\]
and
\[
\E[\|\mathscr{Z}_\mathbf{J}\|_{\ell^q}^p]\lesssim b^{-k[d + \frac{\zeta(p)}{\log b}-\frac{\tau p}{2}-\frac{dp}{q}]}\quad\text{for all $\mathbf{J}\in\mathscr{D}_{k}^b(\mathbf{I})$}.
\]
Therefore, note that $\#\mathscr{D}_{k}^b(\mathbf{I})=b^d$ is finite, we obtain the desired inequality
\[
\E[\|\mathscr{Y}_\mathbf{I}\|_{\ell^q}^p]\lesssim b^{-k[d+\frac{\zeta(p)}{\log b}-\frac{\tau p}{2}-\frac{dp}{q}]}.
\]
This completes the proof of Lemma~\ref{UB-ZW-abs}.

\subsection{Uniform weighted $L^p(\ell^q)$-boundedness}\label{sec-mart-type}
We now prove Proposition~\ref{Uni-Bou-Lplq-abs} by using  Proposition~\ref{lem-lq-abs}, Lemma~\ref{dec-vm-abs} and Lemma~\ref{UB-ZW-abs}. Recall again  that $\tau$, $p$, $q$ are fixed as in \eqref{fix-exponents}.

Recall the definitions of $\mathcal{L}_F$ in \eqref{def-LF-abs} and   of $\zeta(p)=\zeta_{j_0}(p)$ in \eqref{def-phi-abs} (which is repeated in \eqref{def-phi-abs-again}).  

{\flushleft \it Claim:} For any $\tau\in(0,\mathcal{L}_F)$, there exists a large enough integer $j_0>0$ and a pair  of exponents $(p,q)$ such that  $1<p\leq p_0\leq\max\{2,\frac{2d}{2\alpha_0-\tau}\}<q<\infty$ and 
\begin{align}\label{existen-p-q-abs}
	\frac{\zeta(p)}{\log b}-\frac{\tau p}{2}-\frac{dp}{q} = \frac{\zeta_{j_0}(p)}{\log b}-\frac{\tau p}{2}-\frac{dp}{q}>0.
\end{align}

Indeed, by choosing $q$ large enough, it suffices to find large enough $j_0$ and $1<p\le p_0$ such that 
\begin{align}\label{find-p-q}
\frac{\zeta(p)}{\log b}-\frac{\tau p}{2} = \frac{\zeta_{j_0}(p)}{\log b}-\frac{\tau p}{2}>0.
\end{align}
By the definition of $\mathcal{L}_F$ in \eqref{def-LF-abs}, we have 
\[
\tau<\mathcal{L}_F\leq\sup_{1<p\leq p_0 }\frac{2\Theta(p)}{p\log b}.
\]
Therefore,  there exists $1<p\leq p_0$ such that
\[
\frac{\Theta(p)}{\log b}-\frac{\tau p}{2}>0.
\]
Comparing  the definition \eqref{def-Theta-abs} of $\Theta(p)$ and the definition  \eqref{def-phi-abs-again} of $\zeta(p)=\zeta_{j_0}(p)$, we know that 
\[
\lim_{j_0\to\infty}\zeta_{j_0}(p)=\Theta(p).
\]
Hence  the inequality \eqref{find-p-q} holds for large enough $j_0$ and $q$, which completes the proof of the Claim. 

By Proposition~\ref{lem-lq-abs}, $\E[\|\mathcal{M}_m\|_{\ell^q}^p]<\infty$ for each $m=0,1,\cdots,k_0$. Therefore, by Lemma~\ref{dec-vm-abs}, to prove  Proposition~\ref{Uni-Bou-Lplq-abs}, it suffices to prove the inequality 
\[
	\sum_{k=k_0+1}^{\infty}N_{k-1}^{p-1}\sum_{\mathbf{I} \in \mathscr{D}_{k-1}^b}\mathbb{E}[\|\mathscr{Y}_\mathbf{I} \|_{\ell^q}^p]<\infty.
\]
This inequality follows from Lemma~\ref{UB-ZW-abs}. Indeed, since $\# \mathscr{D}_{k-1}^b = b^{d(k-1)}$,  by Lemma~\ref{UB-ZW-abs}, we have 
\begin{align*}
	\sum_{k=k_0+1}^{\infty}N_{k-1}^{p-1}\sum_{\mathbf{I} \in \mathscr{D}_{k-1}^b}\mathbb{E}[\|\mathscr{Y}_\mathbf{I} \|_{\ell^q}^p] &\lesssim \sum_{k=k_0+1}^{\infty}N_{k-1}^{p-1}\cdot  b^{d(k-1)}\cdot   b^{-k [d+\frac{\zeta(p)}{\log b}-\frac{\tau p}{2}-\frac{dp}{q}] }\\
	&\lesssim \sum_{k=k_0+1}^\infty N_{k-1}^{p-1}\cdot b^{-k [\frac{\zeta(p)}{\log b}-\frac{\tau p}{2}-\frac{dp}{q}] }.
\end{align*}
Note that $\frac{\zeta(p)}{\log b}-\frac{\tau p}{2}-\frac{dp}{q}>0$ from \eqref{existen-p-q-abs}, thus by the sub-exponential growth condition  \eqref{def-subexp-abs} of $N_{k-1}$, the above series is convergent. Hence we obtain the desired inequality
\[
\sup_{m\geq 0}\mathbb{E}[\|\mathcal{M}_{m}\|_{\ell^{q}}^{p}]<\infty.
\]
This completes the proof of Proposition~\ref{Uni-Bou-Lplq-abs}.

\subsection{Proof of Theorem~\ref{Fou-Dec-abs}}
We can now  prove  the unified Theorem~\ref{Fou-Dec-abs} from Proposition~\ref{Uni-Bou-Lplq-abs}. Let us fix the exponents $p$ and $q$ as in Proposition~\ref{Uni-Bou-Lplq-abs}. By the standard argument in the theory of vector-valued martingales and the weak convergence \eqref{def-lim-mea-abs}, the inequality \eqref{sup-Lplq-abs} in Proposition~\ref{Uni-Bou-Lplq-abs} is equivalent to
\[
\mathbb{E}\Big[\Big\{\sum_{\mathbf{n}\in\mathbb{N}^{d}\setminus\{\mathbf{0}\}}\big||\mathbf{n}|^{\frac{\tau}{2}} \widehat{\mu_{\infty}}(\mathbf{n})\big|^{q}\Big\}^{p/q}\Big]<\infty
\]
and hence, almost surely, 
\[
|\widehat{\mu_{\infty}}(\mathbf{n})|^2=O(|\mathbf{n}|^{-\tau})\quad\text{as $\mathbf{n}\to\infty$}. 
\]
Since $\tau\in(0, \mathcal{L}_F)$ is chosen arbitrarily, the above asymptotic relation provides the almost sure lower bound $\dim_F(\mu_{\infty})\geq \mathcal{L}_F>0$, which completes the whole proof of Theorem~\ref{Fou-Dec-abs}.


\newcommand{\etalchar}[1]{$^{#1}$}

\end{document}